\newcommand{\BN}{{\mathbb{N}}}
\newcommand{\BR}{{\mathbb{R}}}
\newcommand{\BC}{{\mathbb{C}}}
\newcommand{\gd}{\delta}
\newcommand{\gb}{\beta}
\newcommand{\Ga}{\Gamma} 
\newcommand{\gs}{\sigma} 
\newcommand{\gO}{\Omega}
\newcommand{\go}{\omega}
\newcommand{\gl}{\lambda}
\newcommand{\gth}{\theta}
\newcommand{\cC}{{\mathcal{C}}}
\newcommand{\dd}{{\partial}}
\renewcommand{\hat}{\widehat}
\def\vp{\varphi}
\def\e{\varepsilon}
\newcommand{\ol}[1]{\overline{#1}}
\newcommand{\tprob}{\text{Prob}}
\newcommand{\df}{\stackrel{\text{def}}{=}}
\newcommand{\half}{\frac{1}{2}}
\newtheorem{prop}{Proposition}[section]
\newtheorem*{prop*}{Proposition}
\newtheorem{thm}{Theorem}
\newtheorem*{thm*}{Theorem}
\newtheorem{lem}[prop]{Lemma}
\newtheorem{cor}[prop]{Corollary}
\newtheorem*{cor*}{Corollary}
\theoremstyle{definition}
\newtheorem{defn}[prop]{Definition}
\newtheorem*{defn*}{Definition}
\newtheorem{rem}[prop]{Remark}
\newtheorem*{rem*}{Remark}
\newtheorem{exam}[prop]{Example}
\title[Spectral gaps, critical exponents and representations.]{Spectral gaps, critical exponents and representations of negatively curved groups}
\begin{document}
\maketitle
\centerline{\scriptsize KEVIN BOUCHER}
\centerline{\tiny University of Southampton, UK}
\centerline{\tiny (e-mail: kevin.boucher01@gmail.com)}

\begin{abstract}
In this paper we introduce a notion of Poincar\'e exponent for isometric representations of discrete groups on Hilbert spaces. 
Similarly as growth exponents control the geometry this exponent is shown to control the size of spectral gaps.
Following similar ideas as Patterson and Sullivan in \cite{MR450547} \cite{MR556586} it is used in the case of negatively curved groups to construct weakly contained boundary representations \cite{MR2787597} reflecting the spectral properties of the original representation analogously as complementary series representations in the case of semi-simple Lie groups \cite{MR1792293}.
This is exploited to deduced sharp estimates on spectral invariants. 
A quantitive property (T) \'a la Cowling \cite{MR560837} is also established proving uniform bound on the mixing rate of representations of hyperbolic groups with property (T).
Along the way some properties of boundary representations are discussed. A original characterisation of the positivity of the so-called Knapp-Stein operators and certain fusion rules on the boundary complementary series representations \cite{Boucher:2020ab} (\cite{Boyer:2022aa}) are established.
\end{abstract}

{\tiny \textbf{Key words:} Poincar\'e series, Patterson-Sullivan theory, hyperbolic groups, complementary series, boundary representations, mixing rates, spectral gap.\\
2020 Mathematics Subject Classification: 20F67 22D10 37A46 43A35 }

\tableofcontents

\section{Introduction}
Given a topological group, $G$, together with a continuous unitary representation on a Hilbert space, $(\pi,\mathcal{H})$, quantitative estimates on the operator norms $\|\pi(\mu)\|_\text{op}$ for Borel probability measures, $\mu$, (see \cite{MR171173} for technical details) is a classic problem in spectral methods involving $G$-representations.
The representation $\pi$ of $G$ is said to have \textit{spectral gap} if there exists a Borel probability measure, $\mu$, with $\ol{\langle\text{supp}(\mu)\rangle}=G$ such that $\|\pi(\mu)\|_\text{op}<1$ (see \cite[Prop. 3.4.]{MR3888695} for other characterisations).
Given a Borel set $A\subset G$ and denoting $\text{Avr}_A(\pi)\df\frac{1}{\text{Vol}(A)}\int_A\pi(g) dg$, 
a central question is to obtain a bound on $\|\text{Avr}_A(\pi)\|_\text{op}$ in terms of the volume of $A$,
in other words understand how the spectral gap evolves when $A$ grows in volume.
This has a number of applications going from homogeneous dynamic \cite{MR3888695} to quantum ergodicity \cite{MR3732880} passing through number theory \cite{MR3286482}, counting problems \cite{MR2573139}, Apollonian circle packing problems \cite{MR2784325} or random walk on groups \cite{MR1743100}.

If $G$ is a semi-simple Lie group it follows from a deep machinery involving classification of irreducible unitary representations together with matrix coefficient estimates that precise quantitative estimates on spectral gaps can be obtained for bi-$K$-invariant balls \cite{MR1637840} where $K\subset G$ stands for a maximal compact subgroup of $G$.

In this present work we address the situation of discrete negatively curved groups in a broad sense by setting the role of boundary complementary series representations within the problem of spectral gap estimates.
Ergodic theorems for measure preserving action of negatively curved groups actions has a long history \cite{MR1923970} \cite{MR1645314} \cite{MR3062901} \cite{MR3358037} \cite{MR3429477} but, as far as the author knows, no quantitive description of the spectral gap have been established even in this particular case.

\subsection{Boundary representations}\hfill\break

The subject of boundary representations of discrete groups popularised by Bader and Muchnik in \cite{MR2787597} can be traced back to the work of Figa-Talamanca and Piccardello on free groups \cite{MR710827}.
Bader and Muchnik investigated the irreducibility of the Koopman representations of negatively curved groups on their visual boundaries endowed with their Patterson-Sullivan measures from a dynamical perspective. 
First established by them in the case of fundamental groups of negatively curved manifolds, it was latter extended to CAT(-1) groups by Boyer \cite{MR3692899}   and Gromov hyperbolic groups by Garncarek \cite{Garncarek:2014aa}. 
Those representations recently inspired different work such as classification of type 1 topological hyperbolic groups and their boundary representations in \cite{MR4626714} and \cite{Glasner:2023aa}, classification of additive representations of free groups in relation with spectral gap problems in \cite{quint} or boundary representation of Mapping class groups \cite{Ma:2022aa}.
Following the analogy between hyperbolic groups and uniform lattices of rank 1 semi-simple Lie groups the author introduced a notion of spherical complementary series representations  in \cite{Boucher:2020ab} (see also \cite{Boyer:2022aa}).
Those representations, when they exist, produce a non-tempered family of non-equivalent, irreducible, unitary representations with intermediate spectral gap properties.
Their existence relates to properties such as the Kazhdan property (T) and its natural obstruction namely the Haagerup property \cite{Boucher:2020ab} \cite{Boucher:2020aa}.
They also showed remarkable approximation properties in relation with the so-called Shalom conjecture \cite{Boucher:2023aa}, \cite{MR2078626}, \cite{Nishikawa:2020aa}, \cite{gruetzner:hal-03963498} and \cite[Conj. 35]{MR3382026} for context.

In the semi-simple case, the relation between
complementary series representations and decay of matrix coefficients  \cite{MR1792293} are fundamental in order to investigate spectral gaps.
Our approach is to extend this relation using complementary boundary representations, as a substitute to the semi-simple ones.

\subsection{A example: homogeneous dynamics of $SL_2(\BR)$}\hfill\break

Let us illustrate our point through the example of $SL_2(\BR)$ viewed as a topological Gromov hyperbolic group.
The reader can refer to \cite{MR855239} for material about $SL_2(\BR)$ representation theory or \cite[Sect. 3.4]{MR3071503} for a gentle summary.

Let $\Gamma\subset SL_2(\BR)$ be a discrete, finitely generated and not virtually solvable subgroup (see \cite{MR3929581} for examples)
and $L^2(\Ga\setminus SL_2(\BR))$ be the quasi-regular representation of $SL_2(\BR)$ on square integrable $\Ga$-automorphic functions on $SL_2(\BR)$.
Assume the critical exponent $\gd_\Ga$ of $\Ga$ with respect to the canonical distance on $SL_2(\BR)$ satisfies $\gd_\Ga>\half$.\\
It follows from Lax and Phillips \cite{MR661875} that:
$$L^2(\Ga\setminus SL_2(\BR))\simeq W_{\gl_0}\oplus \dots\oplus W_{\gl_m}\oplus W_\text{temp.}$$
where $W_{\gl_i}$ is a spherical complementary series of Langland parameter $0<s_i\le 1$ ($s=1$ corresponds to the trivial representation) and $W_\text{temp.}$ the tempered spectrum, i.e a $SL_2(\BR)$-representation with $L^{2+\e}$-integrable matrix coefficients \cite{MR946351}.
Each spherical complementary series representation $W_{\gl_i}$ with $0\le i\le m$ is generated by an unique $SO(2)$-invariant $\gl_i=s_i(1-s_i)$-eigenvector  of the hyperbolic Laplace operator on $L^2(\Ga\setminus SL_2(\BR))^{SO(2)}\simeq L^2(\Ga\setminus\mathbb{H}^{2})$ where  $0\le\gl_i\le \frac{1}{4}$.
Moreover, due to Sullivan's theorem \cite{MR556586}, one has $\gl_0=\gd(1-\gd)$.
In particular only finitely many spherical complementary series appear in $L^2(\Ga\setminus SL_2(\BR))$.\\

\begin{rem}
This decomposition has its generalisation in higher dimension with $\Ga\subset SO(n+1,1)$ discrete, geometrically finite with critical exponent $\gd>\frac{n}{2}$ and Zariski dense (see \cite{MR3336838} and references therein). 
\end{rem}

The above example illustrates the general fact that spherical complementary series control the decay of matrix coefficients (see \cite{MR1792293})
and thus spectral gap of average operators such as
$\frac{1}{\text{Vol}(A)}\int_A\pi(g)dg$ for $A\subset SL_2(\BR)$ and $\pi$ a $SL_2(\BR)$-representation (using spectral transfer \cite{MR1637840} \cite{MR3888695}).

Sullivan's theorem guarantees that $L^2(\Ga\setminus SL_2(\BR))$ contains no complementary series for $\gd<s\le 1$ and therefore a control of the mixing rate for the representation in terms of $\gd$.
This is crucial to deduce the correct normalisation of correlation functions in matrix coefficients equidistribution formulas such as in \cite{MR2057305} or \cite[Thm. 1.4]{MR3336838} and also play a role in the study of boundary representations  \cite{MR2787597}, \cite{MR3692899},\cite{Garncarek:2014aa},\cite{Boucher:2020ab}, \cite{Boyer:2022aa}.

A major difficulty while considering discrete groups is that one cannot rely on  integrability properties of matrix coefficients to approach the spectral gap of a given representation as decay of matrix coefficients and spectral gap turn out to be distinct notions:

\begin{exam}\label{exam:intro:spec:int}
Let $\Ga$ be a discrete group and $\Ga'\lhd\Ga$ an infinite normal subgroup with non-amenable quotient $\Ga/{\Ga'}$. 
The quasi-regular representation $(\gl_{\Ga'},\ell^2(\Ga/{\Ga'}))$ is an example of $\Ga$-representation with spectral gap (due to Kesten \cite{MR112053}) but no matrix coefficient in $\ell^p(\Ga)$ for any $p\neq \infty$ nor even if in $c_0(\Ga)$.
\end{exam}

Following ideas of Patterson and Sullivan \cite{MR450547} \cite{MR556586} on their investigation of the geometry and dynamic related to Kleinian groups, the notion of integrability is substituted with a representation theoretic notion of critical exponent:

\subsection{Statement of results}
\subsubsection{Critical exponent}

Let $\Ga$ be a discrete finitely generated group endowed with a $\Ga$-invariant distance, $d_\Ga$, quasi-isometric to a word distance and $\gd$ its critical exponent defined as 
$\gd\df\limsup_r\frac{1}{r}\log|B_r|$
where $B_r$ stands for a $d_\Ga$-ball of radius $r$ in $\Ga$.

A unitary representation $(\pi,\mathcal{H})$ together with a unit vector $v\in\mathcal{H}$ such that $\ol{\text{Span}\{\pi(g)v:g\in\Ga\}}=\mathcal{H}$ is called \textit{cyclic}.
Two cyclic representations $(\pi,v)$ and $(\pi',v')$ are equivalent if there exists an isometric intertwiner, $U$, between $\pi$ and $\pi'$ such that $U(v)=v'$.
The classes of such representations are in bijective correspondence with positive definite functions on $\Gamma$ (see Subsection \ref{subsec:gns:twist} or \cite{Bekka:2019aa}) through the map $[\pi,v]\leftrightarrow \phi_{\pi,v}(.)\df(\pi(.) v,v)\in\BC^\Ga$.\\
We shall use the identification between positive definite functions on $\Ga$ and cyclic representations without explicit mention throughout the rest.\\
A cyclic representation $[\pi,v]$ is called positive if $(\pi(.) v,v)\ge0$ on $\Ga$ 
and we denote the set of classes of positive cyclic unitary representations $\mathbb{P}_+(\Ga)$.

\begin{exam}[Positive cyclic representations]
\begin{enumerate}
\item The trivial representation associated to the constant function, ${\bf1}_\Ga$, on $\Ga$;
\item Given a subgroup $H\subset \Ga$, $[\gl_H,[\text{Dir}_e]]$ where $\gl_H$ stands for the left quasi-regular representation of $\Ga$ on $\ell^2(\Ga/H)$ and $[\text{Dir}_e]=\text{Dir}_H$ the class of $e\in\Ga$ in $\Ga/H$;
\item Given a unitary cocycle $b:\Ga\rightarrow\mathcal{H}$, that is a map that satisfies $b(gh)=b(g)+\pi(g)b(h)$ for all $g,h\in\Ga$ and some unitary representation $(\pi,\mathcal{H})$ (see \cite[p.76]{MR2415834} for details), then $\phi(g)=e^{-t\|b(g)\|^2}\in\mathbb{P}_+(\Ga)$ for all $t\in\BR_+$;
\item Given measure class preserving action $\Ga\curvearrowright(\gO,m)$ and $u\in L^2_+(\gO)$ with $\|u\|_2=1$, $\phi(g)=(\pi_\text{Koop}(g)u,u)\in\mathbb{P}_+(\Ga)$ where $\pi_\text{Koop}$ stands for the $L^2$ Koopman representation:
$$\pi_\text{Koop}(g)f(\go)=\frac{dg_*m}{dm}(\go)f(g^{-1}\go)$$
for $m$-almost every $\go\in\gO$;
\item The real boundary representations, $\pi_s$, \cite{MR2787597} \cite{Garncarek:2014aa} \cite{Boucher:2020ab} together with the constant vector ${\bf1}_{\dd \Ga}$ also belong to $\mathbb{P}_+(\Ga)$.
\end{enumerate}
\end{exam}

More generally, given a unitary representation $(\pi,\mathcal{H})$ of $\Ga$ and a unit vector $v\in \mathcal{H}$ one can consider the cyclic representation $[\pi\otimes \ol{\pi}|_{W_v},v\otimes \ol{v}]$ where $\ol{\pi}$ stands for the contragredient representation and $W_v=\ol{\text{Span}\{\pi(g)\otimes \ol{\pi(g)}(v\otimes \ol{v}):g\in\Ga\}}\subset \mathcal{H}\otimes \ol{\mathcal{H}}$.
Observe that $[\pi\otimes \ol{\pi}|_{W_v},v\otimes \ol{v}]$ can be seen as a positive representation as $(\pi\otimes \ol{\pi}(v\otimes \ol{v}),v\otimes \ol{v})=|(\pi v,v)|^2$.\\

The critical exponent of a positive cyclic representation $[\pi,v]$ is defined as the abscissa of convergence of the weighted Poincar\'e series:
$$\mathcal{P}([\pi,v];t)=\mathcal{P}((\pi v,v);t)\df\sum_{g\in\Ga} e^{-td_\Ga(g,e)}(\pi(g)v,v)$$
and denoted $\gd[\pi,v]$ (or $\gd(\phi)$ for $\phi=(\pi v,v)$).
Observe that $\gd[\pi,v]\le \gd$.

\begin{rem}
In relation with Brook's amenable cover problem a notion of critical exponent for measure preserving transformation was introduced in \cite{Coulon:2018aa} in order to investigate the spectral properties of  $\ell^2(\Ga/{\Ga'})$ in the particular case where $\gd_\Ga=\gd_{\Ga'}$ and $\Ga$ with some  form of hyperbolicity (see \cite{Coulon:2018aa} for details).
As $[\gl_{\Ga'},[\text{Dir}_e]]$ is a positive cyclic representation one can compare the two notions and show that
$\gd[\gl_{\Ga'},[\text{Dir}_e]]=\gd_{\Ga'}\le \gd(\gl_{\Ga'})$ where $\gd_{\Ga'}$ stands for the critical exponent of ${\Ga'}$ in $(\Ga,d_\Ga)$ and $\gd(\gl_{\Ga'})$ the critical exponent of $\gl_{\Ga'}$ in the sense of \cite{Coulon:2018aa}. 
\end{rem}

If $\gl$ stands for the regular representation of $\Ga$ on $\ell^2(\Ga)$ the following spectral inequality holds:

\begin{thm}\label{thm:general:1}
Let $[\pi,v]$ be a positive cyclic representation.
Then for all $f\in \BC[\Ga]$ one has:
$$\|\pi(f)\|_\text{op}\le e^{\half\gd[\pi,v]r(f)}\|\gl(f)\|_\text{op}$$
where $r(f)$ stands for the smallest $r$ such that $\text{supp}(f)\subset B_r$ the $d_\Ga$-ball of radius $r$ around $e\in \Ga$.
\end{thm}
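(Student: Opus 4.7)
The plan is to establish the bound via a four-step route: a Cauchy--Schwarz matrix coefficient estimate at $v$, a spectral radius iteration on $T := \pi(f^**f) \ge 0$, a transfer of the estimate to all translates $\pi(g_0)v$, and a cyclicity argument converting spectral bounds at individual vectors into the operator norm bound on $\mathcal{H}$.

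For $s > \gd[\pi,v]$, set $M_s := \sum_g e^{-s d_\Ga(g,e)}\phi(g) < \infty$, where $\phi(\cdot) = (\pi(\cdot)v,v) \ge 0$ is bounded by $\phi(e) = 1$. Using positivity $\phi \ge 0$ and a weighted Cauchy--Schwarz on $\sum_g |K(g)|\phi(g_0^{-1}gg_0)$, combined with the ball estimate $\sum_{g \in B_R}\phi(g_0^{-1}gg_0) \le e^{s(R+2d_\Ga(g_0,e))}M_s$ (obtained from the substitution $h = g_0^{-1}gg_0$ and the triangle inequality), the bound $\phi \le 1$ in the second factor, and $\|K\|_2 \le \|\gl(K)\|_\text{op}$, one obtains for every $K \in \BC[\Ga]$ and $g_0 \in \Ga$
$$\bigl|\langle \pi(K)\pi(g_0)v,\pi(g_0)v\rangle\bigr| \le M_s^{1/2}\, e^{s(r(K)+2d_\Ga(g_0,e))/2}\,\|\gl(K)\|_\text{op}.$$

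Next I specialise to $K = (f^**f)^{*n}$, so that $r(K) \le 2nr(f)$ and $\|\gl(K)\|_\text{op} = \|\gl(f)\|_\text{op}^{2n}$. Taking the $n$-th root and sending $n \to \infty$ absorbs the prefactors $M_s^{1/(2n)}$ and $e^{sd_\Ga(g_0,e)/n}$; combined with the spectral-calculus identity $\lim_n \langle T^n\xi,\xi\rangle^{1/n} = \sup\mathrm{supp}(\mu^T_\xi)$ this yields, for every $g_0 \in \Ga$,
$$\sup\mathrm{supp}(\mu^T_{\pi(g_0)v}) \le e^{s r(f)}\|\gl(f)\|_\text{op}^2.$$
To upgrade this into the operator norm bound, set $\lambda_0 := \sup_{g_0\in\Ga}\sup\mathrm{supp}(\mu^T_{\pi(g_0)v})$ and assume for contradiction $\lambda_0 < \|T\|_\text{op}$: then the non-zero spectral projection $E := E^T_{(\lambda_0,\|T\|_\text{op}]}$ annihilates every $\pi(g_0)v$, hence vanishes on $\mathrm{span}\{\pi(g_0)v : g_0 \in \Ga\}$, and by continuity on all of $\mathcal{H}$, contradicting $E \ne 0$. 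Thus $\|\pi(f)\|_\text{op}^2 = \|T\|_\text{op} \le e^{s r(f)}\|\gl(f)\|_\text{op}^2$, and letting $s \downarrow \gd[\pi,v]$ yields the claim.

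The main obstacle is precisely this last upgrade: a cyclic vector need not be separating for $\pi(\Ga)''$, so $\mu_v^T$ may fail to see the top of $\mathrm{spec}(T)$, and a direct spectral radius argument at $v$ alone controls only $\sup\mathrm{supp}(\mu_v^T)$, not $\|T\|_\text{op}$. The remedy is to run the Cauchy--Schwarz argument at every translate $\pi(g_0)v$ simultaneously: the positive definite function $\phi(g_0^{-1}\cdot g_0)$ retains the critical exponent $\gd[\pi,v]$ at the cost of a harmless $e^{2sd_\Ga(g_0,e)}$ Patterson-type prefactor, which is precisely what disappears under the $n$-th root, after which cyclicity of $v$ alone forces the union of spectral supports over the orbit to exhaust $\mathrm{spec}(T)$.
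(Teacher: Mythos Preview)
Your proof is correct and shares the underlying spectral-radius-via-iteration idea with the paper, but the implementation differs in a way worth noting. The paper works with the full dense subspace $\mathcal{H}_\infty=\pi(\BC[\Ga]){\bf 1}_\phi$ and invokes the Cowling--Haagerup--Howe lemma (Lemma~\ref{lem:tech:haag:v1}) directly; since for a general $v'=\pi(h){\bf1}_\phi$ the coefficient $(\pi(\cdot)v',v')=\bar h*\phi*h^\vee$ need no longer be pointwise positive, the paper is forced into an auxiliary H\"older step with exponent $2+\e$ together with a crude ball count $|S^{(n)}|\le Ce^{8\gd nr(f)}$, and must appeal to Lemma~\ref{lem:elem:control} to control $\gd(|(\pi v',v')|^{2+\e})$. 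You instead restrict to the $\Ga$-orbit $\{\pi(g_0)v\}$, where the relevant coefficient is the \emph{conjugated} function $\phi(g_0^{-1}\cdot g_0)$, which stays pointwise positive; the price is the $g_0$-dependent prefactor $e^{sd_\Ga(g_0,e)}$, which disappears under the $n$-th root. This keeps the Cauchy--Schwarz step elementary (just $\phi^2\le\phi$ and the ball estimate) and replaces the black-box Lemmas~\ref{lem:tech:haag:v1}--\ref{lem:tech:haag:v2} by the explicit spectral-projection contradiction you spell out. Both routes are valid; yours is a bit more self-contained, while the paper's has the advantage that the CHH formulation applies verbatim to any dense subspace and thus transfers without change to the $L^{p+\e}$ setting of Theorem~\ref{thm:general:2}.
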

As observed by Shalom in \cite[Lem. 2.3]{MR1792293}: 
every positive cyclic representation $[\pi,v]$ satisfies $\|\gl(f)\|_\text{op}\le \|\pi(f)\|_\text{op}$ for all $f\in\BR_+[\Ga]$.
It is therefore natural to have a spectral estimate that involves the regular representation.\\

A function $f\in \BC[\Ga]$ on $\Ga$ is radial if $f(g)=k(d_\Ga(g,e))$ for some compactly supported function $k$ on $\BR$.
\begin{defn*}[\cite{MR3666050}]
The discrete metric group $(\Ga,d_\Ga)$ has the \textit{radial rapid decay (RRD) property}, if there exist constants $C\in\BR_+$ and $m\in\BN$ such that for all radial function $f\in \BC[\Ga]$ 
supported in a ball of radius $R$: $\|\gl(f)\|_\text{op}\le CR^m\|f\|_2$.
\end{defn*}
This property is satisfied by Gromov hyperbolic groups \cite{MR943303}, CAT(0) cubical groups \cite{MR2153902}, cocompact lattices of connected semi-simple Lie groups \cite{MR2514049} and others  \cite{MR3666050}.
It also appears to be strictly weaker than the usual rapid decay \cite{MR4613611} popularised by Lafforgue in \cite{MR1914617}.

Let $B_r$ be the ball of radius $r$ around the unit element $e\in\Ga$ and denote $\text{Avr}_r(\pi)\df\frac{1}{|B_r|}\sum_{g\in B_r}\pi(g)$.
The asymptotic behaviour of $\|\text{Avr}_r(\pi)\|_\text{op}$ is independent of the centre of $B_r$ and  the \textit{entropy} (or \textit{mixing rate}) of $\pi$ is defined as:
 $$h(\pi)\df-\limsup_r\frac{1}{r}\log \|\text{Avr}_r(\pi)\|_\text{op}\ge0$$

\begin{rem}
Let $\text{Rep}(\Ga)$ be the space of separable unitary $\Ga$-representations.
The map $\pi\in\text{Rep}(\Ga)\mapsto h(\pi)\in\BR_+$ is monotone with respect to weak containment \cite[Appendix F]{MR2415834}. More precisely if $\pi,\pi'\in \text{Rep}(\Ga)$ with $\pi\prec\pi'$ then $h(\pi')\le h(\pi)$ with equality whenever $\pi$ and $\pi'$ are Fell equivalent \cite[Thm. F.4.4]{MR2415834}.
\end{rem}

The following result establishes a general relation between the entropy and critical exponent:
\begin{cor}
If $\Ga$ has the radial rapid decay property, there exists $m\in\BN$, for all $\e>0$, there exist $C(\e),R_\e>0$ such that
$$\|\text{Avr}_r(\pi)\|_\text{op}\le C(\e) r^m e^{\half (\gd(\pi)-\gd-\e) r}$$
for all $r\ge R_\e$.
In particular $h(\pi)\ge \half (\gd-\gd[\pi,v])$.
\end{cor}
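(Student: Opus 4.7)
The plan is to apply Theorem \ref{thm:general:1} directly to the normalised indicator function of a ball and then invoke the radial rapid decay property to control the regular representation side.

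First I would set $f_r \df \frac{1}{|B_r|}\mathbbm{1}_{B_r}\in\BC[\Ga]$, so that $\pi(f_r)=\text{Avr}_r(\pi)$, $r(f_r)=r$, and $f_r$ is radial. Theorem \ref{thm:general:1} applied to $f_r$ yields
$$\|\text{Avr}_r(\pi)\|_\text{op}\le e^{\tfrac{1}{2}\gd[\pi,v]r}\|\gl(f_r)\|_\text{op}.$$
Since $f_r$ is radial and supported in $B_r$, the RRD property provides constants $C>0$, $m\in\BN$ (independent of $r$) with
$$\|\gl(f_r)\|_\text{op}\le C r^m \|f_r\|_2=C r^m |B_r|^{-1/2}.$$

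Next I would convert $|B_r|^{-1/2}$ into an exponential decay using the definition of the critical exponent $\gd$. Because $r\mapsto|B_r|$ is non-decreasing and $\gd=\limsup_r\tfrac{1}{r}\log|B_r|$, for any $\e>0$ there exists a sequence $r_n\to\infty$ with $|B_{r_n}|\ge e^{(\gd-\e/2)r_n}$, and by monotonicity $|B_r|\ge e^{(\gd-\e)r}$ for all sufficiently large $r$ (after adjusting $\e$). Hence there exist $c_\e>0$, $R_\e>0$ such that $|B_r|^{-1/2}\le c_\e e^{-\tfrac{1}{2}(\gd-\e)r}$ for $r\ge R_\e$. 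Combining the three displayed inequalities gives
$$\|\text{Avr}_r(\pi)\|_\text{op}\le Cc_\e\, r^m\, e^{\tfrac{1}{2}(\gd[\pi,v]-\gd+\e)r},$$
which is the first claim with $C(\e)\df Cc_\e$.

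For the statement on the entropy, I would take logarithms, divide by $r$ and pass to the $\limsup$:
$$-h(\pi)=\limsup_r\tfrac{1}{r}\log\|\text{Avr}_r(\pi)\|_\text{op}\le \tfrac{1}{2}(\gd[\pi,v]-\gd+\e),$$
since $\tfrac{1}{r}\log(C(\e)r^m)\to 0$. Letting $\e\to 0^+$ yields $h(\pi)\ge \tfrac{1}{2}(\gd-\gd[\pi,v])$.

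The only mildly delicate point is the passage from the $\limsup$ definition of $\gd$ to a uniform lower bound on $|B_r|$, but the monotonicity of $r\mapsto|B_r|$ makes this routine; the essential content is simply to combine Theorem \ref{thm:general:1}, RRD, and the growth lower bound, with no further ingredient needed.
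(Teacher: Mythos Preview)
Your argument is essentially the paper's own proof: apply Theorem~\ref{thm:general:1} to $f_r=|B_r|^{-1}\mathbbm{1}_{B_r}$, bound $\|\gl(f_r)\|_\text{op}\le Cr^m|B_r|^{-1/2}$ via RRD, and then absorb $|B_r|^{-1/2}$ into an exponential.

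One small correction on the step you single out as delicate: monotonicity of $r\mapsto|B_r|$ does \emph{not} by itself upgrade $\gd=\limsup_r\tfrac{1}{r}\log|B_r|$ to a uniform lower bound $|B_r|\ge c_\e e^{(\gd-\e)r}$ for all large $r$. A non-decreasing function can achieve its $\limsup$ growth rate along a sparse subsequence $r_n\to\infty$ while the quantity $\tfrac{1}{r}\log|B_r|$ dips well below $\gd-\e$ on the long plateaux between consecutive $r_n$'s; your proposed inequality $(\gd-\e/2)r_{n-1}\ge(\gd-\e)r$ would require a bound on the ratio $r_n/r_{n-1}$ that monotonicity alone does not provide. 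What is actually needed is that $\tfrac{1}{r}\log|B_r|$ is an honest limit, which one gets e.g.\ from submultiplicativity of balls for a word metric (Fekete), or from the pure sphericality $|B_r|\asymp e^{\gd r}$ established later in the paper for the hyperbolic setting. The paper's own proof is equally brief at exactly this point (and in fact writes the ball estimate with the opposite inequality, presumably a typo), so your proof matches the paper both in content and in this minor looseness.
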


\begin{cor*}
If $\Ga$ has the radial rapid decay property and $\Ga'\subset \Ga$ a subgroup with $\gd_{\Ga'}<\gd_\Ga$,
then mixing rate of $\ell^2(\Ga/{\Ga'})$ is strictly positive.
\end{cor*}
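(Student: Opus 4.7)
The plan is to apply the previous corollary to the positive cyclic representation $[\gl_{\Ga'},[\text{Dir}_e]]$ of $\Ga$ acting on $\ell^2(\Ga/\Ga')$. Three ingredients need to be checked. Cyclicity of $[\text{Dir}_e]$ is immediate from transitivity of the $\Ga$-action on $\Ga/\Ga'$, since its orbit is the canonical orthonormal basis of $\ell^2(\Ga/\Ga')$. Positivity is clear from
$$(\gl_{\Ga'}(g)[\text{Dir}_e],[\text{Dir}_e]) = \mathbf{1}_{\Ga'}(g)\ge 0.$$
Finally, the associated weighted Poincar\'e series is
$$\mathcal{P}\bigl([\gl_{\Ga'},[\text{Dir}_e]];t\bigr) = \sum_{g\in\Ga} e^{-td_\Ga(g,e)}\mathbf{1}_{\Ga'}(g) = \sum_{g\in\Ga'} e^{-td_\Ga(g,e)},$$
whose abscissa of convergence is by definition the geometric critical exponent $\gd_{\Ga'}$ of $\Ga'$ inside $(\Ga,d_\Ga)$. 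Hence $\gd[\gl_{\Ga'},[\text{Dir}_e]] = \gd_{\Ga'}$, matching the identification recorded in the remark preceding Theorem~\ref{thm:general:1}.

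Having identified this critical exponent, the conclusion is a direct invocation of the previous corollary: since $\Ga$ is assumed to have the radial rapid decay property,
$$h(\gl_{\Ga'}) \ge \half\bigl(\gd_\Ga - \gd[\gl_{\Ga'},[\text{Dir}_e]]\bigr) = \half(\gd_\Ga - \gd_{\Ga'}),$$
and the hypothesis $\gd_{\Ga'} < \gd_\Ga$ forces the right-hand side to be strictly positive. There is no genuine obstacle at this stage: all of the substance is packaged in Theorem~\ref{thm:general:1}, which produces the representation-theoretic comparison $\|\pi(f)\|_\text{op}\le e^{\half\gd[\pi,v]r(f)}\|\gl(f)\|_\text{op}$, and in the preceding corollary, which converts this into exponential decay of averages under the radial rapid decay assumption. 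The present corollary is then the specialisation of that machinery to the pair $(\gl_{\Ga'},[\text{Dir}_e])$, once the elementary identification of its representation-theoretic Poincar\'e exponent with the geometric critical exponent of $\Ga'$ is in place.
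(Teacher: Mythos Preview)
Your proof is correct and follows exactly the intended route: the paper does not spell out a proof of this corollary, but it is meant to be the immediate specialisation of the preceding corollary to the positive cyclic representation $[\gl_{\Ga'},[\text{Dir}_e]]$, using the identification $\gd[\gl_{\Ga'},[\text{Dir}_e]]=\gd_{\Ga'}$ already recorded in the remark before Theorem~\ref{thm:general:1}. Your verification of cyclicity, positivity, and the Poincar\'e exponent is precisely what is needed.
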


A unitary representation, $(\pi,\mathcal{H})$, is called strongly $L^{p+\e}$ with $p\neq \infty$ if for all $p'>p$, there exists a dense subspace, $\mathcal{D}_{p'}\subset \mathcal{H}$, such that the matrix coefficients $(\pi(.) v, w)\in \ell^{p'}(\Ga)$ for all $v,w\in \mathcal{D}_{p'}$.

\begin{thm}\label{thm:general:2}
Let $\pi$ be a strongly $L^{p+\e}$ unitary representation with $2\le p\neq \infty$, then for all $f\in\BC[\Ga]$:
$$\|\pi(f)\|_\text{op}\le e^{\frac{p-2}{2p}\gd r(f)}\|\gl(f)\|_\text{op}.$$
\end{thm}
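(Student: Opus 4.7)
The plan is to combine a Cauchy--Schwarz estimate on matrix coefficients with Jensen's inequality, the latter absorbing a vector-dependent constant through an $n$th-root limit.

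Fix $p'>p$ and a unit vector $v\in\mathcal{D}_{p'}$, so that $\psi_v\df(\pi(\cdot)v,v)\in\ell^{p'}(\Ga)$. For any $h\in\BC[\Ga]$ supported in a ball $B_R$ and any $t>0$, Cauchy--Schwarz gives
\[
|(\pi(h)v,v)|\le \Big(\sum_g e^{td(g,e)}|h(g)|^2\Big)^{1/2}\Big(\sum_g e^{-td(g,e)}|\psi_v(g)|^2\Big)^{1/2}.
\]
The first factor is at most $e^{tR/2}\|h\|_2\le e^{tR/2}\|\gl(h)\|_\text{op}$, using that $\gl(h)\gd_e=h$ in $\ell^2(\Ga)$. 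For the second factor, Hölder's inequality with conjugate exponents $p'/2$ and $p'/(p'-2)$ applied to $\psi_v\in\ell^{p'}(\Ga)$ yields a finite quantity $C(v,t,p')$ whenever $t>(p'-2)\gd/p'$.

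Apply this estimate to the $n$th convolution power $h_n\df(f^* * f)^n\in\BC[\Ga]$, which is positive self-adjoint, supported in $B_{2nr(f)}$, and satisfies $\|\gl(h_n)\|_\text{op}=\|\gl(f)\|_\text{op}^{2n}$. One obtains
\[
(\pi((f^* * f)^n)v,v)\le e^{tnr(f)}\|\gl(f)\|_\text{op}^{2n}\cdot C(v,t,p')^{1/2}.
\]
Jensen's inequality applied to the spectral measure of $\pi(f^* * f)$ against the unit vector $v$ gives $\|\pi(f)v\|^2=(\pi(f^* * f)v,v)\le(\pi((f^* * f)^n)v,v)^{1/n}$ for every $n\ge 1$. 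Taking $n$th roots in the previous display and letting $n\to\infty$ collapses $C(v,t,p')^{1/(2n)}$ to $1$, yielding $\|\pi(f)v\|\le e^{tr(f)/2}\|\gl(f)\|_\text{op}$.

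Since $\mathcal{D}_{p'}$ is dense in $\mathcal{H}$ and $\pi(f)$ is bounded, this extends by continuity to all vectors in $\mathcal{H}$, giving $\|\pi(f)\|_\text{op}\le e^{tr(f)/2}\|\gl(f)\|_\text{op}$ for every $t>(p'-2)\gd/p'$. Letting $p'\downarrow p$ and $t\downarrow(p-2)\gd/p$ completes the proof. The principal technical obstacle is the vector-dependent constant $C(v,t,p')$ produced by Hölder's inequality: it cannot be controlled uniformly in $v$ at finite scale, but is rendered harmless by the $n$th-root limit provided by Jensen's inequality.
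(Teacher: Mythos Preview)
Your argument is correct and follows the same overall strategy as the paper: bound the matrix coefficients of convolution powers $(f^**f)^n$ via Cauchy--Schwarz/H\"older, then take an $n$th-root limit to eliminate the vector-dependent constant. The packaging differs in three small ways. First, where the paper invokes Haagerup's formula $\|\pi(f)\|_\text{op}=\sup_v\lim_n(\pi((f^**f)^{*2n})v,v)^{1/4n}$ (Lemma~\ref{lem:tech:haag:v1}), you use the more elementary Jensen inequality $(\pi(f^**f)v,v)\le(\pi((f^**f)^n)v,v)^{1/n}$, which suffices for the upper bound. Second, the paper applies plain Cauchy--Schwarz and then extracts the exponential factor from the volume of the support $|S^{(n)}|\prec e^{(\gd+\kappa)4nr(f)}$ via a further H\"older step; you build the weight $e^{t d(\cdot,e)}$ directly into the Cauchy--Schwarz and recover finiteness from the Poincar\'e series condition $t>(p'-2)\gd/p'$. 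Third, the paper uses Lemma~\ref{lem:tech:haag:v2} ($\lim_n\|(f^**f)^{*2n}\|_2^{1/4n}=\|\gl(f)\|_\text{op}$), whereas you use only the cruder $\|h_n\|_2\le\|\gl(h_n)\|_\text{op}=\|\gl(f)\|_\text{op}^{2n}$. Your version is slightly more self-contained; the paper's version makes the role of the regular representation more explicit and parallels the proof of Theorem~\ref{thm:general:1}.
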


In particular if $(\Ga,d_\Ga)$ has the radial rapid decay property one has $h(\pi)\ge \frac{1}{p}\gd$.

\begin{rem}
It follows from \cite[Thm. 1]{MR1637840} and \cite[Lem. 2.3]{MR1792293} that 
$\|\pi(f)\|_\text{op}= \|\gl(f)\|_\text{op}$ for all $f\in\BR_+[\Ga]$ whenever $[\pi,v]$ is positive and strongly $L^{p+\e}$ with $1\le p\le 2$.
\end{rem}

\begin{rem}
It was known by spectral transfer argument (see \cite[Thm. 1]{MR1637840}) that $\|\text{Avr}_r(\pi)\|_\text{op}\le Ce^{-\gb r}$ for some $\gb(p)>0$ whenever $\pi$ is strongly $L^p$ and $(\Ga,d_\Ga)$ has the radial rapid decay. As far as the author knows such precise bound is new in this general context.
\end{rem}

Observe that Theorem \ref{thm:general:2} shows some deficiency in Theorem \ref{thm:general:1}.
Indeed assuming the Poincar\'e series $t\mapsto\sum_{g\in\Ga}e^{-td_\Ga(g,e)}$ diverges at $\gd$, i.e $\Ga$ is of divergent type,
the functions $f_s(g)=e^{-(1-s)\gd d_\Ga(g,e)}$ for $\half< s< 1$ belong to $\ell^{\frac{1}{1-s}+\e}(\Ga)\setminus \ell^{\frac{1}{1-s}}(\Ga)$ with $p=\frac{1}{1-s}>2$ and satisfy 
$\gd(f_s)=s\gd$.
If $f_s$ (or $\phi_s\asymp f_s$ with $\phi_s\in\mathbb{P}_+(\Ga)$) is positive definite, Theorem \ref{thm:general:2} gives a sharper estimate of the mixing rate than Theorem \ref{thm:general:1} (see Subsection \ref{subsec:gap:mixing:general} for more details).
This suggest the better estimate on mixing rate in terms of the critical exponent : $h(\pi)\ge \gd-\gd[\pi,v]$.

As discussed in the next subsection this sharper bound is actually optimal under the assumption that $\Ga$ is hyperbolic.

\subsubsection{Critical exponent and negatively curved groups}
\label{intro:general:mixing}\hfill\break

Assume for the rest of this introduction that $\Ga$ is a non-elementary Gromov hyperbolic group and $d_\Ga$ is strongly hyperbolic (see Definition \ref{def:str:hyp:spa.nica}).
\begin{rem}
Due to Mineyev and Yu in \cite{MR1914618} (see also \cite{MR2346214} and \cite{MR3551185}) every Gromov hyperbolic group acts geometrically on a proper roughly geodesic strongly hyperbolic space.
\end{rem}

\begin{thm}\label{thm:intro:entropy:lim}
Let $[\pi,v]$ be a positive cyclic representation with critical exponent $\gd[\pi,v]$.
Then 
$$h(\pi)=-\lim_r\frac{1}{r}\log \|\text{Avr}_r(\pi)\|_\text{op}=\gd-\max\{\gd[\pi,v],\half\gd\}$$
In particular $0\le h(\pi)\le \frac{\gd}{2}$ and $\gd[\pi,v]$ is independent of the choice of the positive cyclic vector $v$ and can be denoted $\gd(\pi)$.
\end{thm}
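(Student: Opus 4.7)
The strategy is to extract from $[\pi,v]$ a Patterson--Sullivan boundary representation $\gs_\phi$ weakly contained in $\pi$, compute its mixing rate directly, and then match this with an upper bound on $\|\text{Avr}_r(\pi)\|_\text{op}$ via a hyperbolic refinement of Theorem~\ref{thm:general:1}. Set $\phi(g)=(\pi(g)v,v)$ and $s=\max\{\gd[\pi,v],\gd/2\}$. Following Patterson--Sullivan \cite{MR450547}\cite{MR556586}, one constructs a $\phi$-weighted conformal density $\{\nu_x^\phi\}_{x\in\Ga}$ of dimension $s$ on $\dd\Ga$ as a weak-$*$ limit of normalised Poincar\'e measures $\mathcal{P}(\phi;t)^{-1}\sum_g e^{-td_\Ga(g,x)}\phi(g)\,1_{\{g\}}$ along $t\downarrow s$, restoring divergence by Patterson's trick when the series converges at $s$. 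Strong hyperbolicity of $d_\Ga$ together with the shadow lemma then yield the matrix-coefficient asymptotic $(\gs_\phi(g){\bf1},{\bf1})\asymp e^{-(\gd-s)d_\Ga(g,e)}$ up to sub-exponential factors, and averaging over $B_r$ gives $h(\gs_\phi)=\gd-s$.

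The upper bound $h(\pi)\le\gd-s$ is then straightforward: by construction $(\gs_\phi(g){\bf1},{\bf1})$ is a weak-$*$ limit of finite non-negative combinations of left translates of $\phi$, so $\gs_\phi\prec\pi$, and monotonicity of $h$ under weak containment recalled in the introduction forces $h(\pi)\le h(\gs_\phi)=\gd-s$. The matching lower bound $h(\pi)\ge\gd-s$ requires a strengthening of Theorem~\ref{thm:general:1} in the strongly hyperbolic setting to
\[
\|\pi(f)\|_\text{op}\le \text{poly}(r(f))\,e^{\max\{\gd[\pi,v]-\gd/2,\,0\}\,r(f)}\,\|\gl(f)\|_\text{op}.
\]
If $\gd[\pi,v]\le\gd/2$, a direct Patterson--Sullivan computation shows that $\gs_\phi$ is strongly $L^{2+\e}$, so the Fell equivalence $\pi\sim_{\text{Fell}}\gs_\phi\oplus\gl$ combined with Theorem~\ref{thm:general:2} gives $\|\pi(f)\|_\text{op}\le\text{poly}(r(f))\|\gl(f)\|_\text{op}$. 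If $\gd[\pi,v]>\gd/2$, Theorem~\ref{thm:general:2} applied to $\gs_\phi$ with $p=\gd/(\gd-\gd[\pi,v])>2$ (the sharp integrability exponent forced by the matrix-coefficient asymptotic above) yields $\|\gs_\phi(f)\|_\text{op}\le e^{(\gd[\pi,v]-\gd/2)r(f)}\|\gl(f)\|_\text{op}$, which transfers to $\pi$ through the same Fell equivalence. Combining this with $\|\text{Avr}_r(\gl)\|_\text{op}\asymp e^{-\gd r/2}$ (radial rapid decay for the upper bound, and the elementary inequality $\|\gl({\bf1}_{B_r})\|_\text{op}^2\ge|B_r|$ obtained by testing against $\text{Dir}_e$ for the lower bound) produces $h(\pi)\ge\gd-s$. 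The limit (not merely $\limsup$) exists by the asymptotic multiplicativity of ball convolution under strong hyperbolicity, and the independence of $\gd[\pi,v]$ from the cyclic vector $v$ follows immediately from the identity $h(\pi)=\gd-\max\{\gd[\pi,v],\gd/2\}$.

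The main obstacle is the Fell equivalence between $\pi$ and its Patterson--Sullivan boundary representation $\gs_\phi$, equivalently the refined spectral inequality displayed above: this is the genuinely hyperbolic ingredient, with no counterpart in the general framework of Theorem~\ref{thm:general:1}, and its proof relies on the boundary-representation and complementary-series machinery developed earlier in the paper.
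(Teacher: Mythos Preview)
Your upper bound $h(\pi)\le\gd-s$ via weak containment of a boundary representation $\gs_\phi\prec\pi$ is correct and is exactly the paper's route (Lemma~\ref{lem:seq:weak:cont} and Corollary~\ref{cor:bound:rep:mixing}).

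The lower bound $h(\pi)\ge\gd-s$, however, rests on the asserted Fell equivalence $\pi\sim_{\text{Fell}}\gs_\phi\oplus\gl$, and this is a genuine gap. You acknowledge in your final paragraph that this is the ``main obstacle'' and attribute its proof to ``the boundary-representation and complementary-series machinery developed earlier in the paper,'' but the paper never establishes such a Fell equivalence, and there is no reason to expect one in general: weak containment $\gs_\phi\prec\pi$ is one-directional, and the reverse containment $\pi\prec\gs_\phi\oplus\gl$ would be a very strong structural statement about an arbitrary positive cyclic representation. Consequently your transfer of the bound $\|\gs_\phi(f)\|_\text{op}\le e^{(\gd[\pi,v]-\gd/2)r(f)}\|\gl(f)\|_\text{op}$ back to $\pi$ is unjustified, and the refined spectral inequality you display remains unproved.

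The paper's argument for the lower bound (Corollary~\ref{cor:pos:rep:spectrapgap}) is entirely different and avoids any Fell equivalence. For each $\gs>\max\{s(\phi),\tfrac12\}$ it uses the twisted GNS construction of Subsection~\ref{subsec:gns:twist} with weight $\go_\gs(g)=e^{-\gs\gd d_\Ga(g,e)}$ to realise $\pi_\phi$ \emph{unitarily} as the $\gs$-density representation $\pi_\gs$ acting on a Hilbert space $\mathcal{H}(\mu_\gs,E_\gs)$ built from a $(\Ga,\gs,\mu_\gs)$-conformal conditional expectation. On such spaces a Schur-type estimate (Lemma~\ref{lem:gene:cocycle:v} and Corollary~\ref{cor:control:general:l2}) gives
\[
\Big\|\sum_{d_\Ga(g,e)\le r}\pi_\gs(g)\Big\|_{\text{op}}\le \Big\|\sum_{d_\Ga(g,e)\le r}\pi_\gs(g){\bf1}\Big\|_{L^\infty(\mu_\gs)},
\]
and the right-hand side is controlled pointwise by the elementary summation estimates of Corollary~\ref{cor:ball:spec:tech}, yielding $\|\text{Avr}_r(\pi_\phi)\|_\text{op}\prec e^{-(1-\gs)\gd r}$. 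Letting $\gs\downarrow\max\{s(\phi),\tfrac12\}$ gives the lower bound on $h(\pi)$ directly, with no appeal to Theorem~\ref{thm:general:2}, to integrability of $\gs_\phi$, or to any comparison of $\pi$ with its boundary piece beyond the one-sided weak containment already used. The existence of the limit then falls out because the $\liminf$ and $\limsup$ are separately pinned to $\gd-s$; no submultiplicativity argument is needed.
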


A careful analysis of the critical case $\gd=\gd(\pi)$ leads to:

\begin{thm}\label{thm:intro:charact}
Let $[\pi,v]$ be a positive cyclic representation with critical exponent $\gd(\pi)$.
The following are equivalent:
\begin{enumerate}
\item  $\pi$ has almost invariant vectors;
\item  $\gd(\pi)=\gd$, i.e $[\pi,v]$ has no critical gap;
\item  $\pi$ has trivial entropy, i.e $h(\pi)= 0$.
\end{enumerate}
\end{thm}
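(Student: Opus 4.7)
The equivalence $(2) \Leftrightarrow (3)$ is an immediate consequence of Theorem~\ref{thm:intro:entropy:lim}: since $\Ga$ is non-elementary one has $\gd > 0$, so the formula $h(\pi) = \gd - \max\{\gd(\pi), \gd/2\}$ gives $h(\pi) = 0 \iff \max\{\gd(\pi), \gd/2\} = \gd \iff \gd(\pi) = \gd$. For the implication $(1) \Rightarrow (3)$, the monotonicity of the mixing rate under weak containment suffices: if $\pi$ has almost invariant vectors then $\mathbf{1}_\Ga \prec \pi$, and since $\|\text{Avr}_r(\mathbf{1}_\Ga)\|_\text{op} = 1$ one has $h(\mathbf{1}_\Ga) = 0$; the monotonicity remark preceding Theorem~\ref{thm:intro:entropy:lim} then forces $h(\pi) \le h(\mathbf{1}_\Ga) = 0$, which combined with the trivial bound $h(\pi) \ge 0$ gives $h(\pi) = 0$.

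The substantive implication is $(2) \Rightarrow (1)$. The plan is a Patterson--Sullivan-type averaging directly inside $\mathcal{H}$: for $s > \gd = \gd(\pi)$, set
\[
w_s = \sum_{g \in \Ga} e^{-s d_\Ga(g, e)} \pi(g) v \in \mathcal{H}, \qquad \xi_s = \frac{w_s}{\|w_s\|},
\]
which converges in norm since $\sum_g e^{-s d_\Ga(g, e)} < \infty$ for $s > \gd$. Writing $\phi = (\pi(\cdot) v, v) \ge 0$ and substituting $k = h^{-1} g$, the positive definite function $\phi_s(g_0) \df (\pi(g_0) \xi_s, \xi_s) \in [0, 1]$ takes the explicit form
\[
\phi_s(g_0) = \frac{\sum_{h, k} e^{-s(d_\Ga(hk, e) + d_\Ga(h, e))} \phi(h^{-1} g_0 h \cdot k)}{\sum_{h, k} e^{-s(d_\Ga(hk, e) + d_\Ga(h, e))} \phi(k)}.
\]
The aim is to show that $\phi_s(g_0) \to 1$ pointwise in $g_0$ as $s \to \gd^+$, which delivers almost invariant vectors for $\pi$.

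Under the hypothesis $\gd(\pi) = \gd$ the denominator diverges as $s \to \gd^+$ (modulo passing to a Patterson-style modification in the convergent-type case), so the probability measure on pairs $(h, k)$ attached to the weights spreads to $\dd \Ga \times \dd \Ga$. Strong hyperbolicity of $d_\Ga$ is then used to analyse the ratio $\phi(h^{-1} g_0 h \cdot k)/\phi(k)$: as $h \to \xi \in \dd \Ga$ and $k \to \eta \in \dd \Ga$, the Gromov products and Busemann cocycles satisfy clean additive identities that permit passage to a weak-$*$ accumulation point. The limit is a $\gd$-conformal density on $\dd \Ga$ together with a boundary representation of $\Ga$ weakly contained in $\pi$; at the critical parameter $s = \gd$ this boundary representation degenerates to the trivial representation of $\Ga$, consistent with the spherical complementary series picture of \cite{Boucher:2020ab, Boyer:2022aa} in which $s = \gd$ corresponds to the trivial representation. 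The cyclic matrix coefficient of the constant vector in the limit is therefore identically $\mathbf{1}_\Ga$.

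The main obstacle will be executing this boundary-limit construction rigorously, in particular extracting the cocycle asymptotics of $\phi(h^{-1} g_0 h \cdot k)$ against $\phi(k)$ uniformly in $s$ and matching them with the framework of boundary complementary series developed elsewhere in the paper. Strong hyperbolicity (rather than mere Gromov hyperbolicity) is precisely the hypothesis that ensures the Busemann cocycle is honestly additive and the Gromov products converge cleanly, so that the limiting object is a genuine conformal boundary representation; this in turn forces $\phi_s \to \mathbf{1}_\Ga$ pointwise, completing the implication.
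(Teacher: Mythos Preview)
Your treatment of $(2)\Leftrightarrow(3)$ and $(1)\Rightarrow(3)$ is fine and matches the paper. For $(2)\Rightarrow(1)$ your overall strategy is also the paper's: form the normalised Poincar\'e average $\xi_s$ in $\mathcal H$, pass to a weak limit as $s\to\gd^+$, and identify the limiting cyclic representation as the trivial one. The paper carries this out by building the two-variable probability $m_n$ on $\ol\Ga\times\ol\Ga$ (your denominator), extracting a weak limit $m$ supported on $\dd\Ga\times\dd\Ga$ with the correct conformal Radon--Nikodym cocycle (Proposition~\ref{prop:fond:meas}), and then, at the critical parameter, proving that the associated boundary representation is trivial (Theorem~\ref{thm:amenable:gene}).

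The gap in your proposal is precisely this last step, which you assert rather than prove. You write that ``at the critical parameter $s=\gd$ this boundary representation degenerates to the trivial representation'' and appeal to the complementary-series picture, but nothing in the conformal cocycle alone forces this: an abstract $\gd$-conformal quadratic form on $\cC(\dd\Ga)$ could a priori be large. The paper's argument is genuinely additional: using the upper shadow lemma (Lemma~\ref{lem:half:shadow}) at parameter $s=1$ one obtains the domination $\|f\|_s\prec\|f\|_{L^1(\nu_{\text{PS}})}$, hence a bounded map $L^1(\nu_{\text{PS}})\to\mathcal H^\dd$; the Radon--Nikodym property of Hilbert spaces then yields a bounded kernel $\Phi_T\in L^\infty(\dd\Ga\times\dd\Ga,\nu_{\text{PS}}\otimes\nu_{\text{PS}})$ representing the form; and finally $\Ga$-invariance of $\Phi_T$ together with $2$-ergodicity of $\nu_{\text{PS}}\otimes\nu_{\text{PS}}$ forces $\Phi_T$ to be constant, so $\|f\|_s^2=c\,|\nu_{\text{PS}}(f)|^2$ and $\pi^\dd\simeq 1$. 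None of these ingredients appears in your sketch.

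A secondary issue: your plan to ``analyse the ratio $\phi(h^{-1}g_0 h\cdot k)/\phi(k)$'' as $h,k\to\dd\Ga$ is not workable as stated, since $\phi$ is an arbitrary element of $\mathbb P_+(\Ga)$ with no pointwise asymptotics and may well vanish. The paper avoids this entirely: the boundary cocycle comes from the explicit weights $e^{-\gs\gd d_\Ga(\cdot,e)}$ (via strong hyperbolicity), while $\phi$ enters only through the quadratic form, never through pointwise ratios.
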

This result can be seen as a generalisation of \cite[Thm. 1]{Coulon:2018aa} with $[\pi,v]=[\gl_{{\Ga'}},[\text{Dir}_e]]$.\\

A representation, $(\pi,\mathcal{H})$, is called weakly mixing if it does not contain any finite dimension subrepresentation or equivalently $(\pi\otimes \ol{\pi},\mathcal{H}\otimes \ol{\mathcal{H}})$ does not have non-zero invariant vectors \cite[Prop A.1.12]{MR2415834}.
Similarly as Cowling's quantitative property (T) \cite{MR560837} for semi-simple Lie groups, hyperbolic groups with Kazhdan property (T) \cite{MR2415834} have a uniform lower bound on the entropy of their mixing representations:

\begin{thm}\label{thm:quant:T}
If $\Ga$ has the property (T) then there exists $\gb(\Ga,d_\Ga)>0$, for all mixing representation, $\pi$, one has
$h(\pi)\ge \gb(\Ga,d_\Ga)$.
\end{thm}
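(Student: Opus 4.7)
The plan is to reduce the mixing rate bound for $\pi$ to a uniform upper bound on the critical exponents of the positive cyclic subrepresentations $\rho_v := (\pi\otimes\ol{\pi})|_{W_v}$, and to extract the latter from property (T) via a quantitative version of the $(2)\Rightarrow(1)$ direction of Theorem \ref{thm:intro:charact}. First, the elementary Cauchy--Schwarz inequality
$$\|\text{Avr}_r(\pi)v\|^4 = \Bigl|\frac{1}{|B_r|^2}\sum_{g,h\in B_r}(\pi(g^{-1}h)v,v)\Bigr|^2 \le \frac{1}{|B_r|^2}\sum_{g,h\in B_r}|(\pi(g^{-1}h)v,v)|^2 = \|\text{Avr}_r(\rho_v)(v\otimes\ol{v})\|^2$$
yields $\|\text{Avr}_r(\pi)\|_\text{op}^2 \le \sup_{v}\|\text{Avr}_r(\rho_v)\|_\text{op}$ with $v$ ranging over unit vectors of $\mathcal{H}_\pi$.

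Next I would apply Theorem \ref{thm:general:1} to the positive cyclic representation $\rho_v$ with $f = |B_r|^{-1}\mathbf{1}_{B_r}$, giving $\|\text{Avr}_r(\rho_v)\|_\text{op}\le e^{\half\gd(\rho_v)r}\|\gl(\text{Avr}_r)\|_\text{op}$; combined with radial rapid decay of $(\Ga,d_\Ga)$ and $\|\text{Avr}_r\|_2\asymp e^{-\gd r/2}$ this yields $\|\gl(\text{Avr}_r)\|_\text{op}\le Cr^m e^{-\gd r/2}$. Inserting these estimates,
$$\|\text{Avr}_r(\pi)\|_\text{op}^2 \le Cr^m\sup_{v}e^{-(\gd-\gd(\rho_v))r/2},$$
so the theorem reduces to producing $\gb=\gb(\Ga,d_\Ga)>0$ with $\gd(\rho_v)\le\gd-2\gb$ for every mixing $\pi$ and every unit $v\in\mathcal{H}_\pi$.

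For the uniform critical exponent bound I would argue by contradiction with property (T). Fix a Kazhdan pair $(Q,\kappa)$ for $\Ga$ and suppose there were mixing representations $\pi_n$ with unit vectors $v_n\in\mathcal{H}_{\pi_n}$ satisfying $\gd(\rho_{v_n})\to\gd$. The direct sum $\tilde\rho := \bigoplus_n \rho_{v_n}$ is a subrepresentation of $\bigoplus_n(\pi_n\otimes\ol{\pi_n})$, and the latter has no non-zero invariant vectors since each $\pi_n$ is weakly mixing, so neither does $\tilde\rho$. On the other hand, the Patterson--Sullivan type vectors
$$u_{n,t} := \mathcal{P}(\phi_{v_n};t)^{-1/2}\sum_{g\in\Ga} e^{-td_\Ga(g,e)/2}\rho_{v_n}(g)(v_n\otimes\ol{v_n}),\qquad t>\gd(\rho_{v_n}),$$
underlying the $(2)\Rightarrow(1)$ direction of Theorem \ref{thm:intro:charact} have $Q$-oscillation $\max_{q\in Q}\|\rho_{v_n}(q)u_{n,t}-u_{n,t}\|$ controlled by $\gd-\gd(\rho_{v_n})$ and the strong hyperbolicity constants of $d_\Ga$ alone; choosing $t_n\searrow\gd(\rho_{v_n})$ suitably produces $u_n\in W_{v_n}$ whose $Q$-oscillation tends to $0$. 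Embedded in $\tilde\rho$ the $u_n$ become $(Q,\kappa)$-almost invariant for $n$ large, contradicting property (T).

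The main obstacle is the quantitative control on the $Q$-oscillation of $u_{n,t}$ in terms of the gap $\gd-\gd(\rho_{v_n})$, \emph{uniformly} over the ambient representation. Strong hyperbolicity of $d_\Ga$ enters precisely here: the rough additivity $d_\Ga(qg,e)=d_\Ga(g,e)+O_Q(1)$ allows one to compare $\mathcal{P}(\phi_{v_n};t)$ with the $q$-translated series $\sum_g e^{-td_\Ga(g,e)}\phi_{v_n}(qg)$, and the Gromov product/shadow estimates available in the strongly hyperbolic setting propagate the error into the norm displacement $\|\rho_{v_n}(q)u_{n,t}-u_{n,t}\|$. Once this geometry-dependent quantitative Patterson--Sullivan estimate is in place the contradiction argument closes and delivers $\gb(\Ga,d_\Ga)>0$ with $h(\pi)\ge\gb(\Ga,d_\Ga)$ for every mixing representation $\pi$ of $\Ga$.
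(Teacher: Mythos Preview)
Your reduction via Cauchy--Schwarz to the positive cyclic representations $\rho_v=\pi\otimes\ol{\pi}|_{W_v}$, followed by Theorem~\ref{thm:general:1} and radial rapid decay, is exactly the paper's argument. The paper likewise obtains the uniform bound $\gd(\rho_v)\le\gd_0<\gd$ by contradiction (Proposition~\ref{prop:T:crit}): assuming $\gd(\phi_n)\to\gd$ for positive cyclic $\phi_n$ without invariant vectors, it invokes the sequential form of $(2)\Rightarrow(1)$ in Theorem~\ref{thm:intro:charact} (namely Lemma~\ref{lem:seq:weak:cont} together with Theorem~\ref{thm:amenable:gene}) to get $\pi_{\phi_n}\to 1$ in the Fell topology, contradicting a fixed Kazhdan pair. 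The almost invariant vectors produced are indeed your $u_{n,t}$, but normalised by $\mathcal{P}_{\gth,2}(\phi_n;\gs_n)^{1/2}$ rather than $\mathcal{P}(\phi_n;t)^{1/2}$ (with your normalisation $\|u_{n,t}\|^2=\mathcal{P}_2/\mathcal{P}$ need not stay bounded).

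Where you diverge from the paper is in the mechanism for the oscillation estimate, and this is where the gap lies. Rough additivity $d_\Ga(qg,e)=d_\Ga(g,e)+O_Q(1)$ alone only yields $(\rho_{v_n}(q)u_n,u_n)\ge e^{-\gs_n\gd\, d_\Ga(q,e)}$, and since $\gs_n\to 1$ (not $0$) this does \emph{not} force the oscillation to vanish. What is actually needed is that the probability measures $m_n$ on $\ol\Ga\times\ol\Ga$ governing $B_n$ concentrate on $\dd\Ga\times\dd\Ga$ (Proposition~\ref{prop:fond:meas}) and that the limiting $\pi_1$-invariant form collapses to the rank-one form $f\mapsto|\int f\,d\nu_{\text{PS}}|^2$; this is the content of Theorem~\ref{thm:amenable:gene}, whose proof uses the upper shadow Lemma~\ref{lem:half:shadow}, the Radon--Nikodym property of Hilbert spaces, and the $2$-ergodicity of $\nu_{\text{PS}}\otimes\nu_{\text{PS}}$. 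The ``Gromov product/shadow estimates'' you allude to do enter, but through that route rather than through a direct comparison of Poincar\'e series. Once you replace your last paragraph by an appeal to Theorem~\ref{thm:amenable:gene} (in its sequential form), your argument coincides with the paper's.
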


The main ingredient in our approach is the existence of a critical weakly contained boundary representation of conformal order $\gd(\pi)$ :

\subsubsection{Critical exponent and boundary representations}$ $

Let $\dd \Ga$ be the Gromov boundary of $\Ga$, the Gromov product based at $e\in \Ga$ between $\xi,\eta\in\dd \Ga$ is denoted $(\xi,\eta)\in[0,\infty]$ and $b_x$ the Buseman cocycle at $x\in\Ga\cup\dd\Ga$ (see Subsection \ref{subsec:prelim:strong}). 
Let $\pi_s$ be the linear representation of $\Ga$ on $\cC(\ol{\Ga})$ given by
$$\pi_s(g)\psi(x)=e^{-s\gd b_x(g,e)}\psi(g^{-1}x)$$
for all $g\in\Ga$ and $x\in\ol{\Ga}$.

The analogue of spherical complementary series are determined by the (bounded) $s$-Knapp-Stein operators, $\mathcal{I}_s$, for $\half<s<1$  defined on $L^2(\nu_\text{PS})$ as:
$$\mathcal{I}_s(\psi)(\xi)=\int_{\dd \Ga}\psi(\eta)e^{2(1-s)\gd(\xi,\eta)}d\nu_\text{PS}(\eta)$$
where $\nu_\text{PS}$ stands for the Patterson-Sullivan measure.
The $s$-spherical complementary series (also called natural complementary series in the rest) are defined whenever $\mathcal{I}_s$ is positive definite and are obtained from the completion on $L^2(\nu_\text{PS})$ with respect to the $\pi_s$-invariant quadratic form:
$$Q_s(\psi)=(\mathcal{I}_s(\psi),\psi)_{L^2(\nu_\text{PS})}$$
see \cite{Boucher:2020ab} (\cite{Boyer:2022aa}) for more details and \cite[Chap. 5]{MR710827} for the case of free groups.

As non virtually abelian discrete groups are not type $1$ \cite[Chap. 7]{Bekka:2019aa} their unitary dual cannot be canonically understood.
This is reflected on the boundary through the existence of boundary representation of exotic type that does not have analogue within the semi-simple case and motivate the following definition:

Given $\nu$ a $\Ga$-quasi-invariant probability measure over the boundary $\dd \Ga$, a conditional expectation, $E$, on $L^1(\nu)$ is called \textit{$(\Ga,s,\nu)$-conformal} if it satisfies $\pi^*_s(g)E=E\pi_s(g)$ for all $g\in\Ga$ on $L^2(\nu)\subset L^1(\nu)$, where $\pi^*_s(g)$ stands for the adjoint of $\pi_s(g)$ on $L^2(\nu)$.
Assuming the quadratic form
$$Q_E(f)\df\iint f(\xi)E(f)(\xi) d\nu(\xi)$$
associated to $E$ on $L^2(\nu)$ is positive definite it induces a $\pi_s$-invariant scalar product and therefore a $\Ga$-representation. 
A boundary representation is called $s$-complementary series representation (or $(\nu,E,s)$-complementary series to insist on the structure) if it is obtained through the above procedure.
\begin{thm}\label{thm:main:ingre:1}
Let $(\Ga,d_\Ga)$ be Gromov hyperbolic endowed with a strongly hyperbolic distance as above and $[\pi,v]$ be a positive cyclic representation with critical exponent $\gd([\pi,v])$.
There exists a $(\nu,E,s)$-complementary series representation, $\pi^\dd$, with $s\gd=\gd[\pi,v]$ factorising through the kernel of $\pi$ that is weakly contained in $\pi$.
Moreover $\gd(\pi^\dd)=s\gd =\gd[\pi,v]$ with $\pi^\dd=1$ whenever $\gd[\pi,v]=\gd$ and $\pi$ does not weakly contain any boundary representation of conformal index strictly greater than $\gd(\pi)$.
\end{thm}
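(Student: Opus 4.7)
The strategy is to adapt the Patterson--Sullivan construction to the weighted Poincar\'e series $\mathcal{P}([\pi,v];t)=\sum_{g\in\Ga}e^{-td_\Ga(g,e)}(\pi(g)v,v)$ and then read off a boundary complementary series representation from the limit datum. Writing $s\gd=\gd[\pi,v]$, consider the family of probability measures on $\ol{\Ga}$ given by
$$\mu_t=\frac{1}{\mathcal{P}([\pi,v];t)}\sum_{g\in\Ga}e^{-td_\Ga(g,e)}(\pi(g)v,v)\,\text{Dir}_g\qquad (t>s\gd).$$
Using positivity of the matrix coefficient $\phi=(\pi(\cdot)v,v)$, I would apply Patterson's slowly-varying-weight trick when the series converges at $s\gd$ in order to force divergence, then extract a weak-$*$ limit $\nu$ as $t\downarrow s\gd$. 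Strong hyperbolicity of $d_\Ga$, together with the sub-multiplicativity $\phi(gh)\phi(e)\le \phi(g)^{1/2}\phi(h)^{1/2}\cdot(\text{Harish-Chandra-type bound})$ coming from positive-definiteness, should guarantee that $\nu$ is supported on $\dd\Ga$ and satisfies the transformation rule
$$\frac{dg_*\nu}{d\nu}(\xi)=e^{-s\gd b_\xi(g,e)}$$
for $\nu$-a.e.\ $\xi$, i.e.\ $\nu$ is an $s\gd$-conformal density factorising through $\ker(\pi)$.

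\textbf{Construction of the conformal expectation.} The second step is to produce the conditional expectation $E$ on $L^1(\nu)$ defining $\pi^\dd$. The natural candidate comes from the positive kernel
$$K(\xi,\eta)=\lim_{g_n\to\xi,\,h_n\to\eta}\frac{(\pi(g_n)v,\pi(h_n)v)}{\phi(e)^{1/2}\phi(g_n^{-1}h_n)^{1/2}}e^{s\gd(g_n,h_n)},$$
whose existence I would verify by a shadow-lemma argument on $(\Ga,d_\Ga)$ using strong hyperbolicity to control multiplicative errors in the Gromov product. Define $E$ by $E(\psi)(\xi)=\int K(\xi,\eta)\psi(\eta)\,d\nu(\eta)$. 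Positivity of $\phi$ gives positive definiteness of $Q_E$ on simple tensors, and the cocycle identity $b_\xi(g,e)+b_{g^{-1}\xi}(h,e)=b_\xi(gh,e)$ together with the $(\pi,v)$-construction of $K$ yields the intertwining relation $\pi_s^*(g)E=E\pi_s(g)$; thus $E$ is $(\Ga,s,\nu)$-conformal and $Q_E\ge 0$, so the induced completion $\pi^\dd$ is a bona fide $(\nu,E,s)$-complementary series boundary representation.

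\textbf{Weak containment and conformal index.} Weak containment $\pi^\dd\prec\pi$ is obtained by exhibiting each matrix coefficient $(\pi^\dd(g)\ti{\psi},\ti{\psi})_{Q_E}$ with $\psi\in\BC[\Ga]\subset L^2(\nu)$ as a pointwise limit of normalised diagonal matrix coefficients of $\pi$: concretely, approximating $\ti{\psi}$ by finite sums $\sum_i c_i \pi(g_i)v$ the quadratic form $Q_E$ is computed, up to a normalising factor $\mathcal{P}([\pi,v];t)$, as $\sum_{i,j}c_i\ol{c_j}(\pi(g_j^{-1}g_i)v,v)$ by construction of $K$ and $\nu$, which is manifestly a matrix coefficient of $\pi$. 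That $\pi^\dd$ factors through $\ker(\pi)$ is automatic once $\nu$ and $K$ are expressed in terms of $\phi$. The equality $\gd(\pi^\dd)=s\gd$ is obtained from Theorem \ref{thm:intro:entropy:lim} applied to $\pi^\dd$: the critical exponent is bounded above by $s\gd$ because the conformal density $\nu$ has dimension $s\gd$ (standard shadow-lemma calculation), and bounded below by $s\gd$ because $\pi^\dd\prec\pi$ forces $\gd(\pi^\dd)\ge\gd[\pi,v]=s\gd$ via monotonicity of Poincar\'e exponents under weak containment.

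\textbf{The limit statements.} If $\gd[\pi,v]=\gd$ then $s=1$, $K\equiv 1$ and $\nu=\nu_\text{PS}$ is the Patterson--Sullivan measure, so $Q_E$ becomes $(\int\psi\,d\nu)^2$ and $\pi^\dd$ is the trivial representation. The \emph{moreover} clause asserting that $\pi$ cannot weakly contain a boundary representation of conformal index strictly greater than $\gd(\pi)$ follows by contradiction: such a representation would produce a matrix coefficient whose Poincar\'e series diverges at an exponent strictly greater than $\gd[\pi,v]$, contradicting the monotonicity of the critical exponent under weak containment. The main obstacle I anticipate is showing that the auxiliary kernel $K$ is genuinely well defined on $\dd\Ga\times\dd\Ga$ off the diagonal and bounded on $L^2(\nu)$: this requires sharp Harish-Chandra-type estimates on $\phi$ along geodesic sequences, which is exactly where strong hyperbolicity of $d_\Ga$ (rather than mere Gromov hyperbolicity) becomes essential.
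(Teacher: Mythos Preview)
Your proposal has a structural gap at the very first step that propagates through the rest. You build a one-variable Patterson--Sullivan measure $\mu_t=\mathcal{P}([\pi,v];t)^{-1}\sum_g e^{-td_\Ga(g,e)}\phi(g)\,\text{Dir}_g$ on $\ol\Ga$ and assert that its weak limit $\nu$ is $s\gd$-conformal. But computing the Radon--Nikodym derivative gives
\[
\frac{d(g_0)_*\mu_t}{d\mu_t}(h)=e^{-t\,b_h(g_0^{-1},e)}\cdot\frac{\phi(g_0^{-1}h)}{\phi(h)},
\]
and the factor $\phi(g_0^{-1}h)/\phi(h)$ has no reason to converge as $h\to\xi\in\dd\Ga$ for an arbitrary positive definite $\phi$. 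There is no ``sub-multiplicativity'' or Harish-Chandra bound available for a general $\phi\in\mathbb{P}_+(\Ga)$; the inequality you invoke simply does not hold outside very special cases (roughly radial $\phi$). Consequently your kernel $K(\xi,\eta)$, which is built from limits of such ratios, has no reason to exist either.

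The paper avoids this entirely by working \emph{two-variably}: it forms the probability measures
\[
m_n=\frac{1}{\mathcal{P}_{\gth,2}(\phi;\gs_n)}\sum_{g,h}\phi(g^{-1}h)\,\gth(d_\Ga(g,e))e^{-\gs_n\gd d_\Ga(g,e)}\,\gth(d_\Ga(h,e))e^{-\gs_n\gd d_\Ga(h,e)}\,\text{Dir}_{(g,h)}
\]
on $\ol\Ga\times\ol\Ga$ and takes a weak limit $m$. The point is that $\phi(g^{-1}h)$ is invariant under the \emph{diagonal} $\Ga$-action, so the $\phi$-factor disappears from the Radon--Nikodym derivative of $m$, which is cleanly $e^{-s\gd b_\xi(g^{-1})}e^{-s\gd b_\eta(g^{-1})}$. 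The marginal $\nu=(p_1)_*m$ is \emph{not} claimed to be $s\gd$-conformal (indeed $\frac{dg_*\nu}{d\nu}(\xi)=e^{-s\gd b_\xi(g)}E(e^{-s\gd b_\bullet(g)})(\xi)$), and the expectation $E$ is obtained by disintegrating $m$ over $\nu$ rather than via any explicit kernel. Positive definiteness of $E$ is then automatic from the GNS origin of $m$, and weak containment comes from the intertwiners $J_n:\cC(\ol\Ga)\to\mathcal{H}_\phi$ that realise each $B_n$ as a genuine matrix coefficient of $\pi_\phi$.

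Two further issues: the ``monotonicity of Poincar\'e exponent under weak containment'' you use for the lower bound $\gd(\pi^\dd)\ge s\gd$ goes the wrong way (weak containment $\pi^\dd\prec\pi$ gives $\gd(\pi^\dd)\le\gd(\pi)$, not $\ge$) and is in any case not elementary---the paper deduces it from the entropy formula. The actual computation of $\gd(\pi^\dd)$ is done directly from the summation estimates on $\pi_s(g)\mathbf{1}$. Finally, for the critical case $\gd(\phi)=\gd$ you cannot simply assert $\nu=\nu_{\text{PS}}$ and $K\equiv 1$; the paper needs a Radon--Nikodym-property argument combined with $2$-ergodicity of $\nu_{\text{PS}}\otimes\nu_{\text{PS}}$ to conclude that the limiting quadratic form is $|\int f\,d\nu_{\text{PS}}|^2$.
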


In certain situations the representations $\pi^\dd$ induce by Theorem \ref{thm:main:ingre:1} can be realised as an actual subrepresentation of $\pi$ or even be identified with the natural complementary series induced by the Knapp-Stein operator:

\begin{defn}
A positive function $f$ on $\Ga$ is called Poincar\'e spherical if there exist $\gd(f)<s_0$ and $C$ such that the inequality:
 $$\sum f(g^{-1}h)e^{-\gs d_\Ga(h,e)}e^{-\gs d_\Ga(g,e)}\le C.\mathcal{P}(f;\gs)^2$$ holds for all $\gd(f)<\gs\le s_0$.
\end{defn}
See Subsection \ref{subsec:weak:cont:andco} for more explanations about this definition.

\begin{prop}\label{prop:intro:poin:sph}
Let $\phi=(\pi v,v)\in\mathbb{P}_+(\Ga)$ that is Poincar\'e spherical and $[\pi,v]$ its positive cyclic representation.
Then the space of intertwiners between $\pi^\dd$ and $\pi$ is non-trivial.
Moreover if $\pi^\dd$ is irreducible then $\pi^\dd$ is a subrepresentation of $\pi$.
\end{prop}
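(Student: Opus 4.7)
The strategy is to promote the weak containment $\pi^\dd\prec\pi$ from Theorem~\ref{thm:main:ingre:1} to an actual bounded $\Ga$-equivariant map, using the Poincar\'e spherical hypothesis as the quantitative input that controls a Patterson--Sullivan-type averaging construction uniformly across the regularisation parameter.

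For $\gs\in(\gd[\pi,v],s_0]$ and $\psi\in L^2(\nu_\text{PS})$, I would introduce the regularised Poisson transform
$$T_\gs\psi\df\frac{1}{\mathcal P(\phi;\gs)}\sum_{g\in\Ga}e^{-\gs d_\Ga(g,e)}\,\Bigl(\int_{\dd\Ga}\psi(\xi)\,e^{-s\gd b_\xi(g,e)}\,d\nu_\text{PS}(\xi)\Bigr)\,\pi(g)v\in\mathcal H.$$
Expanding $\|T_\gs\psi\|^2$ produces a double sum of $\phi(h^{-1}g)$ against the kernel $e^{-\gs(d_\Ga(g,e)+d_\Ga(h,e))}$ weighted by the boundary samples of $\psi$; invoking the Poincar\'e spherical inequality applied to $\phi$, a Cauchy--Schwarz in the boundary variable and the standard shadow-type estimate for $\nu_\text{PS}$ yield the uniform bound $\|T_\gs\|_{\mathrm{op}}\le C$, where the operator norm is taken from a dense subspace of the $\pi^\dd$-completion of $L^2(\nu_\text{PS})$ into $\mathcal H$.

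Next I would verify that $T_\gs$ is approximately $\Ga$-equivariant: the commutator $T_\gs\pi^\dd(g_0)-\pi(g_0)T_\gs$ reduces to a boundary error comparing the conformal weight $e^{-s\gd b_\xi(g_0,e)}$ with its Poincar\'e surrogate, and this vanishes as $\gs\downarrow\gd[\pi,v]$ by the conformality identity $\pi_s^*(g_0)E=E\pi_s(g_0)$ characterising $\pi^\dd$ together with the Patterson--Sullivan construction of $\nu_\text{PS}$. A Banach--Alaoglu extraction along a subnet $\gs_n\downarrow\gd[\pi,v]$ then produces a bounded $\Ga$-equivariant limit $T_\infty\in\Hom_\Ga(\pi^\dd,\pi)$. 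Non-triviality is detected by testing on the cyclic vector: the scalar $\langle T_\gs\mathbbm 1_{\dd\Ga},v\rangle$ reduces to $\mathcal P(\phi;\gs)^{-1}\sum_g e^{-\gs d_\Ga(g,e)}\phi(g)\int_{\dd\Ga}e^{-s\gd b_\xi(g,e)}d\nu_\text{PS}(\xi)$, and the conformal factor precisely neutralises the divergence of $\mathcal P(\phi;\gs)$ as $\gs\downarrow\gd[\pi,v]$, leaving a definite non-zero limit. Hence $\Hom_\Ga(\pi^\dd,\pi)\neq 0$.

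If in addition $\pi^\dd$ is irreducible, $\ker T_\infty$ is a closed $\Ga$-invariant subspace, hence zero by non-triviality, so $T_\infty$ is injective; Schur's lemma applied to the positive self-adjoint intertwiner $T_\infty^*T_\infty\in\End_\Ga(\pi^\dd)$ forces it to be a positive scalar, so $T_\infty$ is a scalar multiple of an isometric embedding, realising $\pi^\dd$ as a closed $\Ga$-invariant subspace of $\mathcal H$. \emph{The main obstacle} will be identifying the limiting object: showing that the double-average kernel produced from $\phi$ coincides, in the limit $\gs\downarrow\gd[\pi,v]$, with the conformal kernel of the $(\nu_\text{PS},E,s)$-complementary series of Theorem~\ref{thm:main:ingre:1}, rather than with some a priori different $\Ga$-invariant boundary object. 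Strong hyperbolicity of $d_\Ga$ (to convert Busemann cocycles into Gromov products with uniform error control) together with the Poincar\'e spherical inequality (to tame the cross terms $\phi(h^{-1}g)$ as $g,h\to\dd\Ga$) are both indispensable in this matching step, and this is precisely where the term ``spherical'' earns its weight.
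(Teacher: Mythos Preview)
Your approach diverges from the paper's and creates an unnecessary difficulty that you correctly flag as ``the main obstacle'' but do not resolve.

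The paper's proof is much shorter because it reuses the operators already built to construct $\pi^\dd$. Recall from Section~\ref{sec:const:bound} that $\pi^\dd$ arises from the operators
\[
J_n:(\cC(\ol{\Ga}),\|\cdot\|_\infty)\to\mathcal H_\phi,\qquad J_n(f)=\frac{1}{\sqrt{\mathcal P_{\gth,2}(\phi;\gs_n)}}\sum_{z\in\Ga}\gth(d_\Ga(z,e))e^{-\gs_n\gd d_\Ga(z,e)}f(z)\,\pi_\phi(z){\bf1}_\phi,
\]
which evaluate $f$ at group points (no boundary integral). They satisfy $\|J_n f\|^2=B_n(f)\le\|f\|_\infty^2$, so they sit in the WOT-compact ball $B_{\le1}(\cC(\ol\Ga),\mathcal H_\phi)$ and already intertwine $\pi_{\gth,\gs_n}$ with $\pi_\phi$. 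Any WOT limit $J_\phi$ then intertwines $\pi_s$ with $\pi_\phi$ (since $\pi_{\gth,\gs_n}(g)f\to\pi_s(g)f$ uniformly), and descends to $\mathcal H_\phi^\dd$ because $\|J_\phi f\|^2\le\liminf_n B_n(f)=B_\phi^\dd(f)$. Poincar\'e sphericity is used \emph{only} for non-triviality: $(J_n{\bf1},{\bf1}_\phi)=\mathcal P_\gth(\phi;\gs_n)/\sqrt{\mathcal P_{\gth,2}(\phi;\gs_n)}\ge c>0$. Boundedness is automatic.

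Your construction instead introduces a Poisson-type transform with a boundary integral against $\nu_\text{PS}$. This creates two problems. First, the Hilbert space of $\pi^\dd$ is not $L^2(\nu_\text{PS})$ in general: it is the completion with respect to $(E\cdot,\cdot)_{L^2(\nu)}$, where $\nu$ is the marginal of the measure $m$ built from $\phi$ in Subsection~\ref{subsec:meas:pers}; $\nu$ is only identified with $\nu_\text{PS}$ in the roughly radial case (Proposition~\ref{prop:nat:comp}). Second, and more seriously, your ``matching step''---showing that the limiting double-average kernel from $\phi$ agrees with the kernel of the specific $(\nu,E,s)$-complementary series---is not an obstacle to overcome but essentially the whole statement. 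In the paper's setup it is tautological, since $B_\phi^\dd$ is \emph{by definition} the limit of $B_n=\|J_n(\cdot)\|^2$. By building $T_\gs$ from scratch with an extraneous boundary integral you have decoupled the candidate intertwiner from the construction of $\pi^\dd$, and must then re-couple them; you have not indicated how.

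Your Schur-lemma argument for the second clause is correct and standard.
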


\begin{rem}
It is likely that any boundary representation obtained from the above procedure must be irreducible. 
\end{rem}

Given a real positive function, $k$, a function $f$ on $\Ga$ is called $k$-roughly-spherical if $f\asymp k(d_\Ga(\bullet,e))$.
Let $k_s$ be the real function $k_s(t)=\exp({-(1-s)\gd t})$ for $t\ge 0$.
Then a $k_s$-roughly-spherical function on $\Ga$ is Poincar\'e spherical (see Proposition \ref{prop:hs:rough}) and
the spectral properties of those type of functions relate to the spectrum of the Knapp-Stein operators introduced before:
\begin{thm}\label{thm:intro:knapp:char}
If a pointwise positive, positive definite function $\phi$ is $k_s$-roughly radial then the Knapp-Stein operator $\mathcal{I}_s$ is positive definite on $L^2(\nu_\text{PS})$, the boundary representation $\pi_\phi^\dd$ is uniquely defined and isomorphic to the natural complementary series $\pi_s$.
Moreover $\pi_s$ appears as a subrepresentation of $\pi_\phi$.
\end{thm}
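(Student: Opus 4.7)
The plan is to verify that the Knapp-Stein operator $\mathcal{I}_s$ arises naturally from the boundary construction of Theorem \ref{thm:main:ingre:1} applied to $\phi$, and then to use Poincar\'e sphericity (Proposition \ref{prop:hs:rough}) together with Proposition \ref{prop:intro:poin:sph} to realise $\pi_s$ as a subrepresentation of $\pi_\phi$.

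First, I would compute the critical exponent of $\phi$: since $\phi \asymp k_s(d_\Ga(\bullet,e)) = e^{-(1-s)\gd d_\Ga(\bullet,e)}$, term-by-term comparison of the weighted Poincar\'e series $\mathcal{P}(\phi;\sigma)$ with the ordinary series at $\sigma + (1-s)\gd$ gives $\gd(\phi) = s\gd$. Theorem \ref{thm:main:ingre:1} then produces a $(\nu,E,s)$-complementary series representation $\pi_\phi^\dd$ of conformal order $s\gd$ that is weakly contained in $\pi_\phi$. Next, I would identify the boundary measure $\nu$: it appears as a weak-$*$ subsequential limit of the twisted Patterson measures
$$\mu_\sigma^\phi = \frac{1}{\mathcal{P}(\phi;\sigma)} \sum_{g \in \Ga} \phi(g) e^{-\sigma d_\Ga(g,e)} \delta_g,$$
and the $k_s$-rough-radial assumption makes $\mu_\sigma^\phi$ comparable, uniformly in $\sigma$, to the ordinary Patterson sums at parameter $\sigma + (1-s)\gd \to \gd$. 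Any weak limit is therefore a bounded density multiple of $\nu_\text{PS}$.

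The core step is to match the conformal expectation $E$ with (a positive multiple of) $\mathcal{I}_s$. The quadratic form $Q_E$ on $\mathcal{C}(\ol{\Ga})$ arises from the doubled Poincar\'e kernel
$$Q_\sigma(f_1,f_2) = \frac{1}{\mathcal{P}(\phi;\sigma)^2}\sum_{g,h\in\Ga} \phi(g^{-1}h)\, f_1(g)\, \overline{f_2(h)}\, e^{-\sigma(d_\Ga(g,e)+d_\Ga(h,e))},$$
so I would substitute $\phi(g^{-1}h) \asymp e^{-(1-s)\gd d_\Ga(g^{-1}h,e)}$ and invoke the strong hyperbolicity identity $d_\Ga(g,e)+d_\Ga(h,e)-d_\Ga(g^{-1}h,e) = 2(g^{-1},h)$ (Gromov product), to rewrite the kernel as $e^{2(1-s)\gd(g^{-1},h)}$ up to multiplicative constants. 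Passing to the limit $\sigma \searrow s\gd$ on continuous test functions and using the already-identified limit measure, the kernel converges boundary-wise to $e^{2(1-s)\gd(\xi,\eta)}$ integrated against $d\nu_\text{PS}\otimes d\nu_\text{PS}$, i.e. $Q_E$ coincides up to a positive scalar with $Q_s$. This forces $\mathcal{I}_s$ to be positive definite on $L^2(\nu_\text{PS})$, so $\pi_s$ is well defined, and the canonical identification of $(\nu,E,s)$ up to scalars gives $\pi_\phi^\dd \simeq \pi_s$.

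Finally, since $\phi \asymp k_s$ implies $\phi$ is Poincar\'e spherical by Proposition \ref{prop:hs:rough}, Proposition \ref{prop:intro:poin:sph} furnishes a non-trivial intertwiner $\pi_s \simeq \pi_\phi^\dd \to \pi_\phi$; irreducibility of $\pi_s$ (as in \cite{Boucher:2020ab}) forces this intertwiner to be an isometric embedding, so $\pi_s$ appears as a subrepresentation of $\pi_\phi$. The main obstacle is the third step: making rigorous the limit $Q_\sigma \to c \cdot Q_s$ requires precise shadow-lemma style estimates from strong hyperbolicity to control the double sum uniformly in $\sigma$, and a diagonal/off-diagonal splitting argument to handle the region where the Gromov product $(g^{-1},h)$ becomes large (which, without care, could contribute anomalously to the limiting kernel). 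This is precisely where the strong hyperbolicity of $d_\Ga$ must be used to upgrade rough identities into sharp conformal limits on $\dd\Ga \times \dd\Ga$.
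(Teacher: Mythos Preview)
Your overall strategy matches the paper's approach in Subsection~\ref{subsec:nat:comp}: identify the limiting two-variable measure $m$ with a constant multiple of $m_s(\xi,\eta)=e^{2(1-s)\gd(\xi,\eta)}\,d\nu_\text{PS}(\xi)d\nu_\text{PS}(\eta)$, then invoke Poincar\'e sphericity (Proposition~\ref{prop:hs:rough}) and irreducibility of $\pi_s$ for the subrepresentation claim. However, there is a genuine gap in your third step.

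Substituting $\phi(g^{-1}h)\asymp e^{-(1-s)\gd d_\Ga(g^{-1}h,e)}$ only gives that the limit measure $m$ satisfies $m\asymp m_s$, i.e.\ $c\,m_s\le m\le C\,m_s$ for some constants $0<c\le C$. This does \emph{not} imply $m=c\cdot m_s$ for a single constant~$c$: two comparable measures can have a genuinely non-constant bounded Radon--Nikodym derivative. Your sentence ``the kernel converges boundary-wise to $e^{2(1-s)\gd(\xi,\eta)}$ \dots, i.e.\ $Q_E$ coincides up to a positive scalar with $Q_s$'' asserts exactly this unjustified step. The ``diagonal/off-diagonal splitting'' you anticipate does not resolve this: the issue is not uniform control of the limit, but rather that rough comparability of kernels cannot produce a sharp equality.

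The paper closes this gap as follows (Proposition~\ref{prop:nat:comp}). From the one-sided bound $m\prec m_s$ one gets absolute continuity $m\ll m_s$ with bounded density $F\in L^\infty(\dd\Ga\times\dd\Ga)$. Since both $m$ and $m_s$ transform under $\Ga$ with the same conformal cocycle $e^{-s\gd b_\xi(g)}e^{-s\gd b_\eta(g)}$, the density $F$ is $\Ga$-invariant. The double ergodicity of the $\Ga$-action on $(\dd\Ga\times\dd\Ga,\nu_\text{PS}\otimes\nu_\text{PS})$ (cf.\ \cite[Thm.~4.1]{Coulon:2018aa}) then forces $F$ to be essentially constant, yielding $m=c(s)\,m_s$. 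This ergodicity argument is the missing idea in your proposal; once you insert it, the rest of your outline goes through. (A minor point: the Gromov product you invoke is $(g,h)_e$, not $(g^{-1},h)$.)
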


This leads to the following characterisation of positivity for the Knapp-Stein operator $\mathcal{I}_s$ on $L^2(\nu_\text{PS})$:
\begin{cor}\label{cor:intro:charact:matrix}
Let $k_s$ be the real function $k_s(t)=\exp({-(1-s)\gd t})$.
The operator $\mathcal{I}_s$ on $L^2(\nu_\text{PS})$ is positive definite if and only if $\Ga$ has a $k_s$-roughly-spherical positive definite function.
\end{cor}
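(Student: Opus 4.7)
The reverse implication $(\Leftarrow)$ is immediate from Theorem \ref{thm:intro:knapp:char}: any $k_s$-roughly-spherical pointwise positive, positive definite function already forces $\mathcal{I}_s$ to be positive definite on $L^2(\nu_\text{PS})$.

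For the forward direction, my plan is to exhibit the required function as the spherical matrix coefficient of the natural complementary series $\pi_s$. Since $\mathcal{I}_s$ is assumed positive definite, $\pi_s$ is well defined as a unitary representation on the Hilbert completion $\mathcal{H}_s$ of $L^2(\nu_\text{PS})$ with respect to $Q_s(\psi)=(\mathcal{I}_s\psi,\psi)_{L^2(\nu_\text{PS})}$. The constant function ${\bf 1}_{\dd\Ga}$ lies in $\mathcal{H}_s$ (its norm being finite by the integral estimate below), and I would take
$$\phi(g)\df\langle \pi_s(g){\bf 1}_{\dd\Ga},{\bf 1}_{\dd\Ga}\rangle_s=\iint_{\dd\Ga\times\dd\Ga} e^{-s\gd b_\xi(g,e)}\,e^{2(1-s)\gd(\xi,\eta)}\,d\nu_\text{PS}(\xi)\,d\nu_\text{PS}(\eta).$$
Positive definiteness is automatic from the unitarity of $\pi_s$, while pointwise positivity is clear from the strict positivity of the integrand.

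The crux of the argument is the two-sided asymptotic $\phi(g)\asymp e^{-(1-s)\gd d_\Ga(g,e)}$. Using the strong hyperbolicity of $d_\Ga$ to write $b_\xi(g,e)=d_\Ga(g,e)-2(g,\xi)+O(1)$, I would factor out $e^{-s\gd d_\Ga(g,e)}$ and decouple the double integral into the two single-variable integrals $\mathcal{I}_s{\bf 1}(\xi)$ and $\int e^{2s\gd(g,\xi)}\,d\nu_\text{PS}(\xi)$. Shadow-lemma bounds and the $\gd$-conformality of $\nu_\text{PS}$ supply the uniform estimates $\nu_\text{PS}\{\eta:(\xi,\eta)>r\}\asymp e^{-\gd r}$ and $\nu_\text{PS}\{\xi:(g,\xi)>r\}\asymp e^{-\gd r}$ for $r$ in the relevant range; a layer-cake integration then yields, for $\half<s<1$, the clean asymptotics $\mathcal{I}_s{\bf 1}\asymp 1$ and $\int e^{2s\gd(g,\xi)}\,d\nu_\text{PS}(\xi)\asymp e^{(2s-1)\gd d_\Ga(g,e)}$. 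Multiplying these together gives $\phi(g)\asymp e^{-s\gd d_\Ga(g,e)}\cdot e^{(2s-1)\gd d_\Ga(g,e)}=e^{-(1-s)\gd d_\Ga(g,e)}$, as desired.

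The main obstacle is establishing uniform (rather than merely asymptotic) shadow-lemma estimates in both arguments; this is precisely where the strong hyperbolicity hypothesis on $d_\Ga$ pays off, supplying the sharp quasi-equality $b_\xi(g,e)=d_\Ga(g,e)-2(g,\xi)+O(1)$ with a globally controlled error. A secondary constraint is the condition $s>\half$, which is exactly what makes the exponent $(2s-1)\gd$ positive and ensures the relevant integrals exhibit the required exponential growth; this matches the range in which $\mathcal{I}_s$ can produce a non-trivial complementary series.
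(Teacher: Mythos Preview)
Your proof is correct and takes essentially the same approach as the paper: both directions use the Harish-Chandra function $\Xi_s(g)=\langle\pi_s(g){\bf1},{\bf1}\rangle_s$ of the natural complementary series as the required $k_s$-roughly-spherical positive definite function, together with Theorem~\ref{thm:intro:knapp:char} for the reverse implication. The only difference is cosmetic: the paper cites the asymptotic $\Xi_s(g)\asymp e^{-(1-s)\gd d_\Ga(g,e)}$ from \cite[Lem.~3.6]{Boucher:2020ab}, whereas you sketch the computation directly via the shadow lemma and a layer-cake decomposition (incidentally, under strong hyperbolicity the identity $b_\xi(g,e)=d_\Ga(g,e)-2(g,\xi)_e$ is exact, not just $O(1)$, so your factorisation is even cleaner than stated).
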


We call a unitary cocycle $b:\Ga\rightarrow \mathcal{H}$ \textit{special} if the Hilbert pseudo-distance $d_b(g,h)=\|b(g^{-1}h)\|^2$ for $g,h\in\Ga$ is roughly isometric to $d_\Ga$, i.e $\|d_\Ga- c.d_b\|_\infty\le C$ for some finite $C$ and $c>0$.
\begin{cor}[Fusion rules]\label{cor:intro:fusion:special}
Let $\half<s,s',t\le 1$ such that $s+s'=1+t$ and assuming $\pi_s$ and $\pi_{s'}$ are well defined  natural complementary series, then $\pi_t$ is also well defined and $\pi_t\subset \pi_s\otimes \pi_{s'}$.
Moreover if $\Ga$ admits a special cocycle $c$ then the natural complementary series $(\pi_s)_{s\in(\half,1]}$ exist and $\pi_s\subset e^{-(1-s)\|c\|^2}$ for all $\half<s\le 1$.
\end{cor}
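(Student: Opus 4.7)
The plan is to use Theorem \ref{thm:intro:knapp:char} — which characterises the positivity of the Knapp--Stein operator through $k_s$-roughly-spherical positive definite functions — as the main engine, and to feed it matrix coefficients of tensor products and of Gaussian combinations of the cocycle.

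For the fusion rule, I take the constant function $\mathbf{1}_{\dd\Ga}$ as a cyclic vector in $\pi_s$ (respectively $\pi_{s'}$) and consider the matrix coefficient $\phi_s(g) := (\pi_s(g)\mathbf{1},\mathbf{1})_{\pi_s}$. Unwinding the definition of the invariant form $Q_s$ and using Patterson--Sullivan/shadow-lemma estimates (which are already the backbone of the construction of $\pi_s$ in Subsection \ref{subsec:weak:cont:andco} and in \cite{Boucher:2020ab}), one checks that
$$\phi_s(g) \;\asymp\; e^{-(1-s)\gd d_\Ga(g,e)} \;=\; k_s(d_\Ga(g,e)),$$
so $\phi_s$ is pointwise positive, positive definite, and $k_s$-roughly-spherical. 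Forming the tensor product $\pi_s\otimes\pi_{s'}$ with distinguished vector $\mathbf{1}\otimes\mathbf{1}$ gives matrix coefficient
$$\psi_t(g) := \phi_s(g)\,\phi_{s'}(g) \;\asymp\; e^{-((1-s)+(1-s'))\gd d_\Ga(g,e)} \;=\; e^{-(1-t)\gd d_\Ga(g,e)},$$
using the constraint $s+s'=1+t$. Hence $\psi_t$ is itself pointwise positive, positive definite, and $k_t$-roughly-spherical. Theorem \ref{thm:intro:knapp:char} then says that $\mathcal{I}_t$ is positive definite on $L^2(\nu_{\text{PS}})$, so $\pi_t$ is a well-defined natural complementary series, and moreover $\pi_t$ embeds as a subrepresentation of the cyclic representation generated by $\mathbf{1}\otimes\mathbf{1}$ inside $\pi_s\otimes\pi_{s'}$, which is the content of $\pi_t\subset\pi_s\otimes\pi_{s'}$.

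For the second part, let $c:\Ga\to\mathcal{H}$ be a special unitary cocycle, so that $\|c(g^{-1}h)\|^2 \asymp c^{-1} d_\Ga(g,h)$ up to a bounded error, for some $c>0$. Schoenberg's construction (item (3) in the list of examples of positive cyclic representations) shows that for every $t>0$ the function $g\mapsto e^{-t\|c(g)\|^2}$ is positive definite. Choosing $t$ so that $t\,c^{-1} = (1-s)\gd$ (i.e.\ after an innocuous rescaling of $c$, the function is literally $e^{-(1-s)\|c\|^2}$ as in the statement), the resulting positive definite function is pointwise positive and $k_s$-roughly-spherical for every $\tfrac12 < s \le 1$. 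A second application of Theorem \ref{thm:intro:knapp:char} therefore produces the natural complementary series $\pi_s$ for the full range $\tfrac12<s\le 1$, together with the embedding $\pi_s\subset\pi_{e^{-(1-s)\|c\|^2}}$.

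The delicate point I expect to encounter is the first step, namely verifying that $(\pi_s(g)\mathbf{1},\mathbf{1})_{\pi_s}$ really is $k_s$-roughly-spherical: this requires the constant function $\mathbf{1}$ to define a non-degenerate cyclic vector in the $Q_s$-completion, and the matrix coefficient itself has to be compared, via a double integral against the Patterson--Sullivan density and sharp Gromov-product estimates, with the exponential $e^{-(1-s)\gd d_\Ga(g,e)}$ both from above and from below. The second subtlety is the scaling normalisation between $\|c\|^2$ and $\gd\,d_\Ga$ in the special-cocycle case, but this is absorbed by the rough-isometry constant and does not affect the conclusion.
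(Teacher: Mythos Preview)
Your proposal is correct and follows essentially the same route as the paper. The paper's proof (in Subsection~\ref{subsec:nat:comp}) also takes the Harish--Chandra function $\Xi_s(g)=(\pi_s(g){\bf1},{\bf1})$, invokes the estimate $\Xi_s\asymp e^{-(1-s)\gd d_\Ga(\cdot,e)}$ (quoted from \cite[Lem.~3.6]{Boucher:2020ab} rather than reproved), multiplies to get $\Xi_s\Xi_{s'}\asymp k_t$, and then applies Proposition~\ref{prop:nat:comp} together with Propositions~\ref{prop:hs:rough} and~\ref{prop:poin:sph}; these are exactly the ingredients packaged inside Theorem~\ref{thm:intro:knapp:char}, which you invoke directly. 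The one point the paper makes explicit and you leave inside the black box is that the non-zero intertwiner produced by Poincar\'e sphericity is upgraded to an embedding $\pi_t\hookrightarrow\pi_s\otimes\pi_{s'}$ via the \emph{irreducibility} of $\pi_t$, taken from \cite[Thm.~1]{Boucher:2020ab}; your appeal to Theorem~\ref{thm:intro:knapp:char} is legitimate, but be aware that the subrepresentation clause of that theorem rests on this external irreducibility result. The special-cocycle half is handled identically in the paper via Schoenberg, and your remark about the normalisation constant between $\|c\|^2$ and $\gd\,d_\Ga$ is apt --- the paper is equally informal on that point.
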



\subsection{Organisation of the paper}\hfill\break

Section \ref{sec:prelim} starts with general material about negatively curved spaces and groups. 
Estimates on spherical sums of the $s$-density representations are discussed in Subsection \ref{subsec:fund:esti}.
Elements of Banach geometry and their relations with the so-called Gelfand-Naimark-Segal (or GNS) construction of positive cyclic representations are respectively discussed in Subsections \ref{subsec:net:pos} and \ref{subsec:gns:twist}.

The section \ref{sec:const:bound} is dedicated to the constructive part of this work, namely the boundary representations of positive cyclic representations.
After discussing properties of Poincare series on negatively curved groups in Subsection \ref{subsec:crit:poin}, the construction is performed throughout the Subsections \ref{subsec:twist:poincare} to \ref{subsec:quad:final}.

Our main results are proved in Sections \ref{sec:first:prop} and \ref{sec:spec:bound:all}.
In Section \ref{sec:first:prop} we prove general properties about the boundary representations constructed in Section \ref{sec:const:bound} and give specific description of them in particular cases. The Section \ref{sec:spec:bound:all} focuses on spectral estimates.

Theorems \ref{thm:general:1} and \ref{thm:general:2} are respectively established in the self-contained Subsection \ref{subsec:gap:mixing:general}.
Theorems \ref{thm:intro:entropy:lim} and \ref{thm:intro:charact}  are respectively consequences of Corollary \ref{cor:pos:rep:spectrapgap} and \ref{cor:gap:eq:almost} in Section \ref{subsec:last}.
Subsection \ref{subsec:quant:prop:T} is dedicated to the proof of Theorem \ref{thm:quant:T}.
Theorem \ref{thm:main:ingre:1} is a consequence of the construction of Section \ref{sec:const:bound}, Subsection \ref{subsec:weak:cont:andco} and Corollary \ref{cor:last:of:the:last}.
Theorem \ref{thm:intro:knapp:char} together with Corollary \ref{cor:intro:charact:matrix} and \ref{cor:intro:fusion:special} are proved in subsection \ref{subsec:nat:comp}.
The Proposition \ref{prop:intro:poin:sph} is established in Subsection \ref{subsec:weak:cont:andco}.

\subsection{Notations and symbols}\hfill\break
Given two real functions $a,b:S\rightarrow\BR$ on a set $S$, we denote $a\prec_* b$ if there exists a constant $C$ that depends on $*$ such that $a(s)\le Cb(s)$ for all $s\in S$ and $a\asymp_* b$ if $a\prec_* b$ and $b\prec_* a$.\\
Given two real functions $a,b:S\rightarrow\BR$ on a set $S$, we denote $a\lesssim_* b$ if there exists a constant $C$ that depends on $*$ such that $a(s)\le b(s)+C$ for all $s\in S$ and $a\simeq_* b$ if $a\lesssim_* b$ and $b\lesssim_* a$.\\

\subsection{Acknowledgments}\hfill\break
The authors would like to express his gratitude to Uri Bader, Alex Furman and Lam Pham for inspiring discussions that led to the present work as well as Bogdan Nica and Jan Spakula for their support and feedbacks and Rhiannon Dougall for interesting discussions about boundary representations.

The author was supported by EPSRC Standard Grant EP/V002899/1.

\section{Preliminaries}\label{sec:prelim}
In this section we recall basics about Gromov hyperbolic geometry and counting arguments.
The reader can refer to \cite{MR1086648} for more about Gromov hyperbolic geometry .

Let $(X,d_X)$ be a discrete uniformly locally finite metric space and $o\in X$ a fixed basepoint. 

\subsection{Strong hyperbolicity and consequences}\label{subsec:prelim:strong}
\hfill \break
A metric space $(X,d_X)$ is \textit{roughly geodesic} if for all $x,y\in X$, there exist a interval $I\subset\BR$ and $c:I\rightarrow X$ such that
$d_X(c(t_1),c(t_2))\simeq_X|t_1-t_2|$ for all $t_1,t_2\in I$.

The \textit{Gromov product} on $(X,d_X)$ at $o\in X$ is defined as
$$(x,y)_o\df\half(d_X(x,o)+d_X(y,o)-d_X(x,y))$$
for all $x,y\in X$ and $(X,d_X)$ is \textit{Gromov hyperbolic} if there exists $C\ge0$ such that
$$(x,y)_o\ge \min\{(x,z)_o,(z,y)_o\}-C$$
for all $x,y,z\in X$.

Assuming $(X,d_X)$ is Gromov hyperbolic the \textit{Gromov boundary} of $(X,d_X)$ can be defined as 
$$\dd X\df\{(x_n)_n\in X^\BN:\liminf_{n,m}(x_n,x_m)_o=+\infty\}/\sim$$
with $(x_n)_n\sim(y_n)_n\Leftrightarrow\liminf_{n,m}(x_n,y_m)_o=+\infty$.
The space $\dd X$ endowed with the natural visual topology is compact as the space $(X,d_X)$ is proper. Moreover there exists a compatible topology on $\ol{X}=X\cup\dd X$ that makes it a compact space \cite{MR1086648}. 

\begin{defn}[\cite{MR3551185}]\label{def:str:hyp:spa.nica}
A roughly geodesic hyperbolic metric space $(X,d_X)$ is strongly hyperbolic if there exists $\e>0$ such that
$$e^{-\e (x,y)_o}\le e^{-\e (x,z)_o}+e^{-\e (z,y)_o}$$
for all $x,y,z,o\in X$.
\end{defn}

Assuming $(X,d_X)$ is strongly hyperbolic, the Gromov product extends continuously to the Gromov compactification $\ol{X}=X\cup \dd X$ (excluding the diagonal pairs in $\dd X$, i.e $(\xi,\xi)$ with $\xi\in\dd X$) and a visual distance $d_{\dd X}$ on $\dd X$ can be defined by $d_{\dd X}(\xi,\eta)=e^{-\e_0 (\xi,\eta)_o}$ for all $\xi,\eta\in \dd X$ where $\e_0>0$ is chosen small enough.
In particular $\lim_{z\rightarrow \xi} d_X(x,z)-d_X(y,z)=b_\xi(x,y)=d_X(x,y)-2(x,\xi)_y$ for all $\xi\in\dd X$ and $x,y\in X$.

\begin{thm*}[Mineyev-Yu \cite{MR1914618} ]
Every Gromov hyperbolic group admits an invariant roughly geodesic and strongly hyperbolic distance on themselves that is quasi-isometric to a word distance.
\end{thm*}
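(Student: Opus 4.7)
The plan is to follow the construction of Mineyev and Yu. Fix a word distance $d_W$ on the Gromov hyperbolic group $\Ga$ and its Cayley graph. The key input is Mineyev's deep construction of a $\Ga$-equivariant \emph{homological bicombing}: a family of $\ell^1$-chains $q(x,y) \in \ell^1(\Ga)$, defined for each pair $(x,y) \in \Ga^2$, which (i) is $\Ga$-equivariant, $g \cdot q(x,y) = q(gx,gy)$, (ii) has combinatorial boundary equal to the formal difference $y - x$, and (iii) enjoys a sharp bounded overlap estimate on triples of points. From this data one sets
$$\hat{d}(x,y) \df \|q(x,y)\|_1,$$
which will be the candidate distance.

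Next, one verifies symmetry, the triangle inequality, and $\Ga$-invariance of $\hat{d}$, all of which follow formally from the corresponding properties of $q$ (triangle inequality via concatenation of chains, invariance via equivariance). The quasi-isometry $\hat{d} \asymp d_W$ follows from two observations: an upper bound comes from supporting $q(x,y)$ along a combinatorial geodesic, giving $\|q(x,y)\|_1 \le C d_W(x,y)$, while a matching lower bound comes from the fact that any $1$-chain with boundary $y-x$ has $\ell^1$-mass at least $d_W(x,y)$. Rough geodicity is then immediate: a combinatorial geodesic supporting $q(o,x)$ can be reparametrized as a rough geodesic in $(\Ga,\hat{d})$ thanks to the quasi-isometry.

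The technical heart is strong hyperbolicity. The crucial observation is that the Gromov product $(x,y)_o$ computed with $\hat{d}$ can be reinterpreted, via the bicombing, as the $\ell^1$-norm of the common segment shared by $q(o,x)$ and $q(o,y)$, up to a uniform additive error independent of $x,y,o$. Mineyev's sharp overlap estimates on triples then translate directly into the exponential triangle inequality
$$e^{-\e(x,y)_o} \le e^{-\e(x,z)_o} + e^{-\e(z,y)_o}$$
for $\e > 0$ chosen small enough in terms of the hyperbolicity and overlap constants, which is precisely the strong hyperbolicity condition of Definition \ref{def:str:hyp:spa.nica}.

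The main obstacle lies squarely in this last step. Standard Gromov $\gd$-hyperbolicity only yields $(x,y)_o \ge \min\{(x,z)_o,(z,y)_o\} - \gd$ with an additive defect, which after exponentiation degrades to $e^{-\e(x,y)_o} \le e^{\e\gd}\bigl(e^{-\e(x,z)_o} + e^{-\e(z,y)_o}\bigr)$, a strictly weaker estimate carrying a spurious multiplicative constant. Removing this constant is exactly what the bicombing achieves: one needs overlap estimates that are sharp to \emph{constants truly independent of the triple of points}, and it is producing such a bicombing $\Ga$-equivariantly on a general hyperbolic group that constitutes the main technical contribution of Mineyev and Yu.
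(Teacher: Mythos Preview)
The paper does not prove this theorem. It is stated as a cited result attributed to Mineyev--Yu \cite{MR1914618}, with pointers to \cite{MR2346214} and \cite{MR3551185} for further details, and no argument is given in the paper itself. So there is no ``paper's own proof'' to compare against; your proposal is being measured against a black-box citation.

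That said, your sketch is in the right spirit but oversimplified in ways worth flagging. First, Mineyev's metric is not literally $\hat d(x,y)=\|q(x,y)\|_1$ for a single bicombing $q$; the actual construction (see \cite{MR2346214}) passes through an auxiliary averaging procedure and a symmetrised ``double difference'' or cross-ratio quantity, and it is this more delicate object whose associated Gromov product satisfies the sharp four-point inequality. Simply taking the $\ell^1$-mass of a bicombing chain gives a quasi-metric, but you have not explained why it should satisfy the \emph{exact} exponential inequality rather than one with a multiplicative constant---your own final paragraph identifies this as the crux, and the sentence ``Mineyev's sharp overlap estimates on triples then translate directly into the exponential triangle inequality'' is doing all of the work without justification. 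Second, your lower bound ``any $1$-chain with boundary $y-x$ has $\ell^1$-mass at least $d_W(x,y)$'' is correct for simplicial $1$-chains in a graph, but you should say why $q(x,y)$ lives in that setting. Third, the observation that Mineyev's metric is \emph{strongly hyperbolic} in the precise sense of Definition~\ref{def:str:hyp:spa.nica} is due to Nica--\v{S}pakula \cite{MR3551185}, not to Mineyev--Yu directly; your write-up conflates the construction of the metric with the later verification of this specific property.
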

See \cite{MR2346214} and \cite{MR3551185} for more about the subject.

\subsection{Orbital distribution}\label{subsec:prelim:orbital}\hfill\break

Given $x\in X$ we denote $\hat{x}_o\subset \dd X$ the non-empty compact subset defined as 
$$\hat{x}_o=\{\xi\in\dd X:(x,\xi)_o=\|(x,\bullet)_o\|_\infty\}$$
As a consequence of strong hyperbolicity the map $\xi\mapsto (\xi,x)_o$ for $x\in X$ is continuous on the compact space $\dd X$.

\begin{rem}
The \textit{hat map}, $x\mapsto \hat{x}_o$, can be formalised using the notion of correspondences \cite[Chap. 17]{MR2378491}.
\end{rem}

Using a tree approximation argument one can show that 
$$(x,\xi)_o\simeq d_X(x,o)$$
for all $x\in X$ and $\xi\in \hat{x}_o$.
Moreover, using strong hyperbolicity,
$d_{\dd X}(\xi,\eta)=e^{-\e(\xi,\eta)_o}\prec e^{-\e d_X(x,o)}$ for all $\xi,\eta\in \hat{x}_o$, in other words $\text{diam}(\hat{x}_o)\prec e^{-\e d_X(x,o)}$.
In the rest we use the notation $\hat{x}$ for any $\xi\in\hat{x}_o$ and relation that does not involve a particular choice.

There exist $C_1,C_2>0$ and $L>R>0$ such that
$$B(\hat{x},C_1e^{-\e_0 d_X(x,o)})\subset \mathcal{O}_o(x;R)\subset B(\hat{x},C_2e^{-\e_0 d_X(x,o)})$$
where $\mathcal{O}_o(x;R)\df \{\xi\in\dd X:(\xi,x)_o\ge d(x,o)-R \}$ stands for $R$-shadow at $x\in X$ with $d_X(o,x)\ge L$ (see \cite[Prop. 2.1]{MR2919980}).
A sufficient condition for the above $R>0$ is that any rough geodesic from $o$ to $\xi\in \dd X$ intersects 
$$C_{o,m}\df \{x\in X: Rm\le d_X(x,o)< R(m+1)\}$$
and $\dd X=\bigcup_{x\in C_{o,m}}\mathcal{O}_o(x;R)$ for all $m\in\BN$ .
The reader can refer to \cite[Sect. 4]{Garncarek:2014aa} for elements of proof (see also \cite{MR2787597} \cite{MR3692899}).

Observe that 
$\mathcal{O}_o(x;R)\cap \mathcal{O}_o(y;R)\neq \emptyset$ implies that 
$d_X(x,y)\le 4R$ and as $(X,d_X)$ is uniformly locally finite the covering
$\mathcal{C}_{o,m}=\{\mathcal{O}_o(x;R)\}_{x\in C_{o,m}}$ have uniformly finite multiplicity independently of $m$, i.e there exists $M\ge1$, for all $\xi\in\dd X$, $|\{x\in C_{o,m}: \xi\in\mathcal{O}(x;R)\}|\le M$.

\begin{defn}
Given for $D>0$, a metric measure space $(Z,d,\nu)$ is called $D$-David-Ahlfors regular (or $D$-regular) if
$\nu(B(\xi,r))\asymp r^D$ for all $r\in [0,\text{diam}(Z,d)]$.
\end{defn}
Equivalently \cite{MR1616732} given a ball of radius $0<r<\text{diam}(Z,d)$, any maximal $\e$-discrete sequence in $B(\xi,r)$ has a number of elements equivalent, up to multiplicative constants independent of $\xi\in Z$ and $r$, to $\frac{r^D}{\e^D}$.

We conclude this subsection with Vitali's Lemma \cite{MR1800917} and some consequences:
\begin{lem}\label{lem:vita}[Vitali]
Given an arbitrary collection, $\mathcal{C}$, of balls in a separable metric space $(Z,d)$ of uniformly bounded radius.
There exists a countable subcollection $\mathcal{C}'$ of pairwise disjoint balls such that
$$\bigcup_\mathcal{C}B=\bigcup_{\mathcal{C}'}5B'$$
\end{lem}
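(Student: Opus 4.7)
The proof plan is to carry out the classical Vitali selection via the standard scale-partition argument, adapted to the separable setting so that the disjoint subcollection is automatically countable.

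First I would fix a uniform bound $R > 0$ on the radii of balls in $\mathcal{C}$ and split $\mathcal{C}$ by scale: for each $n \in \mathbb{N}$, set
\[
\mathcal{C}_n \;\df\; \bigl\{\, B \in \mathcal{C} \,:\, 2^{-(n+1)} R < \text{rad}(B) \le 2^{-n} R \,\bigr\},
\]
so that $\mathcal{C} = \bigsqcup_{n \ge 0} \mathcal{C}_n$. I would then build the subcollection $\mathcal{C}'$ inductively, scale by scale. Define $\mathcal{C}'_0$ to be any maximal pairwise disjoint subfamily of $\mathcal{C}_0$, and having constructed $\mathcal{C}'_0, \ldots, \mathcal{C}'_{n-1}$, define $\mathcal{C}'_n$ to be a maximal subfamily of $\mathcal{C}_n$ whose balls are pairwise disjoint and also disjoint from every ball already chosen in $\mathcal{C}'_0 \cup \cdots \cup \mathcal{C}'_{n-1}$. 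Maximality here is obtained by a Zorn's lemma argument, but in fact in a separable metric space any pairwise disjoint family of balls of radius bounded below by $2^{-(n+1)} R$ is automatically countable (each ball contains a point of a fixed countable dense set), so the selection can be realised explicitly by a greedy procedure on that dense set. Setting $\mathcal{C}' \df \bigcup_n \mathcal{C}'_n$ then yields a countable pairwise disjoint subcollection of $\mathcal{C}$.

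Next I would verify the covering statement. Let $B \in \mathcal{C}$, say $B \in \mathcal{C}_n$, with centre $x$ and radius $r \in \bigl(2^{-(n+1)}R, 2^{-n}R\bigr]$. By maximality of $\mathcal{C}'_n$, the ball $B$ must meet some $B' \in \mathcal{C}'_0 \cup \cdots \cup \mathcal{C}'_n$, for otherwise we could have added $B$ to $\mathcal{C}'_n$. Writing $x', r'$ for the centre and radius of this $B'$, we have $r' > 2^{-(n+1)} R \ge r/2$, i.e. $r \le 2 r'$. For any $y \in B$, the triangle inequality combined with $B \cap B' \ne \emptyset$ gives
\[
d(y, x') \;\le\; d(y,x) + d(x,x') \;\le\; r + (r + r') \;\le\; 2(2r') + r' \;=\; 5 r',
\]
so $y \in 5 B'$, and hence $B \subset 5B'$. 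Taking the union over $B \in \mathcal{C}$ gives $\bigcup_{\mathcal{C}} B \subset \bigcup_{\mathcal{C}'} 5 B'$, which is the nontrivial containment of the stated equality.

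The main potential obstacle is the inductive maximality step: one needs a disjoint family at scale $n$ that is maximal not only among $\mathcal{C}_n$ but also with respect to the previously chosen balls at larger scales. In an abstract setting this requires Zorn's lemma, but the separability hypothesis (used here solely to guarantee countability) together with the uniform lower bound $2^{-(n+1)} R$ on radii at scale $n$ lets one carry out the greedy selection on a fixed countable dense subset of $Z$, bypassing the need for the full axiom of choice and simultaneously delivering the countability of $\mathcal{C}'$ claimed in the statement.
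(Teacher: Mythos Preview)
Your argument is the standard scale-partition proof of the Vitali covering lemma and is correct; the computation $d(y,x') \le 2r + r' \le 5r'$ via $r \le 2r'$ is exactly the point, and your use of separability to secure countability of the disjoint family is appropriate. The paper does not actually supply a proof of this lemma --- it is quoted as a known result with a reference to the literature --- so there is nothing further to compare.
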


Since the cover $(\mathcal{O}_o(x;R))_{x\in C_{o,m}}$ has uniformly finite multiplicity, the Vitali's covering Lemma \ref{lem:vita} implies:
\begin{cor}\label{cor:vita:shadow}
If $(\dd X,d_{\dd X})$ is $D$-regular with respect to his Hausdorff measure (or any other measure \cite{MR1800917}), then for all $\xi\in \dd X$ and $0<r<\text{diam}(\dd X,d_{\dd X})$ the number of elements in $\mathcal{C}_{o,m}$ that intersect $B(\xi,r)$ is equivalent, up to multiplicative constants independent of $\xi\in Z$ and $r$, to $e^{\e_0D Rm}r^D$.
\end{cor}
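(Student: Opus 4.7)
The plan is to translate the counting problem for shadows into a covering/packing estimate on $(\dd X,d_{\dd X})$, which the $D$-regularity hypothesis is tailor-made to handle. Fix $\xi\in\dd X$ and $0<r<\diam(\dd X,d_{\dd X})$, and set
$$N_m(\xi,r)\df\bigl|\{x\in C_{o,m}:\mathcal{O}_o(x;R)\cap B(\xi,r)\neq\emptyset\}\bigr|.$$
The key geometric input, already recorded before the statement, is the two-sided sandwich
$$B\!\left(\hat{x},C_1e^{-\e_0 Rm}\right)\subset \mathcal{O}_o(x;R)\subset B\!\left(\hat{x},C_2 e^{-\e_0 Rm}\right)$$
valid for every $x\in C_{o,m}$ (using $d_X(x,o)\asymp Rm$), together with the uniformly finite multiplicity $M$ of the family $\mathcal{C}_{o,m}$.

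For the upper bound, I would argue as follows. Any $x$ contributing to $N_m(\xi,r)$ satisfies $\hat{x}\in B(\xi,r+C_2 e^{-\e_0 Rm})$; restricting attention to the regime $r\ge e^{-\e_0 Rm}$ (below this threshold both sides of the claimed equivalence are bounded and the statement is void of content), we get $B(\hat{x},C_1 e^{-\e_0 Rm})\subset B(\xi,C'r)$ for some $C'$ independent of $\xi,r,m$. Summing the indicators of these inner balls, using the multiplicity bound $M$ and $D$-regularity,
$$N_m(\xi,r)\cdot c\,e^{-\e_0 DRm}\;\le\;M\cdot \nu\bigl(B(\xi,C'r)\bigr)\;\le\;C\, r^D,$$
which gives $N_m(\xi,r)\prec e^{\e_0 DRm}r^D$.

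For the lower bound, the point is that $\mathcal{C}_{o,m}$ already covers $\dd X$, and in particular
$$B(\xi,r)\subset\bigcup_{\substack{x\in C_{o,m}\\ \mathcal{O}_o(x;R)\cap B(\xi,r)\neq\emptyset}}\mathcal{O}_o(x;R).$$
Each shadow in the union is contained in a $d_{\dd X}$-ball of radius $C_2 e^{-\e_0 Rm}$, hence has $\nu$-measure $\le C\,e^{-\e_0 DRm}$ by $D$-regularity. Combined with $\nu(B(\xi,r))\asymp r^D$, this yields $N_m(\xi,r)\succ e^{\e_0 DRm} r^D$.

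The only subtlety I anticipate is the boundary case $r\lesssim e^{-\e_0 Rm}$, where the inner balls $B(\hat{x},C_1 e^{-\e_0 Rm})$ are larger than the target ball $B(\xi,r)$ and the upper-bound argument needs to be adjusted (either by replacing $B(\xi,C'r)$ by $B(\xi,C''e^{-\e_0 Rm})$ or simply by invoking the multiplicity bound to conclude $N_m(\xi,r)\le M$). Since Vitali's lemma \ref{lem:vita} was quoted precisely to reduce such disjointification/packing issues to $\nu$-mass comparisons, the argument should reduce cleanly in both regimes to the two displays above; this is the only real obstacle, and it is essentially bookkeeping.
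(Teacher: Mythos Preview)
Your argument is correct and is precisely the intended fleshing-out of the paper's one-line justification (``Since the cover $(\mathcal{O}_o(x;R))_{x\in C_{o,m}}$ has uniformly finite multiplicity, the Vitali's covering Lemma implies\ldots''); the paper gives no further details. Your upper bound via the multiplicity of the inner balls and your lower bound via the covering property and the measure estimate for individual shadows are exactly the two ingredients the paper is invoking, and your handling of the threshold regime $r\lesssim e^{-\e_0 Rm}$ is the right caveat.

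One minor remark: you do not actually use Vitali's lemma as stated (extracting a disjoint subfamily), since the uniform multiplicity bound $M$ already serves as a substitute for disjointness in the mass-comparison step. This is harmless---the two devices are interchangeable here---but it is worth noting that the paper's pointer to Vitali is really a pointer to the packing/covering mechanism, which you have implemented directly.
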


\begin{rem}
In the case $(X,d_X)=(\Ga,d_\Ga)$ considered throughout the rest of this work, one has $\e_0\text{Dim}_\text{Haus}(\dd \Ga,d_{\dd \Ga})=\gd_\Ga$, where $\gd_\Ga$ is the critical exponent of $(\Ga,d_\Ga)$ \cite{MR1214072}.
\end{rem}

\begin{defn}
A discrete metric space $(Y,d_Y)$ is called purely spherical at $y\in Y$ if there exists $R$, $|B_Y(y,r)|\asymp e^{\gd_Y r}$ for $r\ge R$ where $\gd_Y$ for the critical exponent of $(Y,d_Y)$.
\end{defn}

\begin{lem}
The space $(X,d_X)$ is purely spherical at $o\in X$ whenever $\dd X$ is David-Ahlfors regular.
\end{lem}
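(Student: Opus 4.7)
The plan is to derive the ball-counting asymptotic directly from the shadow decomposition of $\dd X$ via Corollary \ref{cor:vita:shadow}, and then to sum over annuli $C_{o,m}$.

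First I would estimate $|C_{o,m}|$ for each $m$. Fix any $\xi_0\in\dd X$ and any $r_0<\text{diam}(\dd X,d_{\dd X})$ such that $B(\xi_0,r_0)$ covers a substantial part of $\dd X$; by compactness one can also cover $\dd X$ by a uniformly bounded number $N$ of balls of radius $r_0$. Applying Corollary \ref{cor:vita:shadow} simultaneously to these balls gives
\[
|C_{o,m}|\;\asymp\;e^{\e_0 D R m},
\]
the lower bound coming from a single ball $B(\xi_0,r_0)$ and the upper bound from summing over the cover (using that the shadows have uniformly bounded multiplicity as recorded earlier in Section \ref{subsec:prelim:orbital}).

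Next I would assemble these annular counts into a ball estimate. Since $\{o\}\cup\bigsqcup_{k=0}^{m-1} C_{o,k}=B_X(o,Rm)$, summing the geometric series yields
\[
|B_X(o,Rm)|\;\asymp\;\sum_{k=0}^{m-1}e^{\e_0 D R k}\;\asymp\;e^{\e_0 D R m}.
\]
For an arbitrary $r\ge R$, write $r=Rm+\gt$ with $0\le \gt<R$ and $m=\lfloor r/R\rfloor$; sandwiching $B_X(o,Rm)\subset B_X(o,r)\subset B_X(o,R(m+1))$ gives $|B_X(o,r)|\asymp e^{\e_0 D r}$, with multiplicative constants depending only on $R$ and the regularity constants. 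Taking logarithms and $\limsup$ identifies $\gd=\e_0 D$ (so in particular the $\limsup$ defining $\gd$ is actually a limit here), and we obtain $|B_X(o,r)|\asymp e^{\gd r}$ for $r\ge R$, which is the definition of pure sphericity at $o$.

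The only subtle point, and the one I would be most careful about, is extracting the matching lower bound on $|C_{o,m}|$ from the shadow Vitali estimate: Corollary \ref{cor:vita:shadow} counts shadows meeting a ball on $\dd X$, so one must pick $r_0$ uniformly bounded away from $\text{diam}(\dd X,d_{\dd X})$ and invoke the uniform multiplicity of the shadow cover from Section \ref{subsec:prelim:orbital} to pass from ``shadows hitting $B(\xi_0,r_0)$'' to ``points of $C_{o,m}$''. Everything else is routine bookkeeping.
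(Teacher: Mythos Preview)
Your proposal is correct and follows essentially the same line as the paper's proof: both establish $|C_{o,m}|\asymp e^{\e_0 D R m}$ via the Vitali-type shadow covering together with $D$-regularity, the only difference being that the paper extracts a maximal disjoint subfamily $C_{o,m}^*\subset C_{o,m}$ directly (then shows $|C_{o,m}^*|\asymp|C_{o,m}|$ via uniform multiplicity), whereas you invoke the already-packaged Corollary \ref{cor:vita:shadow}. Your explicit geometric summation over annuli to pass from $|C_{o,m}|$ to $|B_X(o,r)|$ is a step the paper leaves implicit, and your ``subtle point'' about multiplicity is not actually needed for the lower bound (a single ball $B(\xi_0,r_0)$ already gives $|C_{o,m}|\succ e^{\e_0 D R m}$ since distinct shadows correspond to distinct points of $C_{o,m}$).
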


\begin{proof}
Let $C_{o,m}^*\subset C_{o,m}$ maximal such that $\dd X= \bigcup_{x\in C_{o,m}^*}B(\hat{x},5C_2e^{-\e_0 d_X(x,o)})$ and $(B(\hat{x},C_2e^{-\e_0 d_X(x,o)}))_{x\in C_{o,m}^*}$ pairwise disjoint.
Then using $D$-regularity $|C_{o,m}^*|e^{-\e_0D Rm}\asymp 1$.

On the other hand it follows from maximality that for all $x\in C_{o,m}$, there exists $x'\in C_{o,m}^*$ such that $B(\hat{x},C_2e^{-\e_0 d_X(x,o)})\cap B(\hat{x}',C_2e^{-\e_0 d_X(x,o)})\neq\emptyset$ and uniform multiplicity that the number of $x$ that intersect a given ball $B(\hat{x}',C_2e^{-\e_0 d_X(x,o)})$ is uniformly controlled.
In particular $|C_{o,m}^*|\asymp |C_{o,m}|$.
\end{proof}

Let 
$$\mathcal{O}_{o}(m,n;y)\df \{x\in C_{o,m}:Rn\le (x,y)_o<R(n+1)\}$$
$$\mathcal{O}_{o}'(m,n;y)\df \{x\in C_{o,m}:\exists \xi\in\hat{x}, Rn\le (\xi,y)_o<R(n+1)\}$$
for $y\in X\cup\dd X$, $m,n\in\BN$ with $n\le m$ and $C\ge e^{\e_0 R}$.
It follows from Corollary \ref{cor:vita:shadow} that
\begin{equation}\label{eq:fund:esti}
\frac{|\mathcal{O}_{o}'(m,n;y)|}{|C_{o,m}|}\asymp\frac{|\mathcal{O}_{o}(m,n;y)|}{|C_{o,m}|}\asymp e^{-\gd Rn}
\end{equation}
whenever $\dd X$ is $D$-regular (see also \cite[Lem. 4.3]{Garncarek:2014aa} for alternative proof).

\subsection{$s$-density representations and summation estimates}\label{subsec:fund:esti}\hfill\break
In the rest $(X,d_X)=(\Ga,d_\Ga)$, $o=e$ and we denote $C_{m}=C_{o,m}$. In particular $\dd \Ga$ is $D$-regular with respect to his visual distance due to the Patterson-Sullivan theory and $\e_0D_{(\e_0)}=\gd$ (see \cite[Thm. 7.7]{MR1214072}).

\begin{defn}\label{def:slowgrowth}
A slow growing function is a non-decreasing function $\gth:\BR_+\rightarrow[1,+\infty)$ such that for all $\e>0$, there exists $T$, for all $u\ge0$ and $t\ge T$: $\gth(u+t)\le e^{\e u}\gth(t)$.
\end{defn}
Note that $\gth=1$ is a slow growing function.

Given $\gth$ and $s\in[0,1]$, we define the $\gth$-rearranged  $s$-density representation, $\pi_{\gth,s}$, on $\cC(\ol{\Ga})$ as 
$$\pi_{\gth,s}(g)\phi(x)=e^{-s\gd b_x(g)}\frac{\gth(d_\Ga(x,g))}{\gth(d_\Ga(x,e))}\phi(g^{-1}x)$$
for all $x\in \ol{\Ga}$.
Observe that the continuous extension over the boundary, $\dd \Ga$, satisfies $\pi_{\gth,s}|_{\dd\Ga}=\pi_s|_{\dd\Ga}=\pi_{1,s}|_{\dd\Ga}$.

The following estimates are extensively used in the rest:
\begin{lem}[Version 1]\label{lem:tech:v1}
Let $\gs_2\in (\half,1]$, $\gs_1\in\BR_+$, $\pi_{\gs_2}$ the $\gs_2$-density representation and

$$\mathcal{S}_m(\gs_1,\gs_2;h)\df (\sum_{g\in C_m} e^{-\gs_1\gd d_\Ga(g,e)}\pi_{\gs_2}(g)){\bf1}_h$$

Then
\[   
\mathcal{S}_m(\gs_1,\gs_2;h)\asymp_{\gs_2}
     \begin{cases}
       e^{(1-(\gs_1+\gs_2))\gd Rm}e^{(2\gs_2-1)\gd\min\{Rm,d_\Ga(h,e)\}} & \text{ for $h\in\Ga$}\\
       e^{(\gs_2-\gs_1)\gd Rm} &\text{otherwise} 
     \end{cases}
\]

\end{lem}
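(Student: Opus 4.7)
The plan is to reduce $\mathcal{S}_m$ to a scalar geometric sum along annular shells determined by the Gromov product with $h$, and then apply the counting estimate (\ref{eq:fund:esti}).

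Evaluating $\pi_{\gs_2}(g)$ at the point $h$ on the constant $1$ function gives $\pi_{\gs_2}(g)\mathbf{1}_h = e^{-\gs_2 \gd b_h(g, e)}$, so
\[
\mathcal{S}_m(\gs_1, \gs_2; h) = \sum_{g \in C_m} e^{-\gs_1 \gd d_\Ga(g, e)} e^{-\gs_2 \gd b_h(g, e)}.
\]
Since $d_\Ga(g, e) = Rm + O(R)$ on $C_m$, the first exponential is uniformly $\asymp e^{-\gs_1 \gd Rm}$. Using the Gromov product identity (valid for $h \in \Ga$ via $b_h(g, e) = d_\Ga(g, h) - d_\Ga(h, e)$, and for $h \in \dd\Ga$ via the continuous extension $b_h(g, e) = d_\Ga(g, e) - 2(g, h)_e$ recalled in Subsection~\ref{subsec:prelim:strong}) one obtains, in either case,
\[
e^{-\gs_2 \gd b_h(g, e)} \asymp e^{-\gs_2 \gd Rm}\, e^{2\gs_2 \gd (g, h)_e}.
\]

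Next I partition $C_m$ along the shells $\mathcal{O}_e(m, n; h)$ for $0 \le n \le N_h$, where $N_h = \lfloor\min\{m, d_\Ga(h, e)/R\}\rfloor$ when $h \in \Ga$ and $N_h = m$ when $h \in \dd\Ga$; the cutoff follows from the hyperbolic inequality $(g, h)_e \lesssim \min\{d_\Ga(g, e), d_\Ga(h, e)\}$. By (\ref{eq:fund:esti}) each stratum has cardinality $\asymp e^{\gd Rm} e^{-\gd Rn}$, which combines with the previous display to give
\[
\mathcal{S}_m \asymp e^{(1 - \gs_1 - \gs_2)\gd Rm} \sum_{n=0}^{N_h} e^{(2\gs_2 - 1)\gd Rn}.
\]
Because $\gs_2 > \half$ this is a geometric sum with ratio $>1$, dominated by its top term $e^{(2\gs_2 - 1)\gd R N_h}$. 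Substituting $N_h$ in each case yields $e^{(1-(\gs_1+\gs_2))\gd Rm}\, e^{(2\gs_2-1)\gd \min\{Rm, d_\Ga(h, e)\}}$ for $h \in \Ga$, and the collapsed expression $e^{(\gs_2 - \gs_1)\gd Rm}$ for $h \in \dd\Ga$.

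The main technical point is the bookkeeping of the summation cutoff $N_h$: one has to check carefully that no $g \in C_m$ contributes with $(g, h)_e$ appreciably beyond $\min\{Rm, d_\Ga(h, e)\}$ (a straightforward hyperbolicity estimate), and that (\ref{eq:fund:esti}) applies uniformly in $h \in \ol{\Ga}$ including the boundary case. The condition $\gs_2 > \half$ is exactly what makes the geometric series non-degenerate and dominated by its top term; at the threshold $\gs_2 = \half$ one would pick up an extra factor polynomial in $m$, which is precisely why the lemma restricts $\gs_2 \in (\half, 1]$.
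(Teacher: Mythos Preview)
Your proof is correct and follows essentially the same route as the paper: expand the Busemann factor via the Gromov product, stratify $C_m$ by the shells $\mathcal{O}_e(m,n;h)$, apply the counting estimate (\ref{eq:fund:esti}), and sum the resulting geometric series dominated by its top term since $\gs_2>\tfrac12$. The only cosmetic difference is that the paper normalises by $|C_m|$ before invoking (\ref{eq:fund:esti}) whereas you carry the factor $e^{\gd Rm}$ through directly.
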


\begin{proof}

For $h\in\Ga$:
\begin{align*}
\mathcal{S}_m(\gs_1,\gs_2;h)&\df (\sum_{g\in C_m} e^{-\gs_1\gd d_\Ga(g,e)}\pi_{\gs_2}(g)){\bf1}_h\\
&=\sum_{g\in C_m} e^{-\gs_1 \gd d_\Ga(g,e)}e^{-\gs_2\gd b_h(g^{-1},e)}\\
&=\sum_{g\in C_m}e^{2\gs_2\gd (g,h)_e}e^{-(\gs_1+\gs_2)\gd d_\Ga(g,e)}\\
&\asymp e^{(1-(\gs_1+\gs_2))\gd Rm}[\frac{1}{|C_m|}\sum_{C_m}e^{2\gs_2\gd (g,h)_e}]
\end{align*}

Using the estimate (\ref{eq:fund:esti}): 
\begin{align*}
&\frac{1}{|C_m|}\sum_{C_m}e^{2\gs_2\gd(g,h)_e}
\asymp \sum_{n:Rn\le \min\{Rm,d_\Ga(h,e)\}}\frac{|\mathcal{O}(m,n;h)|}{|C_m|}e^{2\gs_2\gd Rn}\\
&\asymp \sum_{n:Rn\le \min\{Rm,d_\Ga(h,e)\}}e^{(2\gs_2-1)\gd Rn}
\asymp_{\gs_2} e^{(2\gs_2-1)\gd\min\{Rm,d_\Ga(h,e)\}}
\end{align*}

and thus
\begin{align*}
\mathcal{S}_m(\gs_1,\gs_2;h)
&\asymp_{\gs_2} e^{(1-(\gs_1+\gs_2))\gd Rm}e^{(2\gs_2-1)\gd\min\{Rm,d_\Ga(h,e)\}}
\end{align*}
The case $h=\xi\in\dd \Ga$ follows from similar computations replacing $\min\{Rm,d_\Ga(h,e)\}$ with $Rm$.

\end{proof}

\begin{lem}[Version 2]\label{lem:tech:v2}
Let $\gth$ be a slow growing function, $\gs_2\in (\half,1]$, $\gs_1\in\BR_+$, $\pi_{\gth,\gs_2}$ the rearranged $\gs_2$-density representation and
$$\mathcal{S}_{\gth,m}(\gs_1,\gs_2;h)\df (\sum_{g\in C_m} \gth(d_\Ga(g,e))e^{-\gs_1\gd d_\Ga(g,e)}\pi_{\gs_2,\gth}(g)){\bf1}_h$$
for $\gs_2>\half$.
Then for all $\e>0$, there exists $N\ge0$ such that:
\[   
\mathcal{S}_{\gth,m}(\gs_1,\gs_2;h)\prec_{\gs_2} 
     \begin{cases}
       \gth^2(Rm)e^{\e d_\Ga(h,e)}e^{(1-(\gs_1+\gs_2))\gd Rm}e^{(2\gs_2-1)\gd\min\{Rm,d_\Ga(h,e)\}} & \text{ for $h\in\Ga$}\\
       \gth^2(Rm)e^{\e d_\Ga(h,e)}e^{(\gs_2-\gs_1)\gd Rm} &\text{otherwise} 
     \end{cases}
\]
for all $m\ge N$.

\end{lem}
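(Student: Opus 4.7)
The plan is to reduce Lemma \ref{lem:tech:v2} to Lemma \ref{lem:tech:v1} by isolating the contributions of $\gth$ and controlling them through the slow growth property of Definition \ref{def:slowgrowth}.

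First, arguing exactly as in the proof of Lemma \ref{lem:tech:v1} (applying the sum of operators to the constant function and reading off the value at $h$), one obtains, for $h\in\Ga$,
\[
\mathcal{S}_{\gth,m}(\gs_1,\gs_2;h)=\sum_{g\in C_m}\gth(d_\Ga(g,e))\frac{\gth(d_\Ga(h,g))}{\gth(d_\Ga(h,e))}e^{-\gs_1\gd d_\Ga(g,e)}e^{-\gs_2\gd b_h(g,e)},
\]
with the analogous expression when $h=\xi\in\dd\Ga$ in which the $\gth$-ratio is absent: this is because $\pi_{\gth,s}|_{\dd\Ga}=\pi_s|_{\dd\Ga}$, an easy consequence of slow growth since $\gth(d_\Ga(x,g))/\gth(d_\Ga(x,e))\to 1$ as $x\to\xi$ for any fixed $g\in\Ga$.

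Next, for $g\in C_m$ one has $Rm\le d_\Ga(g,e)<R(m+1)$, so monotonicity combined with slow growth (at base $t=Rm$ with increment $R$, valid for $m$ above some threshold) yields $\gth(d_\Ga(g,e))\asymp_R\gth(Rm)$. For the ratio, the triangle inequality gives $d_\Ga(h,g)\le d_\Ga(h,e)+R(m+1)$, so that applying slow growth with base $t=Rm$ and increment $u=d_\Ga(h,e)+R$, valid once $Rm\ge N(\e)$, one obtains
\[
\gth(d_\Ga(h,g))\le\gth\bigl(Rm+d_\Ga(h,e)+R\bigr)\le e^{\e(d_\Ga(h,e)+R)}\gth(Rm),
\]
and using $\gth(d_\Ga(h,e))\ge 1$ one concludes
\[
\frac{\gth(d_\Ga(h,g))}{\gth(d_\Ga(h,e))}\prec_\e e^{\e d_\Ga(h,e)}\gth(Rm).
\]

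Substituting these bounds into the expression for $\mathcal{S}_{\gth,m}$ and pulling the constants out of the sum leaves exactly the unweighted sum $\mathcal{S}_m(\gs_1,\gs_2;h)$ of Lemma \ref{lem:tech:v1}, giving
\[
\mathcal{S}_{\gth,m}(\gs_1,\gs_2;h)\prec_{\e,\gs_2}\gth^2(Rm)\,e^{\e d_\Ga(h,e)}\,\mathcal{S}_m(\gs_1,\gs_2;h),
\]
and Lemma \ref{lem:tech:v1} delivers the stated bound in the $\Ga$-case. The boundary case is easier: only the factor $\gth(d_\Ga(g,e))\asymp\gth(Rm)$ survives, whence $\mathcal{S}_{\gth,m}(\gs_1,\gs_2;\xi)\prec\gth(Rm)\mathcal{S}_m(\gs_1,\gs_2;\xi)\le\gth^2(Rm)e^{(\gs_2-\gs_1)\gd Rm}$.

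The main technical point is purely bookkeeping: one must verify that a single threshold $N$ works uniformly in $h$ and in the parameters $\gs_1,\gs_2$. This is automatic once $\e$ is chosen, since the threshold produced by slow growth depends only on $\e$, and every subsequent invocation of the property occurs at base $t=Rm$. No further ideas beyond those of Lemma \ref{lem:tech:v1} are required.
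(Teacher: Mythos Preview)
Your argument is correct and follows essentially the same route as the paper: both proofs use the slow-growth hypothesis at base $t\approx Rm$ and increment $u\approx d_\Ga(h,e)$ to extract the factor $\gth^2(Rm)e^{\e d_\Ga(h,e)}$, then invoke the computation of Lemma \ref{lem:tech:v1}. The only cosmetic difference is that the paper packages the two $\gth$-factors into a single kernel $k_\gth(g,h)=\frac{\gth(d_\Ga(h,g))}{\gth(d_\Ga(h,e))\gth(d_\Ga(g,e))}$ before bounding, whereas you bound $\gth(d_\Ga(g,e))$ and the ratio $\gth(d_\Ga(h,g))/\gth(d_\Ga(h,e))$ separately; the resulting estimates are identical.
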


\begin{proof}
First observe that if $\gth$ has slow growth then $\gth^2$ as well and $\gth\le \gth^2$ as $\gth\ge1$.
Let 
$$k_\gth(g,h)\df \frac{\gth(d_\Ga(h,g))}{\gth(d_\Ga(h,e))\gth(d_\Ga(g,e))}\le\frac{\gth(d_\Ga(g,e)+d_\Ga(h,e))}{\gth(d_\Ga(h,e))\gth(d_\Ga(g,e))} $$
for $g,h\in\Ga$.
Given $\e>0$ and $T_\e$ as in Definition \ref{def:slowgrowth}, 
for $d_\Ga(g,e)\ge T_\e$ one has:
$$k_\gth(g,h)\le\frac{e^{\e d_\Ga(h,e)}}{\gth(d_\Ga(h,e))}$$
for all $h\in\Ga$.

Given $h\in\Ga$:
\begin{align*}
\mathcal{S}_{\gth,m}(\gs_1,\gs_2;h)&\df (\sum_{g\in C_m} \gth(d_\Ga(g,e))e^{-\gs_1\gd d_\Ga(g,e)}\pi_{\gth,\gs_2}(g)){\bf1}_h\\
&=\sum_{g\in C_m} \gth(d_\Ga(g,e))e^{-\gs_1 \gd d_\Ga(g,e)}\frac{\gth(d_\Ga(h,g))}{\gth(d_\Ga(h,e))}e^{-\gs_2\gd b_h(g^{-1},e)}\\
&=\sum_{g\in C_m}k_\gth(g,h)e^{2\gs_2\gd (g,h)_e}\gth^2(d_\Ga(g,e))e^{-(\gs_1+\gs_2)\gd d_\Ga(g,e)}\\
&\asymp\gth^2(Rm)e^{(1-(\gs_1+\gs_2))\gd Rm}[\frac{1}{|C_m|}\sum_{g\in C_m}k_\gth(g,h)e^{2\gs_2\gd (g,h)_e}]
\end{align*}

For $m\ge T_\e$, doing a similar computation as in Lemma \ref{lem:tech:v1} proof:
\begin{align*}
&\frac{1}{|C_m|}\sum_{g\in C_m}k_\gth(g,h)e^{2\gs_2(g,h)_e}
\prec \frac{e^{\e d_\Ga(h,e)}}{\gth(d_\Ga(h,e))}\frac{1}{|C_m|}\sum_{g\in C_m}e^{2\gs_2(g,h)_e}\\
&\prec_{\gs_2} e^{\e d_\Ga(h,e)}e^{(2\gs_2-1)\gd\min\{Rm,d_\Ga(h,e)\}}
\end{align*}

and thus:
\begin{align*}
\mathcal{S}_{\gth,m}(\gs_1,\gs_2;h)
&\prec_{\gs_2}\gth^2(Rm)e^{\e d_\Ga(h,e)}e^{(1-(\gs_1+\gs_2))\gd Rm}e^{(2\gs_2-1)\gd\min\{Rm,d_\Ga(h,e)\}}
\end{align*}
The case $h=\xi\in\dd \Ga$ follows from 
similar computations replacing $\min\{Rm,d_\Ga(h,e)\}$ with $Rm$
and using the fact that $\frac{\gth(d_\Ga(h,g))}{\gth(d_\Ga(h,e))}\xrightarrow{d_\Ga(h,e)\rightarrow \infty} 1$.

\end{proof}

\begin{rem}\label{rem:tech:half}
Following the same lines as Lemma \ref{lem:tech:v1}:\\
Given
$$\mathcal{S}_m(\gs,\half;h)\df (\sum_{g\in C_m} e^{-\gs\gd d_\Ga(g,e)}\pi_{\half}(g)){\bf1}_h$$
with $\gs\in\BR_+$,
there exist $N>0$ and $L>0$ such that
\[   
\mathcal{S}_m(\gs,\half;h)\asymp
     \begin{cases}
       e^{(\half-\gs)\gd Rm}\min\{Rm,d_\Ga(h,e)\} & \text{ for $h\in\Ga$}\\
       e^{(\half-\gs)\gd Rm}Rm &\text{otherwise} 
     \end{cases}
\]
for $m\ge N$, $d_\Ga(h,e)\ge L$.

\end{rem}

\begin{cor}\label{cor:tech:sum:last}
Let $\gs_2\in [\half,1]$, $\gs_1\in\BR_+$ with $\gs_1>\gs_2$, $\pi_{\gs_2}$ the $\gs_2$-density representation and 
$$\mathcal{S}_{\infty}(\gs_1,\gs_2;\xi)=(\sum_{g\in\Ga} e^{-\gs_1\gd d_\Ga(g,e)}\pi_{\gs_2}(g)){\bf1}_\xi$$
for $\xi\in\dd \Ga$.
Then
\[   
\mathcal{S}_\infty(\gs_1,\gs_2;\xi)\asymp_{\gs_2}
     \begin{cases}
       \frac{1}{\gs_1-\gs_2}& \text{for $\gs_2>\half$}\\
       \frac{1}{(\gs_1-\half)^2} &\text{for $\gs_2=\half$} 
     \end{cases}
\]
In particular $\mathcal{S}_\infty(\gs_1,\gs_2;\bullet)$ is bounded on $\dd \Ga$.
\end{cor}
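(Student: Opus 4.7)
The plan is to decompose $\Ga=\bigsqcup_{m\ge0}C_m$ into the annular shells from Subsection \ref{subsec:prelim:orbital} and apply the shell-wise estimates already established in Subsection \ref{subsec:fund:esti}. Since $\xi\in\dd\Ga$, the relevant branch of Lemma \ref{lem:tech:v1} and Remark \ref{rem:tech:half} is the ``otherwise'' branch, which crucially does not depend on $\xi$; this immediately yields the pointwise boundedness claimed at the end.

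For $\gs_2\in(\half,1]$, Lemma \ref{lem:tech:v1} applied with $h=\xi\in\dd\Ga$ gives $\mathcal{S}_m(\gs_1,\gs_2;\xi)\asymp_{\gs_2}e^{(\gs_2-\gs_1)\gd Rm}$, so
\[
\mathcal{S}_\infty(\gs_1,\gs_2;\xi)=\sum_{m\ge0}\mathcal{S}_m(\gs_1,\gs_2;\xi)\asymp_{\gs_2}\sum_{m\ge0}e^{-(\gs_1-\gs_2)\gd Rm}=\frac{1}{1-e^{-(\gs_1-\gs_2)\gd R}},
\]
which is a convergent geometric series since $\gs_1>\gs_2$. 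The elementary estimate $\frac{1}{1-e^{-x}}\asymp\frac{1}{x}$ (valid on bounded ranges of $x>0$, with the implicit constants coming from an upper bound on $\gs_1-\gs_2$) converts this into the stated comparison $\asymp_{\gs_2}\frac{1}{\gs_1-\gs_2}$.

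For the borderline case $\gs_2=\half$, I would instead appeal to Remark \ref{rem:tech:half}, giving $\mathcal{S}_m(\gs_1,\half;\xi)\asymp e^{(\half-\gs_1)\gd Rm}\,Rm$ for $m$ beyond the threshold $N$. The finitely many remaining terms contribute a bounded constant that can be absorbed. Summing the arithmetico–geometric series
\[
\sum_{m\ge N}Rm\,x^m=\frac{Rx}{(1-x)^2}+O(1),\qquad x=e^{-(\gs_1-\half)\gd R}\in(0,1),
\]
and using $1-x\asymp(\gs_1-\half)\gd R$ yields the desired behaviour $\asymp\frac{1}{(\gs_1-\half)^2}$.

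No step is particularly difficult here; the only point requiring care is that the comparison constants in Lemma \ref{lem:tech:v1} and Remark \ref{rem:tech:half} depend on $\gs_2$ only (not on $\gs_1$ nor on $\xi$), which is exactly what their proofs provide. Finally, since both resulting bounds depend only on $\gs_1,\gs_2$, the supremum over $\xi\in\dd\Ga$ is finite, proving the pointwise boundedness of $\mathcal{S}_\infty(\gs_1,\gs_2;\bullet)$ on the boundary.
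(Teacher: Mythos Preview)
Your proposal is correct and follows essentially the same approach as the paper: decompose into annular shells $C_m$, apply Lemma~\ref{lem:tech:v1} (respectively Remark~\ref{rem:tech:half}) on each shell, and sum the resulting geometric (respectively arithmetico--geometric) series. The paper phrases the $\gs_2=\half$ case as a ``summation by parts argument'' where you compute $\sum m x^m$ directly, but these are equivalent; your added remark that the implicit constants require a bounded range of $\gs_1-\gs_2$ is a fair caveat that the paper leaves implicit.
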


\begin{proof}
Given $\xi\in\dd \Ga$ and $\gs_2>\half$, one has:
\begin{align*}
\mathcal{S}_{\infty}(\gs_1,\gs_2;\xi)&=(\sum_{g\in\Ga} e^{-\gs_1\gd d_\Ga(g,e)}\pi_{\gs_2}(g)){\bf1}_\xi
=\sum_{m}\mathcal{S}_{m}(\gs_1,\gs_2;\xi)\\
&\asymp_\text{Lem. \ref{lem:tech:v1}} \sum_me^{(\gs_2-\gs_1)\gd Rm}\\
&\asymp \frac{1}{\gs_1-\gs_2}
\end{align*}

The case $\gs_2=\half$ follows from similar computations using remark \ref{rem:tech:half} and a summation by part argument.
\end{proof}

\begin{cor}\label{cor:ball:spec:tech}
Let $s\in [\half,1]$ and $\pi_s$ the $s$-density representation.
Then there exists $L_0$, for all $L\ge L_0$:
\[   
\sum_{g:d_\Ga(g,e)\le L}\pi_s(g){\bf1}|_{\dd \Ga}\asymp_s
     \begin{cases}
       e^{s\gd L}& \text{for $s>\half$}\\
       e^{s\gd L}L&\text{for $s=\half$} 
     \end{cases}
\]
Moreover 
\[   
\sum_{g:d_\Ga(g,e)\le L}\pi_s(g){\bf1}|_{\ol{\Ga}}\prec_s
     \begin{cases}
       e^{s\gd L}& \text{for $s>\half$}\\
       e^{s\gd L}L&\text{for $s=\half$} 
     \end{cases}
\]

\end{cor}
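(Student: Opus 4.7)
The plan is to decompose the ball $\{g:d_\Ga(g,e)\le L\}$ into the annular shells $C_m=\{g: Rm\le d_\Ga(g,e)<R(m+1)\}$ introduced in Subsection \ref{subsec:prelim:orbital}, so that
\[
\sum_{g:d_\Ga(g,e)\le L}\pi_s(g){\bf1}_h=\sum_{0\le m\le L/R}\mathcal{S}_m(0,s;h),
\]
and then to feed the estimates of Lemma \ref{lem:tech:v1} (for $s>\half$) and Remark \ref{rem:tech:half} (for $s=\half$) with $\gs_1=0$, $\gs_2=s$ into this sum. This reduces the corollary to an elementary geometric series estimate.

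For the first part (evaluation on $\dd\Ga$): taking $h=\xi\in\dd\Ga$, Lemma \ref{lem:tech:v1} gives $\mathcal{S}_m(0,s;\xi)\asymp_s e^{(s-0)\gd Rm}=e^{s\gd Rm}$ when $s>\half$, so that summing the (dominated-by-last-term) geometric series over $m$ up to $\lfloor L/R\rfloor$ yields the asymptotic $\asymp_s e^{s\gd L}$ as soon as $L\ge L_0$ is large enough for the asymptotic estimate of Lemma \ref{lem:tech:v1} to kick in. When $s=\half$, Remark \ref{rem:tech:half} gives $\mathcal{S}_m(0,\half;\xi)\asymp e^{\half\gd Rm}\cdot Rm$; the sum over $m$ is again dominated by its last few terms and produces $\asymp e^{\half\gd L}\cdot L$.

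For the second part (bound on $\ol{\Ga}$), the only new case is $h\in\Ga$. Here Lemma \ref{lem:tech:v1} yields
\[
\mathcal{S}_m(0,s;h)\asymp_s e^{(1-s)\gd Rm}e^{(2s-1)\gd\min\{Rm,d_\Ga(h,e)\}}\le e^{(1-s)\gd Rm}e^{(2s-1)\gd Rm}=e^{s\gd Rm},
\]
using $2s-1\ge 0$ for $s\ge\half$, and similarly for $s=\half$ via Remark \ref{rem:tech:half} with $\min\{Rm,d_\Ga(h,e)\}\le Rm$. Summing over $m\le L/R$ then produces the same upper bound as in the boundary case, uniformly in $h\in\ol{\Ga}$.

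I do not expect a genuine obstacle here: the content of the corollary is a repackaging of the shell-by-shell estimates in Lemma \ref{lem:tech:v1} and Remark \ref{rem:tech:half}. The only mildly delicate point is the choice of $L_0$, which must be taken large enough so that all shells $C_m$ with $Rm$ comparable to $L$ are in the regime where Lemma \ref{lem:tech:v1} provides two-sided estimates (and, in the $s=\half$ case of Remark \ref{rem:tech:half}, also in the regime where $d_\Ga(h,e)$ is large enough to kill boundary effects when $h\in\Ga$). Handling the few initial shells with $m$ small is absorbed into the implicit constants in $\asymp_s$ and $\prec_s$.
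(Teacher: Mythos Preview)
Your proposal is correct and follows essentially the same approach as the paper's proof: decompose the ball into the shells $C_m$, apply Lemma \ref{lem:tech:v1} (resp.\ Remark \ref{rem:tech:half}) with $\gs_1=0$, $\gs_2=s$ to each $\mathcal{S}_m(0,s;h)$, and sum the resulting geometric series; the $\ol{\Ga}$ upper bound is obtained exactly via $\min\{Rm,d_\Ga(h,e)\}\le Rm$. One minor remark: the estimates in Lemma \ref{lem:tech:v1} hold for all $m$, so the role of $L_0$ is only to make the geometric sum dominated by its last term, not to reach a regime where the lemma applies.
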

\begin{proof}
Given $\xi\in\dd \Ga$ and $\gs_2>\half$, one has:
\begin{align*}
\sum_{g:d_\Ga(g,e)\le L}\pi_s(g){\bf1}_\xi&=\sum_{m}\mathcal{S}_{m}(0,s;\xi)
\asymp_{s,\text{Lem. \ref{lem:tech:v1}}}\sum_{m:mR\le L}e^{s\gd Rm}\\
&\asymp_s e^{s\gd L}
\end{align*}
The case $\gs_2=\half$ follows from similar computations using remark \ref{rem:tech:half} and summation by part.
The upper bound for $x\in \Ga\subset\ol{\Ga}$ follows from the fact that $\min\{Rm,d_\Ga(h,e)\}\le Rm$ and the above computations.
\end{proof}

\subsection{Nets and positive families of vectors}\label{subsec:net:pos}\hfill\break
The reader can refer to \cite[Chap. 2]{MR2378491} for details about the notion of net and order.

Given a Hilbert space, $\mathcal{H}$, and countable set, $I$,
a family of vectors $\{v_i\}_{i\in I}$ \textit{belongs to a positive cone} in $\mathcal{H}$ if $(v_i,v_j)\ge0$ for all $i,j\in I$.

Given such a family, $\{v_i\}_{i\in I}$, we define the summation net as
$$(2^I_0,\subset)\rightarrow(\mathcal{H},\mathcal{H}_+); F\subset I\mapsto v_F=\sum_Fv_i$$
where $(2^I_0,\subset)$ stands for the finite subsets of $I$ with direct order induced by the inclusion. Observe that $(\|v_F\|)_F$ is a non-decreasing net
as  $$\|v+w\|^2=\|v\|^2+\|w\|^2+2(v,w)\ge\max\{\|v\|^2, \|w\|^2\}$$ for all $v,w\in \mathcal{C}_+$
with $\mathcal{C}_+\df\ol{\{\sum_Fc_iv_i:\text{$c_i\ge0$ for all $i\in F$}\}}\subset\mathcal{H}$.

\begin{lem}\label{lem:sum:net}
Given a family $\{v_i\}_{i\in I}$ within a positive cone of $\mathcal{H}$, the summation net $(v_F)_F$ converges if and only if $(\|v_F\|)_F$ is bounded.
\end{lem}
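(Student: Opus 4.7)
The plan is to prove both implications using two consequences of the positivity assumption: first, the monotonicity of $F \mapsto \|v_F\|$ already noted before the lemma, and second, the fact that for disjoint finite subsets $F, F'$ of $I$ one has $(v_F, v_{F'}) = \sum_{i \in F, j \in F'} (v_i, v_j) \ge 0$.

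For the forward direction, suppose $v_F \to v$ in $\mathcal{H}$. Then there exists $F_0 \in 2^I_0$ such that $\|v_F - v\| \le 1$ for every $F \supseteq F_0$; in particular $\|v_F\| \le \|v\| + 1$ for all such $F$. For an arbitrary $F \in 2^I_0$, the enlargement $F \cup F_0$ contains $F_0$, so $\|v_{F \cup F_0}\| \le \|v\| + 1$, and by the monotonicity of the net of norms (applied to $F \subset F \cup F_0$, both sitting in the positive cone $\mathcal{C}_+$) one concludes $\|v_F\| \le \|v_{F \cup F_0}\| \le \|v\| + 1$. Hence $(\|v_F\|)_F$ is bounded.

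For the converse, assume $M \df \sup_F \|v_F\| < \infty$. Since $(\|v_F\|^2)_F$ is a monotone non-decreasing bounded net of reals, it converges to $s \df \sup_F \|v_F\|^2$. I will show $(v_F)_F$ is Cauchy, which suffices as $\mathcal{H}$ is complete. Fix $\e > 0$ and pick $F_0$ such that $s - \|v_{F_0}\|^2 < \e^2/2$. For any $F \supseteq F_0$, write $v_F = v_{F_0} + v_{F \setminus F_0}$. Expanding the norm and using $(v_{F_0}, v_{F \setminus F_0}) \ge 0$ gives
\[
\|v_{F \setminus F_0}\|^2 \le \|v_F\|^2 - \|v_{F_0}\|^2 < \e^2/2.
\]

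Now let $F_1, F_2 \supseteq F_0$. Then $v_{F_1} - v_{F_2} = v_{F_1 \setminus F_0} - v_{F_2 \setminus F_0}$, and using once more that the inner product of two vectors in $\mathcal{C}_+$ is non-negative,
\[
\|v_{F_1} - v_{F_2}\|^2 = \|v_{F_1 \setminus F_0}\|^2 + \|v_{F_2 \setminus F_0}\|^2 - 2(v_{F_1 \setminus F_0}, v_{F_2 \setminus F_0}) \le \|v_{F_1 \setminus F_0}\|^2 + \|v_{F_2 \setminus F_0}\|^2 < \e^2.
\]
Therefore the net is Cauchy and converges. The only subtle point is the second use of positivity (killing the cross term with the correct sign), and both halves of the argument really are the same observation that in a positive cone the norms behave additively up to a sign-favourable remainder.
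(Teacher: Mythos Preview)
Your proof is correct and follows essentially the same route as the paper: both argue that the monotone bounded net $(\|v_F\|)_F$ converges, then use the key inequality $\|v_{F\setminus F_0}\|^2\le\|v_F\|^2-\|v_{F_0}\|^2$ (coming from $(v_{F_0},v_{F\setminus F_0})\ge0$) to show the net $(v_F)_F$ is Cauchy. The only differences are cosmetic: you also spell out the easy forward implication, and in the final step you exploit positivity once more to bound $\|v_{F_1}-v_{F_2}\|^2$ directly, whereas the paper simply applies the triangle inequality $\|v_{F_1}-v_{F_2}\|\le\|v_{F_1\setminus F_0}\|+\|v_{F_2\setminus F_0}\|$.
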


In particular $\sum_{i\in I}v_i$ is well defined iff there exists an exhaustion $(F_n)_n$ by finite sets of $I$ such that $\sup_n\|\sum_{F_n}v_i\|$ is finite.

\begin{proof}
It is enough to prove that if $(\|v_F\|)_F$ is bounded the net $(v_F)_F$ converges in $\mathcal{H}$.

As $(\|v_F\|)_F$ is monotone and bounded it converges to $\sup_{2^I_0}\|v_F\|$.
In particular  $(\|v_F\|)_F$ is a Cauchy net.\\
On the other hand given $F_0\subset F$ one has:
$$\|v_{F\setminus F_0}\|^2=\|v_F-v_{F_0}\|^2=\|v_F\|^2-\|v_{F_0}\|^2-2(v_{F_0},v_{F\setminus F_0})\le\|v_{F}\|^2-\|v_{F_0}\|^2$$
that is small whenever $F_0$ is taken large enough and as 
$$\|v_{F}-v_{F'}\|\le \|v_{F'\setminus F_0}\|+\|v_{F\setminus F_0}\|$$
for $F_0\subset F',F$ it follows that $(v_F)_F$ is a Cauchy net in the Banach space $\mathcal{H}$ and thus converges to a unique limit point.
\end{proof}

\begin{lem}\label{lem:infty:net}
Given a summable family $\{v_i\}_{i\in I}$ within a positive cone of $\mathcal{H}$ and $u=(u_i)_{i\in I},u'=(u'_i)_{i\in I}\in \ell^\infty_+(I)$ with $u\le u'$.
Then $$\|\sum_{i\in I} u_iv_i\|\le\|\sum_{i\in I} u_i'v_i\|\le \|u'\|_\infty\|\sum_{i\in I} v_i\|$$
\end{lem}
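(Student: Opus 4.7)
The plan is to reduce both inequalities to elementary estimates on finite partial sums and then pass to the limit using Lemma \ref{lem:sum:net}. The key observation is that, for any finite $F\subset I$, one can expand
\[
\Bigl\|\sum_{i\in F}u_iv_i\Bigr\|^2 = \sum_{i,j\in F}u_iu_j(v_i,v_j),
\]
and by hypothesis every scalar $(v_i,v_j)$ is non-negative. This positivity of the Gram matrix is the only input we need.

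First I would verify that the sums $\sum_I u_iv_i$ and $\sum_I u'_iv_i$ actually converge in $\mathcal{H}$. The families $\{u_iv_i\}_{i\in I}$ and $\{u'_iv_i\}_{i\in I}$ still lie in a positive cone of $\mathcal{H}$ since $u_i,u'_i\ge0$ implies $(u_iv_i,u_jv_j)=u_iu_j(v_i,v_j)\ge0$. Using the pointwise bound $u'_iu'_j\le\|u'\|_\infty^2$ in the expansion above yields
\[
\Bigl\|\sum_{i\in F}u'_iv_i\Bigr\|^2\le\|u'\|_\infty^2\sum_{i,j\in F}(v_i,v_j)=\|u'\|_\infty^2\Bigl\|\sum_{i\in F}v_i\Bigr\|^2,
\]
which is uniformly bounded in $F$ since $\{v_i\}_{i\in I}$ is summable. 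By Lemma \ref{lem:sum:net} the summation net $(\sum_F u'_iv_i)_F$ converges, and the same bound $0\le u_i\le u'_i$ gives convergence of $\sum_I u_iv_i$ as well. Passing to the limit in the displayed inequality delivers the right-hand estimate $\|\sum_I u'_iv_i\|\le\|u'\|_\infty\|\sum_I v_i\|$.

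For the left-hand inequality, on any finite $F$ the pointwise estimate $u_iu_j\le u'_iu'_j$ combined with $(v_i,v_j)\ge0$ gives
\[
\Bigl\|\sum_{i\in F}u_iv_i\Bigr\|^2=\sum_{i,j\in F}u_iu_j(v_i,v_j)\le\sum_{i,j\in F}u'_iu'_j(v_i,v_j)=\Bigl\|\sum_{i\in F}u'_iv_i\Bigr\|^2.
\]
Taking the supremum over $F$ (equivalently, passing to the limit of the summation nets, which exist by the previous step and Lemma \ref{lem:sum:net}) yields $\|\sum_I u_iv_i\|\le\|\sum_I u'_iv_i\|$, closing the chain of inequalities.

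There is no real obstacle here; the only subtlety is to check convergence of the reweighted sums before passing to the limit, which is handled uniformly by the observation that multiplication by a non-negative bounded scalar family preserves the positive-cone property and is controlled in norm by $\|u'\|_\infty$.
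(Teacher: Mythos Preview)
Your proof is correct and follows essentially the same approach as the paper: expand $\|\sum_F u_iv_i\|^2$ as a double sum with non-negative entries $(v_i,v_j)\ge0$, compare coefficients pointwise, and invoke Lemma \ref{lem:sum:net} to pass from finite $F$ to $I$. If anything, you are more thorough than the paper, which only writes out the first inequality explicitly and leaves the summability of the reweighted families and the second inequality implicit.
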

\begin{proof}
It enough to prove the first inequality on a finite subset, $F$, of $I$. The result will follow from Lemma \ref{lem:sum:net}.

In this case 
\begin{align*}
\|\sum_{i\in F} u_iv_i\|^2&=\sum_{i,j\in F} u_iu_j(v_i,v_j)
\le\sum_{i,j\in F} u_i'u_j'(v_i,v_j)=\|\sum_{i\in F} u_i'v_i\|^2\\
\end{align*}
\end{proof}

\subsection{Observation about the GNS of pointwise positive functions}\label{subsec:gns:twist}\hfill\break
A function $\phi$ is called positive definite if the quadratic form on $\BC[\Ga]$ given by 
$$B_\phi(f)=\sum_{g,h}f(g)\ol{f(h)}\phi(g^{-1}h)$$
is positive \cite[Sect. 1.B]{Bekka:2019aa}. As it is invariant by left translation it defines a pre-unitary cyclic representation with $\text{Dir}_e$ as cyclic vector.
Moreover one has $\phi(g)=B_\phi(\gl(g)\text{Dir}_e,\text{Dir}_e)$ for all $g\in\Ga$.
This construction, called Gelfand-Naimak-Segal construction (or GNS), defines a bijection between positive definite functions and classes of cyclic representations of $\Ga$.

Let $\mathbb{P}_+(\Ga)$ be the set of positive definite functions on $\Ga$ that are pointwise positive. Given $\phi\in \mathbb{P}_+(\Ga)$ we denote $(\pi_\phi,\mathcal{H}_\phi,{\bf1}_\phi)$ its GNS triple.

Observation that $(\pi_\phi(g){\bf1}_\phi)_g$ belongs to a positive cones of $\mathcal{H}_\phi$ as defined in Subsection \ref{subsec:net:pos}.
This follows from the fact that $$(\pi_\phi(g){\bf1}_\phi,\pi_\phi(h){\bf1}_\phi)=(\pi_\phi(h^{-1}g){\bf1}_\phi,{\bf1}_\phi)=\phi(h^{-1}g)\ge0$$
Given a weight $\go$ with $\go(g)>0$ for all $g\in\Ga$ such that $\sum_g\go(g)\pi_\phi(g){\bf1}_\phi$ is well defined, Lemma \ref{lem:infty:net} implies that $\sum_g\go(g)|f|(g)\pi_\phi(g){\bf1}_\phi$ is also well defined with 
$$\|\sum_g\go(g)|f|(g)\pi_\phi(g){\bf1}_\phi\|\le \|f\|_\infty\|\sum_g\go(g)\pi_\phi(g){\bf1}_\phi\|$$
for all $f\in \ell^\infty(\Ga)$.
One can construct a pre-Hilbertian structure, $B_{\phi,\go}$, on $\ell^\infty(\Ga)$ as follows: 
\begin{align*}
\mathcal{P}_\go^{2}.B_{\phi,\go}(f)&\df 
\sum_{g,h\in\Ga} f(g)f(h)\phi(g^{-1}h)\go(g)\go(h)\\
&=_\text{Lem. \ref{lem:sum:net}}\|\sum_g\go(g)f(g)\pi_\phi(g){\bf1}_\phi\|^2\\
&=\|\sum_g\go(g)f_+(g)\pi_\phi(g){\bf1}_\phi-\sum_g\go(g)f_-(g)\pi_\phi(g){\bf1}_\phi\|^2\\
&\le 2\|\sum_g\go(g)f_+(g)\pi_\phi(g){\bf1}_\phi\|^2+2\|\sum_g\go(g)f_-(g)\pi_\phi(g){\bf1}_\phi\|^2\\
&\le_\text{Lem. \ref{lem:infty:net}} 4\|f\|_\infty^2\|\sum_g\go(g)\pi_\phi(g){\bf1}_\phi\|^2
\end{align*}
with $\|\sum_g\go(g)\pi_\phi(g){\bf1}_\phi\|^2=\mathcal{P}_\go^{2}$ and $f\in \ell^\infty(\Ga)$.
The case of complex-valued functions is obtained by scalar extension.

For this positive quadratic form the linear operators 
 $$\pi_\go(g)f(z)=\frac{\go(g^{-1}z)}{\go(z)}f(g^{-1}z)$$
 for $g,z\in\Ga$ defines a pre-unitary representation. 

Observe that the operator
$$J_\go:\ell^\infty(\Ga)\rightarrow \mathcal{H}_\phi;\quad f\mapsto \frac{1}{\mathcal{P}_\go}\sum_z \go(z)f(z)\pi_\phi(z){\bf 1}_\phi$$
defines a bounded $(\pi_\go,\pi)$-intertwiner with dense range.
Indeed given $f'\in\BC[\Ga]$ one has $\pi_\phi(f'){\bf1}=J_\go(f)$ with $f(g)=\mathcal{P}_\go\go(g)^{-1}f'(g)$.

The operator $J_\go$ naturally extends to a Hilbert space of functions:

Observe that
\begin{align*}
&B_{\phi,\go}(f_1,f_2)=\mathcal{P}_\go^{-2}\sum_{x,y\in\Ga} f_1(x)f_2(y)\go(x)\go(y)\phi(x^{-1}y)\\
&=\sum_{x\in\Ga} f_1(x)\frac{\sum_{y\in\Ga}f_2(y)\go(y)\phi(x^{-1}y)}{\sum_{y\in\Ga}\go(y)\phi(x^{-1}y)}\mathcal{P}_\go^{-2}.\go(x)\sum_{y\in\Ga}\go(y)\phi(x^{-1}y)\\
&=\int_{\Ga} f_1(x) E_\go(f_2)_xd\mu_\go(x)
\end{align*}
where $\mu_\go$ denote a probability on $\ol{\Ga}$ defined as:
$$\int_\Ga f(x)d\mu_\go(x)=\mathcal{P}_\go^{-2}\int_\Ga f(x)\go(x)\sum_{y\in\Ga}\go(y)\phi(x^{-1}y)$$
and $E_\go$ the $(\ol{\Ga},\mu_\go)$-conditional expectation (\cite[Chap. \romannumeral 2]{MR488194}):
$$E_\go(f)_x=\frac{\sum_{y\in\Ga}f(y)\go(y)\phi(x^{-1}y)}{\sum_{y\in\Ga}\go(y)\phi(x^{-1}y)}$$
for $f\in L^1(\mu_\go)$.

In particular $E_\go$ induces a self-adjoint contraction on $L^2(\mu_\go)$ with $E_\go({\bf1})={\bf1}$ and thus $B_{\phi,\go}(f)=\|J_\go(f)|\mathcal{H}_\phi\|^2\le \|f|L^2(\mu_\go)\|^2$ for $f\in \ell^\infty(\Ga)$. 
It follows that $J_\go$ extends to $L^2(\mu_\go)$.

Moreover
$$E_\go(\pi_\go(g)f)_x=\frac{\sum_{y\in\Ga}\frac{\go(g^{-1}y)}{\go(y)}\go(y)\phi(x^{-1}y)}{\sum_{y\in\Ga}\go(y)\phi(x^{-1}y)}E_\go(f)_{g^{-1}x }$$
in other words
 $E_\go(\pi_\go(g)f)=\pi_\go^*(g)E_\go(f)$ 
 where $f\in L^2(\mu_\go)$ and $\pi_\go^*$ denote the adjoint of $\pi_\go$ in $L^2(\mu_\go)$.
In particular $\text{Ker}J_\go$ is $\pi_\go$-invariant and $J_\go|_N$ with $N=L^2(\mu_\go)/{\text{Ker}J_\go}\simeq\text{Ker}J_\go^\perp$ extends as a unitary equivalence between the completion of $N$ with respect to the quadratic form $B_{\phi,\go}$, $\ol{(N,B_{\phi,\go})}$, and $(\pi_\phi,\mathcal{H}_\phi)$ and satisfies 
$$[J_\go|_{\ol{N}}]^{-1}\pi_\phi J_\go|_{\ol{N}}=\pi_{\go}$$


\section{Boundary representations of positive cyclic representations}\label{sec:const:bound}
This section is dedicated to the construction of boundary representations associated to positive cyclic representations. 
Those representations are obtained as limit of twisted GNS construction (see Subsection \ref{subsec:gns:twist}).

\subsection{Critical exponent and Poincar\'e series}\label{subsec:crit:poin}\hfill\break
Given a pointwise positive function on $\Ga$, $f$, the normalised critical exponent of $f$ is defined as the infimum of $\gs>0$ such that:
$$\mathcal{P}(f;\gs)=\sum_{g\in\Ga} e^{-\gs \gd d_\Ga(g,e)}f(g)$$
is finite and denoted $s(f)$.
The critical exponent of $f$ is $\gd(f)\df s(f)\gd$.
A pointwise positive function is of \textit{divergent type} if $\mathcal{P}(f;\gs)\xrightarrow{\gs\rightarrow s(f)^+}+\infty$.

\begin{rem}
The indicator function of a subgroup of $\Ga$ is of divergent type if and only if the subgroup is of divergent type \cite{MR450547}.
\end{rem}

As in the previous constructions (e.g \cite{MR1214072} \cite{Coulon:2018aa} \cite{MR450547})  the divergence of the Poincar\'e series $\mathcal{P}(f;\gs)$  at $\gd(f)$ is necessary.
This can be arranged by an adaptation of the so-called Patterson argument \cite{MR450547}.
The following sequential formulation of Patterson's trick is used later to deal with families of positive cyclic representations.
\begin{lem}\label{lem:patt:arg}
Let $(f_n)_n$ be a sequence of non-zero pointwise positive functions on $\Ga$ with $\lim_n\gd(f_n)=\gd_\infty$ finite.
Let $(\gs_n)_n$ be a non-increasing sequence such that $\gs_n\gd>\gd(f_n)$ and $\gs_n\gd\rightarrow \gd_\infty$. 
There exists a non-decreasing map $\gth:\BR_+\rightarrow[1,+\infty)$ such that:
\begin{enumerate}
\item for all $\e>0$, there exists $T$, for all $u\ge0$ and $t\ge T$: $\gth(u+t)\le e^{\e u}\gth(t)$, i.e $\gth$ has slow growth;
\item $$\mathcal{P}_\gth(f_n;\gs)\df \sum_{g\in\Ga}f_n(g)\gth(d_\Ga(g,e))e^{-\gs\gd d_\Ga(g,e)}$$ is bounded when $\gd(f_n)<\gs\gd$ and unbounded when $\gd(f_n)>\gs\gd$;
\item $\lim_n\mathcal{P}_\gth(f_n;\gs_n)\rightarrow +\infty$.
\end{enumerate}

\end{lem}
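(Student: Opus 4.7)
The plan is a diagonal adaptation of Patterson's classical argument~\cite{MR450547}. I would construct $\gth$ as a non-decreasing step function $\gth|_{[T_k,T_{k+1})}=A_k$ with $T_k\nearrow+\infty$ and $A_k\nearrow+\infty$ to be chosen inductively from the data $(f_n,\gs_n)_n$.

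For property (1), slow growth reduces to the asymptotic condition $\log(A_{j+1}/A_j)/(T_{j+1}-T_j)\to 0$: for $t\in[T_{k_0},T_{k_0+1})$ and $t+u\in[T_{k_1},T_{k_1+1})$,
\[
\log\frac{\gth(t+u)}{\gth(t)}=\sum_{j=k_0+1}^{k_1}\log\frac{A_j}{A_{j-1}}\le\Bigl(\sup_{j>k_0}\frac{\log(A_j/A_{j-1})}{T_j-T_{j-1}}\Bigr)(T_{k_1}-T_{k_0})\le\e u,
\]
using that $T_{k_1}-T_{k_0}\le u$. Property (2) then follows directly: $\gth\ge 1$ yields divergence for $s\gd<\gd(f_n)$, while slow growth provides $\gth(r)\le C_\e e^{\e r}$ for any $\e>0$, so $\mathcal{P}_\gth(f_n;s)\le C_\e\mathcal{P}(f_n;s-\e/\gd)<+\infty$ whenever $s\gd-\e>\gd(f_n)$; letting $\e\to 0$ gives convergence for all $s\gd>\gd(f_n)$.

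The heart of the proof is (3). The key inequality is $\mathcal{P}_\gth(f_n;\gs_n)\ge A_k\,\tau_k(f_n,\gs_n)$ where $\tau_k(f,\gs):=\sum_{d_\Ga(g,e)\ge T_k}f(g)e^{-\gs\gd d_\Ga(g,e)}$, since $\gth\ge A_k$ on $[T_k,+\infty)$. At step $k$ of the induction I would first pick $T_k>T_{k-1}$ with $T_k-T_{k-1}$ large enough to give room for the slow-growth spacing, while keeping $\tau_k(f_k,\gs_k)>0$ (automatic as the tail of a convergent positive series); then set $A_k\ge\max\{A_{k-1}(1+1/k^2),\,k/\tau_k(f_k,\gs_k)\}$, enlarging $T_k$ further if required so that $\log(A_k/A_{k-1})\le(T_k-T_{k-1})/k$. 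This produces $A_k\tau_k(f_k,\gs_k)\ge k$, hence $\mathcal{P}_\gth(f_k;\gs_k)\ge k$, and the limit (3) follows.

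The main obstacle is the tension between the amplification requirement ($A_k\to\infty$ fast enough to compensate potentially small tails $\tau_k$) and the slow-growth constraint (which forces $A_k/A_{k-1}\to 1$), further sharpened by the fact that as $\gs_k\gd-\gd(f_k)\to 0$ the tails $\tau_k(f_k,\gs_k)$ decay slowly in $T_k$ but still decay. What saves the construction is the full freedom to enlarge $T_k-T_{k-1}$: any multiplicative boost needed in $A_k$ can be absorbed into a correspondingly large spacing, keeping the slow-growth denominator dominant. This careful bookkeeping of thresholds and heights, faithful to Patterson's original idea, is the technical crux; once the two scales are synchronised, the induction closes and $\gth$ is obtained as a well-defined non-decreasing slow-growing step function.
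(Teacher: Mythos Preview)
Your overall structure is right and close to the paper's, but there is a genuine gap in step (3). The circularity you yourself flag does not resolve with the slow-growth rate $1/k$ you chose. Concretely, your induction requires finding $T_k>T_{k-1}$ and $A_k$ with
\[
A_k\,\tau_k(T_k)\ge k\qquad\text{and}\qquad A_k\le A_{k-1}e^{(T_k-T_{k-1})/k},
\]
which together force $A_{k-1}\,e^{-T_{k-1}/k}\cdot e^{T_k/k}\tau_k(T_k)\ge k$. But since $d_\Ga(g,e)\ge T_k$ on the tail,
\[
e^{T_k/k}\tau_k(T_k)\le\sum_{d_\Ga(g,e)\ge T_k}f_k(g)\,e^{-(\gs_k\gd-1/k)\,d_\Ga(g,e)}\le\mathcal{P}\bigl(f_k;\gs_k-\tfrac{1}{k\gd}\bigr),
\]
which is \emph{finite} whenever $\gs_k\gd-1/k>\gd(f_k)$. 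Nothing in the hypotheses prevents, say, $\gs_k\gd-\gd(f_k)=1/\sqrt{k}$; in that case $e^{T_k/k}\tau_k(T_k)$ is bounded uniformly in $T_k$, and no enlargement of $T_k$ will satisfy both constraints simultaneously. ``Full freedom to enlarge $T_k-T_{k-1}$'' is not enough: enlarging $T_k$ shrinks $\tau_k$ at a rate governed by the gap $\gs_k\gd-\gd(f_k)$, which can dominate your spacing gain $1/k$.

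The paper's fix is precisely to adapt the slow-growth slope to the gap: it chooses $\e_n\to 0$ non-increasing with $(\gs_n-\e_n)\gd<\gd(f_n)$, and takes $\gth$ piecewise exponential with slope $\e_n$ on $]t_n,t_{n+1}]$. Then on that piece $\gth(d_\Ga(g,e))e^{-\gs_n\gd d_\Ga(g,e)}$ is, up to a constant, $e^{-(\gs_n-\e_n)\gd d_\Ga(g,e)}$; since $(\gs_n-\e_n)\gd$ lies \emph{below} the critical exponent of $f_n$, the corresponding series diverges, so $t_{n+1}$ can be chosen to make the partial sum over $]t_n,t_{n+1}]$ exceed $n$. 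Your step-function approach would also go through once you replace $1/k$ by any $\eta_k\to 0$ satisfying $\eta_k>\gs_k\gd-\gd(f_k)$; the missing idea is that the slow-growth rate must be calibrated against the gap, not fixed a priori.
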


\begin{proof}
Let $(\e_n)_n$ be a non-increasing sequence such that $\e_n>0$, $\lim_n\e_n=0$ and $(\gs_n-\e_n)\gd<\gd(f_n)$.

We construct the function $\gth$ together with a non-decreasing positive sequence $(t_n)_n$ inductively as follows. 
Set $t_0=0$ and $\gth(t_0)=1$ and define
$t_{n+1}$ such that $t_{n+1}>t_n+1$ and
$$e^{-\e_nt_n}\gth(t_n)\sum_{\{g:t_n<d_\Ga(g,e)\le t_{n+1}\}}e^{-(\gs_n-\e_n)\gd d_\Ga(g,e)}f_n(g)\ge n$$ 
and $\gth$ on $]t_n,t_{n+1}]$ to be $\gth(t)=e^{\e_n(t-t_n)}\gth(t_n)$.

Observe that
\begin{align*}
\mathcal{P}_\gth(f_n;\gs_n)&\ge\sum_{\{g:t_n<d_\Ga(g,e)\le t_{n+1}\}}\gth(d_\Ga(g,e))e^{-\gs_n\gd d_\Ga(g,e)}f_n(g)\\
&=e^{-\e_nt_n}\gth(t_n)\sum_{\{g:t_n<d_\Ga(g,e)\le t_{n+1}\}}e^{-(\gs_n-\e_n)\gd d_\Ga(g,e)}f_n(g)\ge n
\end{align*}

The rest follows from the definition of $\gth$.

\end{proof}

\begin{prop}\label{prop:two:div}
Let $f\neq0$ be a positive function on $\Ga$ with $\gd(f)$ finite and $\gth$ as in Lemma \ref{lem:patt:arg}.
Then
$$\mathcal{P}_{\gth,2}(f;\gs)\df \sum_{g,h\in\Ga} f(g^{-1}h)\gth(d_\Ga(g,e))e^{-\gs \gd d_\Ga(g,e)}\gth(d_\Ga(h,e))e^{-\gs \gd d_\Ga(h,e)}$$
is bounded  for $\max\{\half\gd,\gd(f)\}<\gs\gd$, unbounded for $\max\{\half\gd,\gd(f)\}>\gs\gd$ and 
$$\mathcal{P}_{\gth,2}(f;\gs)\xrightarrow{\gs\rightarrow \max\{\half\gd,\gd(f)\}^+}+\infty$$
\end{prop}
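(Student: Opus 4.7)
The plan is to rewrite the double sum $\mathcal{P}_{\gth, 2}(f; \gs)$ as a single Poincar\'e-type series against $f$, weighted by an auto-convolution kernel, and to analyse the kernel via the spherical summation estimates of Section \ref{subsec:fund:esti}. Setting $\phi_\gs(g) \df \gth(d_\Ga(g, e)) e^{-\gs \gd d_\Ga(g, e)}$ and using the symmetry of $d_\Ga$ (so $\phi_\gs(u^{-1}) = \phi_\gs(u)$), the changes of variables $v = g^{-1}h$ followed by $u = g^{-1}$ give
\begin{equation*}
\mathcal{P}_{\gth, 2}(f; \gs) \;=\; \sum_{v \in \Ga} f(v)\, (\phi_\gs \star \phi_\gs)(v), \qquad (\phi_\gs \star \phi_\gs)(v) \;=\; \sum_{u \in \Ga} \phi_\gs(u)\,\phi_\gs(u^{-1}v).
\end{equation*}

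The technically central step would be to prove a two-sided kernel estimate: for $\gs > \half$,
\[(\phi_\gs \star \phi_\gs)(v) \asymp_\gs \bigl(d_\Ga(v, e) + C_\gs\bigr)\, \gth^2(d_\Ga(v, e))\, e^{-\gs\gd\, d_\Ga(v, e)}, \qquad C_\gs \asymp \frac{1}{\gs - \half}.\]
To establish this I would decompose $u$ along the spherical shells $C_m$ and, for each $m$, along level sets of the Gromov product $(u, v)_e \approx R j$, using the identity $d_\Ga(u, e) + d_\Ga(u^{-1}v, e) = 2 d_\Ga(u, e) + d_\Ga(v, e) - 2(u, v)_e$ together with the orbital counting (\ref{eq:fund:esti}) and Lemma \ref{lem:tech:v2}. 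Setting $r = d_\Ga(v, e)$, the contribution from $m \le r/R$ would give a constant-order amount per shell (coming from the rough geodesic tube between $e$ and $v$) summing to $\asymp_\gs r\, \gth^2(r)\, e^{-\gs \gd r}$, whereas the contribution from $m > r/R$ would be controlled by the geometric tail $\sum_m e^{(1 - 2\gs)\gd R m}$, producing the $C_\gs$-factor that blows up as $\gs \to \half^+$. For $\gs \le \half$, the same decomposition (using $\gth \ge 1$) would show the inner sum is already infinite, so $(\phi_\gs \star \phi_\gs)(v) = +\infty$ for every $v$.

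Granted the kernel estimate, for $\gs > \half$ the identity above would make $\mathcal{P}_{\gth, 2}(f;\gs)$ comparable to the weighted Poincar\'e series $\sum_v f(v)\, \tilde\gth_\gs(d_\Ga(v,e))\, e^{-\gs\gd\, d_\Ga(v, e)}$ where $\tilde\gth_\gs(r) \df (r + C_\gs)\gth^2(r)$ is slow-growing in $r$ for each fixed $\gs$; its convergence threshold then coincides with $\gd(f)$, and together with the constraint $\gs > \half$ this yields finiteness exactly when $\gs\gd > \max\{\half\gd, \gd(f)\}$. Below the threshold, divergence would come from two sources: for $\gs \le \half$, the kernel itself is infinite and $f \not\equiv 0$ forces $\mathcal{P}_{\gth, 2}(f; \gs) = +\infty$; for $\half < \gs$ with $\gs\gd < \gd(f)$, restricting the convolution to $u = e$ would yield the trivial lower bound $(\phi_\gs \star \phi_\gs)(v) \ge \gth(0)\phi_\gs(v)$, whence $\mathcal{P}_{\gth, 2}(f; \gs) \succ \mathcal{P}_\gth(f; \gs) = +\infty$ via Lemma \ref{lem:patt:arg}.

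Divergence at the critical value $\gs\gd \to \max\{\half\gd, \gd(f)\}^+$ would then split according to which term realises the maximum. If $\gd(f) > \half\gd$, the same $u = e$ lower bound together with property $(3)$ of Lemma \ref{lem:patt:arg} would force $\mathcal{P}_{\gth, 2}(f; \gs_n) \to +\infty$ along a critical sequence. If $\gd(f) \le \half\gd$, the factor $C_\gs \to +\infty$ coming from the kernel estimate would be the driver: $\mathcal{P}_{\gth, 2}(f;\gs) \succ C_\gs \sum_v f(v)\, \gth^2(d_\Ga(v, e))\, e^{-\gs\gd d_\Ga(v, e)}$, with the residual series bounded below by a positive constant for $\gs\gd > \gd(f)$ (or itself divergent when $\gd(f) = \half\gd$), ensuring $\mathcal{P}_{\gth, 2}(f; \gs) \to +\infty$. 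The hard part will be the geometric kernel estimate itself, in particular tracking the exact $C_\gs$ blow-up as $\gs \to \half^+$ and controlling the $\gth$-ratios appearing in Lemma \ref{lem:tech:v2} via the slow-growth property so that the final comparison depends only on $d_\Ga(v, e)$ up to slow-growing factors.
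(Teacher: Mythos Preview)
Your approach is essentially the paper's: the same change of variables $v=g^{-1}h$ reduces the double sum to a single Poincar\'e-type series weighted by the auto-convolution kernel, and the spherical shell analysis via Lemmas \ref{lem:tech:v1}--\ref{lem:tech:v2} is exactly how the paper handles the upper bound (finiteness for $\gs\gd>\max\{\half\gd,\gd(f)\}$). Your $u=e$ restriction for the divergence when $\gd(f)>\half\gd$ is also the paper's argument.

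Two points of divergence. First, your two-sided kernel estimate $(\phi_\gs\star\phi_\gs)(v)\asymp_\gs (r+C_\gs)\,\gth^2(r)\,e^{-\gs\gd r}$ is stronger than what is needed and the $\gth^2(r)$ factor is suspect: along the tube from $e$ to $v$ the product $\gth(d(u,e))\gth(d(v,u))$ ranges from $\gth(0)\gth(r)$ to $\gth(r/2)^2$, none of which are uniformly $\asymp\gth^2(r)$, and Lemma \ref{lem:tech:v2} only yields an upper bound with the slack $\gth^2(Rm)e^{\e r}$ (shell-dependent, not $\gth^2(r)$). Fortunately the proposition needs much less: any slow-growing upper bound suffices for finiteness, and for the lower bounds one can discard $\gth$ via $\gth\ge1$.

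Second, your plan for the divergence at $\gs\to\half^+$ (tracking $C_\gs\to\infty$ through the kernel) is the most delicate part of your outline, and the paper sidesteps it entirely with a one-line trick: pick any $\gamma$ with $f(\gamma)>0$ and restrict to pairs $(g,g\gamma)$, giving
\[
\mathcal{P}_{\gth,2}(f;\gs)\;\succ_\gamma\; f(\gamma)\sum_{g\in\Ga}\gth^2(d_\Ga(g,e))\,e^{-2\gs\gd d_\Ga(g,e)}\;\ge\; f(\gamma)\sum_{g\in\Ga}e^{-2\gs\gd d_\Ga(g,e)},
\]
and the right-hand side diverges as $\gs\to\half^+$ since $\Ga$ is of divergent type. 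This simultaneously shows unboundedness for $\gs\le\half$ and blow-up at the threshold, with no need to chase the $\gs$-dependence of the implicit constants in Lemma \ref{lem:tech:v1}.
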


\begin{proof}
Let $\gamma\in \{f>0\}$ and $\gs>0$ such that $\mathcal{P}_{\gth,2}(f;\gs)$ is finite.
Then
\begin{align*}
f(\gamma)&\sum_{g\in\Ga} (\gth(d_\Ga(g,e)))^2e^{-2\gs\gd d_\Ga(g,e)}\\
&\asymp_{\gth,\gamma} f(\gamma)\sum_{g\in\Ga} \gth(d_\Ga(g\gamma,e))e^{-\gs\gd d_\Ga(g\gamma,e)}\gth(d_\Ga(g,e))e^{-\gs\gd d_\Ga(g,e)}\\
&\le\mathcal{P}_{\gth,2}(f;\gs)
\end{align*}
and thus we must have $\gs>\half$.
On the other hand
$$\mathcal{P}_{\gth}(f;\gs)=\sum_\Ga e^{-\gs\gd d_\Ga(g,e)}\gth(d_\Ga(g,e))f(g)\le \mathcal{P}_{\gth,2}(f;\gs)$$ 
and thus 
$\mathcal{P}_{\gth,2}(f;\gs)\xrightarrow{\gs\rightarrow \max\{\half\gd,\gd(f)\}^+}+\infty$
as $\Ga$ is of divergence type \cite[Cor 7.3]{MR1214072}.

Let us prove that $\mathcal{P}_{\gth,2}(f;\gs)$ is finite whenever $\gs\gd>\max\{\gd(f),\half\gd\}$.
Assume $\gd(f)>\half\gd$ (the case $\gd(f)\le\half\gd$ follows from a similar approach together with remark \ref{rem:tech:half}).
Let $\gs$ such that $\gs\gd>\gd(f)$, $\e$ such that $(\gs-3\e)\gd>\gd(f)$ and $T$ such that $\gth(u+t)\le e^{\gd\e u}\gth(t)$ for all $u\ge0$ and $t\ge T$ as in Lemma \ref{lem:patt:arg}.
Using Lemma \ref{lem:tech:v2} and its notation:
\begin{align*}
\mathcal{P}_{\gth,2}&(f;\gs)
=\sum_{\Ga\times\Ga} f(g^{-1}h)\gth(d_\Ga(g,e))e^{-\gs \gd d_\Ga(g,e)}\gth(d_\Ga(h,e))e^{-\gs \gd d_\Ga(h,e)}\\
&=_{h'=g^{-1}h}\sum_{h'\in\Ga} f(h')\gth(d_\Ga(h',e))e^{-\gs \gd d_\Ga(h',e)}[\sum_{g\in\Ga} \gth(d_\Ga(g,e))e^{-\gs \gd d_\Ga(g,e)}\pi_{\gth,\gs}(g){\bf1}_{h'}]\\
&=\sum_{h\in\Ga} f(h)\gth(d_\Ga(h,e))e^{-\gs \gd d_\Ga(h,e)}(\sum_{m:Rm\le T}+\sum_{m:T< Rm})\mathcal{S}_{\gth,m}(\gs,\gs;h)\\
&\le\|\sum_{m:Rm\le T}\mathcal{S}_{\gth,m}(\gs,\gs;\bullet)\|_\infty\mathcal{P}_{\gth}(f,\gs)\\
&+\sum_{h\in\Ga} f(h)\gth(d_\Ga(h,e))e^{-\gs \gd d_\Ga(h,e)}\sum_{m:T< Rm}\mathcal{S}_{\gth,m}(\gs,\gs;h)
\end{align*}

It enough to prove that the second term, $I$, is finite.

\begin{align*}
I&=\sum_{h\in\Ga} f(h)\gth(d_\Ga(h,e))e^{-\gs\gd d_\Ga(h,e)}(\sum_{m:T< Rm\le d_\Ga(h,e)}+\sum_{m:\max\{T,d_\Ga(h,e)\}< Rm})\mathcal{S}_{\gth,m}(\gs,\gs;h)\\
&\prec\sum_{h\in\Ga} f(h)\gth(d_\Ga(h,e))e^{-(\gs-\e) \gd d_\Ga(h,e)}\sum_{m:T< Rm\le d_\Ga(h,e)}e^{2\e\gd Rm}\\
&+\sum_{h\in\Ga} f(h)\gth(d_\Ga(h,e))e^{(\gs-1-\e) \gd d_\Ga(h,e)}\sum_{m:\max\{T,d_\Ga(h,e)\}< Rm}e^{(1+2\e-2\gs)\gd Rm}\\
&\prec\frac{1}{e^{2\e\gd R}-1}\sum_{h\in\Ga} f(h)\gth(d_\Ga(h,e))e^{-(\gs-3\e) \gd d_\Ga(h,e)}\\
&+\frac{1}{1-e^{(1+2\e-2\gs)\gd R}}
\sum_{h\in\Ga} f(h)\gth(d_\Ga(h,e))e^{-(\gs-\e)\gd d_\Ga(h,e)}\\
&=\frac{1}{e^{2\e\gd R}-1}\mathcal{P}_{\gth}(f,\gs-3\e)+\frac{1}{1-e^{(1+2\e-2\gs)\gd R}}\mathcal{P}_{\gth}(f,\gs-3\e)
\end{align*}

that is finite.
\end{proof}

\begin{rem}
Note that $\mathcal{P}_{\gth=1,2}(\phi)$ diverges at $\half\gd$ whenever $\gd(f)\le\half\gd$, in other words no rearrangement is needed.
\end{rem}

\begin{rem}
Given a sequence of pointwise positive functions on $\Ga$, $(f_n)_n$, with  $\lim_n\gd(f_n)=\gd_\infty$ finite, $(\gs_n)_n$ a non-increasing sequence such that $\gs_n\gd>\gd(f_n)$ and $\gs_n\gd\rightarrow \gd_\infty$ and $\gth$ has in Lemma \label{lem:patt:arg}.
It follows from Proposition \ref{prop:two:div} that the sequence is well defined
$$\mathcal{P}_{\gth,2}(f_n;\gs_n)\df \sum_{g,h\in\Ga} f_n(g^{-1}h)\gth(d_\Ga(g,e))e^{-\gs_n \gd d_\Ga(g,e)}\gth(d_\Ga(h,e))e^{-\gs_n \gd d_\Ga(h,e)}$$
and 
$$\mathcal{P}_{\gth,2}(f_n;\gs_n)\xrightarrow{\gs\rightarrow \max\{\half\gd,\gd_\infty\}^+}+\infty$$
\end{rem}

\subsection{A revisited Gelfand-Naimark-Segal construction}\label{subsec:twist:poincare}\hfill\break

Let ${\boldsymbol\phi}=(\phi_n\simeq[\pi_{\phi_n},\mathcal{H}_{\phi_n},{\bf1}_{\phi_n}])_n$ in $\mathbb{P}_+(\Ga)$ such that $\lim_n\gd(\phi_n)=\gd({\boldsymbol\phi})\le \gd$ is well defined.
Let $(\gs_n)_n$ be a non-increasing sequence such that $\gs_n\gd>\gd(\phi_n)$ and $\gs_n\gd\rightarrow \gd({\boldsymbol\phi})$ and $\gth$ as in Lemma \ref{lem:patt:arg}. 
As a consequence of Lemma \ref{lem:sum:net} the sum 
$$\sum_g\gth(d_\Ga(g,e))e^{-\gs_n\gd d_\Ga(g,e)}\pi_{\phi_n}(g){\bf1}_{\phi_n}$$ 
is well defined in $\mathcal{H}_{\phi_n}$ if and only if
$$\sup_m\sum_{g,h\in F_m} \gth(d_\Ga(g,e))e^{-\gs_n\gd d_\Ga(g,e)}\gth(d_\Ga(h,e))e^{-\gs_n\gd d_\Ga(h,e)}\phi_n(g^{-1}h)$$
is finite for some exhaustion, $(F_m)_m$, of $\Ga$ by finite sets which is equivalent in the case $\gd_\infty\ge\half\gd$, 
using Proposition \ref{prop:two:div}, to
$$\mathcal{P}_{\gth}(\phi_n;\gs_n)=\sum_{g\in\Ga} \gth(d_\Ga(g,e))e^{-\gs_n\gd d_\Ga(g,e)}\phi_{n}(g)$$ is finite.

It follows from Subsection \ref{subsec:gns:twist} that the maps:
$$J_{n}:(\cC(\ol{\Ga}),\|\bullet\|_\infty)\rightarrow \mathcal{H}_{\phi_n};\quad f\mapsto \frac{1}{\sqrt{\mathcal{P}_{\gth,2}(\phi_n;\gs_n)}}\sum_z e^{-\gs_n\gd d_\Ga(z,e)}\gth(d_\Ga(z,e))f(z)\pi_{\phi_n}(z){\bf 1}_{\phi_n}$$
for $n\ge0$, are well defined bounded intertwiners between the representations 
$$\pi_{\gth,\gs_n}(g)f(z)=\frac{\gth(d_\Ga(g,z))}{\gth(d_\Ga(z,e))}e^{-\gs_n\gd b_z(g)}f(g^{-1}z)$$
on $\cC(\ol{\Ga})$ and $\pi_{\phi_n}$.

The pre-unitary structure on $ \cC(\ol{\Ga})$ is therefore given by the scalar products 
\begin{align*}
&B_n(f)=\|J_{n}(f)\|^2\\
&=\frac{1}{\mathcal{P}_{\gth,2}(\phi_n;\gs_n)}\sum_{g,h\in\Ga} f(g)\ol{f(h)}\phi_n(g^{-1}h)[\gth(d_\Ga(g,e))e^{-\gs_n\gd d_\Ga(g,e)}][\gth(d_\Ga(h,e))e^{-\gs_n\gd d_\Ga(h,e)}]\\
&=\iint f(g)\ol{f(h)}dm_n(g,h)
\end{align*}

where $m_n$ is a probability measure defined on the compact space $\ol{\Ga}\times \ol{\Ga}$ as
$$m_n(F)=\frac{1}{\mathcal{P}_{\gth,2}(\phi_n;\gs_n)}\sum_{\Ga\times \Ga}F(g,h)\phi_n(g^{-1}h)\gth(d_\Ga(g,e))e^{-\gs_n\gd d_\Ga(g,e)}\gth(d_\Ga(h,e))e^{-\gs_n\gd d_\Ga(h,e)}$$
for $F\in\cC(\ol{\Ga}\times\ol{\Ga})$.

\subsection{A measure theoretic perspective}\label{subsec:meas:pers}\hfill\break
In order to investigate the quadratic form $B_n$ introduced in Subsection \ref{subsec:twist:poincare} we consider $m$ a weak limit of sequence $(m_n)_n$ in $\tprob(\ol{\Ga}\times\ol{\Ga})$.

\begin{prop}\label{prop:fond:meas}
The measure $m$ is invariant by the flip of coordinates (i.e symmetric), $\Ga$-quasi-invariant, supported on $\dd \Ga\times \dd \Ga$ and satisfies
$$\frac{dg_*m}{dm}(\xi,\eta)=e^{-\gd({\boldsymbol\phi})  b_\xi(g^{-1})}e^{-\gd({\boldsymbol\phi})  b_\eta(g^{-1})}$$
for all $g\in\Ga$.
Moreover $m(\text{Diag}(\dd \Ga))=0$ whenever $\gd({\boldsymbol\phi})>\half\gd$.
\end{prop}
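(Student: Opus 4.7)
The symmetry of $m$ is inherited from each $m_n$: since every $\phi_n \in \mathbb{P}_+(\Ga)$ is real-valued and positive definite, $\phi_n(g^{-1}h) = \ol{\phi_n(h^{-1}g)} = \phi_n(h^{-1}g)$, so $m_n$ is flip-invariant, and this passes to the weak limit.

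For the support on $\dd\Ga \times \dd\Ga$, apply Cauchy--Schwarz to the positive-definite form $B_{\phi_n}$: pairing the Dirac ${\bf1}_{\{x\}}$ with $f(z) = \gth(d_\Ga(z,e))e^{-\gs_n\gd d_\Ga(z,e)}$ gives
\[
\sum_{y \in \Ga} \phi_n(x^{-1}y)\,f(y) \le \phi_n(e)^{1/2}\,\mathcal{P}_{\gth,2}(\phi_n,\gs_n)^{1/2},
\]
whence $m_n(K\times\ol\Ga) \prec_K \mathcal{P}_{\gth,2}(\phi_n,\gs_n)^{-1/2}$ for any finite $K \subset \Ga$. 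Since $\mathcal{P}_{\gth,2}(\phi_n,\gs_n) \to +\infty$ by Lemma \ref{lem:patt:arg} combined with the Cauchy--Schwarz inequality $\mathcal{P}_\gth(\phi_n,\gs_n)^2 \le \mathcal{P}_{\gth,2}(\phi_n,\gs_n)$, and since $K\times\ol\Ga$ is clopen (as $\Ga$ is discrete in $\ol\Ga$), Portmanteau yields $m(K\times\ol\Ga) = 0$; countability concludes that $m$ is supported on $\dd\Ga\times\dd\Ga$.

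For the Radon--Nikodym derivative, fix $g \in \Ga$ and apply the change of variables $x' = gx$, $y' = gy$ in the defining sum of $m_n$. Using $\Ga$-invariance of $d_\Ga$, the pushforward $g_*m_n$ admits density with respect to $m_n$ equal to
\[
\rho_{n,g}(x,y) = \frac{\gth(d_\Ga(x,g))}{\gth(d_\Ga(x,e))}\cdot\frac{\gth(d_\Ga(y,g))}{\gth(d_\Ga(y,e))}\cdot e^{-\gs_n\gd(d_\Ga(x,g)-d_\Ga(x,e))}\cdot e^{-\gs_n\gd(d_\Ga(y,g)-d_\Ga(y,e))}.
\]
Three convergences let one pass to the limit: $\gs_n\gd \to \gd({\boldsymbol\phi})$; the slow growth of $\gth$ (Lemma \ref{lem:patt:arg}(i)) forces the $\gth$-ratios to tend to $1$ as $(x,y)$ approaches $\dd\Ga\times\dd\Ga$; and strong hyperbolicity (Subsection \ref{subsec:prelim:strong}) provides continuity of the Busemann cocycle on $\ol\Ga$. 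Choosing a continuous extension $\tilde\rho_g$ of the claimed limit density to $\ol\Ga\times\ol\Ga$, splitting $\int F\rho_{n,g}\,dm_n = \int F\tilde\rho_g\,dm_n + \int F(\rho_{n,g}-\tilde\rho_g)\,dm_n$, handling the error via the support estimate, and combining with $\int F\,dg_*m_n = \int F\circ g\,dm_n \to \int F\,dg_*m$ identifies $g_*m$ with $\rho_g \cdot m$ on $\dd\Ga\times\dd\Ga$.

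The diagonal vanishing under $\gd({\boldsymbol\phi}) > \half\gd$ is the main obstacle. Consider the open neighborhoods $W_N = \{(x,y) \in \ol\Ga\times\ol\Ga : (x,y)_e > N\}$ of $\text{Diag}(\dd\Ga)$ (the Gromov product is upper semi-continuous on $\ol\Ga\times\ol\Ga$ and equals $+\infty$ on the diagonal), so that $\bigcap_N(W_N \cap \dd\Ga\times\dd\Ga) = \text{Diag}(\dd\Ga)$. My plan is to cover $W_N \cap (\Ga\times\Ga)$ with bounded multiplicity by the shadow tubes $\bigcup_{n \ge N/R}\bigcup_{z \in C_n} T_z \times T_z$: on each $T_z \times T_z$ the shift $x = zx'$, $y = zy'$ factors $\phi_n(x^{-1}y) = \phi_n(x'^{-1}y')$ and extracts a prefactor of order $\gth(Rn)^2 e^{-2\gs_n\gd Rn}$, while the remaining restricted two-variable Poincar\'e sum over $S_z \times S_z$ is bounded uniformly via Lemma \ref{lem:tech:v2} and Corollary \ref{cor:tech:sum:last} combined with the slow growth of $\gth$. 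Exploiting $d_\Ga(x,e) + d_\Ga(y,e) = 2(x,y)_e + d_\Ga(x,y)$ absorbs an additional $e^{-2\gs_n\gd N}$ factor, and summing over $z \in C_n$ (with $|C_n| \asymp e^{\gd Rn}$) and $n \ge N/R$ yields a geometric series with ratio $e^{(1-2\gs_n)\gd R}$, whose convergence requires exactly $\gs_n > \half$. Since $\gs_n\gd \to \gd({\boldsymbol\phi}) > \half\gd$, this produces $m_n(W_N) = O(\gth^2(N)e^{(1-2\gs_n)\gd N}/\mathcal{P}_{\gth,2}(\phi_n,\gs_n)) \to 0$; Portmanteau and continuity of measures from above then give $m(\text{Diag}(\dd\Ga)) = 0$. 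The critical regime $\gs_n \to \half$ fails precisely because $1-2\gs_n \to 0$, consistently with Remark \ref{rem:tech:half}.
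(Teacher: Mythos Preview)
Your treatment of symmetry, of the support on $\dd\Ga\times\dd\Ga$, and of the Radon--Nikodym derivative is correct and essentially identical to the paper's (the paper isolates the passage to the limit of the densities as a separate lemma, but the content is the same Cauchy--Schwarz and uniform-convergence argument you give).

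For the diagonal vanishing the paper takes a completely different and far shorter route. It does not estimate $m_n(W_N)$ at all: instead it observes that if $m(\text{Diag}(\dd\Ga))>0$ then the normalised restriction $m|_{\text{Diag}}$, identified with a probability on $\dd\Ga$, inherits from the Radon--Nikodym formula the transformation rule of a $2\gd(\boldsymbol\phi)$-conformal density. Coornaert's uniqueness theorem for conformal densities on hyperbolic groups then forces $2\gd(\boldsymbol\phi)=\gd$, contradicting $\gd(\boldsymbol\phi)>\half\gd$. This soft argument uses no quantitative input beyond the already-established quasi-invariance.

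Your direct-estimate approach, by contrast, has a real gap. The assertion that ``the remaining restricted two-variable Poincar\'e sum over $S_z\times S_z$ is bounded uniformly'' is not justified, and the lemmas you invoke (Lemma~\ref{lem:tech:v2} and Corollary~\ref{cor:tech:sum:last}) concern sums of the form $\sum_g e^{-\gs\gd|g|}\pi_{\gs'}(g){\bf1}$ with no $\phi_n$-weight present; they cannot bound a double sum carrying the factor $\phi_n(x'^{-1}y')$. Concretely, after the shift $x=zx'$ the weight $\gth(|zx'|)$ becomes $\gth(|z|+|x'|)$, and slow growth only gives $\gth(|z|+|x'|)\le e^{\e|x'|}\gth(|z|)$; this effectively replaces $\gs_n$ by $\gs_n-\e/\gd$ in the inner sum. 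Since $\gs_n\gd-\gd(\phi_n)\to0$, for any fixed $\e>0$ and all large $n$ one has $(\gs_n-\e/\gd)\gd<\gd(\phi_n)$, so the bounding Poincar\'e sum is actually infinite. Even when $\gth\equiv1$ the restricted sum over the half-space $S_z\times S_z$ is comparable to the full $\mathcal{P}_{1,2}(\phi_n,\gs_n)$, not to an absolute constant; the bound you should obtain is $m_n(W_N)\prec e^{(1-2\gs_n)\gd N}$ with \emph{no} $\mathcal{P}_{\gth,2}$ in the denominator, and making even that precise requires controlling the overcounting across the scales $n'\ge N/R$, which your sketch does not do.
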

In order to prove Proposition \ref{prop:fond:meas} we need the following Lemma:

\begin{lem}
Let $(m_n)$ and $(m_n')_n$ be two sequences of probability measure on a compact metric space $Z$ such that $m_n\rightarrow m$ and $m_n'\rightarrow m'$ for the weak topology.
Assume $m_n$ is absolutely continuous with respect to $m_n'$ and the Radon-Nikodym derivatives $f_n=\frac{dm_n}{dm_n'}$ are continuous and converge uniformly to a function $f$.
Then $m$ is absolutely continuous with respect to $m'$ and $f=\frac{dm}{dm'}$.
\end{lem}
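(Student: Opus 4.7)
The plan is to test both measures $m$ and $f\cdot m'$ against an arbitrary continuous function $g\in \cC(Z)$ and use uniform convergence of $f_n$ together with weak convergence of $m_n'$ to identify them.

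First I note that $f$, being the uniform limit of the continuous functions $f_n$ on the compact space $Z$, is itself continuous and bounded, so $f\cdot m'$ is a well-defined finite Borel measure on $Z$. Given $g\in \cC(Z)$, the starting identity is
\begin{equation*}
\int_Z g\,dm_n \;=\; \int_Z g f_n\,dm_n',
\end{equation*}
which holds because $f_n=dm_n/dm_n'$. The left-hand side converges to $\int g\,dm$ by the assumed weak convergence $m_n\to m$, so the task reduces to showing the right-hand side converges to $\int g f\,dm'$.

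For the right-hand side I would split
\begin{equation*}
\Big|\int g f_n\,dm_n' - \int g f\,dm'\Big| \;\le\; \Big|\int g(f_n-f)\,dm_n'\Big| + \Big|\int g f\,dm_n' - \int g f\,dm'\Big|.
\end{equation*}
The first term is bounded by $\|g\|_\infty\|f_n-f\|_\infty\,m_n'(Z)=\|g\|_\infty\|f_n-f\|_\infty$ since each $m_n'$ is a probability measure, and this tends to $0$ by uniform convergence. The second term tends to $0$ because $gf\in\cC(Z)$ and $m_n'\to m'$ weakly. Combining, we obtain $\int g\,dm=\int gf\,dm'$ for every $g\in\cC(Z)$.

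Finally, since $Z$ is a compact metric space, finite Borel measures on $Z$ are uniquely determined by their action on $\cC(Z)$ (Riesz representation). Therefore $m=f\cdot m'$, which means $m\ll m'$ with $dm/dm'=f$. The only subtle point, and the one I would take care to phrase correctly, is the uniform control in the first term: it is precisely the fact that the total masses $m_n'(Z)$ are uniformly bounded (in fact equal to $1$) that lets the $\|f_n-f\|_\infty$ factor close the argument — without that, uniform convergence of densities alone would not suffice.
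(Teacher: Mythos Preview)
Your proof is correct and follows essentially the same approach as the paper: both test against an arbitrary $g\in\cC(Z)$, use the identity $\int g\,dm_n=\int g f_n\,dm_n'$, and split the error into one term controlled by $\|f_n-f\|_\infty$ (uniform convergence, with the total mass bound) and one term controlled by weak convergence applied to the continuous function $gf$. The paper's write-up is slightly terser and organizes the two limits in a different order, but the argument is the same.
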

\begin{proof}
As $f\in \cC(Z)$ for any $F\in\cC(Z)$, there exists $N$ such that $\|f-f_n\|_\infty<\e\|F\|^{-1}_\infty$ and $|m(fF)-m_n(fF)|<\e$ for all $n\ge N$.
It follows
\begin{align*}
|m(fF)-m_n'(F)|
&=|m(fF)-m_n(f_nF)|\\
&\le |m_n(|f_n-f|.|F|)|+|m(fF)-m_n(fF)|\\
&\le \|F\|_\infty\|f_n-f\|_\infty+|m(fF)-m_n(fF)|\le 2\e
\end{align*}
In other words $m_n'(F)\rightarrow m(fF)$ and thus $m(fF)=m'(F)$ for all $F\in\cC(Z)$.
\end{proof}

\begin{proof}[Proof of Proposition \ref{prop:fond:meas}]
The symmetry follows from the construction.
Let $(\pi_\phi,\mathcal{H}_\phi,{\bf1}_\phi)$ be the positive cyclic representation associated to $\phi$.

Given a neighbourhood, $W$, of $\dd\Ga\times \dd\Ga$ one can find a finite set, $F$, of $\Ga$ such that $W^c\subset \ol{\Ga}\times F\cup F\times \ol{\Ga}$.
Indeed otherwise one can construct a sequence $(x_n,y_n)\notin W$ with  $x_n,y_n\rightarrow\dd \Ga$.

Let $F\in\cC(\ol{\Ga}\times \ol{\Ga})$ with $\|F|_{W}\|_\infty\le \e$, then
\begin{align*}
&|m_n(F)-m_n(F|_{W})|\le \|F\|_\infty m_n(\ol{\Ga}\times F\cup F\times \ol{\Ga})\\
&\le\frac{2\|F\|_\infty }{\mathcal{P}_{\gth,2}(\phi_n;\gs_n)}\sum_{F} \gth(d_\Ga(g,e))e^{-\gs_n\gd d_\Ga(g,e)}(\pi_{\phi_n}(g){\bf1}_{\phi_n},\sum_\Ga \gth(d_\Ga(h,e))e^{-\gs_n\gd d_\Ga(h,e)}\pi_{\phi_n}(h){\bf1}_{\phi_n})\\
&\le_\text{Cauchy-Schwarz} \frac{2\|F\|_\infty}{\sqrt{\mathcal{P}_{\gth,2}(\phi_n;\gs_n)}}\sum_{g\in F}  \gth(d_\Ga(g,e))e^{-\gs_n\gd d_\Ga(g,e)}\xrightarrow{n\rightarrow+\infty} 0
\end{align*} 
as $F$ is finite and $\mathcal{P}_{\gth,2}(\phi_n;\gs_n)\rightarrow+\infty$.
It follows that $m$ is supported in $\dd\Ga\times \dd \Ga$.

Using strong hyperbolicity the functions
$$D_\gs(\gamma;g,h)=\frac{\gth(d_\Ga(g,\gamma))}{\gth(d_\Ga(g,e))}e^{-\gs\gd b_g(\gamma,e)}
\frac{\gth(d_\Ga(h,\gamma))}{\gth(d_\Ga(h,e))}e^{-\gs\gd b_h(\gamma,e)}$$
on $\Ga\times\Ga$  extend continuously to $\ol{\Ga}\times\ol{\Ga}$
as 
\[   
D_\gs(\gamma;\xi,h)=
     \begin{cases}
       e^{-\gs\gd b_\xi(\gamma,e)}
\frac{\gth(d(h,\gamma))}{\gth(d(h,e))}e^{-\gs\gd b_h(\gamma,e)} &\quad \text{on $\dd\Ga\times \Ga$}\\
       e^{-\gs\gd b_\xi(\gamma,e)}e^{-\gs\gd b_\eta(\gamma,e)} &\quad\text{on $\dd\Ga\times \dd\Ga$.} 
     \end{cases}
\]

A direct computation shows that
$$\frac{d\gamma_*m_n}{dm_n}(g,h)
=D_{\gs_n}(\gamma;g,h)$$
on $\Ga\times\Ga$ for all $\gamma\in\Ga$.

The sequences of probability measures $(m_n)_n$ and $(\gamma_*m_n)_n$ converge respectively to $m$ and $\gamma_*m$. 
Moreover $D_{\gs_n}(\gamma;)$ converges uniformly to $D_{s({\boldsymbol\phi})}(\gamma;\bullet)$ for all $\gamma\in\Ga$. 
It follows that $m$ is $\Ga$-quasi-invariant and 
$$\frac{dg_*m}{dm}(\xi,\eta)
=e^{-\gs\gd b_\xi(\gamma,e)}e^{-\gs\gd b_\eta(\gamma,e)}$$
as $m$ is supported on $\dd\Ga\times \dd\Ga$.

Assume $s>\half$ and $m(\text{Diag}(\dd\Ga))\neq0$, as $m$ is quasi-invariant and $\dd\Ga$ minimal  $\frac{1}{m(\text{Diag}(\dd\Ga))}m|_{\dd\Ga}$ is a $2s\gd$-conformal probabilities supported on $\dd\Ga$ with $2s\gd>\gd$ which contradict the uniqueness of the conformal measure for hyperbolic groups  (see \cite[Thm. 7.7]{MR1214072}) .
\end{proof}

Note that $f\mapsto m(f\otimes f)^\half$ is a $\pi_s$-invariant (with $s\gd=s({\boldsymbol\phi})\gd=\gd({\boldsymbol\phi})$) semi-norm on $\cC(\dd \Ga)$ and thus satisfies the triangle inequality.
Together with the conformal properties of $m$ it allows us to recover a partial shadow Lemma:
\begin{lem}[Upper shadow lemma]\label{lem:half:shadow}
Given $f\in\cC(\dd\Ga)$ a function supported on a Shadow $\mathcal{O}(\gamma,r_0)$.
Then $$m(f\otimes f)^\half\prec  e^{2\gd({\boldsymbol\phi})r_0}e^{-\gd({\boldsymbol\phi})d_\Ga(\gamma,e)}\|f\|_\infty$$

\end{lem}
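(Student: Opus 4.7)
Since $|f|\le\|f\|_\infty$ and $f$ is supported on the shadow $B=\mathcal{O}(\gamma,r_0)\subset\dd\Ga$, the first step is the trivial domination
\[
m(f\otimes f)\le \|f\|_\infty^{2}\, m(B\times B),
\]
so the task reduces to bounding $m(B\times B)$. The plan is to transport $B$ to a favorable set via the $\Gamma$-action and to use the fact that $m$ is a probability measure.

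The second step uses the conformal quasi-invariance established in Proposition \ref{prop:fond:meas}. Writing $B\times B=\gamma^{-1}(\gamma B\times\gamma B)$ and using $\frac{dg_*m}{dm}(\xi,\eta)=e^{-\gd({\boldsymbol\phi})b_\xi(g^{-1})}e^{-\gd({\boldsymbol\phi})b_\eta(g^{-1})}$ with $g=\gamma$, one obtains
\[
m(B\times B)=\int_{\gamma B\times\gamma B}e^{-\gd({\boldsymbol\phi})b_\xi(\gamma^{-1})}\,e^{-\gd({\boldsymbol\phi})b_\eta(\gamma^{-1})}\,dm(\xi,\eta).
\]

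The third step is the only geometric input: for $\zeta\in B=\mathcal{O}(\gamma,r_0)$ one has $(\zeta,\gamma)_e\ge d_\Ga(\gamma,e)-r_0$, so by the identity $b_\zeta(\gamma,e)=d_\Ga(\gamma,e)-2(\gamma,\zeta)_e$ one gets $b_\zeta(\gamma,e)\le 2r_0-d_\Ga(\gamma,e)$. Setting $\zeta=\gamma^{-1}\xi$ and using the $\Gamma$-equivariance $b_{g\zeta}(gx,gy)=b_\zeta(x,y)$ with $g=\gamma$, $x=\gamma^{-1}$, $y=e$, this rewrites on $\gamma B$ as
\[
b_\xi(e,\gamma^{-1})\le 2r_0-d_\Ga(\gamma,e),\qquad\text{i.e.}\qquad b_\xi(\gamma^{-1},e)\ge d_\Ga(\gamma,e)-2r_0,
\]
so $e^{-\gd({\boldsymbol\phi})b_\xi(\gamma^{-1})}\le e^{\gd({\boldsymbol\phi})(2r_0-d_\Ga(\gamma,e))}$ on $\gamma B$.

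Combining, and using $m(\gamma B\times\gamma B)\le m(\dd\Ga\times\dd\Ga)=1$, I get $m(B\times B)\le e^{4\gd({\boldsymbol\phi})r_0}e^{-2\gd({\boldsymbol\phi})d_\Ga(\gamma,e)}$. Taking square roots after combining with the initial $\|f\|_\infty^2$ bound yields the claim. The only mildly delicate point is the conversion between $b_\zeta(\gamma,e)$ on $B$ and $b_\xi(\gamma^{-1},e)$ on $\gamma B$: this is a one-line use of the $\Gamma$-equivariance of the Busemann cocycle, and it is where the shift from the definition of the shadow to the conformal density really happens.
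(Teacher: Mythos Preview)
Your overall strategy is the same as the paper's, but the direction of the transport is wrong, and this is not a cosmetic slip: the Busemann bound you claim on $\gamma B$ is \emph{false} in general.

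Concretely, you assert that for $\xi\in\gamma B$ one has $b_\xi(\gamma^{-1},e)\ge d_\Ga(\gamma,e)-2r_0$. Take $\Ga=F_2=\langle a,b\rangle$, $\gamma=a^nba^{-n}$ (so $d_\Ga(\gamma,e)=2n+1$), $r_0=1$, and $\zeta=a^nba^{-n}b^\infty\in\mathcal{O}(\gamma,1)$. Then $\gamma\zeta=a^nb^2a^{-n}b^\infty$ and $(\gamma^{-1},\gamma\zeta)_e=(a^nb^{-1}a^{-n},a^nb^2a^{-n}b^\infty)_e=n$, hence $b_{\gamma\zeta}(\gamma^{-1},e)=(2n+1)-2n=1$, while your bound would require $1\ge 2n-1$. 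The equivariance you invoke with $g=\gamma$, $x=\gamma^{-1}$, $y=e$ actually yields $b_{\gamma\zeta}(e,\gamma)=b_\zeta(\gamma^{-1},e)$, which involves $b_\zeta(\gamma^{-1},e)$ rather than the quantity $b_\zeta(\gamma,e)$ you controlled from the shadow condition; so the equivariance step does not prove what you wrote, and indeed the conclusion cannot hold.

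The fix is exactly what the paper does: transport to $\gamma^{-1}B$ instead. Using the $\pi_s$-invariance of $f\mapsto m(f\otimes f)^{1/2}$ (equivalently, change variables with $g=\gamma^{-1}$), one lands on $\gamma^{-1}B$, where for $\xi=\gamma^{-1}\zeta$ with $\zeta\in B$ the equivariance gives $b_{\gamma^{-1}\zeta}(\gamma^{-1},e)=-b_\zeta(\gamma,e)\ge d_\Ga(\gamma,e)-2r_0$ directly from the shadow condition. Everything else in your outline (the $\|f\|_\infty$ domination and the use of $m$ being a probability) is fine.
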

This Lemma is used in Theorem's \ref{thm:amenable:gene} proof.
\begin{proof}
For $\xi\in \mathcal{O}(\gamma,r_0)$ one has:
\begin{align*}
b_{\gamma^{-1}\xi}(\gamma^{-1},e)+d_\Ga(\gamma,e)&=
-b_{\xi}(\gamma,e)+d_\Ga(\gamma,e)\\
&=2(\xi,\gamma)\ge  2d_\Ga(\gamma,e)-2r_0
\end{align*} 
and thus $b(\gamma^{-1},e)\ge d_\Ga(\gamma,e)-2r_0$ on $\gamma^{-1}\mathcal{O}(\gamma,r_0)$.
It follows 
\begin{align*}
m(f{\bf1}_{\mathcal{O}(\gamma,r_0)}{\otimes}f{\bf1}_{\mathcal{O}(\gamma,r_0)})^\half
&=m((\pi_s(\gamma^{-1})f).{\bf1}_{\gamma^{-1}\mathcal{O}(\gamma,r_0)}{\otimes}(\pi_s(\gamma^{-1})f).{\bf1}_{\gamma^{-1}\mathcal{O}(\gamma,r_0)})^\half\\
&\le e^{2\gd({\boldsymbol\phi})r_0}e^{-\gd({\boldsymbol\phi})d_\Ga(\gamma,e)}m(g^*f{\otimes}g^*f)^\half\\
&\le e^{2\gd({\boldsymbol\phi})r_0}e^{-\gd({\boldsymbol\phi})d_\Ga(\gamma,e)}\|f\|_\infty
\end{align*}
\end{proof}

The $\Ga$-invariant coordinate projection maps $p_i:\dd\Ga\times \dd\Ga\rightarrow \dd\Ga$ ($i=1,2$)  induce a disintegration of the $\Ga$-quasi-invariant probability measure $m$ over the compact set $\dd\Ga$. As $m$ is symmetric we denote $(p_1)_*m\cong_\text{isom.}(p_2)_*m=\nu$.

As a consequence of the desintegration theorem (see \cite[Chapter \romannumeral 3]{MR488194} or \cite{MR1799683}) there exist a family on probability measures, $(\rho_\xi)_{\xi\in\dd \Ga}$  on $\dd\Ga$ and a multiplicative cocycle $c:\Gamma\times \dd \Ga\rightarrow\BR_+$ such that for any positive Borel function $\vp$ on $\dd \Ga$
$\xi\mapsto \rho_\xi(\vp)$ is also Borel,
$$\int \vp(\gamma^{-1}\eta)c(\gamma,\eta)d\rho_\xi(\eta)=\int \vp(\eta)d\rho_{\gamma^{-1} \xi}(\eta)$$
, i.e the probability $\gamma_*\rho_\xi$ and $\rho_{\gamma^{-1} \xi}$ are equivalent,
and
$$\iint_{\dd\Ga\times\dd\Ga} \Phi dm=\int_{\dd\Ga}\int_{\dd\Ga} \Phi(\xi,\eta)d\rho_\xi(\eta)d\nu(\xi)$$
for all positive measurable function $\Phi\in L^0_+(\dd \Ga\times \dd \Ga)$.

The family $(\rho_\xi)_\xi$ induces $\Ga$-equivariant conditional expectation, $E$, on $(\dd \Ga,\nu)$ given by 
$$E(\Phi)_\xi=\int_{\dd\Ga} \Phi(\xi,\eta)d\rho_\xi(\eta)$$ for all positive Borel function on $\dd\Ga\times\dd \Ga$
such that
$$\int f(\xi)f'(\eta)dm(\xi,\eta)=\int f(\xi)E_\xi(f')d\nu(\xi)$$ 
for all $f,f'\in L^0_+(\dd X)$.
In particular $E$ induces a contraction on every $L^p(\nu)$ for $1\le p\le\infty$ and satisfies the monotone convergence theorem.

\begin{defn}\label{def:gamma:s:cond}
Given $\mu$ a quasi-invariant probability measure on $\ol{\Ga}$, a conditional expectation, $E$, is called $(\Ga,s,\mu)$-conformal if 
$$E(\pi_s(g)f)=\pi_{s}^*(g)E(f)$$
for $f\in L^2(\ol{\Ga})$ and $\pi_{s}^*(g)$ the adjoint of $\pi_{s}(g)$ in $L^2(\ol{\Ga})$.
\end{defn}

\begin{prop}[Generalized Knapp-Stein relation]\label{prop:gene:knap:stein}
For all $1\le p\le\infty$, the conditional expectation $E$ induces a contraction on $L^p(\dd \Ga,\nu)$ that is $(\Ga,s,\mu)$-conformal.
Moreover $E$ is positive definite on $L^2(\dd \Ga,\nu)$.
\end{prop}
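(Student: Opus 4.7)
The plan is to address the three assertions separately, each drawing on a distinct structural property of the measure $m$ established in Proposition \ref{prop:fond:meas}. For contraction on $L^p(\nu)$ with $1\le p<\infty$, since the fibre measures $\rho_\xi$ are probabilities and $E(f)(\xi)=\int f\,d\rho_\xi$, Jensen's inequality gives $|E(f)(\xi)|^p\le E(|f|^p)(\xi)$ pointwise, so
\[
\|E(f)\|_{L^p(\nu)}^p\le\int E(|f|^p)\,d\nu=\iint|f(\eta)|^p\,dm(\xi,\eta)=\int|f|^p\,d\nu=\|f\|_{L^p(\nu)}^p.
\]
The penultimate equality uses flip-symmetry of $m$, which forces $(p_2)_*m=(p_1)_*m=\nu$. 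The case $p=\infty$ is immediate from $|E(f)(\xi)|\le\|f\|_\infty$.

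Positive definiteness on $L^2(\nu)$ is inherited directly from the construction of $m$. For $f\in\cC(\dd\Ga)$, the disintegration yields $(Ef,f)_{L^2(\nu)}=\iint f(\xi)\overline{f(\eta)}\,dm(\xi,\eta)$, and combining weak convergence $m_n\to m$ with the defining identity $B_n(f)=\|J_n(f)\|_{\mathcal{H}_{\phi_n}}^2$ from Subsection \ref{subsec:twist:poincare} gives
\[
(Ef,f)_{L^2(\nu)}=\lim_n\iint f\otimes\bar f\,dm_n=\lim_n\|J_n(f)\|_{\mathcal{H}_{\phi_n}}^2\ge 0.
\]
Density of $\cC(\dd\Ga)$ in $L^2(\nu)$ combined with the contraction property from the first step extends this to all of $L^2(\nu)$. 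As a byproduct, symmetry of $m$ also makes $E$ self-adjoint: $(Ef_1,f_2)=\iint f_1(\eta)\overline{f_2(\xi)}\,dm=\iint f_1(\xi)\overline{f_2(\eta)}\,dm=(f_1,Ef_2)$.

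The central step is the conformality relation of Definition \ref{def:gamma:s:cond}. The decisive input is the product form of the Radon-Nikodym derivative $\frac{dg_*m}{dm}(\xi,\eta)=e^{-s\gd b_\xi(g^{-1})}e^{-s\gd b_\eta(g^{-1})}$: its factorisation in $\xi$ and $\eta$ is exactly mirrored by the tensor structure of $\pi_s(g)\otimes\pi_s(g)$ acting on functions on $\dd\Ga\times\dd\Ga$. The concrete strategy is to prove joint $\pi_s$-invariance of the sesquilinear form $Q(f_1,f_2)=(f_1,Ef_2)_{L^2(\nu)}=\iint f_1(\xi)\overline{f_2(\eta)}\,dm(\xi,\eta)$, i.e.\ $Q(\pi_s(g)f_1,\pi_s(g)f_2)=Q(f_1,f_2)$. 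Expanding the left-hand side, performing the substitution $(\xi,\eta)\mapsto(g\xi',g\eta')$, and invoking the cocycle identity $b_{g\xi'}(g,e)=-b_{\xi'}(g^{-1},e)$ causes the two Busemann exponentials from $\pi_s(g)\otimes\pi_s(g)$ to cancel exactly against the two factors of the Jacobian, recovering $\iint f_1(\xi')\overline{f_2(\eta')}\,dm(\xi',\eta')=Q(f_1,f_2)$. In operator form this invariance reads $\pi_s^*(g)E\pi_s(g)=E$, which is the conformality relation $E\pi_s(g)=\pi_s^*(g)E$ of Definition \ref{def:gamma:s:cond} after rearrangement using the anti-multiplicativity $\pi_s^*(g^{-1})=\pi_s^*(g)^{-1}$.

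The main obstacle I anticipate lies in this last bookkeeping: one has to verify that the two Busemann exponentials produced by $\pi_s(g)\otimes\pi_s(g)$ pair off correctly with the two factors of the Jacobian, and that the complex conjugation appearing on the second slot of the sesquilinear form does not spoil the cancellation. Strong hyperbolicity enters here only through the continuous extension of the Busemann cocycle to $\ol{\Ga}$ recorded in Subsection \ref{subsec:prelim:strong}, so no additional geometric input is needed beyond what is already assembled in Section \ref{sec:prelim}.
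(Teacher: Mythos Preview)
Your argument is correct and matches the paper's approach: both establish conformality from the product Radon--Nikodym derivative of $m$, with your joint $\pi_s$-invariance of the sesquilinear form $Q$ being a symmetric repackaging of the paper's one-sided duality computation (pairing $E\pi_s(g)\psi_2$ against an arbitrary test function $\psi_1$). One small bookkeeping slip in your last line: from $\pi_s^*(g)E\pi_s(g)=E$ the rearrangement actually yields $E\pi_s(g)=\pi_s^*(g)^{-1}E=\pi_s^*(g^{-1})E$ rather than $\pi_s^*(g)E$; since the identity is asserted for all $g\in\Gamma$ this is only a relabelling $g\leftrightarrow g^{-1}$ and does not affect the content.
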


\begin{proof}
The contraction property and the positivity on $L^2(\dd \Ga,\nu)$ follows respectively from general facts about conditional expectations \cite{MR488194} and the origin of the measure  $m$.

A direct computation shows:
$$\pi_{s({\boldsymbol\phi})}^*(g)\psi(\xi)=e^{\gd({\boldsymbol\phi}) b_\xi(g)}\frac{dg^{-1}_*\nu}{d\nu}(\xi)\psi(g^{-1}\xi)$$

Given $\psi_1,\psi_2\in L^2(\dd \Ga)$ one has:
\begin{align*}
&\int \psi_1(\xi)\int e^{-\gd({\boldsymbol\phi}) b_\eta(g)}\psi_2(g^{-1}\eta)d\rho_\xi(\eta)d\nu(\xi)\\
&=\int \psi_1(g\xi)e^{\gd({\boldsymbol\phi}) b_\eta(g^{-1})}\psi_2(\eta)d(g^{-1})_*m(\xi,\eta)
=\int e^{-\gd({\boldsymbol\phi}) b_\xi(g^{-1})}\psi_1(g\xi)E(\psi_2)(\xi)d\nu(\xi)\\
&=\int \psi_1(\xi)\frac{dg^{-1}_*\nu}{d\nu}(\xi)e^{\gd({\boldsymbol\phi}) b_\xi(g)}E(\psi_2)(g^{-1}\xi)d\nu(\xi)
\end{align*}

In other words
$$E(e^{-\gd({\boldsymbol\phi}) b(g)}g^*\psi)(\xi)=e^{\gd({\boldsymbol\phi}) b_\xi(g)}\frac{dg^{-1}_*\nu}{d\nu}(\xi)E(\psi)(g^{-1}\xi)$$
for all $g\in\Ga$ and $\psi\in\cC(\dd \Ga)$.

\end{proof}

\begin{rem}
Note that 
$$\frac{dg_*\nu}{d\nu}(\xi)=e^{-\gd({\boldsymbol\phi})b_\xi(g)}E(e^{-\gd({\boldsymbol\phi}) b_\bullet(g)})(\xi)$$
$\nu$-almost surely and thus
$$E(\pi_s(g)\psi)=E(e^{-\gd({\boldsymbol\phi}) b_\bullet(g)})E(\psi)$$
for all $\psi\in L^1(\nu)$ and $g\in \Ga$.
\end{rem}

\subsection{Back to quadratic forms}\label{subsec:quad:final}\hfill\break
It follows from Subsection \ref{subsec:meas:pers} that $B_{\boldsymbol\phi}^\dd(\psi)\df(E(\psi),\psi)_{L^2(\nu)}$ is a positive $\pi_s$-invariant (with $s\gd=s({\boldsymbol\phi})\gd=\gd({\boldsymbol\phi})$) quadratic form on $L^2(\dd\Ga,\nu)$.

Observe that the operators $J_n:\cC(\ol{\Ga})\rightarrow\mathcal{H}_{\phi_n}$ (introduced in Subsection \ref{subsec:twist:poincare}) induce the maps $E_n=J_n^*J_n:\cC(\ol{\Ga})\rightarrow\cC(\ol{\Ga})'$ such that $(E_n(f_1),f_2)=B_n(f_1,f_2)$ for all $f_1,f_2\in \cC(\ol{\Ga})$.

\begin{lem}\label{lem:limit:weak}
The sequence of operators $(E_n)_n$ converges to $E$ for weak operator topology (see Appendix \ref{appendix:WOT}).
In particular 
$$B_n(f_1,f_2)\rightarrow B_{\boldsymbol\phi}^\dd(f_1|_{\dd \Ga},f_2|_{\dd \Ga})$$
for all $f_1,f_2\in\cC(\ol{\Ga})$.

\end{lem}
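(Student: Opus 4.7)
The plan is to reduce the Weak Operator Topology convergence to the assertion that $B_n(f_1,f_2)\to B_{\boldsymbol\phi}^\dd(f_1|_{\dd\Ga},f_2|_{\dd\Ga})$ for all $f_1,f_2\in\cC(\ol{\Ga})$, and then to extract that bilinear convergence from the three structural facts already established: the definition $B_n(f_1,f_2)=\iint f_1(g)\ol{f_2(h)}\,dm_n$, the weak$^*$ convergence $m_n\to m$ on $\ol{\Ga}\times\ol{\Ga}$ (by which $m$ was defined), and the disintegration formula for $m$ provided by Subsection \ref{subsec:meas:pers}.

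First I would fix $f_1,f_2\in\cC(\ol{\Ga})$ and form the continuous function $F(g,h)\df f_1(g)\ol{f_2(h)}$ on the compact space $\ol{\Ga}\times\ol{\Ga}$. By the construction of $m_n$ and $E_n=J_n^*J_n$ in Subsection \ref{subsec:twist:poincare}, one has
\begin{equation*}
(E_n(f_1),f_2)=B_n(f_1,f_2)=\iint_{\ol{\Ga}\times\ol{\Ga}} F\,dm_n.
\end{equation*}
Since $F$ is continuous on a compact space and $m_n\to m$ weakly in $\tprob(\ol{\Ga}\times\ol{\Ga})$, the right-hand side converges to $\iint F\,dm$.

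Next I would invoke Proposition \ref{prop:fond:meas} to conclude that $m$ is supported on $\dd\Ga\times\dd\Ga$, so that
\begin{equation*}
\iint_{\ol{\Ga}\times\ol{\Ga}} F\,dm=\iint_{\dd\Ga\times\dd\Ga} f_1|_{\dd\Ga}(\xi)\,\ol{f_2|_{\dd\Ga}(\eta)}\,dm(\xi,\eta).
\end{equation*}
Applying the disintegration $m=\int\rho_\xi\,d\nu(\xi)$ from Subsection \ref{subsec:meas:pers} and the definition of the conditional expectation $E(\psi)_\xi=\int\psi(\eta)\,d\rho_\xi(\eta)$, the double integral rewrites as
\begin{equation*}
\int_{\dd\Ga} f_1(\xi)\,\ol{E(f_2)(\xi)}\,d\nu(\xi)=\bigl(f_1|_{\dd\Ga},E(f_2|_{\dd\Ga})\bigr)_{L^2(\nu)},
\end{equation*}
which by self-adjointness of the conditional expectation $E$ on $L^2(\nu)$ coincides with $B_{\boldsymbol\phi}^\dd(f_1|_{\dd\Ga},f_2|_{\dd\Ga})$. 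Combining these identities gives the claimed pointwise convergence of the bilinear forms, and hence the WOT convergence of $E_n$ to (the pullback of) $E$ through the restriction map $\cC(\ol{\Ga})\to\cC(\dd\Ga)\hookrightarrow L^2(\nu)$.

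No serious obstacle is anticipated: every ingredient is in place. The only mild point requiring care is to justify that the bilinear convergence genuinely encodes WOT convergence in the sense of Appendix \ref{appendix:WOT}; this amounts to remarking that the operators $E_n$ naturally factor through $\cC(\dd\Ga)$ at the limit because the mass of $m_n$ concentrates on $\dd\Ga\times\dd\Ga$ as $n\to\infty$ (a quantitative version of this concentration was already established in the proof of Proposition \ref{prop:fond:meas}). Thus the entire argument is essentially a weak-convergence-of-measures computation feeding into the disintegration identity defining $E$.
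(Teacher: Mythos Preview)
Your proposal is correct and follows essentially the same approach as the paper: express $(E_n(f_1),f_2)$ as $\iint f_1\ol{f_2}\,dm_n$, use the weak convergence $m_n\to m$ on $\ol{\Ga}\times\ol{\Ga}$, and then rewrite the limit via the disintegration of $m$ in terms of $E$ and $\nu$. The paper's proof is simply a terser version of exactly this computation.
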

\begin{proof}
Given $f_1,f_2\in\cC(\ol{\Ga})$ one has:
\begin{align*}
\lim_n(E_n(f_1),f_2)&=\iint f_1(x)\ol{f_2}(y)dm_n(x,y)\\
&\rightarrow \iint f_1(\xi)\ol{f_2}(\eta)dm(\xi,\eta)=\int_{\dd \Ga} f_1(\xi)E(f_2)_\xi d\nu(\xi)
\end{align*}

\end{proof}

As $\pi_{\gth,\gs_n}(g)f$ converges uniformly to $\pi_{s}(g)f$ for all $f\in\cC(\ol{\Ga})$ it follows 
$$(E_n(\pi_{\gth,\gs_n}(g)f_1),f_2)\rightarrow (E(\pi_s(g)f_1),f_2)$$
for all $f_1,f_2\in\cC(\ol{\Ga})$ (see Lemma \ref{lem:app:wot:comp} Appendix \ref{appendix:WOT}).


\section{First properties}\label{sec:first:prop}
Given $\phi\in\mathbb{P}_+(\Ga)$ and $(\pi_\phi,\mathcal{H}_\phi,{\bf1}_\phi)$ its GNS triple, we denote $(\pi_\phi^\dd,\mathcal{H}_\phi^\dd,{\bf1})$ a $\phi$-boundary representation constructed in Section \ref{sec:const:bound}, $B_n$ with $n\ge0$ and $B_\phi^\dd$ the quadratic forms considered respectively Subsections \ref{subsec:twist:poincare}  and  \ref{subsec:quad:final} (in this case $\phi_n=\phi$ for all $n\ge0$ and $\gd({\boldsymbol\phi})=\gd(\phi)$).
We assume $\gth=1$ (see Lemma \ref{lem:patt:arg}) whenever no particular adjustment are needed.

\subsection{The relations between $\pi_\phi^\dd$ and $\pi_\phi$}\label{subsec:weak:cont:andco}\hfill\break

A representation is mixing if its matrix coefficients belongs to $c_0(\Ga)$.
The mixing property is invariant by isomorphism.

\begin{lem}
The representation $\pi_\phi^\dd$ factorises through the kernel of $\phi$.
In particular $\pi_\phi^\dd$ is not mixing whenever $\phi$ has an infinite kernel.
\end{lem}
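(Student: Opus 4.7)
The plan is to show that every $g_0 \in \ker(\pi_\phi)$ acts as the identity on $\mathcal{H}_\phi^\dd$, so that $\pi_\phi^\dd$ descends to $\Gamma / \ker(\pi_\phi)$. The starting point is the intertwining identity $\pi_\phi(g) J_n = J_n \pi_{\gth, \gs_n}(g)$ from Subsection \ref{subsec:twist:poincare}. Applying it at $g = g_0$ and using $\pi_\phi(g_0) = \mathrm{id}$ gives $J_n(\pi_{\gth, \gs_n}(g_0) f - f) = 0$ for every $f \in \mathcal{C}(\overline{\Gamma})$ and every $n$, hence $B_n(\pi_{\gth, \gs_n}(g_0) f - f) = 0$.

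Next I would expand this vanishing quadratic form. Since $B_n = \|J_n(\cdot)\|^2$ and $\pi_\phi$ is unitary, $B_n$ is $\pi_{\gth, \gs_n}(g_0)$-invariant, and polarization reduces the identity to $B_n(f,f) = \mathrm{Re}\, B_n(\pi_{\gth, \gs_n}(g_0) f, f)$. Now pass to the limit $n \to \infty$: Lemma \ref{lem:limit:weak} gives the convergence of the left-hand side to $B_\phi^\dd(f|_{\dd \Gamma})$, while the uniform convergence $\pi_{\gth, \gs_n}(g_0) f \to \pi_s(g_0) f$ on $\overline{\Gamma}$ noted just after Lemma \ref{lem:limit:weak}, combined with the WOT compatibility statement (Lemma \ref{lem:app:wot:comp}), handles the right-hand side. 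One obtains $B_\phi^\dd(f|_{\dd \Gamma}) = \mathrm{Re}\, B_\phi^\dd(\pi_s(g_0) f|_{\dd \Gamma}, f|_{\dd \Gamma})$. Combined with the $\pi_s$-invariance of $B_\phi^\dd$ (Proposition \ref{prop:gene:knap:stein} with $s\gd = \gd(\phi)$), the Cauchy--Schwarz equality case forces $\pi_s(g_0) f|_{\dd \Gamma} = f|_{\dd \Gamma}$ in $\mathcal{H}_\phi^\dd$. Since the restrictions $\{f|_{\dd \Gamma} : f \in \mathcal{C}(\overline{\Gamma})\} = \mathcal{C}(\dd \Gamma)$ are dense in $\mathcal{H}_\phi^\dd$, this upgrades to $\pi_\phi^\dd(g_0) = \mathrm{id}$ on the whole space.

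For the second assertion, if $\ker(\pi_\phi)$ is infinite then any sequence $(g_n) \subseteq \ker(\pi_\phi)$ leaving every finite set of $\Gamma$ satisfies $\pi_\phi^\dd(g_n) = \mathrm{id}$ by what we have just proved; consequently, for any unit vector $v \in \mathcal{H}_\phi^\dd$ the matrix coefficient $(\pi_\phi^\dd(g_n) v, v) = 1$ fails to vanish at infinity, so $\pi_\phi^\dd$ is not mixing. The main technical obstacle is exactly the passage to the limit in the second step, where one must interchange $\lim_n$ with the $n$-dependent action $\pi_{\gth, \gs_n}(g_0)$; this is precisely what the uniform-convergence remark following Lemma \ref{lem:limit:weak} is designed to supply, and once invoked the argument closes without further difficulty.
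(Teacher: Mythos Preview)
Your proof is correct and follows essentially the same approach as the paper's: establish the identity at each approximation level $n$ using $\pi_\phi(g_0)=\mathrm{id}$, then pass to the limit via Lemma \ref{lem:limit:weak} and the uniform convergence $\pi_{\gth,\gs_n}(g_0)f\to\pi_s(g_0)f$. The only stylistic difference is that the paper computes directly from the explicit sum defining $B_n$ (using $\phi(a\gamma)=\phi(a)$ for $\gamma\in\ker\pi_\phi$) to obtain the bilinear identity $B_n(\pi_{\gth,\gs_n}(\gamma)f_1,f_2)=B_n(f_1,f_2)$ for all $f_1,f_2$, which after the limit gives $(E(\pi_s(\gamma)f_1),f_2)=(E(f_1),f_2)$ immediately; you instead deduce the quadratic identity from the intertwiner $J_n$ and then recover the conclusion via the equality case of Cauchy--Schwarz (equivalently the parallelogram law), which is a slightly longer but equally valid path.
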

\begin{proof}
For $\gs>s(\phi)$, $g\in \text{Ker}(\pi_\phi)$ and $f_1,f_2\in\cC(\ol{\Ga})$ one has:
\begin{align*}
&B_n(\pi_\gs(\gamma)f_1,f_2)=\sum_{g,h} e^{-\gs\gd b_g(\gamma)}f_1(\gamma^{-1}g)f_2(h)\phi(g^{-1}h)e^{-\gs\gd d_\Ga(g,e)}e^{-\gs\gd d_\Ga(h,e)}\\
&=\sum_{g,h} f_1(g)f_2(h)\phi(g^{-1}h[h^{-1}\gamma^{-1}h])e^{-\gs\gd d(g,e)}e^{-\gs\gd d(h,e)}\\
&=B_n(f_1,f_2)
\end{align*}
On the other hand, using Lemma \ref{lem:limit:weak}:
$$B_n(\pi_{\gs_n}(g)(f_1),f_2)=(E_n\pi_{\gs_n}(g)(f_1),f_2)\rightarrow (E(\pi_s(g)f_1),f_2)$$
It follows that $(E(\pi_s(g)f_1),f_2)=(E(f_1),f_2)$ and thus $\pi_\phi^\dd(g)=1$ for all $g\in \text{Ker}(\pi_\phi)$ as $\cC(\ol{\Ga})$ generates a dense subspace in $\mathcal{H}_\phi^\dd$.

\end{proof}

\begin{rem}
All the boundary representations considered in  \cite{MR2787597} \cite{Garncarek:2014aa} \cite{Boucher:2020ab} but also every spherical representations of semi-simple Lie groups with finite center are mixing (due to Howe-Moore theorem \cite{MR1781937}).
It follows that $\pi_\phi^\dd$  does not identify with one of them whenever $\phi$ has an infinite kernel, in other words $\pi_\phi^\dd$ is of exotic type.
\end{rem}

A unitary representation $\pi$ is weakly contained in $\pi'$ if every $\pi$-matrix coefficient is a limit for the compact-open topology of $\pi'$-matrix coefficients and  denoted $\pi\prec \pi'$ (see \cite{Bekka:2019aa} and \cite{MR2415834} appendix F for more about weak containment).

\begin{lem}\label{lem:seq:weak:cont}
Let ${\boldsymbol\phi}=(\phi_n)_n$ be a sequence of positive cyclic representations such that $\lim_n\gd(\phi_n)=\gd({\boldsymbol\phi})$ is well defined and $\pi_{\boldsymbol\phi}^\dd$ an associated boundary representation.
Then $\pi_n\rightarrow \pi_{\boldsymbol\phi}^\dd$ for the Fell topology.
In particular given $\phi\in\mathbb{P}_+(\Ga)$, $\pi_\phi^\dd$ is weakly contained in $\pi_\phi$ and $\|\pi_\phi^\dd(f)\|\le\|\pi_\phi(f)\|$ for all $f\in\BC[\Ga]$.
\end{lem}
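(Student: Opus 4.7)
The plan is to exhibit every matrix coefficient of $\pi_{\boldsymbol\phi}^\dd$ as a pointwise (hence, since $\Ga$ is discrete, uniform on finite subsets) limit of matrix coefficients of $\pi_{\phi_n}$; this is exactly Fell convergence $\pi_{\phi_n}\to\pi_{\boldsymbol\phi}^\dd$. The essential ingredients have already been assembled in Subsections \ref{subsec:twist:poincare} and \ref{subsec:quad:final}, so the argument is largely a matter of bookkeeping.

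The core step is to exploit the bounded intertwiners $J_n:\cC(\ol{\Ga})\to\mathcal{H}_{\phi_n}$ built in Subsection \ref{subsec:twist:poincare}. Given $f_1,f_2\in\cC(\ol{\Ga})$ and $g\in\Ga$, the intertwining relation $J_n\circ\pi_{\gth,\gs_n}(g)=\pi_{\phi_n}(g)\circ J_n$ together with the identity $B_n(\bullet,\bullet)=(J_n(\bullet),J_n(\bullet))_{\mathcal{H}_{\phi_n}}$ yields
\[
(\pi_{\phi_n}(g)J_n(f_1),J_n(f_2))_{\mathcal{H}_{\phi_n}}=B_n(\pi_{\gth,\gs_n}(g)f_1,f_2),
\]
realising each such quantity as a genuine matrix coefficient of $\pi_{\phi_n}$.

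Passing to the limit in $n$ invokes the convergence
\[
B_n(\pi_{\gth,\gs_n}(g)f_1,f_2)\longrightarrow B_{\boldsymbol\phi}^\dd(\pi_s(g)f_1|_{\dd\Ga},f_2|_{\dd\Ga})=(\pi_{\boldsymbol\phi}^\dd(g)[f_1],[f_2])_{\mathcal{H}_{\boldsymbol\phi}^\dd}
\]
stated at the end of Subsection \ref{subsec:quad:final}. This combines the WOT convergence $E_n\to E$ from Lemma \ref{lem:limit:weak} with the uniform convergence of $\pi_{\gth,\gs_n}(g)f_1$ to $\pi_s(g)f_1$ on $\ol{\Ga}$ for fixed $g$, the latter following from boundedness of $b_\bullet(g)$ by $d_\Ga(g,e)$ and from slow growth of $\gth$ (so that $\gth(d_\Ga(x,g))/\gth(d_\Ga(x,e))\to1$ as $x\to\dd\Ga$). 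Since the images $\{[f]:f\in\cC(\ol{\Ga})\}$ span a dense subspace of $\mathcal{H}_{\boldsymbol\phi}^\dd$ by construction, this approximates every matrix coefficient of $\pi_{\boldsymbol\phi}^\dd$; a harmless renormalisation, using $\|J_n(f)\|^2=B_n(f,f)\to\|[f]\|^2$ on the orthogonal complement of the kernel, reformulates it in the unit-vector language of the Fell criterion.

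Specialising to the constant sequence $\phi_n=\phi$ yields $\pi_\phi^\dd\prec\pi_\phi$, and the operator norm inequality $\|\pi_\phi^\dd(f)\|_\text{op}\le\|\pi_\phi(f)\|_\text{op}$ for $f\in\BC[\Ga]$ is then the standard consequence of weak containment \cite[Thm. F.4.4]{MR2415834}. No serious obstacle is anticipated: the heavy lifting has been carried out in the construction of the $J_n$ and the WOT convergence of the $E_n$, so this lemma essentially packages those facts into the language of weak containment of unitary representations.
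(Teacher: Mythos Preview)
Your proposal is correct and follows essentially the same route as the paper's proof: the paper invokes the isomorphism $\pi_{\gth,\gs_n}\simeq_\text{isom.}\pi_n$ established via $J_n$ in Subsection \ref{subsec:gns:twist}/\ref{subsec:twist:poincare}, then appeals to Lemma \ref{lem:limit:weak} (together with \cite[Lem. F.2.2]{MR2415834}) to conclude that the $\pi_{\boldsymbol\phi}^\dd$ coefficients are limits of $\pi_n$ coefficients, hence $\pi_n\to\pi_{\boldsymbol\phi}^\dd$ in the Fell topology. Your write-up simply unpacks these steps explicitly---the intertwining identity, the WOT-plus-uniform-convergence argument from the end of Subsection \ref{subsec:quad:final}, and the renormalisation to unit vectors---so there is no substantive difference.
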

\begin{proof}
Let $[\pi_n,v_n]$ be the positive cyclic representation associated to $\phi_n$.
As explained in Subsection \ref{subsec:twist:poincare} $\pi_{\gth,\gs_n}\simeq_\text{isom.} \pi_n$ for all $n\in\BN$ and as the $\pi^\dd$ matrix coefficients are limits of $(\pi_{\gth,\gs_n})_n$ coefficients due to Lemma \ref{lem:limit:weak} (and \cite[Lem. F.2.2]{MR2415834}) it follows that
$\pi_{\gs_n,\gth}\simeq\pi_n\rightarrow\pi^\dd$.
\end{proof}

This can be improved assuming the Poincar\'e series $\mathcal{P}(\phi)$ and $\mathcal{P}_{2}(\phi)$ (see Subsection \ref{subsec:crit:poin}) satisfy a certain inequality:\\
As the space $\text{B}_{\le 1}(\cC(\ol{\Ga}),\mathcal{H})$ of bounded operators with norms at most $1$ is compact for the weak operator topology (see Appendix \ref{appendix:WOT}) one can assume that the sequence $(J_{\gth,\gs_n})_n$ in $\text{B}_{\le 1}(\cC(\ol{\Ga}),\mathcal{H})$ introduced in Subsection \ref{subsec:twist:poincare} converges to an operator $J_\phi$ for this topology.
In order to prove that $J$ is non-zero it is enough to have 
$$(J_{\gth,\gs_n}{\bf1}_{\ol{\Ga}},{\bf1}_\phi)=\frac{\mathcal{P}_{\gth}(\phi;\gs_n)}{\sqrt{\mathcal{P}_{\gth,2}(\phi;\gs_n)}}\ge c>0$$
for $n$ large enough. We call this condition \textit{Poincar\'e sphericity}.
This leads to the following criterion:

\begin{prop}\label{prop:poin:sph}
If $\phi$ is Poincar\'e spherical 
then there exists a non-trivial intertwiner between $\pi_\phi^\dd$ and $\pi_\phi$.
\end{prop}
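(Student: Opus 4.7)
The plan is to produce the intertwiner as a weak-operator-topology (WOT) subsequential limit of the maps $J_n := J_{\gth, \gs_n}: \cC(\ol{\Ga}) \to \mathcal{H}_\phi$ built in Subsection \ref{subsec:twist:poincare}. These are uniformly bounded: the computation in Subsection \ref{subsec:gns:twist} gives $\|J_n(f)\|^2 = B_n(f) \le 4\|f\|_\infty^2$. Compactness of the closed operator ball in WOT (Appendix \ref{appendix:WOT}) then delivers a subsequential limit $J_\phi: \cC(\ol{\Ga}) \to \mathcal{H}_\phi$, as already flagged in the paragraph preceding the statement.

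Next I would check the intertwining and descent properties in parallel. Each $J_n$ intertwines $\pi_{\gth, \gs_n}$ with $\pi_\phi$; since $\pi_{\gth, \gs_n}(g) f \to \pi_s(g) f$ uniformly on $\ol{\Ga}$ (with $s\gd = \gd(\phi)$), as noted at the end of Subsection \ref{subsec:quad:final}, and since $(J_n)$ is uniformly bounded, $J_\phi$ inherits the relation $J_\phi \pi_s(g) = \pi_\phi(g) J_\phi$ on $\cC(\ol{\Ga})$. For descent, $\|J_n(f)\|^2 = B_n(f) \to B_\phi^\dd(f|_{\dd\Ga})$ by Lemma \ref{lem:limit:weak}, and weak lower semi-continuity of the Hilbert norm yields $\|J_\phi(f)\|^2 \le B_\phi^\dd(f|_{\dd\Ga})$. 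Hence $J_\phi$ annihilates every $f$ with $f|_{\dd\Ga} \in \ker B_\phi^\dd$ and factors through a contraction $\bar J_\phi$ from the completion $\mathcal{H}_\phi^\dd$ of $(\cC(\dd\Ga), B_\phi^\dd)$ into $\mathcal{H}_\phi$; since $\pi_\phi^\dd(g)$ acts on $\cC(\dd\Ga)\subset \mathcal{H}_\phi^\dd$ as the restriction of $\pi_s(g)$, the intertwining passes to the completion, so $\bar J_\phi$ intertwines $\pi_\phi^\dd$ with $\pi_\phi$.

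For non-triviality I invoke Poincar\'e sphericity. A direct computation gives
$$(J_n {\bf 1}_{\ol{\Ga}}, {\bf 1}_\phi)_{\mathcal{H}_\phi} = \frac{1}{\sqrt{\mathcal{P}_{\gth, 2}(\phi; \gs_n)}} \sum_{z \in \Ga} \gth(d_\Ga(z, e)) e^{-\gs_n \gd d_\Ga(z, e)} \phi(z) = \frac{\mathcal{P}_\gth(\phi; \gs_n)}{\sqrt{\mathcal{P}_{\gth, 2}(\phi; \gs_n)}},$$
which by Poincar\'e sphericity is bounded below by some $c > 0$. Taking the WOT limit gives $(\bar J_\phi {\bf 1}_{\dd\Ga}, {\bf 1}_\phi) \ge c > 0$, so $\bar J_\phi \ne 0$.

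The step I expect to demand most care is the descent: the operator-theoretic limit $J_\phi$ initially lives on $\cC(\ol{\Ga})$ while $\pi_\phi^\dd$ is built as a completion modulo the kernel of the boundary quadratic form, and the Fatou-type inequality $\|J_\phi(f)\|^2 \le B_\phi^\dd(f|_{\dd\Ga})$ is the bridge that must be set up cleanly to guarantee $\bar J_\phi$ is well-defined and bounded on $\mathcal{H}_\phi^\dd$. Once this is in place, intertwining and non-triviality follow essentially by passage to the limit from the finite-scale identities that already underlie the construction in Section \ref{sec:const:bound}.
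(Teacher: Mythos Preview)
Your argument is correct and follows the same approach as the paper: take a WOT subsequential limit $J_\phi$ of the $J_{\gth,\gs_n}$, verify the intertwining relation $J_\phi\pi_s(g)=\pi_\phi(g)J_\phi$ by passing to the limit in the finite-scale identity (using that $\pi_{\gth,\gs_n}(g)f\to\pi_s(g)f$ uniformly, together with Lemma~\ref{lem:app:wot:comp}), and deduce non-triviality from the Poincar\'e sphericity lower bound on $(J_n{\bf1}_{\ol\Ga},{\bf1}_\phi)$.

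The one place where you go beyond the paper is the descent step. The paper's proof simply records that $\cC(\ol\Ga)$ is a $\pi_s$-invariant dense subspace of $\mathcal{H}_\phi^\dd$ and stops there; your Fatou-type inequality $\|J_\phi(f)\|^2\le \liminf_n B_n(f)=B_\phi^\dd(f|_{\dd\Ga})$, obtained from weak lower semicontinuity of the norm together with Lemma~\ref{lem:limit:weak}, is exactly the missing justification that $J_\phi$ factors through and extends continuously to the completion $\mathcal{H}_\phi^\dd$. This is a genuine clarification rather than a different method.
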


\begin{proof}

We are going to prove that the limit $J_\phi$ is an intertwiner.
Given $f\in\cC(\ol{\Ga})$
\begin{align*}
&(J_{\gth,\gs_n}\pi_{\gth,\gs_n}(g)f,v)=(J_{\gth,\gs_n}f,\pi_\phi(g^{-1})v)\\
&\rightarrow (J_\phi f,\pi_\phi(g^{-1})v)=(\pi_\phi(g)J_\phi f,v)
\end{align*}
On the other hand, as $\pi_{\gth,\gs_n}(g)f$ converges uniformly to $\pi_s(g)f$, Lemma \ref{lem:app:wot:comp} implies:
\begin{align*}
&(J_{\gth,\gs_n}\pi_{\gth,\gs_n}(g)f,v)\rightarrow (J_\phi\pi_s(g)f,v)
\end{align*}
Since $\cC(\ol{\Ga})$ is $\pi_{\gth,\gs_n}$-invariant dense subspace of $\mathcal{H}_\phi^\dd$ and $(J_\phi{\bf1}_{\ol{\Ga}},{\bf1}_\phi)>0$ it follows that $J_\phi$ is a non-zero intertwiner.

\end{proof}

The Poincar\'e sphericity condition can be deduced when $\phi$ is roughly radial:
\begin{prop}\label{prop:hs:rough}
Let $f$ be a pointwise positive function and assume that $f$ is $k_s$-roughly radial function with $k_s(t)= e^{-(1-s)\gd t}$ for $\half\le s\le 1$.
Then $\mathcal{P}_2(f;\gs)\asymp\mathcal{P}^2(f;\gs)$ for all $\gs>s(f)$.
\end{prop}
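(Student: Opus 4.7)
The plan is to prove the two sides of the equivalence $\mathcal{P}_2(f;\gs) \asymp \mathcal{P}(f;\gs)^2$ separately. For the lower bound, I would use the triangle inequality $d_\Ga(g,h) \le d_\Ga(g,e) + d_\Ga(h,e)$ combined with $k_s$-rough radiality to obtain
$$
f(g^{-1}h) \asymp e^{-(1-s)\gd d_\Ga(g,h)} \succ e^{-(1-s)\gd d_\Ga(g,e)}\, e^{-(1-s)\gd d_\Ga(h,e)} \asymp f(g)\, f(h).
$$
Weighting both sides by $e^{-\gs\gd d_\Ga(g,e)}\, e^{-\gs\gd d_\Ga(h,e)}$ and summing over $g,h \in \Ga$ immediately yields $\mathcal{P}_2(f;\gs) \succ \mathcal{P}(f;\gs)^2$, which is the easy half.

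For the upper bound I would exploit the Gromov product identity $d_\Ga(g,h) = d_\Ga(g,e) + d_\Ga(h,e) - 2(g,h)_e$ together with rough radiality to rewrite
$$
f(g^{-1}h) \asymp e^{2(1-s)\gd (g,h)_e}\, e^{-(1-s)\gd d_\Ga(g,e)}\, e^{-(1-s)\gd d_\Ga(h,e)}.
$$
Substituting into $\mathcal{P}_2(f;\gs)$ and decomposing the double sum over the concentric shells $C_m = \{g : Rm \le d_\Ga(g,e) < R(m+1)\}$, the task reduces, for $h \in C_l$ with $l \le m$ (handled by symmetry), to estimating $\sum_{g \in C_m} e^{2(1-s)\gd (g,h)_e}$. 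Applying the orbital distribution estimate \eqref{eq:fund:esti}, which partitions $C_m$ by Gromov-product levels with $h$, gives
$$
\sum_{g \in C_m} e^{2(1-s)\gd (g,h)_e} \asymp |C_m| \sum_{n=0}^{l} e^{(1-2s)\gd Rn}.
$$

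For $s > \half$ the exponent $(1-2s)\gd R$ is strictly negative, so the inner geometric series is bounded by a constant depending only on $s$; combined with the spherical growth $|C_m| \asymp e^{\gd R m}$, the double shell sum factorises as $\asymp_s \bigl(\sum_m e^{-(\gs - s)\gd R m}\bigr)^2 \asymp \mathcal{P}(f;\gs)^2$, which closes the proof in this regime. The principal obstacle is the endpoint $s = \half$: there $(1-2s) = 0$ and the geometric sum collapses to a linear factor $l+1$ (compare Remark \ref{rem:tech:half}), and a direct calculation then shows the ratio $\mathcal{P}_2(f;\gs)/\mathcal{P}(f;\gs)^2$ diverges like $1/((\gs - \half)\gd R)$ as $\gs \to \half^+$. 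This strongly suggests that the stated $\asymp$ genuinely holds only on $s \in (\half, 1]$, and that the critical case $s = \half$ would require either weakening the conclusion to a one-sided bound (sufficient for Poincar\'e sphericity) or introducing a slow-growing rearrangement $\gth$ as in Lemma \ref{lem:patt:arg} to absorb the logarithmic correction.
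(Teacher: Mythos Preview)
Your argument is correct for $s\in(\tfrac12,1]$ and genuinely different from the paper's. The paper does not factorise $\mathcal P_2$ directly: it substitutes $h'=g^{-1}h$ to write
$\mathcal P_2(f;\gs)=\sum_{h'}f(h')e^{-\gs\gd d_\Ga(h',e)}\sum_m\mathcal S_m(\gs,\gs;h')$,
invokes Lemma~\ref{lem:tech:v1} (so the Gromov-product exponent is $2\gs\gd$, not your $2(1-s)\gd$), shows $\sum_m\mathcal S_m(\gs,\gs;h')\asymp d_\Ga(h',e)+1$, and then computes separately $\mathcal P_2(f;\gs)\asymp(\gs-s)^{-2}$ and $\mathcal P(f;\gs)\asymp(\gs-s)^{-1}$. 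Your route---pulling the rough radiality of $f(g^{-1}h)$ through the Gromov product with the \emph{fixed} exponent $2(1-s)\gd$---makes the $s$-dependence of the constants explicit and yields the factorisation $\mathcal P_2\asymp_s\mathcal P^2$ in one stroke, without computing either side individually. The paper's route, in exchange, delivers the explicit asymptotics $\mathcal P_2\asymp(\gs-s)^{-2}$, which are useful elsewhere.

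Your diagnosis at $s=\tfrac12$ is also correct, and sharper than the paper's treatment: the paper's proof hides the same degeneracy inside the implicit $\asymp_\gs$ constant of Lemma~\ref{lem:tech:v1} (that constant behaves like $(2\gs-1)^{-1}$), so its final line ``$\mathcal P_2(f;\gs)\asymp\mathcal P^2(f;\gs)$ for $\gs\sim s(f)^+$'' is not actually uniform when $s=\tfrac12$. Your computation $\mathcal P_2/\mathcal P^2\asymp(\gs-\tfrac12)^{-1}$ is right; the proposition is only used in the paper for $s>\tfrac12$ (e.g.\ Proposition~\ref{prop:nat:comp}), so this is a harmless imprecision in the statement rather than a gap in your proof.
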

\begin{proof}
On a Gromov hyperbolic groups the functional 
$\gs\mapsto \sum_{g\in\Ga}e^{-\gs \gd d_\Ga(g,e)}$ for $\gs >1$ has a expansion, when $\gs$ approaches $1^+$, comparable to $\frac{1}{(\gs-1)}$ due to pure sphericality \cite[Thm. 7.2]{MR1214072}.
As $$\sum_g\phi(g)e^{-(s+\e)\gd d_\Ga(g,e)}\asymp \sum_gk_s(d_\Ga(g,e))e^{-(s+\e)\gd d_\Ga(g,e)}\asymp \sum_ge^{-(1+\e)\gd d_\Ga(g,e)}\asymp \e^{-1}$$
for all $\e>0$ it follows that $\phi$ is of divergent type.
Using Lemma \ref{lem:tech:v1}, Remark \ref{rem:tech:half} and their notations:
\begin{align*}
\mathcal{P}_2(f;\gs)&=\sum_{g,h}e^{-\gs \gd d_\Ga(g,e)}e^{-\gs \gd d_\Ga(h,e)}f(g^{-1}h)\\
&=_{h'=g^{-1}h}\sum_{h'}f(h')e^{-\gs \gd d_\Ga(h',e)}\sum_ge^{-\gs \gd d_\Ga(g,e)}\pi_\gs(g){\bf1}_{h'}\\
&=\sum_{h}f(h)e^{-\gs \gd d_\Ga(h,e)}(\sum_{m:Rm\le d_\Ga(h,e)}+\sum_{m:d_\Ga(h,e)<Rm})\mathcal{S}_m(\gs,\gs;h)\\
&\asymp \sum_{h\in\Ga} f(h)e^{-\gs\gd d_\Ga(h,e)}(d_\Ga(h,e)+1) 
\asymp \sum_{h\in\Ga} e^{-(1+(\gs-s))\gd d_\Ga(g,e)}(d_\Ga(h,e)+1) \\
&\asymp \frac{1}{(\gs-s)^2}
\end{align*}
The last estimate follows from:
\begin{align*}
 \sum_{h\in\Ga}& e^{-(1+(\gs-s))\gd d_\Ga(g,e)}(d_\Ga(h,e) +1)
 =\sum_{m=0}^\infty\sum_{h\in C_m} e^{-(1+(\gs-s))\gd d_\Ga(g,e)}(d_\Ga(h,e)+1) \\
&\asymp_{(1)} \lim_N\frac{-1}{1-e^{-(\gs-s)\gd R}}\sum_{m\le N} (e^{-(\gs-s)\gd R(m+1)}-e^{-(\gs-s)\gd Rm})m \\
&\asymp_{(2)}\lim_N\frac{-1}{1-e^{-(\gs-s)\gd R}}e^{-(\gs-s)\gd R(m+1)}(m+1)+\frac{1}{1-e^{-(\gs-s)\gd R}}\sum_{1\le m\le N} e^{-(\gs-s)\gd Rm} \\
&\asymp[\frac{1}{1-e^{-(\gs-s)\gd R} }]^2\asymp_{\gs\sim s^+}\frac{1}{(\gs-s)^2}
\end{align*}
where $(1)$ is due to the pure sphericality of $\Ga$ and $(2)$ a summation by part.
One has shown that $\mathcal{P}_2(f;\gs)\asymp\mathcal{P}^2(f;\gs)$ for $\gs\sim s(f)^+$.
\end{proof}

\subsubsection{About the boundary representation of a boundary representation}\hfill\break
Given a $\phi$-boundary representation, $\pi_\phi^\dd$, we denote $\Xi_\phi^\dd=(\pi_\phi^\dd{\bf1},{\bf1})$ its Harish-Chandra function namely the generating function of the positive cyclic representation $(\pi_\phi^\dd,\mathcal{H}_\phi^\dd,{\bf1})$.

\begin{prop}
The function $\Xi_\phi^\dd\in\mathbb{P}_+(\Ga)$ is Poincar\'e spherical of divergent type and exponent $\max\{\gd(\phi),\half\gd\}$.
In particular there exists a non-zero intertwiner between $\pi_\phi^\dd$ to $\pi_{\Xi_\phi^\dd}^\dd$.
\end{prop}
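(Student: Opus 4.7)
The plan is to split the proof into two computational estimates followed by a direct appeal to Proposition \ref{prop:poin:sph}. Throughout, write $s\gd = \max\{\gd(\phi),\half\gd\}$ for the conformal exponent of $\pi_\phi^\dd$, so that $\pi_\phi^\dd$ is realised as $\pi_s$ acting on the completion $\mathcal{H}_\phi^\dd$ of $\cC(\ol{\Ga})$ with respect to $B_\phi^\dd(\psi_1,\psi_2)=(E(\psi_1),\psi_2)_{L^2(\nu)}$. Since $E$ is a conditional expectation (so $\int E(\psi)d\nu = \int\psi d\nu$), a direct computation gives
$$\Xi_\phi^\dd(g) = B_\phi^\dd(\pi_s(g){\bf1},{\bf1}) = \int_{\dd\Ga} e^{-s\gd b_\xi(g)}\,d\nu(\xi),$$
from which $\Xi_\phi^\dd\in \mathbb{P}_+(\Ga)$ is immediate: positive definiteness from unitarity of $\pi_\phi^\dd$, and pointwise non-negativity from $\pi_s(g){\bf1}\ge0$.

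For the critical exponent and the divergence of type, I would use Fubini together with Corollary \ref{cor:tech:sum:last}:
$$\mathcal{P}(\Xi_\phi^\dd;\gs) = \int_{\dd\Ga}\mathcal{S}_\infty(\gs,s;\xi)\,d\nu(\xi)\asymp_s \begin{cases}(\gs-s)^{-1} & s > \half,\\ (\gs-\half)^{-2} & s = \half,\end{cases}$$
valid for $\gs > s$; this reads off $\gd(\Xi_\phi^\dd)=s\gd=\max\{\gd(\phi),\half\gd\}$ and divergent type. For Poincar\'e sphericity I would use unitarity of $\pi_\phi^\dd$ to rewrite
$$\mathcal{P}_2(\Xi_\phi^\dd;\gs) = \left\|\sum_g e^{-\gs\gd d_\Ga(g,e)}\pi_\phi^\dd(g){\bf1}\right\|^2_{\mathcal{H}_\phi^\dd};$$
the summed vector corresponds to the function $\mathcal{S}_\infty(\gs,s;\bullet)$ on $\dd\Ga$, and since $E$ is a self-adjoint contraction on $L^2(\nu)$ the $\mathcal{H}_\phi^\dd$-norm is dominated by the $L^2(\nu)$-norm, so a second application of Corollary \ref{cor:tech:sum:last} gives $\mathcal{P}_2(\Xi_\phi^\dd;\gs)\le \|\mathcal{S}_\infty(\gs,s;\bullet)\|^2_{L^2(\nu)}\prec_s \mathcal{P}(\Xi_\phi^\dd;\gs)^2$, i.e.\ Poincar\'e sphericity.

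Finally, Proposition \ref{prop:poin:sph} applied with $\phi$ replaced by $\Xi_\phi^\dd$ produces a non-zero intertwiner $\pi_{\Xi_\phi^\dd}^\dd\to \pi_{\Xi_\phi^\dd}$. Since $\Xi_\phi^\dd = (\pi_\phi^\dd(\cdot){\bf1},{\bf1})$, the uniqueness clause of the GNS construction identifies $(\pi_{\Xi_\phi^\dd},{\bf1}_{\Xi_\phi^\dd})$ with the closed cyclic $\pi_\phi^\dd$-invariant subspace of $\mathcal{H}_\phi^\dd$ generated by ${\bf1}$, providing an isometric embedding $\pi_{\Xi_\phi^\dd}\hookrightarrow\pi_\phi^\dd$; composing yields the asserted non-zero intertwiner. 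The most delicate point is the uniform boundedness of $\mathcal{S}_\infty(\gs,s;\xi)$ in $\xi$ — exactly what Corollary \ref{cor:tech:sum:last} supplies — together with keeping the $s=\half$ and $s>\half$ regimes aligned so that the ratio $\mathcal{P}_2/\mathcal{P}^2$ stays bounded as $\gs\to s^+$.
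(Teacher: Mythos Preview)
Your proposal is correct and follows essentially the same route as the paper: both compute $\mathcal{P}(\Xi_\phi^\dd;\gs)$ and $\mathcal{P}_2(\Xi_\phi^\dd;\gs)$ by rewriting them in terms of $\mathcal{S}_\infty(\gs,s;\bullet)$ and invoking Corollary~\ref{cor:tech:sum:last}, then apply Proposition~\ref{prop:poin:sph}. Your treatment is in fact slightly cleaner in two places: you observe that only the upper bound $\mathcal{P}_2\prec\mathcal{P}^2$ is needed for Poincar\'e sphericity (and obtain it from the contraction property of $E$ on $L^2(\nu)$), whereas the paper states the two-sided $\asymp$; and you make explicit the GNS identification $\pi_{\Xi_\phi^\dd}\hookrightarrow\pi_\phi^\dd$ that the paper leaves implicit when it says ``the rest follows from Proposition~\ref{prop:poin:sph}''.
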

\begin{proof}
Recall that the conformal exponent of $\pi_\phi^\dd$ is equal to $\max\{s(\phi),\half\}$ due to Proposition \ref{prop:two:div}.

Using Proposition \ref{prop:gene:knap:stein} notations and Corollary \ref{cor:tech:sum:last} estimates, as $\nu$ is supported on $\dd \Ga$,  one has:

\begin{align*}
\mathcal{P}(\Xi_\phi^\dd;\gs)=\sum_{g\in\Ga}e^{-\gs\gd d_\Ga(g,e)}\Xi_\phi^\dd(g)&=(E(\mathcal{S}_\infty(\gs,s;\bullet)),{\bf1})_{L^2(\nu)}\\
&\asymp_s\frac{1}{(\gs-s)}
\end{align*}
and 
\begin{align*}
\mathcal{P}_2(\Xi_\phi^\dd;\gs)=(E(\mathcal{S}_\infty(\gs,s;\bullet)),\mathcal{S}_\infty(\gs,s;\bullet))_{L^2(\nu)}\asymp_s\frac{1}{(\gs-s)^2}
\end{align*}
for $s>\half$.
Similarly for $s=\half$ :
$$\mathcal{P}(\Xi_\phi^\dd;\gs)\asymp\frac{1}{(\gs-\half)^2}\quad \text{and}\quad \mathcal{P}_2(\Xi_\phi^\dd;\gs)\asymp\frac{1}{(\gs-\half)^4}$$

It follows that $\max\{\gd(\phi),\half\gd\}=\gd(\Xi_\phi^\dd)$, $\Xi_\phi^\dd$ is of divergent type and Poincar\'e spherical.
The rest follows from Proposition \ref{prop:poin:sph}.

\end{proof}

\begin{rem}
It is reasonable to conjecture that $\pi_\phi^\dd$ is irreducible.
In this case Poincar\'e sphericity implies that $\pi_\phi^\dd$ is realised as a subrepresentation of $\pi_\phi$.
In particular this would imply that $\pi_\phi^\dd$ and $\pi_{\Xi_\phi^\dd}^\dd$ are isomorphic.
\end{rem}

\subsection{Natural complementary series}\label{subsec:nat:comp}\hfill\break

In certain situations the invariant scalar product, $B_\phi^\dd$, can be explicitly described.\\
Given $t>\gd$ we denote $\mu_t$ the probability measure:
$$\mu_t=\frac{1}{\mathcal{P}({\bf1}_\Ga,\gd^{-1}t)}\sum_{g\in\Ga}e^{-td_\Ga(g,e)}\text{Dir}_{g}$$
on $\ol{\Ga}$ where $\text{Dir}_{g}$ stands for the Dirac measure at $g\in \Ga$.
As a consequence of strong hyperbolicity one has $\mu_t\xrightarrow{t\rightarrow \gd^+}\nu_\text{PS}$ where $\nu_\text{PS}$ stands for the Patterson-Sullivan measure on $\dd \Ga$.
Indeed strong hyperbolicity implies that any limit of $(\mu_t)_{t>\gd}$ must be a $(\Ga,d_\Ga)$-conformal probability but this probability is unique (see \cite[Thm. 7.7]{MR1214072}).

\begin{prop}\label{prop:nat:comp}
Let $f$ be a pointwise positive function that is $k_s$-roughly radial function with
$k_s(t)= e^{-(1-s)\gd t}$ ($\half<s\le 1$), $m$ and $\nu$ be the measures considered in Subsection \ref{subsec:meas:pers}.
Then $f$ is of divergent type, the measure $\nu$ and $\nu_\text{PS}$ are equivalent with $m=c(s).m_s$ where
$$m_s(\Phi)=\iint\Phi(\xi,\eta)e^{2(1-s)\gd (\xi,\eta)}d\nu_\text{PS}(\xi) d\nu_\text{PS}(\eta)$$
 and $c(s)=(m_s({\bf1}_{\dd \Ga\times\dd \Ga}))^{-1}$ for all $\Phi\in\cC(\dd \Ga\times\dd \Ga)$.
\end{prop}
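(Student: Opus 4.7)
The strategy is to show $m = c(s) m_s$ by first obtaining mutual absolute continuity via a direct limiting argument and then upgrading to proportionality via Radon--Nikodym cocycle matching and ergodicity. Divergence of type for $f$ is obtained along the way of Proposition \ref{prop:hs:rough}: the roughly radial estimate gives $\mathcal{P}(f;\gs) \asymp \sum_g e^{-(\gs+1-s)\gd d_\Ga(g,e)} \asymp (\gs-s)^{-1}$ by pure sphericity (\cite[Thm. 7.2]{MR1214072}), so the Poincar\'e series blows up at $\gs = s$.

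For the measure identification I would write $f(g^{-1}h) = \alpha(g,h) e^{-(1-s)\gd d_\Ga(g^{-1}h, e)}$ with $c_1 \le \alpha \le c_2$, then use $d_\Ga(g^{-1}h, e) = d_\Ga(g, e) + d_\Ga(h, e) - 2(g, h)_e$ to recast the approximating measures of Subsection \ref{subsec:twist:poincare} as
\[
dm_n(x, y) = \frac{\mathcal{P}({\bf1}; \gd^{-1}t_n)^2}{\mathcal{P}_2(f; \gs_n)} \cdot \alpha(x, y) \cdot e^{2(1-s)\gd(x, y)_e} d\mu_{t_n}(x) d\mu_{t_n}(y)
\]
with $t_n := (\gs_n + 1 - s)\gd \to \gd^+$. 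Proposition \ref{prop:hs:rough} gives $\mathcal{P}_2(f;\gs_n) \asymp \mathcal{P}({\bf1};\gd^{-1}t_n)^2$, so the prefactor is trapped between two positive constants. Since $\mu_t \to \nu_\text{PS}$ weakly as $t \to \gd^+$ and the singular factor $e^{2(1-s)\gd(\xi,\eta)}$ is $\nu_\text{PS}^{\otimes 2}$-integrable for $s > \half$ (its diagonal singularity is of order $d_{\dd\Ga}(\xi,\eta)^{-2(1-s)D}$ with $2(1-s)D < D$), a truncation at level $M$ combined with the uniform tail control
\[
\sup_n \iint_{(x, y)_e \ge T} e^{2(1-s)\gd(x, y)_e} d\mu_{t_n}(x) d\mu_{t_n}(y) \xrightarrow{T \to \infty} 0,
\]
supplied by the $D$-regularity estimates of Subsection \ref{subsec:prelim:orbital}, shows that $m$ is bi-dominated by a constant multiple of $m_s$; in particular $m$ and $m_s$ are mutually absolutely continuous.

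Proportionality then follows by matching Radon--Nikodym cocycles: a direct computation using the standard identity $(g\xi, g\eta)_e - (\xi, \eta)_e = \half[b_\xi(g^{-1}) + b_\eta(g^{-1})]$ and $\gd$-conformality of $\nu_\text{PS}$ shows that $dg_*m_s/dm_s$ coincides with $dg_*m/dm$ from Proposition \ref{prop:fond:meas}. Hence $dm/dm_s$ is a bounded $\Ga$-invariant function on $\dd\Ga \times \dd\Ga \setminus \text{Diag}(\dd\Ga)$. The classical ergodicity of the $\Ga$-action on $(\dd\Ga \times \dd\Ga, \nu_\text{PS}^{\otimes 2})$ in the Gromov-hyperbolic setting (via the Bowen--Margulis--Sullivan construction) then forces $dm/dm_s$ to be a.e.\ constant, and the value $c(s) = m_s({\bf1}_{\dd\Ga \times \dd\Ga})^{-1}$ is fixed by the probability normalisation $m(\dd\Ga \times \dd\Ga) = 1$. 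Finally, projecting $m = c(s) m_s$ onto the first coordinate gives $d\nu = c(s) I_s \cdot d\nu_\text{PS}$ with $I_s(\xi) = \int e^{2(1-s)\gd(\xi,\eta)} d\nu_\text{PS}(\eta)$; for $s > \half$ the density $I_s$ is bounded above by the same integrability argument and below by $\nu_\text{PS}(\dd\Ga) > 0$ (since $(\xi,\eta)_e \ge 0$), so $\nu \sim \nu_\text{PS}$. The main technical obstacle is the uniform tail estimate near $\text{Diag}(\dd\Ga)$: the unbounded factor $e^{2(1-s)\gd(\cdot,\cdot)}$ is integrable precisely thanks to $s > \half$, and its uniformity in $n$ follows because the $\mu_{t_n}$-mass of shadows is asymptotically comparable to their $\nu_\text{PS}$-mass as $t_n \to \gd^+$.
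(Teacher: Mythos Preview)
Your approach is correct and follows the same overall strategy as the paper: compare the approximating measures $m_n$ with the weighted product measures $e^{2(1-s)\gd(\cdot,\cdot)}d\mu_{t_n}^{\otimes 2}$ via the rough-radiality bound, pass to the limit, and then use $\Ga$-invariance of the density together with double ergodicity of $\nu_\text{PS}^{\otimes 2}$ to force it to be constant.

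The paper streamlines two of your steps. First, only the one-sided domination $m(\Phi)\prec m_s(\Phi)$ for $\Phi\ge 0$ is used: this already yields a bounded Radon--Nikodym density $F=dm/dm_s\in L^\infty(m_s)$, and once $F$ is shown to be $\Ga$-invariant, ergodicity (\cite[Thm.~4.1]{Coulon:2018aa}) forces it to be an a.e.\ constant, necessarily positive since $m$ is a probability. Your separate lower bound and the argument for $I_s$ bounded below are therefore not needed. Second, the weak convergence of $e^{2(1-s)\gd(x,y)}d\mu_t^{\otimes 2}$ to $m_s$---and in particular the uniform diagonal tail control you correctly flag as the main technical point---is exactly the content of the sequential shadow lemma (Lemma~\ref{lem:seq:shadow:bou}) and its corollary, imported from \cite{Boucher:2020ab}; the paper invokes these directly rather than setting up a truncation-by-level-$M$ scheme. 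Your cocycle-matching argument for invariance of $F$ is equivalent to the paper's formulation $m(\pi_s\otimes\pi_s(g)\Phi)=m(\Phi)$.
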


In order to prove Proposition \ref{prop:nat:comp} we need the following results proved in \cite{Boucher:2020ab} (see Lemma 2.4 and Corollary 4.1):

\begin{lem}[sequential shadow lemma]\label{lem:seq:shadow:bou}
There exist $\gd<\gd'$ and $r_0$ such that:
$$\mu_t(\{x\in\ol{\Ga}:(x,h)\ge d_\Ga(h,e)-r\})\asymp e^{t r}e^{-t d_\Ga(h,e)} $$
for all $\gd<t<\gd'$, $h\in \Ga$ and $r>r_0$.
\end{lem}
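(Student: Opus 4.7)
The plan is to reduce the shadow measure computation to a weighted counting problem on $\Ga$ via the change of variable $g=hg'$, and then to apply the orbital counting estimate (\ref{eq:fund:esti}) together with the fact that $\Ga$ is purely spherical.

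First I would translate the shadow condition. Using the $\Ga$-invariance of $d_\Ga$, a direct computation from the definition of the Gromov product gives the two \emph{exact} identities
\begin{align*}
d_\Ga(hg',e) &= d_\Ga(h,e)+d_\Ga(g',e)-2(h^{-1},g')_e,\\
(hg',h)_e &= d_\Ga(h,e)-(h^{-1},g')_e,
\end{align*}
so the shadow condition $(g,h)_e\ge d_\Ga(h,e)-r$ on $g=hg'$ becomes exactly $(h^{-1},g')_e\le r$. Substituting and factoring out the part depending only on $h$ yields
$$
\mu_t\bigl(\{x\in\ol{\Ga}:(x,h)_e\ge d_\Ga(h,e)-r\}\bigr)\;=\;\frac{e^{-t d_\Ga(h,e)}}{\mathcal{P}(\mathbf{1}_\Ga;\gd^{-1}t)}\sum_{\substack{g'\in\Ga\\ (h^{-1},g')_e\le r}}e^{2t(h^{-1},g')_e}\,e^{-t d_\Ga(g',e)}.
$$

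Second, I would decompose the inner sum along the bi-indexed partition $\mathcal{O}_e(m,n;h^{-1})$ introduced in Subsection \ref{subsec:prelim:orbital}, namely along the rings $C_m$ refined by level sets of $(h^{-1},g')_e$ in intervals $[Rn,R(n+1))$ with $0\le n\le m$. The estimate (\ref{eq:fund:esti}), combined with pure sphericality $|C_m|\asymp e^{\gd Rm}$, gives $|\mathcal{O}_e(m,n;h^{-1})|\asymp e^{\gd Rm}e^{-\gd Rn}$ uniformly in $h$. The inner sum then reads
$$
\asymp\sum_{n\le r/R}e^{2tRn}\sum_{m\ge n}e^{-\gd Rn}e^{\gd Rm}e^{-tRm}
=\sum_{n\le r/R}e^{(2t-\gd)Rn}\sum_{m\ge n}e^{(\gd-t)Rm}.
$$
The inner geometric sum converges because $t>\gd$ and is $\asymp (t-\gd)^{-1}e^{(\gd-t)Rn}$; the outer sum is then a geometric sum of ratio $e^{tR}$ truncated at $n=\lfloor r/R\rfloor$, hence dominated by its last term and $\asymp e^{tr}$. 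The whole inner sum is therefore $\asymp \frac{e^{tr}}{t-\gd}$.

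Third, pure sphericality of $(\Ga,d_\Ga)$ also implies $\mathcal{P}(\mathbf{1}_\Ga;\gd^{-1}t)\asymp (t-\gd)^{-1}$ as $t\to\gd^+$ (cf.\ \cite[Thm.\ 7.2]{MR1214072}), and the two singular factors $(t-\gd)^{\pm1}$ cancel to produce $\mu_t(S(h,r))\asymp e^{tr}e^{-t d_\Ga(h,e)}$, as claimed.

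The main obstacle is making all the implicit multiplicative constants uniform in the three parameters $(t,h,r)$ simultaneously. Uniformity in $h$ is built into (\ref{eq:fund:esti}), which is a consequence of strong hyperbolicity (the shadows $\mathcal{O}(h,R)$ form a uniformly locally finite cover of $\dd\Ga$). Uniformity in $t$ forces restriction to a half-interval $(\gd,\gd']$ with $\gd'$ chosen small enough that the ratio $e^{(\gd-t)R}$ stays bounded away from $1$; this is exactly the role of the parameter $\gd'$ in the statement. Finally, $r_0$ has to be taken large enough (at least of order $R$ and absorbing the additive discretisation errors coming from partitioning $\Ga$ into rings of thickness $R$) so that $\lfloor r/R\rfloor\ge 1$ and the leading-term approximation of the outer geometric sum is valid.
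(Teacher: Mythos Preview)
The paper does not prove this lemma in-text; it is quoted from \cite{Boucher:2020ab} (Lemma 2.4 and Corollary 4.1). Your argument is a self-contained and correct derivation from the counting estimate (\ref{eq:fund:esti}) and pure sphericality, and is presumably close in spirit to what is done in the cited reference: the change of variable $g=hg'$ reducing the shadow to a Gromov-product constraint at $e$, followed by the double annular decomposition and cancellation of the $(t-\gd)^{-1}$ singularities, is the standard route.

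One minor correction to your last paragraph: your explanation of the role of $\gd'$ is inverted. The upper bound $t<\gd'$ is not there to keep $e^{(\gd-t)R}$ bounded away from $1$ --- as $t\to\gd^+$ that ratio tends to $1$, and this is fine because you already track the resulting blow-up explicitly as $(t-\gd)^{-1}$. The real purpose of $\gd'$ is to keep $t-\gd$ bounded \emph{from above}, which is what guarantees that the two identifications
\[
\frac{1}{1-e^{(\gd-t)R}}\asymp(t-\gd)^{-1}
\quad\text{and}\quad
\mathcal{P}({\bf1}_\Ga;\gd^{-1}t)\asymp(t-\gd)^{-1}
\]
hold with uniform implied constants on $(\gd,\gd']$ (for large $t$ both quantities are $\asymp 1$, not $\asymp(t-\gd)^{-1}$, so the cancellation would fail). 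This affects only the explanatory sentence, not the computation itself.
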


\begin{cor}
The sequence of probabilities
$$dm_{s;t}(g,h)=e^{2(1-s)\gd(g,h)}d\mu_t(g)d\mu_t(h)$$
converges weakly to 
$$dm_s(\xi,\eta)=e^{2(1-s)\gd(\xi,\eta)}d\nu_\text{PS}(\xi)d\nu_\text{PS}(\eta)$$
on $\ol{\Ga}\times\ol{\Ga} $.
\end{cor}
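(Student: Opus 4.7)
The plan is to deduce the weak convergence from two ingredients: the weak convergence $\mu_t\otimes\mu_t\to\nu_\text{PS}\otimes\nu_\text{PS}$ on $\ol{\Ga}\times\ol{\Ga}$ (an immediate consequence of $\mu_t\to\nu_\text{PS}$ recalled just before Proposition \ref{prop:nat:comp}), and the sequential shadow lemma (Lemma \ref{lem:seq:shadow:bou}) in the form \emph{uniform in $t\in(\gd,\gd')$}. The obstacle is that the weight $(g,h)\mapsto e^{2(1-s)\gd(g,h)}$ is unbounded along the boundary diagonal of $\ol{\Ga}\times\ol{\Ga}$, so one cannot simply pair weak convergence against a bounded continuous test function.

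\textbf{Step 1 (truncation).} Since strong hyperbolicity ensures that the Gromov product $(\cdot,\cdot)$ extends continuously to $\ol{\Ga}\times\ol{\Ga}$ with values in $[0,+\infty]$, for each $R$ the function
$$w_R(g,h)\df\min\!\bigl(e^{2(1-s)\gd(g,h)},\,e^{2(1-s)\gd R}\bigr)$$
belongs to $\cC(\ol{\Ga}\times\ol{\Ga})$ and is bounded. Hence for any $\Phi\in\cC(\ol{\Ga}\times\ol{\Ga})$ and any fixed $R$,
$$\iint \Phi\, w_R\, d\mu_t\otimes d\mu_t\ \xrightarrow{t\to\gd^+}\ \iint \Phi\, w_R\, d\nu_\text{PS}\otimes d\nu_\text{PS}.$$

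\textbf{Step 2 (uniform tail bound).} Consider the dyadic Gromov-product shells
$$A_n\df\bigl\{(g,h)\in\ol{\Ga}\times\ol{\Ga}\,:\,Rn\le(g,h)<R(n+1)\bigr\}.$$
A tree-approximation argument produces a constant $R_1$ depending only on the hyperbolicity of $(\Ga,d_\Ga)$ such that $A_n\subset\bigcup_{x\in C_n}\mathcal{O}(x,R_1)\times\mathcal{O}(x,R_1)$ (one picks $x$ on a rough geodesic from $e$ to $g$ at height $\sim Rn$; the hypothesis $(g,h)\ge Rn$ forces both $g$ and $h$ into the shadow of $x$ at a bounded radius). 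Applying Lemma \ref{lem:seq:shadow:bou} in the form $\mu_t(\mathcal{O}(x,R_1))\prec_{R_1} e^{-tRn}$ uniformly in $t\in(\gd,\gd')$ and $x\in C_n$, together with $|C_n|\prec e^{\gd Rn}$ from pure sphericality and the bounded multiplicity of the shadow cover, yields
$$\iint_{A_n}e^{2(1-s)\gd(g,h)}\,d\mu_t\otimes d\mu_t\ \prec\ e^{2(1-s)\gd Rn}\cdot e^{\gd Rn}\cdot e^{-2tRn}\ \le\ e^{-Rn(2s-1)\gd},$$
where the last inequality uses $2t\ge 2\gd$. Since $s>\half$, this bound is geometrically summable in $n$, uniformly in $t\in(\gd,\gd')$. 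The identical argument with the classical Patterson--Sullivan shadow estimate $\nu_\text{PS}(\mathcal{O}(x,R_1))\asymp e^{-\gd Rn}$ yields the companion bound $\iint_{A_n}e^{2(1-s)\gd(\xi,\eta)}\,d\nu_\text{PS}\otimes d\nu_\text{PS}\prec e^{-Rn(2s-1)\gd}$.

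\textbf{Step 3 (assembly).} Given $\Phi\in\cC(\ol{\Ga}\times\ol{\Ga})$ and $\e>0$, choose $N$ so large that, by Step 2, both tails $\iint_{\{(g,h)\ge RN\}}e^{2(1-s)\gd(\cdot,\cdot)}\,d\mu_t\otimes d\mu_t$ (uniformly in $t\in(\gd,\gd')$) and the analogous Patterson--Sullivan tail are at most $\e/(3\|\Phi\|_\infty)$. Setting $R'=RN$, the truncation $w_{R'}$ coincides with $e^{2(1-s)\gd(g,h)}$ outside $\{(g,h)\ge RN\}$, so Step 1 applied with this fixed $R'$ together with the triangle inequality gives the required weak convergence in the limit $t\to\gd^+$. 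The principal obstacle is securing the uniformity of the shadow estimate in $t$ over a right neighbourhood of $\gd$, which is precisely what Lemma \ref{lem:seq:shadow:bou} provides; with that in hand the elementary exponent arithmetic $(3-2s)\gd-2t\le-(2s-1)\gd<0$ for $s>\half$ closes the tail.
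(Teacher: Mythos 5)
Your proposal is correct and complete, but it is worth pointing out how it sits relative to the text: the paper does not actually prove this corollary. It imports both the sequential shadow lemma and this corollary verbatim from \cite{Boucher:2020ab} (Lemma 2.4 and Corollary 4.1), adding only the remark that the CAT$(-1)$ proofs adapt to strongly hyperbolic spaces. What you have done is supply a self-contained argument in the present setting, built on the one ingredient the paper does restate, namely Lemma \ref{lem:seq:shadow:bou}.

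The three steps are sound. The truncation $w_R=\min\bigl(e^{2(1-s)\gd(\cdot,\cdot)},e^{2(1-s)\gd R}\bigr)$ is genuinely continuous on $\ol{\Ga}\times\ol{\Ga}$: near the diagonal of $\dd\Ga$, where the Gromov product diverges and fails to extend, $w_R$ is locally constant, and elsewhere it is the minimum of two continuous functions. The covering $A_n\cap(\Ga\times\Ga)\subset\bigcup_{x\in C_n}\mathcal{O}(x,R_1)\times\mathcal{O}(x,R_1)$ is the standard thin-triangle argument: if $(g,h)_e\ge Rn$, both $d_\Ga(g,e)$ and $d_\Ga(h,e)$ exceed $Rn$ (since the Gromov product is bounded above by the minimum of the two distances), one picks $x$ at height $\simeq Rn$ on a rough geodesic to $g$, and hyperbolicity forces $(x,h)_e\ge\min\{(x,g)_e,(g,h)_e\}-C$, placing $h$ in a uniformly bounded shadow of $x$. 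One should enlarge $R_1$ to exceed the $r_0$ of Lemma \ref{lem:seq:shadow:bou}, which is harmless. The exponent arithmetic $(3-2s)\gd-2t\le-(2s-1)\gd<0$ for $t\ge\gd$ and $s>\tfrac12$ is correct and gives a geometric tail uniformly in $t\in(\gd,\gd')$; the companion Patterson--Sullivan tail is the $t=\gd$ instance. Note also that for the upper bound $\mu_t\otimes\mu_t(A_n)\le\sum_{x\in C_n}\mu_t(\mathcal{O}(x,R_1))^2$ you do not actually need bounded multiplicity of the shadow cover --- overcounting only helps an upper estimate --- though invoking it is not wrong. With those tails uniformly summable, the $\e/3$ splitting in Step 3 against the truncated kernel closes the argument. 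I cannot verify whether the proof in \cite{Boucher:2020ab} takes the same route, but your reconstruction is a legitimate and natural one, and it is useful to have an argument on record in the strongly hyperbolic setting rather than a pointer.
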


\begin{rem}
Even if those results was originally stated in a CAT(-1) context their proofs adapt to proper roughly geodesic strongly hyperbolic spaces.
\end{rem}

\begin{proof}[Proof of Proposition \ref{prop:nat:comp}]
It follows from Proposition \ref{prop:hs:rough} that 
$f$ is of divergent type and $\mathcal{P}_2(f;\gs)\asymp\mathcal{P}^2(f;\gs)$ for all $\gs\sim s(f)^+$.

Given $\Phi\in \cC_+(\ol{\Ga}\times \ol{\Ga})$ and  $\gs>s(f)$ one has:
\begin{align*}
&\frac{1}{\mathcal{P}_2(f;\gs)}\iint_{\Ga\times\Ga} \Phi(g,h)\phi(g^{-1}h)e^{-\gs \gd d(g,e)}e^{-\gs \gd d(h,e)}dhdg\\
&\prec \frac{1}{\mathcal{P}_2(f;\gs)}\iint_{\Ga\times\Ga} \Phi(g,h)e^{2(1-s)\gd (g,h)_e}e^{-(1+\gs-s)\gd d(g,e)}e^{-(1+\gs -s)\gd d(h,e)}dhdg\\
&\asymp\iint_{\Ga\times\Ga}\Phi(g,h)dm_{s;t}(g,h)
\end{align*}

Using Lemma \ref{lem:seq:shadow:bou} it follows
$m(\Phi)\le m_s(\Phi)$ and thus the non-zero functional
$$\vp:L^1(m_s)\rightarrow \BC;\quad \Phi\mapsto m(\Phi)$$
is a well defined, bounded and order preserving operator.

Therefore one can find a measurable positive and essentially bounded function $F$ on $\dd \Ga\times \dd \Ga$ such that
$$m(\Phi)=\vp(\Phi)=\iint_{\dd \Ga\times \dd \Ga}\Phi(\xi,\eta)F(\xi,\eta)e^{2(1-s)\gd (\xi,\eta)_e}d\nu_\text{PS}(\xi) d\nu_\text{PS}(\eta)$$

As $\vp(\pi_s\otimes\pi_s(g)\Phi)=\vp(\Phi)$ for all $\Phi\in \cC(\dd \Ga\times\dd \Ga)$ it follows that $g^*F=F$ and thus $F$ must be constant as the $\Ga$-action on $\dd \Ga$ is $2$-ergodic due to \cite[Thm. 4.1]{Coulon:2018aa} (see also \cite{Bader:2017aa} or \cite[Appendix A]{Garncarek:2014aa}).
\end{proof}

In this case the $(\Ga,s,\nu_\text{PS})$-conditional expectation is the renormalised Knapp-Stein operator
$$\mathcal{I}_s'(\psi)_\eta=(\go_1(\eta))^{-1}\int f(\xi)e^{2(1-s)(\xi,\eta)}d\nu_\text{PS}(\xi)$$
on $L^2(\go_1 d\nu_\text{PS})$
with $$\go(\eta)=\int e^{2(1-s)(\xi,\eta)}d\nu_\text{PS}(\xi)$$
and $\go_1=\frac{\go}{\nu_\text{PS}(\go)}$.

It follows from \cite[Lem 3.6]{Boucher:2020ab} that the Harish-Chandra functions: $$\Xi_s(g)\df (\mathcal{I}_s'(\pi_s(g){\bf1}),{\bf1})_{L^2(\go_1 d\nu_\text{PS})}$$ on $\Ga$ for $\half< s\le 1$ satisfy $\Xi_s(g)\asymp e^{-(1-s)\gd d_\Ga(g,e)}$.
We deduce, using Proposition \ref{prop:nat:comp}:
\begin{cor}
The Knapp-Stein of order $\half<s\le 1$ is positive definite iff there exists a positive definite function over $\Ga$ such that $\phi\asymp e^{-(1-s)\gd d_\Ga(\bullet,e)}$.
\end{cor}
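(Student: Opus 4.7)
The plan is to prove the two implications separately, in each case reducing the question to the correspondence $B_\phi^\dd \leftrightarrow \mathcal{I}_s$ that becomes explicit once Proposition \ref{prop:nat:comp} is invoked for $k_s$-roughly radial inputs.

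For the forward direction $(\Rightarrow)$, I would assume that $\mathcal{I}_s$ is positive definite on $L^2(\nu_\text{PS})$, so that the natural complementary series $\pi_s$ is a well-defined unitary representation (obtained by completing $L^2(\nu_\text{PS})$ with respect to the invariant quadratic form $Q_s(\psi)=(\mathcal{I}_s\psi,\psi)$, as recalled in the introduction and in \cite{Boucher:2020ab}). The Harish-Chandra function
\[
\Xi_s(g)\df(\mathcal{I}'_s(\pi_s(g){\bf1}),{\bf1})_{L^2(\go_1 d\nu_\text{PS})}
\]
is then a diagonal matrix coefficient of the unitary representation $\pi_s$, hence a positive definite function on $\Ga$. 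The estimate $\Xi_s(g)\asymp e^{-(1-s)\gd d_\Ga(g,e)}$ recalled from \cite[Lem. 3.6]{Boucher:2020ab} just above the statement exhibits $\Xi_s$ as the required function $\phi$.

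For the backward direction $(\Leftarrow)$, suppose a positive definite function $\phi$ on $\Ga$ satisfies $\phi\asymp e^{-(1-s)\gd d_\Ga(\bullet,e)}$. The multiplicative comparison forces $\phi$ to be pointwise positive, so $\phi\in\mathbb{P}_+(\Ga)$ and is $k_s$-roughly radial in the sense of Subsection \ref{subsec:weak:cont:andco}. Proposition \ref{prop:nat:comp} then yields $m=c(s)\,m_s$ for the measure $m$ built from $\phi$ in Subsection \ref{subsec:meas:pers}, so the disintegration fibres are explicitly $d\rho_\xi(\eta)=\go(\xi)^{-1}e^{2(1-s)\gd(\xi,\eta)}d\nu_\text{PS}(\eta)$ over $\nu=c(s)\,\go\,d\nu_\text{PS}$, where $\go(\xi)=\int e^{2(1-s)\gd(\xi,\eta)}d\nu_\text{PS}(\eta)$. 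Unwinding the definitions of $E$ and $\mathcal{I}_s$ yields the identity
\[
B_\phi^\dd(f)=(E(f),f)_{L^2(\nu)}=c(s)\,(\mathcal{I}_s f,f)_{L^2(\nu_\text{PS})}
\]
for every $f\in\cC(\dd\Ga)$. Since $B_\phi^\dd$ is by construction the squared norm of $f$ in the boundary representation $\mathcal{H}_\phi^\dd$, the right-hand side is non-negative, which is precisely positive definiteness of $\mathcal{I}_s$ on $L^2(\nu_\text{PS})$.

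The main technical point I expect is the identification $B_\phi^\dd=c(s)(\mathcal{I}_s\cdot,\cdot)$ used in the backward implication: one must reconcile the abstract conditional expectation $E$ coming from the disintegration of $m$ with the concrete integral operator $\mathcal{I}_s$, via the explicit weights $\go,\go_1$. Once Proposition \ref{prop:nat:comp} supplies the product form $m=c(s)m_s$, this becomes a bookkeeping calculation with Radon-Nikodym derivatives, but it is where the argument is most delicate because everything else reduces to invoking already established results.
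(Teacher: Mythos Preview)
Your proposal is correct and follows exactly the approach the paper intends: the paper's proof consists solely of the sentence ``We deduce, using Proposition \ref{prop:nat:comp}'' together with the Harish-Chandra estimate $\Xi_s\asymp e^{-(1-s)\gd d_\Ga(\bullet,e)}$ recorded just before the statement, and you have unpacked precisely these two ingredients---$\Xi_s$ for the forward direction, and the identification $B_\phi^\dd=c(s)(\mathcal{I}_s\cdot,\cdot)_{L^2(\nu_\text{PS})}$ via Proposition \ref{prop:nat:comp} for the backward one. Your bookkeeping with the disintegration and the weights $\go,\go_1$ is the right way to make the latter identification rigorous.
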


\begin{cor}
The Knapp-Stein operators are positive definite for all $\half<s\le 1$ iff $\Ga$ has a proper affine action with unitary cocycle $b$ such that 
$\|b(g)\|^2\simeq_X c.d_\Ga(g,e)$
for a fixed $c>0$.
\end{cor}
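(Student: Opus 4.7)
The equivalence splits into two directions, both mediated by the correspondence between conditionally negative definite (CND) functions on $\Ga$ and unitary cocycles (Schoenberg's theorem together with the standard GNS-type realization, \cite[p.~76]{MR2415834}), combined with the preceding Corollary characterizing positive definiteness of $\mathcal{I}_s$ by the existence of pointwise positive, positive definite functions $\phi\asymp e^{-(1-s)\gd d_\Ga(\bullet,e)}$.

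For the \emph{if} direction the argument is immediate. Given a unitary cocycle $b:\Ga\to\mathcal{H}$ with $\|b(g)\|^2\simeq_X c\,d_\Ga(g,e)$, the function $g\mapsto\|b(g)\|^2$ is CND, so by Schoenberg's theorem $\phi_t(g)\df e^{-t\|b(g)\|^2}$ is positive definite for every $t\ge 0$; moreover it is pointwise positive and satisfies $\phi_t\asymp e^{-tc\,d_\Ga(\bullet,e)}$ (with multiplicative constants $e^{\pm t K}$ coming from the additive slack $K$ in the $\simeq_X$ estimate). For each $s\in(\tfrac12,1]$, the choice $t=(1-s)\gd/c$ yields $\phi_t\in\mathbb{P}_+(\Ga)$ with $\phi_t\asymp e^{-(1-s)\gd d_\Ga(\bullet,e)}$, and the previous Corollary concludes.

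For the \emph{only if} direction one must invert this construction: from the family $(\phi_s)_{s\in(1/2,1]}$ in $\mathbb{P}_+(\Ga)$ furnished by the hypothesis, one extracts a CND function $\psi\simeq\gd\, d_\Ga(\bullet,e)$ on $\Ga$. Normalize $\phi_s(e)=1$ (division by $\phi_s(e)$ preserves the $\asymp$ estimate); then $1-\phi_s$ is CND, hence so is $\psi_s\df(1-\phi_s)/(1-s)$ for each $s\in(\tfrac12,1)$. The heuristic expansion $\phi_s(g)=1-(1-s)\psi(g)+o(1-s)$ as $s\to 1^-$ gives $\psi_s(g)\to\psi(g)$ pointwise, and since the pointwise limit of CND functions is CND the limit $\psi$ is CND; the $\asymp$ relation $\phi_s\asymp e^{-(1-s)\gd d_\Ga(\bullet,e)}$ together with a Taylor expansion $1-e^{-(1-s)\gd d_\Ga(g,e)}\sim (1-s)\gd d_\Ga(g,e)$ gives the expected two-sided bound $\psi\simeq\gd d_\Ga(\bullet,e)$. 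The Hilbert-space realization of CND functions \cite[p.~76]{MR2415834} then yields a unitary representation $(\pi,\mathcal{H})$ and a cocycle $b:\Ga\to\mathcal{H}$ with $\|b(g)\|^2=\psi(g)\simeq\gd\,d_\Ga(g,e)$, producing the required proper affine action (properness is automatic from the linear lower bound).

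The main technical obstacle is justifying the first-order expansion of $\phi_s$ at $s=1$, or equivalently extracting a subsequential pointwise limit $\psi=\lim_n\psi_{s_n}$ that genuinely satisfies $\psi\simeq c\,d_\Ga(\bullet,e)$ for some $c>0$. This reduces to obtaining uniform control on the multiplicative constants in $\phi_s\asymp e^{-(1-s)\gd d_\Ga(\bullet,e)}$ as $s\to 1^-$, without which the family $\{\psi_s\}$ is not a priori pointwise bounded. A natural candidate is $\phi_s=\Xi_s$, the Harish-Chandra function of the complementary series $\pi_s$ made available by Theorem \ref{thm:intro:knapp:char}, with the desired uniformity obtained by revisiting \cite[Lem.~3.6]{Boucher:2020ab} and tracking the constants as $\Xi_s\to \Xi_1\equiv 1$. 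Alternatively, one may invoke the fusion rules (Corollary \ref{cor:intro:fusion:special}) to combine complementary series at varying parameters into a pointwise-bounded family of CND functions and then apply pointwise compactness in the closed cone of CND functions to extract the limit.
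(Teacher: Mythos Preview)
Your \emph{if} direction is correct and is exactly the paper's argument: Schoenberg plus the preceding corollary.

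For the \emph{only if} direction the paper does not argue directly; it simply invokes \cite[Thm.~2]{Boucher:2020aa}, where the cocycle is constructed by other means from the full complementary series. Your attempt to extract a CND function as a pointwise limit of $\psi_s=(1-\phi_s)/(1-s)$ is a natural idea, but the gap you identify is more serious than you suggest, and is not repaired by ``uniform control on the multiplicative constants''. Suppose indeed $C^{-1}e^{-(1-s)\gd d_\Ga(g,e)}\le \phi_s(g)\le C\,e^{-(1-s)\gd d_\Ga(g,e)}$ with $C$ independent of $s$. Expanding $e^{-(1-s)\gd d_\Ga(g,e)}=1-(1-s)\gd d_\Ga(g,e)+O((1-s)^2)$ one finds
\[
\psi_s(g)\ \ge\ \frac{1-C}{1-s}+C\gd d_\Ga(g,e)+O(1-s),
\qquad
\psi_s(g)\ \le\ \frac{1-C^{-1}}{1-s}+C^{-1}\gd d_\Ga(g,e)+O(1-s),
\]
so for any $C>1$ the lower bound tends to $-\infty$ and the upper bound to $+\infty$: the family $(\psi_s)$ is neither bounded above nor below for fixed $g$, and no subsequence converges. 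What is actually needed is that $\phi_s(g)\to 1$ with error $O(1-s)$ for each $g$, i.e.\ a genuine first-order expansion at $s=1$, not merely a uniform $\asymp$; this is a differentiability statement about the curve $s\mapsto\phi_s$ that does not follow from the hypotheses. The same obstruction persists if one takes $\phi_s=\Xi_s$ and appeals to \cite[Lem.~3.6]{Boucher:2020ab}, since that lemma only gives $\Xi_s\asymp e^{-(1-s)\gd d_\Ga(\bullet,e)}$ with unspecified constants. The construction in \cite{Boucher:2020aa} bypasses this limiting procedure entirely.
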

\begin{proof}
Indeed if $\Ga$ admits such a affine action, then $\phi_t(g)\df e^{-t\|b(g)\|^2}\asymp  e^{-td_\Ga(g,e)}$ is roughly $k_t$-spherical with $k_t(x)=e^{-tx}$ with $\phi_t\in\mathbb{P}_+(\Ga)$ due to Schoenberg's theorem \cite[Prop. 2.11.1]{MR2415834} and Proposition \ref{prop:nat:comp} can be applied to prove the positivity of the Knapp-Stein operator.
The converse follows from \cite[Thm. 2]{Boucher:2020aa}.
\end{proof}

The following semi-group relation holds between the natural complementary series:
\begin{cor}
Given $\half<s,s',t\le 1$ with $s+s'=1+t$ and $\Xi_s,\Xi_{s'}\in \mathbb{P}_+(\Ga)$, one has $\Xi_t\in \mathbb{P}_+(\Ga)$ and $\pi_t\subset \pi_s\otimes\pi_{s'}$.
\end{cor}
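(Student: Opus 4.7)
The plan is to realise $\pi_t$ inside $\pi_s\otimes\pi_{s'}$ through the GNS of the pointwise product $\phi\df\Xi_s\cdot\Xi_{s'}$, exploiting Theorem \ref{thm:intro:knapp:char}. First I would check that $\phi\in\mathbb{P}_+(\Ga)$: the pointwise product of positive definite functions is positive definite (it is the matrix coefficient of the cyclic vector ${\bf1}_s\otimes{\bf1}_{s'}$ in the tensor representation $\pi_s\otimes\pi_{s'}$), and since $\Xi_s,\Xi_{s'}\ge0$ pointwise by hypothesis we also get $\phi\ge0$.

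Next I would verify that $\phi$ is $k_t$-roughly radial. Using the estimate $\Xi_s(g)\asymp e^{-(1-s)\gd d_\Ga(g,e)}$ from \cite[Lem. 3.6]{Boucher:2020ab} (recalled just before Proposition \ref{prop:nat:comp}), together with the hypothesis $s+s'=1+t$, this yields
$$\phi(g)\asymp e^{-(2-s-s')\gd d_\Ga(g,e)}=e^{-(1-t)\gd d_\Ga(g,e)}=k_t(d_\Ga(g,e)).$$
Applying Theorem \ref{thm:intro:knapp:char} to $\phi$ with parameter $t$ then gives positivity of the Knapp-Stein operator $\mathcal{I}_t$ on $L^2(\nu_\text{PS})$, well-definedness of the natural complementary series $\pi_t$, and the stronger conclusion that $\pi_t$ is isomorphic to $\pi_\phi^\dd$ and realises itself as a subrepresentation of the GNS $\pi_\phi$. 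In particular $\Xi_t\in\mathbb{P}_+(\Ga)$, which settles the first assertion.

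Finally, I would embed $\pi_\phi$ inside $\pi_s\otimes\pi_{s'}$. Since
$$\bigl(\pi_s\otimes\pi_{s'}(g)({\bf1}_s\otimes{\bf1}_{s'}),{\bf1}_s\otimes{\bf1}_{s'}\bigr)=\Xi_s(g)\Xi_{s'}(g)=\phi(g),$$
the uniqueness of the GNS construction identifies $\pi_\phi$ with the cyclic subrepresentation of $\pi_s\otimes\pi_{s'}$ generated by ${\bf1}_s\otimes{\bf1}_{s'}$. Chaining inclusions yields $\pi_t\subset\pi_\phi\subset\pi_s\otimes\pi_{s'}$, as desired.

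The main obstacle is the upgrade from weak containment to an actual subrepresentation inclusion $\pi_t\subset\pi_\phi$. Weak containment would follow directly from Lemma \ref{lem:seq:weak:cont} together with the Poincar\'e sphericity criterion Proposition \ref{prop:poin:sph} (which applies via Proposition \ref{prop:hs:rough} for $k_t$-roughly radial functions), but the stronger subrepresentation statement requires the irreducibility of $\pi_t$ provided by the natural complementary series construction of \cite{Boucher:2020ab}, which is precisely the content built into Theorem \ref{thm:intro:knapp:char} that must be invoked carefully here.
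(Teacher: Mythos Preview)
Your proposal is correct and follows essentially the same route as the paper: both arguments use the estimate $\Xi_s\cdot\Xi_{s'}\asymp e^{-(1-t)\gd d_\Ga(\cdot,e)}$, identify $\pi_\phi$ with the cyclic subrepresentation of $\pi_s\otimes\pi_{s'}$ generated by ${\bf1}_s\otimes{\bf1}_{s'}$, and then pass from a nonzero intertwiner to an embedding via the irreducibility of $\pi_t$ from \cite{Boucher:2020ab}. The only difference is cosmetic: the paper unpacks the ingredients (Proposition \ref{prop:nat:comp}, Proposition \ref{prop:hs:rough}, Proposition \ref{prop:poin:sph}, and the irreducibility result) directly, whereas you invoke the packaged Theorem \ref{thm:intro:knapp:char}, which is itself proved from exactly those ingredients in the same subsection.
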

\begin{proof}
The first part follows from the estimate
$\Xi_s.\Xi_{s'}(g)\asymp e^{-(1-t)\gd d_\Ga(g,e)}$ (see \cite[Lem 3.6]{Boucher:2020ab}) for all $g\in\Ga$ and Proposition \ref{prop:nat:comp}.

Proposition \ref{prop:poin:sph} together with Proposition \ref{prop:hs:rough} imply the existence of a non-zero intertwiner between $\pi_t$ and $\pi_{\Xi_s.\Xi_{s'}}\subset \pi_s\otimes\pi_{s'}$. On the other hand the irreducibility of $\pi_t$ \cite[Thm. 1]{Boucher:2020ab} (see also \cite{Boyer:2022aa}) implies the injectivity.
\end{proof}

\subsection{About the critical case $\gd=\gd(\phi)$}\label{subsec:crit:Rh:andco}\hfill\break

\begin{thm}\label{thm:amenable:gene}
Let ${\boldsymbol\phi}=(\phi_n)_n\in\mathbb{P}_+(\Ga)$, $([\pi_{\phi_n},{\bf1}_{\phi_n}])_n$ the sequence of positive cyclic representations associated and assume $\lim_n\gd(\phi_n)=\gd({\boldsymbol\phi})=\gd$, then $\pi^\dd\simeq1$ and $\pi_n\rightarrow 1$.
In particular given $\phi\in\mathbb{P}_+(\Ga)$ with $\gd(\phi)=\gd$, $1\prec\pi_\phi$.
\end{thm}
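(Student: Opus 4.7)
The plan is to identify the limit measure $m$ from Proposition \ref{prop:fond:meas} with $\nu_\text{PS}\otimes \nu_\text{PS}$ under the critical hypothesis $\gd({\boldsymbol\phi})=\gd$; the triviality of $\pi^\dd$ will then follow immediately, and the last assertion is obtained by specialising to the constant sequence.

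By Proposition \ref{prop:fond:meas}, $m$ is a $\Ga$-quasi-invariant probability on $\dd \Ga\times\dd \Ga$ whose Radon--Nikodym cocycle $(\xi,\eta)\mapsto e^{-\gd b_\xi(g^{-1})}e^{-\gd b_\eta(g^{-1})}$ coincides with that of the $\gd$-conformal product $\nu_\text{PS}\otimes\nu_\text{PS}$. I would first establish the absolute continuity $m\ll \nu_\text{PS}\otimes \nu_\text{PS}$ by combining the upper shadow Lemma \ref{lem:half:shadow}, which yields
\[
m\bigl(\mathcal{O}(\gamma,r_0)\times \mathcal{O}(\gamma,r_0)\bigr)\prec e^{-2\gd d_\Ga(\gamma,e)}\asymp (\nu_\text{PS}\otimes\nu_\text{PS})\bigl(\mathcal{O}(\gamma,r_0)\times \mathcal{O}(\gamma,r_0)\bigr)
\]
uniformly in $\gamma\in\Ga$, with a Vitali-type extension (Lemma \ref{lem:vita}) together with the symmetry of $m$ to bridge this estimate to general Borel subsets of $\dd\Ga\times\dd\Ga$. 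The density $dm/d(\nu_\text{PS}\otimes \nu_\text{PS})$ is then $\Ga$-invariant, thanks to the matching cocycles, and by the $2$-ergodicity of the $\Ga$-action on $(\dd\Ga\times\dd\Ga,\nu_\text{PS}\otimes \nu_\text{PS})$ invoked in the proof of Proposition \ref{prop:nat:comp}, it is essentially constant; as both measures are probabilities, $m=\nu_\text{PS}\otimes \nu_\text{PS}$.

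With $m=\nu_\text{PS}\otimes \nu_\text{PS}$ the conditional expectation of Subsection \ref{subsec:meas:pers} reduces to $E(\psi)=\int\psi\, d\nu_\text{PS}$, so
\[
B_{\boldsymbol\phi}^\dd(\psi_1,\psi_2)=\int \psi_1\, d\nu_\text{PS}\cdot\overline{\int \psi_2\, d\nu_\text{PS}}
\]
is of rank one. Thus $\mathcal{H}^\dd$ is one-dimensional, spanned by $[{\bf1}]$; on this line $\pi^\dd(g)[{\bf1}]=c(g)[{\bf1}]$ with $|c(g)|=1$ by unitarity and $c(g)\ge 0$ by positivity of the cyclic vector, forcing $c\equiv 1$ and $\pi^\dd\simeq 1$. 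The Fell-convergence $\pi_n\to \pi^\dd=1$ then follows from Lemma \ref{lem:seq:weak:cont}, and applying this to the constant sequence $\phi_n=\phi$ yields $1\prec \pi_\phi$.

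The main obstacle is the absolute continuity step $m\ll \nu_\text{PS}\otimes \nu_\text{PS}$: the upper shadow lemma directly controls only diagonal shadow-squares, so extending the domination to arbitrary Borel sets (via the Vitali covering lemma and the symmetry of $m$) is the technical heart of the argument. Once this is in place, $2$-ergodicity removes the last remaining degree of freedom and delivers the product decomposition, from which every subsequent step is formal.
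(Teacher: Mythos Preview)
Your overall strategy is the same as the paper's: identify $m$ with (a constant multiple of) $\nu_\text{PS}\otimes\nu_\text{PS}$ via absolute continuity plus $2$-ergodicity, then read off that $B_{\boldsymbol\phi}^\dd$ has rank one and $\pi^\dd\simeq 1$. The conclusion step and the appeal to Lemma \ref{lem:seq:weak:cont} are fine.

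The gap is precisely where you locate it, but your proposed bridge does not close it. Flip-symmetry of $m$ gives you nothing about off-diagonal rectangles $\mathcal{O}(\gamma_1)\times\mathcal{O}(\gamma_2)$, and a Vitali covering of $\dd\Ga\times\dd\Ga$ by \emph{diagonal} squares only sees a neighbourhood of the diagonal. What is actually doing the work is the \emph{positive-definiteness} of $B^\dd_{\boldsymbol\phi}$: the map $f\mapsto m(f\otimes f)^{1/2}=\|f\|_s$ is a seminorm, so the triangle inequality together with the upper shadow lemma and a partition of unity at scale $n$ yields $\|f\|_s\prec \|f\|_{L^1(\nu_\text{PS})}$ for every $f\in\cC(\dd\Ga)$ (this is exactly the computation the paper carries out). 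Cauchy--Schwarz then gives $m(f_1\otimes f_2)\le\|f_1\|_s\|f_2\|_s\prec\|f_1\|_{L^1}\|f_2\|_{L^1}$, which is off-diagonal control.

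Even with the rectangle bound $m(A\times B)\prec\nu_\text{PS}(A)\nu_\text{PS}(B)$ in hand, you still need a genuine argument to produce an $L^\infty$ density on the product; domination on rectangles does not propagate to arbitrary Borel sets by any monotone-class trick. The paper handles this by observing that the inequality $\|f\|_s\prec\|f\|_{L^1}$ defines a bounded operator $T:L^1(\nu_\text{PS})\to\mathcal{H}^\dd$, and then invokes the Radon--Nikodym property of Hilbert spaces to represent $T$ by a kernel $K_T\in L^\infty(\nu_\text{PS};\mathcal{H}^\dd)$, whence $(Tf_1,Tf_2)_s=\iint f_1(\xi)f_2(\eta)\Phi_T(\xi,\eta)\,d\nu_\text{PS}(\xi)d\nu_\text{PS}(\eta)$ with $\Phi_T=(K_T(\cdot),K_T(\cdot))_s\in L^\infty$. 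Only then does $2$-ergodicity apply to force $\Phi_T$ constant. If you prefer to stay on the measure side, you can extract the bounded density by disintegrating $m$ over the first factor and using outer regularity on a countable basis; but either way, ``Vitali plus symmetry'' is not enough---you need positivity (for Cauchy--Schwarz) and a kernel-extraction step (Radon--Nikodym or an equivalent).
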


The proof of Theorem \ref{thm:amenable:gene} uses ideas coming from Proposition 5.21 and Corollary 5.22 of \cite{Coulon:2018aa}.

\begin{proof}
Let $(\pi^\dd, \mathcal{H}^\dd,\|.\|_s)$ be a boundary representation associated to $(\phi_n)_n$ and $([\pi_n,v_n])_n$ the cyclic representations associated.
As a consequence of Vitali's Lemma \ref{lem:vita} for all $n\in\BN$, there exists a continuous partition of the unit of $\dd \Ga$, $(\chi_\gamma)_{\gamma\in C_n^*}$, with $C_n^*\subset C_n$  such that
$$\mathcal{O}(\gamma,R)\subset \{\chi_\gamma\neq0\}\subset \mathcal{O}(\gamma,4R)$$
and $\chi_\gamma|_{\mathcal{O}(\gamma,R)}=1$ (see Subsection \ref{subsec:prelim:orbital} for notations).
Given $f\in\cC(\dd \Ga)$ Lemma \ref{lem:half:shadow} implies:
\begin{align*}
\|f\|_s\le \sum_{\gamma\in C_n^*}\|f\chi_\gamma\|_s=\sum_{\gamma\in C_n^*}m((f\chi_\gamma){\otimes}(f\chi_\gamma))^\half
\prec \sum_{\gamma\in C_n^*}e^{-\gd Rn}f_\gamma
\end{align*}
where $f_\gamma=\sup_{\mathcal{O}(\gamma,4R)}|f|$.

Using uniform continuity on the compact space $\dd \Ga$, for all $\e>0$, there exists $n_0$, for all $n\ge n_0$ and $\gamma\in C_n^*$, $|f_\gamma-f|\le \e$ on $\mathcal{O}(\gamma,4R)$.

On the other hand
\begin{align*}
\int_{\dd\Ga}|f|d\nu_\text{PS}\ge\sum_{\gamma\in C_n^*}\int_{\mathcal{O}(\gamma,R)}|f|d\nu_\text{PS}
\succ \sum_{\gamma\in C_n^*}e^{-\gd Rn}(f_\gamma-\e)
\succ \sum_{\gamma\in C_n^*}e^{-\gd Rn}f_\gamma-\e'
\end{align*}

and thus $\|f\|_{s}\le \|f|L^1(\nu_\text{PS})\|$.

This induces a bounded map:
$$T:L^1(\dd \Ga,\nu_\text{PS})\rightarrow \mathcal{H}^\dd$$  
and as Hilbert spaces are reflexive and thus have Radon-Nikodym property (see \cite[Chap. \romannumeral 3]{MR453964} ) it implies the existence of $K_T\in L^\infty(\dd \Ga,\nu_\text{PS};\mathcal{H}^\dd)$ such that
$$T(f)=\int_{\dd \Ga} f(\xi)K_T(\xi)d\nu_\text{PS}(\xi)\in \mathcal{H}^\dd$$
in the sense of Bochner integral.

In particular given any bounded map $V:\mathcal{H}^\dd \rightarrow B$ with range into a Banach space $B$, one has
$$V\circ T(f)=\int_{\dd \Ga} f(\xi)V(K_T(\xi))d\nu_\text{PS}(\xi)\in B$$
in the sense of Bochner integral (see \cite{MR453964} Chapter \romannumeral 2 \space Theorem 6).
It follows:
\begin{align*}
&(T(f_1),T(f_2))_{s}=\int_{\dd \Ga} f_1(\xi)(K_T(\xi),T(f_2))_{s}d\nu_\text{PS}(\xi)\\
&=\int_{\dd \Ga} f_1(\xi)\int_{\dd \Ga} f_2(\eta)(K_T(\xi),K_T(\eta))_{s}d\nu_\text{PS}(\eta)d\nu_\text{PS}(\xi)\\
&=\iint_{\dd\Ga\times \dd \Ga}f_1(\xi) f_2(\eta)\Phi_T(\xi,\eta)d\nu_\text{PS}(\eta)d\nu_\text{PS}(\xi)\\
\end{align*}
with $\Phi_T\in L^\infty(\dd \Ga\times \dd \Ga)$ and all $f_1,f_2\in L^1(\dd \Ga,\nu_\text{PS})$.

Note that the monotone class Lemma (see \cite[4.13]{MR2378491}) and the finiteness of $m_T=\Phi_T\nu_\text{PS}\otimes\nu_\text{PS}$ implies that $m_T$ is determined by its values on $L^1(\nu_\text{PS})\times L^1(\nu_\text{PS})$.

It follows from the $\pi_1$-invariant of the quadratic form $\|.\|_s^2$ that $\Phi_T$ is $\Ga$-invariant and thus constant, using the $2$-ergodicity of the measure $\nu_\text{PS}\otimes\nu_\text{PS}$ \cite[Thm. 4.1]{Coulon:2018aa}.
Eventually
$$\|f\|_s^2=|\int f(\xi) d\nu_\text{PS}(\xi)|^2$$ 
and using Lemma \ref{lem:seq:weak:cont} it follows $\pi_n\rightarrow 1\simeq\pi^\dd$.
\end{proof}

Theorem \ref{thm:amenable:gene} should be compared with \cite{Coulon:2018aa} Theorem 1.1:\\
Given a subset $S\subset \Ga$ the critical exponent of $S$ is defined as
$$\limsup_r\frac{1}{r}\log|S\cap B_\Ga(e,r)|$$
In particular $\gd(S)= \gd({\bf1}_S)$.

\begin{cor}
Let $\Lambda\subset \Ga$ be a subgroup.
If $\gd(\Lambda)= \gd({\bf1}_\Lambda)=\gd$ then $\Lambda$ is co-amenable in $\Gamma$.
\end{cor}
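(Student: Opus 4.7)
The plan is to read the corollary as a direct application of Theorem \ref{thm:amenable:gene} to the indicator function of $\Lambda$, which is a distinguished element of $\mathbb{P}_+(\Gamma)$.

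First I would observe that $\phi = {\bf 1}_\Lambda$ is a pointwise positive, positive definite function on $\Gamma$. Indeed, it is the diagonal matrix coefficient associated to the positive cyclic representation $[\lambda_\Lambda, [\mathrm{Dir}_e]]$ on $\ell^2(\Gamma/\Lambda)$: for any $g \in \Gamma$,
\[
(\lambda_\Lambda(g)[\mathrm{Dir}_e], [\mathrm{Dir}_e])_{\ell^2(\Gamma/\Lambda)} = {\bf 1}_\Lambda(g),
\]
which is manifestly in $[0,1]$, so $\phi \in \mathbb{P}_+(\Gamma)$. In particular the GNS representation $\pi_\phi$ is (unitarily equivalent to) the quasi-regular representation $\lambda_\Lambda$ on $\ell^2(\Gamma/\Lambda)$.

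Next, I would match critical exponents. By definition $\delta({\bf 1}_\Lambda)$ is the abscissa of convergence of
\[
\mathcal{P}({\bf 1}_\Lambda; t) = \sum_{g \in \Gamma} e^{-t d_\Gamma(g,e)} {\bf 1}_\Lambda(g) = \sum_{g \in \Lambda} e^{-t d_\Gamma(g,e)},
\]
which is exactly $\delta(\Lambda)$, the growth exponent of $\Lambda$ inside $(\Gamma, d_\Gamma)$. The hypothesis thus says $\delta(\phi) = \delta$.

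Applying Theorem \ref{thm:amenable:gene} to $\phi = {\bf 1}_\Lambda$ (or to the constant sequence $\phi_n = \phi$), we conclude that the trivial representation satisfies $1 \prec \pi_\phi$, i.e.\ $1 \prec \lambda_\Lambda$ on $\ell^2(\Gamma/\Lambda)$. By the characterization of weak containment of the trivial representation, this means $\lambda_\Lambda$ admits almost invariant vectors, which is precisely the definition of co-amenability of $\Lambda$ in $\Gamma$. There is essentially no obstacle beyond making these three identifications (pointwise positivity and positive definiteness of ${\bf 1}_\Lambda$, equality of the two notions of critical exponent, and translating $1 \prec \lambda_\Lambda$ into co-amenability); all the hard work is encapsulated in Theorem \ref{thm:amenable:gene}.
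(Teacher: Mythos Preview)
Your proof is correct and follows exactly the approach the paper intends: the corollary is stated immediately after Theorem \ref{thm:amenable:gene} without explicit proof, and you have correctly filled in the three identifications (that ${\bf1}_\Lambda\in\mathbb{P}_+(\Gamma)$ with GNS representation $\lambda_\Lambda$, that $\delta({\bf1}_\Lambda)=\delta(\Lambda)$, and that $1\prec\lambda_\Lambda$ is co-amenability) needed to apply it.
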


\subsection{About the regular representation}\hfill\break
The regular representation, $(\gl,\ell^2(\Ga))$, can be viewed as a cyclic representation with respect to the vector $\text{Dir}_e\in \ell^2(\Ga)$.
In this case $\phi_\gl={\bf1}_e$ and thus $\gd(\phi_\gl)=0$.
Observe that
$$\mathcal{P}_2(\phi_\gl,\gs)=\mathcal{P}(\phi_\gl,2\gs)\asymp\frac{1}{2\gs-1}$$
and for all $f\in\cC(\ol{\Ga})$:
\begin{align*}
B_{\gl,\gs}(f)=\iint |f(g)|^2 d\mu_{2\gs}(g)\xrightarrow{\gs\rightarrow\frac{\gd}{2}^+} \|f|L^2(\nu_\text{PS})\|^2
\end{align*}
It follows that the $\phi_\gl$-boundary representation is uniquely defined and corresponds to the Koopman representation on $L^2(\dd \Ga,\nu_\text{PS})$.

As a consequence of Lemma \ref{lem:seq:weak:cont}:
\begin{cor}\label{cor:tot:transf}
Let $(\pi_\half,L^2(\dd \Ga,\nu_\text{PS}))$ be the unitary Koopman representation over the boundary $(\dd \Ga,\nu_\text{PS})$, then $\pi_\half\prec\gl$. 
In particular $\|\pi_\half(f)\|\le\|\gl(f)\|$ for all $f\in \BC[\Ga]$.
\end{cor}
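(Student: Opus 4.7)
The plan is to realise $\pi_{\half}$ as a boundary representation of $\lambda$ in the sense of Section \ref{sec:const:bound} and then appeal to Lemma \ref{lem:seq:weak:cont}. Choose $\phi_n=\phi_\gl={\bf1}_e$ for every $n$, so that the associated positive cyclic representation $(\pi_{\phi_n},\mathcal{H}_{\phi_n},{\bf1}_{\phi_n})$ is exactly $(\gl,\ell^2(\Ga),\text{Dir}_e)$, with normalised critical exponent $s(\phi_\gl)=0$ and $\gd({\boldsymbol\phi})=0$. Since $\gd({\boldsymbol\phi})<\tfrac12\gd$ the lower bound in Proposition \ref{prop:two:div} is $\tfrac12\gd$, no Patterson rearrangement is needed (take $\gth=1$), and the Poincar\'e series $\mathcal{P}_2(\phi_\gl,\gs)=\mathcal{P}(\phi_\gl,2\gs)\asymp (2\gs-1)^{-1}$ diverges at $\gs=\tfrac12$, as observed in the paragraph preceding the corollary. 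Thus the construction of Subsection \ref{subsec:twist:poincare} applies for any sequence $\gs_n\downarrow\tfrac12$.

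Next, identify the limit. Writing out the quadratic form, one has
\begin{equation*}
B_{n}(f)=\iint|f(g)|^2\,d\mu_{2\gs_n}(g),
\end{equation*}
and by strong hyperbolicity $\mu_{2\gs_n}\rightharpoonup \nu_{\text{PS}}$, so $B_n(f)\to\|f|L^2(\nu_{\text{PS}})\|^2$ for every $f\in\cC(\ol{\Ga})$. Hence the measures $m_n$ of Section \ref{sec:const:bound} converge to the diagonal push of $\nu_{\text{PS}}$, the associated conditional expectation is the identity on $L^2(\nu_{\text{PS}})$, and the resulting boundary Hilbert space is $L^2(\dd\Ga,\nu_{\text{PS}})$. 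The action is the uniform limit $\pi_s|_{\dd\Ga}$ of $\pi_{\gth,\gs_n}|_{\dd\Ga}$ at $s=\tfrac12$, i.e. $\pi_\half(g)\psi(\xi)=e^{-\half\gd b_\xi(g)}\psi(g^{-1}\xi)$, which is exactly the Koopman representation of $\Ga$ on $(\dd\Ga,\nu_{\text{PS}})$ since the Radon--Nikodym derivative of $\nu_{\text{PS}}$ along $g^{-1}$ is $e^{-\gd b_\xi(g)}$ by Proposition \ref{prop:fond:meas}.

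With this identification in place, Lemma \ref{lem:seq:weak:cont} (applied to the constant sequence $\phi_n=\phi_\gl$) gives $\pi_\half=\pi_{\phi_\gl}^\dd\prec\pi_{\phi_\gl}=\gl$ in the sense of Fell, and the final clause $\|\pi_\half(f)\|_{\text{op}}\le\|\gl(f)\|_{\text{op}}$ for $f\in\BC[\Ga]$ is the standard consequence of weak containment (see \cite[Thm.~F.4.4]{MR2415834}), which is in fact built into the last assertion of Lemma \ref{lem:seq:weak:cont}. The only real content to verify is the weak convergence $\mu_{2\gs_n}\to\nu_{\text{PS}}$ and the identification of $\pi_s|_{s=\half}$ with the Koopman representation; both are straightforward from the strongly hyperbolic geometry and Proposition \ref{prop:fond:meas}, so no genuine obstacle is expected.
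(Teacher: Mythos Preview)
Your proposal is correct and follows essentially the same approach as the paper: the paragraph preceding the corollary already identifies $\pi_{\phi_\gl}^\dd$ with the Koopman representation on $L^2(\dd\Ga,\nu_{\text{PS}})$, and the corollary is then deduced directly from Lemma \ref{lem:seq:weak:cont}. One minor remark: the conformality of $\nu_{\text{PS}}$ (i.e.\ $\frac{dg_*^{-1}\nu_{\text{PS}}}{d\nu_{\text{PS}}}=e^{-\gd b_\xi(g)}$) is a standard fact from Patterson--Sullivan theory \cite{MR1214072}, not a consequence of Proposition \ref{prop:fond:meas}, which concerns the measure $m$ on $\dd\Ga\times\dd\Ga$.
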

This fact is known by the mean of amenable actions (see \cite{MR1799683} for definitions):
\begin{thm*}[Kuhn \cite{MR1209424}]
Let $\Lambda$ be a discrete group, $\Lambda\curvearrowright (Z,\mu)$ an amenable action and $(\pi_\mu,L^2(Z,\mu))$ the associated Koopman representation.
Then $\pi_\mu\prec\gl$.
In particular $\|\pi_\mu(f)\|\le\|\gl(f)\|$ for all $f\in \BC[\Ga]$.
\end{thm*}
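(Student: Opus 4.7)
The strategy is classical: use a pointwise probability-measure picture of amenability together with Fell's absorption trick.

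\emph{Almost invariant vectors.} Amenability of $\Lambda\curvearrowright(Z,\mu)$ furnishes a sequence of measurable maps $m_n\colon Z\to \tprob(\Lambda)$ satisfying
$$\int_Z \|g\cdot m_n(g^{-1}z)-m_n(z)\|_{\ell^1(\Lambda)}\,d\mu(z)\longrightarrow 0\qquad (g\in\Lambda),$$
where $\Lambda$ acts on $\tprob(\Lambda)$ by push-forward. Setting $\xi_n(z):=\sqrt{m_n(z)}\in\ell^2(\Lambda)$ pointwise produces a unit vector $\xi_n\in L^2(Z,\mu;\ell^2(\Lambda))\cong L^2(Z,\mu)\otimes\ell^2(\Lambda)$, and the Powers--Stormer inequality $\|\sqrt{p}-\sqrt{q}\|_2^{2}\le \|p-q\|_1$ promotes the $\ell^1$-almost invariance of $(m_n)$ into $\ell^2$-almost invariance of $(\xi_n)$ under the diagonal representation $\rho:=\pi_\mu\otimes\gl$.

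\emph{Matrix coefficients and Fell absorption.} For $f\in L^2(Z,\mu)$ form the pointwise product $v_{n,f}(z):=f(z)\xi_n(z)$, of norm $\|f\|_2$. A direct computation gives
$$\langle\rho(g)v_{n,f},v_{n,f}\rangle=\int_Z f(g^{-1}z)\overline{f(z)}\sqrt{\tfrac{dg^{-1}_*\mu}{d\mu}(z)}\,\bigl\langle \gl(g)\xi_n(g^{-1}z),\xi_n(z)\bigr\rangle_{\ell^2(\Lambda)}\,d\mu(z),$$
in which the inner bracket is bounded by $1$ via Cauchy--Schwarz and tends to $1$ in $L^1(\mu)$ thanks to $2(1-\langle\sqrt{p},\sqrt{q}\rangle)\le \|p-q\|_1$ and the previous step. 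Dominated convergence yields $\langle\rho(g)v_{n,f},v_{n,f}\rangle\to \langle \pi_\mu(g)f,f\rangle$ pointwise in $g\in\Lambda$; polarisation then covers all $\pi_\mu$-coefficients, so $\pi_\mu\prec\rho$. Fell's absorption trick---the unitary $(U\eta)(h):=\pi_\mu(h^{-1})\eta(h)$ on $\ell^2(\Lambda;L^2(Z,\mu))$---intertwines $\pi_\mu\otimes\gl$ with $\mathrm{id}_{L^2(Z,\mu)}\otimes\gl$, exhibiting $\rho$ as a multiple of $\gl$ and hence Fell-equivalent to $\gl$. Chaining the two containments gives $\pi_\mu\prec\gl$, whence $\|\pi_\mu(f)\|_\mathrm{op}\le \|\gl(f)\|_\mathrm{op}$ for $f\in\BC[\Lambda]$ by the standard characterisation of weak containment.

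The main obstacle is the careful bookkeeping in the matrix coefficient computation: one must interlace the square-rooted Radon--Nikodym cocycle with the fibrewise $\ell^2(\Lambda)$ inner product of $\xi_n$, and verify that the convergence is genuinely pointwise in $g\in\Lambda$ rather than only in some averaged sense. Once the Powers--Stormer bound is granted, the remaining steps are routine and no hyperbolicity hypothesis is invoked, consistent with the theorem's general formulation.
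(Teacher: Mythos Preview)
The paper does not prove this theorem; it is cited as a known result due to Kuhn, immediately after Corollary~\ref{cor:tot:transf}, to indicate that the special case just established is already covered by a general fact about amenable actions. Your proof sketch is the standard argument---approximate equivariant maps $Z\to\tprob(\Lambda)$, square roots via Powers--St{\o}rmer, Fell absorption---and is correct. One small point of bookkeeping: to apply dominated convergence in the matrix coefficient step you need an integrable majorant, and the obvious one is $|(\pi_\mu(g)f)\cdot\overline{f}|\in L^1(\mu)$ by Cauchy--Schwarz, which you might state explicitly.

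What the paper itself contributes is a \emph{different} proof of the special case $\Lambda=\Ga$ hyperbolic, $(Z,\mu)=(\dd\Ga,\nu_\text{PS})$: it realises the Koopman representation $\pi_\half$ as the boundary representation $\pi_{\phi_\gl}^\dd$ attached to the regular representation (i.e.\ to $\phi_\gl={\bf1}_e$, which has $\gd(\phi_\gl)=0$), and then invokes the general weak containment $\pi_\phi^\dd\prec\pi_\phi$ of Lemma~\ref{lem:seq:weak:cont}. This route is specific to the hyperbolic setting and makes no use of the amenability of the boundary action; the paper's point in quoting Kuhn and Adams afterwards is precisely that Corollary~\ref{cor:tot:transf} gives an alternative perspective on a known fact. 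So your argument and the paper's are genuinely disjoint: yours (Kuhn's) is fully general and dynamical, the paper's is particular but internal to its Patterson--Sullivan/boundary-representation machinery.
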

On the other hand it follows from Adams \cite{MR1293309} that the action of a Gromov hyperbolic group $\Ga$ on its boundary $(\dd \Ga,\mu)$ endowed with any quasi-invariant measure, $\mu$, is amenable.
Corollary \ref{cor:tot:transf} provides another perspective on this fact.

\section{Spectral bounds}\label{sec:spec:bound:all}

Let $\Lambda$ be a finitely generated group endowed with a $\Lambda$-invariant distance, $d_\Lambda$, quasi-isometric to a word distance.\\

We start with some elementary properties about critical exponents:
\begin{lem}\label{lem:elem:control}
Let $f$ be a pointwise positive function on $\Lambda$.
\begin{enumerate}
\item $\gd(f)\le (1-\frac{1}{p})\gd$ whenever $f\in\ell^p(\Lambda)$ for $p\in[1,+\infty]$.
\item For all $1\le p<\infty$, $\gd(f^p)\le \gd(f)$ whenever $f$ is bounded.
\item Given two positive finitely supported functions $h_1,h_2\in\BR_+[\Lambda]$ one has $\gd(h_1*f* h_2)=\gd(f)$.
\end{enumerate}
\end{lem}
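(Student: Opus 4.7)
For part (1), my plan is a direct application of Hölder's inequality. Given $f\in\ell^p(\Lambda)$ with conjugate exponent $q$ (so $1/p+1/q=1$), I will write
\begin{equation*}
\mathcal{P}(f;\gs)=\sum_g f(g)e^{-\gs\gd d_\Lambda(g,e)}\le \|f\|_p\Bigl(\sum_g e^{-q\gs\gd d_\Lambda(g,e)}\Bigr)^{1/q}.
\end{equation*}
The second factor converges whenever $q\gs\gd>\gd$, i.e.\ whenever $\gs>1/q=1-1/p$. This forces $s(f)\le 1-1/p$ and hence $\gd(f)\le(1-1/p)\gd$. The edge cases are immediate: for $p=\infty$ we just bound $f$ by $\|f\|_\infty$ and use the definition of $\gd$; for $p=1$ the series $\mathcal{P}(f;\gs)$ is already finite at $\gs=0$.

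For part (2), the key observation is that pointwise positivity plus boundedness yield $f^p=f\cdot f^{p-1}\le \|f\|_\infty^{p-1}\,f$ for every $p\ge 1$. Multiplying by $e^{-\gs\gd d_\Lambda(\cdot,e)}$ and summing gives $\mathcal{P}(f^p;\gs)\le\|f\|_\infty^{p-1}\mathcal{P}(f;\gs)$, so $s(f^p)\le s(f)$, which is the claim.

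For part (3), I plan to unwind the convolution and use left-invariance of $d_\Lambda$. Writing
\begin{equation*}
(h_1*f*h_2)(g)=\sum_{a,b}h_1(a)f(a^{-1}gb^{-1})h_2(b),
\end{equation*}
the substitution $g'=a^{-1}gb^{-1}$ gives
\begin{equation*}
\mathcal{P}(h_1*f*h_2;\gs)=\sum_{g'}f(g')\sum_{a,b}h_1(a)h_2(b)\,e^{-\gs\gd d_\Lambda(ag'b,e)}.
\end{equation*}
Since $d_\Lambda(ag'b,e)=d_\Lambda(g',a^{-1}b^{-1})$ and the triangle inequality yields $|d_\Lambda(g',a^{-1}b^{-1})-d_\Lambda(g',e)|\le d_\Lambda(a,e)+d_\Lambda(b,e)$, the compact supports of $h_1$ and $h_2$ give a uniform bound $M$ on this difference. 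Therefore
\begin{equation*}
e^{-\gs\gd M}\|h_1\|_1\|h_2\|_1\,\mathcal{P}(f;\gs)\le \mathcal{P}(h_1*f*h_2;\gs)\le e^{\gs\gd M}\|h_1\|_1\|h_2\|_1\,\mathcal{P}(f;\gs),
\end{equation*}
and since $\|h_1\|_1,\|h_2\|_1>0$ (the functions being positive and nonzero), the two Poincaré series share the same abscissa of convergence, giving $\gd(h_1*f*h_2)=\gd(f)$.

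None of the three parts is genuinely hard; the only mild subtlety is in (3), where one must keep track of the correct convolution formula on a nonabelian group and exploit left-invariance of $d_\Lambda$ together with finiteness of the supports of $h_1,h_2$ to absorb the displacement $d_\Lambda(ag'b,e)-d_\Lambda(g',e)$ into a uniform constant. Once that substitution is set up correctly the two-sided comparison falls out immediately.
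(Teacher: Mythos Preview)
Your argument follows essentially the same route as the paper's: H\"older for (1), the pointwise bound $f^p\le \|f\|_\infty^{p-1}f$ for (2), and a change of variable plus triangle inequality for (3). One small slip: in (3) you write $d_\Lambda(ag'b,e)=d_\Lambda(g',a^{-1}b^{-1})$, but only \emph{left}-invariance of $d_\Lambda$ is assumed, which gives $d_\Lambda(ag'b,e)=d_\Lambda(g'b,a^{-1})$, not what you wrote. The bound you actually need, $|d_\Lambda(ag'b,e)-d_\Lambda(g',e)|\le d_\Lambda(a,e)+d_\Lambda(b,e)$, is still correct---just obtain it in two steps via $d_\Lambda(ag'b,e)=d_\Lambda(g'b,a^{-1})$, then $|d_\Lambda(g'b,a^{-1})-d_\Lambda(g'b,e)|\le d_\Lambda(a,e)$ and $|d_\Lambda(g'b,e)-d_\Lambda(g',e)|\le d_\Lambda(g'b,g')=d_\Lambda(b,e)$. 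With that fix your two-sided comparison goes through; in fact the paper's own proof only records the upper bound $\mathcal{P}(h_1*f*h_2;\gs)\prec_{h_1,h_2}\mathcal{P}(f;\gs)$, so your version is slightly more complete.
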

\begin{proof}
Let $q$ such that $\frac{1}{p}+\frac{1}{q}=1$ and $\gs$ such that $q\gs>1$, i.e $\gs>1-\frac{1}{p}$.
Then
$$\sum_\Ga f(g)e^{-\gs\gd d_\Lambda(g,e)}\le [\sum_\Lambda |f(g)|^p]^\frac{1}{p}[\sum_\Lambda e^{-\gs q\gd d_\Lambda(g,e)}]^\frac{1}{q}$$
is finite.

For second part observe that replacing $f$ with $\frac{f}{\|f\|_\infty}$ we can assume $f\le 1$ and the result is a consequence of $x^p\le x$ on the interval $[0,1]$ for $1\le p<\infty$.

The third part follows from:
\begin{align*}
\sum_{g\in\Lambda}&h_1*f*h_2(g)e^{-\gs\gd d_\Lambda(g,e)}=\sum_{\Ga^{\times 3}} h_1(a)f(a^{-1}gb^{-1})h_2(b)e^{-\gs\gd d_\Lambda(g,e)}\\
&=\sum_{\Lambda^{\times 3}} h_1(a)f(g)h_2(b)e^{-\gs\gd d_\Lambda(agb,e)}\\
&\le(\sum_{a\in\Lambda} h_1(a)e^{\gs\gd d_\Lambda(a,e)})
(\sum_{b\in\Lambda} e^{\gs\gd d_\Lambda(b,e)}h_2(b))
(\sum_{g\in\Lambda} f(g)e^{-\gs\gd d_\Lambda(g,e)})\\
&\prec_{h_1,h_2}\mathcal{P}(f;\gs)
\end{align*}

\end{proof}

\subsection{Rapid decay and critical exponent}\hfill\break

The reader can refer to \cite{MR3666050} for more details about the rapid decay property and extra examples.
We denote $\gl:\Lambda\rightarrow \text{B}(\ell^2(\Lambda))$ the regular representation of $\Lambda$ on $\ell^2(\Lambda)$.\\

\begin{defn}
A discrete metric group $(\Lambda,d_\Lambda)$ has the rapid decay (RD) property, if there exist constants $C\in\BR_+$ and $m\in \BN$ such that for all $f\in \BC[\Lambda]$  
supported in a ball of radius $R$: $\|\gl(f)\|_\text{op}\le CR^m\|f\|_2$.
\end{defn}

\begin{thm}[Jolissaint \cite{MR943303}]
Gromov hyperbolic groups have the rapid decay property.
\end{thm}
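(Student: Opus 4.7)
The plan is to reduce the rapid decay inequality to the following \emph{sphere convolution estimate}: for all $n, m \in \BN$, all $f \in \BC[\Lambda]$ supported on $S_n \df \{g \in \Lambda : d_\Lambda(g,e) = n\}$ and all $h \in \BC[\Lambda]$ supported on $S_m$,
$$\|(f * h)|_{S_k}\|_2 \le C \|f\|_2 \|h\|_2 \quad \text{for every } k \text{ with } |n-m| \le k \le n+m,$$
with $C$ independent of $n, m, k$. Granted this, a symmetric Cauchy-Schwarz in the pair $(m, k)$ inside the triangular region $\{|n-m| \le k \le n+m\}$, which contains $O(n)$ lattice points per row and column, yields $\|\lambda(f)\|_\text{op} \le C'(n+1)\|f\|_2$ for $f$ supported on $S_n$; decomposing a general $f$ supported in $B_R$ as $f = \sum_{n \le R} f_n$ with $f_n \df f \cdot {\bf1}_{S_n}$ and summing via another Cauchy-Schwarz in $n$ then gives $\|\lambda(f)\|_\text{op} \le C'' R^{3/2} \|f\|_2$.

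First I would establish the sphere estimate by the standard \emph{triangle trick}. For fixed $c \in S_k$ one has $f * h(c) = \sum_{g \in S_n,\ g^{-1}c \in S_m} f(g) h(g^{-1}c)$, so Cauchy-Schwarz gives
$$|f*h(c)|^2 \le N(n,m,k) \sum_{g \in S_n} |f(g)|^2 |h(g^{-1}c)|^2,$$
where $N(n,m,k) \df \#\{g \in S_n : g^{-1}c \in S_m\}$. Summing over $c \in S_k$ and switching the order of summation produces $\|(f*h)|_{S_k}\|_2^2 \le N(n,m,k) \|f\|_2^2 \|h\|_2^2$. Hence the sphere estimate reduces to a uniform bound $N(n,m,k) \le M$.

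The geometric core is this counting bound, and this is where Gromov hyperbolicity enters. Fix a rough geodesic $[e,c]$ in the Cayley graph of $\Lambda$ and suppose $c = g g'$ with $d_\Lambda(g,e) = n$, $d_\Lambda(g',e) = m$ and $d_\Lambda(c,e) = k$. The Gromov product $(g, c)_e = \tfrac{1}{2}(n + k - m)$ is \emph{determined} by $(n, m, k)$, so the thin-triangle property of a $\delta$-hyperbolic Cayley graph forces $g$ to lie in a $D\delta$-neighbourhood of the specific point $p_c \in [e, c]$ at distance $\tfrac{1}{2}(n + k - m)$ from $e$. Since $d_\Lambda$ is quasi-isometric to a word metric, balls of bounded radius in $\Lambda$ contain uniformly boundedly many elements, and thus $N(n,m,k) \le M$ with $M$ independent of $n, m, k$.

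The main obstacle is this geometric counting lemma: one must exploit thin triangles to pin every factorisation $c = g g'$ of prescribed radii to the \emph{same} point $p_c$ on a reference rough geodesic, and then invoke uniform local finiteness. Since $d_\Lambda$ is only quasi-isometric to a word metric, the hyperbolicity constant and the bounded-multiplicity constants for balls must be carefully tracked through the quasi-isometry. Once the counting lemma is in place, the remainder is bookkeeping: reassemble the sphere-level estimates by double Cauchy-Schwarz in $(m, k)$ to obtain $\|\lambda(f_n)\|_\text{op} \le C(n+1) \|f_n\|_2$, then sum over $n \le R$ to conclude $\|\lambda(f)\|_\text{op} \le C R^{3/2} \|f\|_2$, which is the rapid decay property with exponent $m = 3/2$.
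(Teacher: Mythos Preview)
The paper does not prove this theorem: it is stated with a citation to Jolissaint \cite{MR943303} and used as a black box, so there is no proof in the paper to compare against.

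That said, your sketch contains a genuine error at the geometric step. The claimed uniform bound $N(n,m,k)\le M$ is false, already for the free group $F_2$ with its word metric. Take $c=a^k$ and look for $g$ with $|g|=n$, $|g^{-1}c|=m$; writing $\ell=\tfrac12(n+m-k)$, every reduced word $g=a^{n-\ell}w$ with $|w|=\ell$ and $w$ starting with $b^{\pm 1}$ works, so $N(n,m,k)$ grows like $3^{\ell}$. The mistake is in the sentence ``the thin-triangle property forces $g$ to lie in a $D\delta$-neighbourhood of the specific point $p_c\in[e,c]$'': thinness controls how long the geodesic $[e,g]$ fellow-travels $[e,c]$ (namely for time $(g,c)_e$), but after branching off, $g$ sits at distance $(e,c)_g=\tfrac12(n+m-k)$ from that branch point, and this is unbounded. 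In a tree this is transparent; hyperbolicity does not improve it.

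The actual Haagerup--Jolissaint argument does not bound $N(n,m,k)$. Instead one parametrises each admissible $g$ by a pair (prefix along $[e,c]$ up to the branch point, then the ``cancelled'' tail of length $\ell$), applies Cauchy--Schwarz \emph{only over the tail variable}, and then sums over $c$; the prefix is determined by $c$ and the lengths, so the double sum reorganises into $\|f\|_2^2\|h\|_2^2$. In a hyperbolic group the exact prefix/tail factorisation is replaced by a controlled correspondence via the quasi-centre of the triangle $(e,g,c)$, which lies within $O(\delta)$ of the point on $[e,c]$ at distance $(g,c)_e$ from $e$; the bounded quantity is the number of such quasi-centres, not the number of $g$'s. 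Your reduction to a sphere-convolution estimate and the subsequent Cauchy--Schwarz bookkeeping over $(m,k)$ are fine; it is only the counting lemma that needs to be replaced by this pivoting argument.
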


We are going to use a weaker form of RD, namely the radial rapid decay property \cite{MR1488249}:\\
A function $f\in \BC[\Lambda]$ is called radial on $(\Lambda,d_\Lambda)$ if $f(g)=k(d_\Lambda(g,e))$ for some function $k$.

\begin{defn}\label{def:RRD}
A discrete metric group $(\Lambda,d_\Lambda)$ has the radial rapid decay (RRD) property, if there exist constants $C\in\BR_+$ and $m\in\BN$ such that for all radial function $f\in \BC[\Lambda]$ 
supported on a ball of radius $R$: $\|\gl(f)\|_\text{op}\le CR^m\|f\|_2$.
\end{defn}

\begin{rem}
It was recently proved in \cite{MR4613611} that RRD property is strictly weaker than RD.
\end{rem}

The RRD property can be used to estimate the critical exponent of certain representations:
\begin{prop}\label{prop:rrd:half}
Let $(\Lambda,d_\Lambda)$ with the radial rapid decay property and $\phi\in\mathbb{P}_+(\Ga)$.
If $\phi\prec \gl$ then $\gd(\phi)\le \frac{\gd}{2}$.
\end{prop}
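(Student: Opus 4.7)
\textbf{Proof plan for Proposition \ref{prop:rrd:half}.} The strategy is to probe $\phi$ with the indicator functions of balls, which are radial, so RRD gives a good bound on their regular-representation operator norm, while weak containment transfers this bound to $\pi_\phi$, where pointwise positivity of $\phi$ gives a clean lower bound on the operator norm in terms of a partial sum of $\phi$.

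\textbf{Step 1: Weak containment.} The hypothesis $\phi \prec \gl$ says that the GNS representation $(\pi_\phi,\mathcal{H}_\phi,\mathbf{1}_\phi)$ is weakly contained in $\gl$. By \cite[Thm. F.4.4]{MR2415834} this gives $\|\pi_\phi(f)\|_\text{op}\le \|\gl(f)\|_\text{op}$ for every $f\in \mathbb{C}[\Lambda]$. Apply this to the (radial) indicator $f=\mathbf{1}_{B_r}$ of the $d_\Lambda$-ball of radius $r$ around $e$.

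\textbf{Step 2: Two-sided bound on $\|\pi_\phi(\mathbf{1}_{B_r})\|_\text{op}$.} Assume $\phi(e)>0$ (else $\phi\equiv 0$ by Cauchy--Schwarz and there is nothing to prove). From below, evaluating at the cyclic vector and using $\phi\ge 0$ pointwise,
\begin{equation*}
\|\pi_\phi(\mathbf{1}_{B_r})\|_\text{op}\;\ge\;\frac{(\pi_\phi(\mathbf{1}_{B_r})\mathbf{1}_\phi,\mathbf{1}_\phi)}{\|\mathbf{1}_\phi\|^2}\;=\;\frac{1}{\phi(e)}\sum_{g\in B_r}\phi(g).
\end{equation*}
From above, RRD (Definition \ref{def:RRD}) together with $\|\mathbf{1}_{B_r}\|_2=|B_r|^{1/2}$ yields $\|\gl(\mathbf{1}_{B_r})\|_\text{op}\le C r^m |B_r|^{1/2}$, and the definition of $\gd$ as $\limsup_r\frac{1}{r}\log|B_r|$ gives $|B_r|\le C_\e e^{(\gd+\e)r}$ for any $\e>0$. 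Combining:
\begin{equation*}
\sum_{g\in B_r}\phi(g)\;\le\; C'_\e\, r^m e^{(\gd+\e)r/2}\qquad(r\ge 0).
\end{equation*}

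\textbf{Step 3: From partial sums to the Poincar\'e series.} Let $S(n)=\sum_{g\in B_n}\phi(g)$ and group the defining sum of $\mathcal{P}(\phi;t)=\sum_g e^{-td_\Lambda(g,e)}\phi(g)$ over annuli of width $1$. An elementary Abel summation gives, for $t>0$,
\begin{equation*}
\mathcal{P}(\phi;t)\;\le\;(e^{t}-1)\sum_{n\ge 0}S(n)e^{-tn},
\end{equation*}
which, by the Step 2 bound on $S(n)$, converges whenever $t>(\gd+\e)/2$. Letting $\e\downarrow 0$ shows $\mathcal{P}(\phi;t)<\infty$ for every $t>\gd/2$, i.e.\ $\gd(\phi)\le \gd/2$.

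\textbf{Expected obstacle.} None of the steps are deep; the only point requiring a little care is checking that RRD applies cleanly to $\mathbf{1}_{B_r}$ (it does, since this function is genuinely radial for $d_\Lambda$) and that the polynomial prefactor $r^m$ is swallowed by the exponential $e^{-\e r/2}$ so that the conclusion $\gd(\phi)\le \gd/2$ is unaffected by the $\e$-loss introduced by the growth bound $|B_r|\le C_\e e^{(\gd+\e)r}$.
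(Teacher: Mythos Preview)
Your proof is correct and follows essentially the same approach as the paper: use weak containment to transfer operator norms from $\gl$ to $\pi_\phi$, apply RRD to a radial test function, and use pointwise positivity of $\phi$ to bound the partial sums from below. The only cosmetic difference is that the paper tests directly with the exponentially weighted radial function $\sum_{g\in B_r}e^{-\gs\gd d_\Lambda(g,e)}\mathrm{Dir}_g$, which bounds the truncated Poincar\'e series in one stroke and avoids your Abel-summation step; your choice of the unweighted indicator $\mathbf{1}_{B_r}$ works just as well.
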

\begin{proof}
As $(\Lambda,d_\Lambda)$ has RRD one can find $C>0$ and $k\in\BN$ as in Definition \ref{def:RRD}.
Note that for all $\kappa>0$, there exists $r_0$, for all $r\ge r_0$: $|B(e,r)|\le e^{(\gd+\kappa)r}$ and thus
\begin{align*}
\sum_{g\in B(e,r)}\phi(g)e^{-\gs \gd d_\Lambda(g,e)}&=\sum_{g\in B(e,r)}(\pi_\phi(g){\bf1}_\phi,{\bf1}_\phi)e^{-\gs\gd d_\Lambda(g,e)}\\
&\le\|\pi_\phi(\sum_{g\in B(e,r)}e^{-\gs \gd d_\Lambda(g,e)}\text{Dir}_g)\|_\text{op}\\
&\le_{\phi\prec\gl}\|\gl(\sum_{g\in B(e,r)}e^{-\gs\gd d_\Lambda(g,e)}\text{Dir}_g)\|_\text{op}\\
&\le_\text{RRD} Cr^m\|\sum_{g\in B(e,r)}e^{-\gs\gd d_\Lambda(g,e)}\text{Dir}_g\|_2
\prec r^me^{\half\kappa r}e^{(\half-\gs)\gd r}+1
\end{align*}
for all $r>r_0$.
It follows for $\gs>\half\gd$, $\kappa>0$ small enough and $r_0(\kappa)$ that
$$\sum_{g\in\Ga}\phi(g)e^{-\gs \gd d_\Lambda(g,e)}=\lim_r\sum_{g\in B(e,r)}\phi(g)e^{-\gs \gd d_\Lambda(g,e)}\prec \lim_rr^ke^{\half\kappa r}e^{(\half-\gs)\gd r}+1$$
is finite.
\end{proof}

\begin{rem}
As a consequence, in the hyperbolic case, the $\phi$-boundary representation $\pi_\phi^\dd$ is non-tempered (i.e $\pi_\phi^\dd\nprec \gl$) whenever $\gd(\phi)>\half\gd$.\\
The author don't know whether $\gd(\phi)\le\frac{\gd}{2}$ implies $\pi_\phi\prec\gl$ (or even $\pi_\phi^\dd\prec\gl$).
\end{rem}

\subsection{Spectral gap and exponential mixing}\label{subsec:gap:mixing:general}\hfill\break
We establish an upper bound on the spectral radius of $\|\pi_\phi(f)\|_\text{op}$ for $\phi\in\mathbb{P}_+(\Lambda)$ and $f\in\BC[\Lambda]$ in terms of its Poincar\'e exponent and \text{regular spectral radius}, namely $\|\gl(f)\|_\text{op}$.

Given $f\in\BC[\Lambda]$ we denote $r(f)$ the minimal $r>0$ such that $\{f\neq0\}\subset B(e,r)$.

\vspace{5mm}
\noindent{\bf Theorem \ref{thm:general:1}}. 
{\it Given a positive cyclic representation, $\phi$,  on $\Lambda$ one has:
$$\|\pi_\phi(f)\|_\text{op}\le e^{\half\gd(\phi)r(f)}\|\gl(f)\|_\text{op}$$
for all $f\in \BC[\Lambda]$.}
\vspace{5mm}

In order to prove Theorem \ref{thm:general:1} we recall the following technical Lemma's proved in \cite{MR946351} (respectively p.101 and p.102):
\begin{lem}\label{lem:tech:haag:v1}
Given a unitary representation $(\pi,\mathcal{H})$, $\mathcal{H}_\infty\subset \mathcal{H}$ a dense subspace and $f\in\BC[\Lambda]$ one has:
$$\|\pi(f)\|_\text{op}= \sup_{v\in\mathcal{H}_\infty}\lim_n(\pi(f^**f^{*2n})v,v)^\frac{1}{4n}$$
\end{lem}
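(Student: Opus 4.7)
The idea is to reduce this identity --- a Gelfand-type spectral-radius formula for $\pi(f)$ --- to the spectral theorem applied to the bounded positive self-adjoint operator $A := \pi(f^{*} * f) = \pi(f)^{*} \pi(f)$, which satisfies $\|A\| = \|\pi(f)\|_{\text{op}}^{2}$. The convolution inside $\pi$ becomes operator multiplication, so $\pi(f^{*} * f^{*2n})$ is a polynomial expression in $T := \pi(f)$, $T^{*}$ and iterated powers of $A$, and the claim translates into a statement about matrix coefficients of these operators.

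Concretely, I first invoke the spectral theorem $A = \int_{0}^{\|A\|} \lambda\, dE(\lambda)$ and, for each unit vector $v \in \mathcal{H}$, introduce the associated spectral probability measure $\mu_{v}(B) := (E(B) v, v)$ on $[0, \|A\|]$, so that $(A^{k} v, v) = \int \lambda^{k}\, d\mu_{v}(\lambda)$. The classical $L^{k}(\mu_{v}) \to L^{\infty}(\mu_{v})$ convergence applied to $\lambda \mapsto \lambda$ yields
\[
\lim_{k \to \infty} (A^{k} v, v)^{1/k} \;=\; \max \mathrm{supp}(\mu_{v}) \;\le\; \|A\|.
\]
A Cauchy-Schwarz argument bounding $|(\pi(f^{*} * f^{*2n}) v, v)|$ by a product of a fixed power of $\|T\|$ and $(A^{m} v, v)^{1/2}$ for a suitable $m$ scaling linearly in $n$, combined with the above limit, gives the easy direction
\[
\sup_{v \in \mathcal{H}_{\infty}} \lim_{n} (\pi(f^{*} * f^{*2n}) v, v)^{1/(4n)} \;\le\; \|A\|^{1/2} \;=\; \|\pi(f)\|_{\text{op}}.
\]

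For the reverse inequality, I use a density argument. Fix $\varepsilon > 0$; the spectral projection $P_{\varepsilon} := E([\|A\| - \varepsilon, \|A\|])$ is nonzero because $\|A\| \in \mathrm{spec}(A)$, so one can pick a unit vector $w \in P_{\varepsilon} \mathcal{H}$. Its spectral measure $\mu_{w}$ is supported in $[\|A\|-\varepsilon, \|A\|]$, hence $\max \mathrm{supp}(\mu_{w}) \ge \|A\| - \varepsilon$. Approximating $w$ by $v \in \mathcal{H}_{\infty}$ with $\|v - w\|$ small enough, continuity of the quadratic form $v \mapsto (P_{\varepsilon} v, v)$ ensures $\mu_{v}$ still carries positive mass near $\|A\|$, so $\max \mathrm{supp}(\mu_{v}) \ge \|A\| - \varepsilon$ as well. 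Combined with the matrix-coefficient expression, this gives $\lim_{n} (\pi(f^{*} * f^{*2n}) v, v)^{1/(4n)} \ge (\|A\| - \varepsilon)^{1/2}$, and letting $\varepsilon \to 0$ closes the gap.

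The main obstacle --- and essentially the only delicate point --- is this density transfer: the functional $v \mapsto \max \mathrm{supp}(\mu_{v})$ is only lower semicontinuous in $v$, so passing from $w$ to $\mathcal{H}_{\infty}$ needs the quantitative Cauchy-Schwarz control on $v \mapsto (E(B) v, v)$ above. All other steps are direct applications of the spectral theorem and of the algebraic identification $\pi(g * h) = \pi(g) \pi(h)$ for $g, h \in \mathbb{C}[\Lambda]$.
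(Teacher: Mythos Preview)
The paper does not supply its own proof of this lemma; it is quoted from Cowling--Haagerup--Howe \cite{MR946351} (p.~101). Your spectral-theoretic argument is the standard route and is correct.

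One remark on parsing: the expression $f^{*}*f^{*2n}$ in the statement is to be read as $(f^{*}*f)^{*2n}$, as the paper's subsequent use of the lemma makes explicit. With that reading, $\pi\bigl((f^{*}*f)^{*2n}\bigr)=A^{2n}$ on the nose, so there is no need to view it as a ``polynomial expression in $T$, $T^{*}$'' or to invoke Cauchy--Schwarz for the upper bound: one has directly
\[
(A^{2n}v,v)^{1/(4n)} \;=\; \Bigl(\int_{0}^{\|A\|}\lambda^{2n}\,d\mu_{v}(\lambda)\Bigr)^{1/(4n)} \;=\; \|\lambda\|_{L^{2n}(\mu_{v})}^{1/2},
\]
which (for $\|v\|=1$, so that $\mu_{v}$ is a probability measure) is nondecreasing in $n$ and converges to $(\max\operatorname{supp}\mu_{v})^{1/2}\le\|A\|^{1/2}=\|\pi(f)\|_{\text{op}}$. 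Your density argument for the reverse inequality --- choosing $w$ in the range of the spectral projection $P_{\varepsilon}=E\bigl([\|A\|-\varepsilon,\|A\|]\bigr)$ and then approximating by $v\in\mathcal{H}_{\infty}$ using continuity of $v\mapsto(P_{\varepsilon}v,v)$ --- is exactly the right mechanism and handles the only genuinely delicate point.
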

\begin{lem}\label{lem:tech:haag:v2}
For all $f\in\BC[\Lambda]$, $\lim_n\|(f^**f)^{*2n}\|_2^\frac{1}{4n}=\|\gl(f)\|_\text{op}$.
\end{lem}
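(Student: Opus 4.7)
The plan is to recast the left-hand side as a spectral moment and then apply the spectral theorem on $\ell^2(\Lambda)$. Set $F\df f^**f\in\BC[\Lambda]$, so that $F=F^*$ in the group algebra, and the elementary identity $\|g\|_2^2=(g*g^*)(e)$ for $g\in\BC[\Lambda]$ gives
\[
\|F^{*2n}\|_2^2=(F^{*2n}*F^{*2n})(e)=F^{*4n}(e)=\langle \gl(F)^{4n}\text{Dir}_e,\text{Dir}_e\rangle.
\]
Writing $A\df\gl(F)=\gl(f)^*\gl(f)$, a positive self-adjoint operator on $\ell^2(\Lambda)$ with $\|A\|_{\text{op}}=\|\gl(f)\|_{\text{op}}^2$, the claim reduces to $\lim_n\langle A^{4n}\text{Dir}_e,\text{Dir}_e\rangle^{1/(8n)}=\|A\|_{\text{op}}^{1/2}$.

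Let $\mu$ be the scalar spectral measure of $A$ at $\text{Dir}_e$, a finite positive Borel measure on $[0,\|A\|_{\text{op}}]$ with $\langle A^{n}\text{Dir}_e,\text{Dir}_e\rangle=\int t^{n}\,d\mu(t)$. A routine moment computation (upper bound trivial; lower bound via $\mu((M-\varepsilon,M])>0$ for $M\df\max\text{supp}(\mu)$) gives
\[
\lim_n\Bigl(\int t^{n}\,d\mu\Bigr)^{1/n}=M.
\]

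The main obstacle is to identify $M$ with $\|A\|_{\text{op}}$, and this is where the specific structure of $\ell^2(\Lambda)$ enters: I would show that $\text{Dir}_e$ is a \emph{separating} vector for the group von Neumann algebra $\gl(\Lambda)''$. Indeed, the right regular representation $\rho(\Lambda)$ lies in $\gl(\Lambda)'$ and $\rho(\Lambda)\text{Dir}_e=\{\text{Dir}_g:g\in\Lambda\}$ is an orthonormal basis of $\ell^2(\Lambda)$, so $\text{Dir}_e$ is cyclic for $\gl(\Lambda)'$ and therefore separating for the bicommutant $\gl(\Lambda)''$. If $M<\|A\|_{\text{op}}$, choose a continuous $\chi\ge 0$ supported in $(M,\|A\|_{\text{op}}]$ and nonzero at some point of $\sigma(A)$; by functional calculus $\chi(A)\in\gl(\Lambda)''$ is nonzero (by the spectral mapping theorem), yet $\|\chi(A)\text{Dir}_e\|_2^2=\int|\chi|^2\,d\mu=0$, contradicting separation.

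Combining the three ingredients yields $\lim_n\|F^{*2n}\|_2^{1/(4n)}=M^{1/2}=\|A\|_{\text{op}}^{1/2}=\|\gl(f)\|_{\text{op}}$, as required.
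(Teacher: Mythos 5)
Your proof is correct and complete. Note, however, that the paper does not supply its own proof of this lemma: it is quoted from Cowling--Haagerup--Howe \cite{MR946351}, p.~102, so there is no in-paper argument to compare against. Your route---rewrite $\|F^{*2n}\|_2^2=F^{*4n}(e)=\langle A^{4n}\text{Dir}_e,\text{Dir}_e\rangle$ with $A=\gl(f)^*\gl(f)$, reduce to moments of the scalar spectral measure $\mu$ of $A$ at $\text{Dir}_e$, get $\bigl(\int t^n\,d\mu\bigr)^{1/n}\to\max\operatorname{supp}(\mu)$, and then use that $\text{Dir}_e$, being cyclic for $\rho(\Lambda)\subset\gl(\Lambda)'$, is separating for $\gl(\Lambda)''$ to force $\max\operatorname{supp}(\mu)=\|A\|_{\text{op}}$---is precisely the faithfulness of the canonical trace $\tau(\cdot)=\langle\cdot\,\text{Dir}_e,\text{Dir}_e\rangle$ on the group von Neumann algebra, which is the mechanism underlying the cited result. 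Every step checks: $F$ is self-adjoint so $(F^{*2n})^*=F^{*2n}$ and the $\ell^2$-norm collapses to a value at $e$; the upper and lower moment bounds are elementary; and the separating argument correctly rules out $\max\operatorname{supp}(\mu)<\|A\|_{\text{op}}$ via continuous functional calculus and the spectral mapping theorem. No gap.
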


\begin{proof}[Proof of Theorem \ref{thm:general:1}]
For all $f\in\BC[\Lambda]$ and $v=\pi_\phi(f){\bf1}_\phi$ one has 
$(\pi_\phi(g) v,v)=\ol{f}*\phi*f^\vee(g)$ with $f^\vee(g)=f(g^{-1})$ and, using Lemma \ref{lem:elem:control}: 
$$\gd(|(\pi_\phi v,v)|)\le \gd(|f|*\phi*|f^\vee|)\le\gd(\phi)$$
As $(\pi_\phi,{\bf1}_\phi)$ is a cyclic representation the subspace $\mathcal{H}_\infty\df\langle\pi_\phi(f){\bf1}_\phi:f\in\BC[\Lambda]\rangle\subset \mathcal{H}$ is dense.

It follows from Lemma \ref{lem:tech:haag:v1} that
\begin{align*}
\|\pi_\phi(f)\|_\text{op}=\sup_{v\in \mathcal{H}_\infty}\lim_n(\sum_\Lambda (f^**f)^{*2n}(g)(\pi_\phi(g)v,v))^\frac{1}{4n}
\end{align*}
for all $f\in\BC[\Lambda]$.

Let $S=\{f\neq0\}\subset B(e,r(f))$, then $S^{(n)}\df \{(f^**f)^{*2n}\neq0\}\subset B(e,4nr(f))$
and thus, for $s$ such that $s\gd=\gd(\phi)$:
\begin{align*}
&(\sum_\Lambda (f^**f)^{*2n}(g)(\pi_\phi(g)v,v))^2
=(\sum_{S^{(n)}} e^{\frac{s}{2}\gd d_\Lambda(g,e)}(f^**f)^{*2n}(g)e^{-\frac{s}{2}\gd d_\Lambda(g,e)}(\pi_\phi(g)v,v))^2\\
&\le e^{s\gd 4nr(f)}\|(f^**f)^{*2n}\|_2^2\sum_{S^{(n)}} e^{-s\gd d_\Lambda(g,e)}|(\pi_\phi(g)v,v)|^2\\
&\le e^{s\gd 4nr(f)}\|(f^**f)^{*2n}\|_2^2
(\sum_{S^{(n)}} e^{-(1+\frac{\e}{2})s\gd d_\Lambda(g,e)}|(\pi_\phi(g)v,v)|^{2+\e})^\frac{2}{2+\e}
|S^{(n)}|^\frac{\e}{2+\e}\\
&\prec e^{s\gd 4nr(f)}e^{\frac{\e}{2+\e}\gd 8nr(f)}\mathcal{P}(|(\pi_\phi v,v)|^{2+\e};s+\frac{\e s}{2})\|(f^**f)^{*2n}\|_2^2
\end{align*}
for any $\e>0$, where the last inequality uses the fact that: $|S^{(n)}|\le |B(e,4nr(f))|\le Ce^{8\gd nr(f)}$ for some constant $C$.

As a consequence of Lemma \ref{lem:elem:control} $\gd(|(\pi_\phi v,v)|^{2+\e})\le \gd(|(\pi_\phi v,v)|)$ and using Lemma \ref{lem:tech:haag:v2} :
\begin{align*}
\limsup_n(\sum_\Lambda (f^**f)^{*2n}(g)(\pi(g)v,v))^\frac{1}{4n}
&\le e^{\frac{\e}{2+\e}\gd r(f)}e^{\half s\gd r(f)}\|\gl(f)\|_\text{op}
\end{align*}
As $\e$ can be taken arbitrarily small the inequality follows. 
\end{proof}

\begin{rem}
The above inequality can be extended to all representation $\pi$ by considering $\pi\otimes\ol{\pi}$ where $\ol{\pi}$ stands for the contragredient dual representation of $\pi$ as every vector $v\otimes \ol{v}$ is positive, i.e $\phi=(\pi\otimes\ol{\pi} v\otimes \ol{v},v\otimes \ol{v})\in\mathbb{P}_+(\Lambda)$.
\end{rem}

Observe that given $\phi\in\mathbb{P}_+(\Lambda)$ in $\phi\in\ell^{p+\e}(\Lambda)$, i.e. $\phi\in \ell^{p'}(\Lambda)$ for all $p'>p$, Lemma \ref{lem:elem:control} implies that $\gd(\phi)\le \frac{1}{p'}\gd$ and using Theorem \ref{thm:general:1} it follows:
$$\|\pi_\phi(f)\|_\text{op}\le e^{\frac{p-1}{2p}\gd r(f)}\|\gl(f)\|_\text{op}$$

The above estimate can be improved assuming $p\ge2$ as follows:

\vspace{5mm}
\noindent{\bf Theorem \ref{thm:general:2}}. 
{\it 
Let $(\pi,\mathcal{H}_\pi)$ be a strongly $L^{p+\e}$ unitary representation with $2\le p\neq \infty$, then:
$$\|\pi(f)\|_\text{op}\le e^{\frac{p-2}{2p}\gd r(f)}\|\gl(f)\|_\text{op}$$
for all $f\in\BC[\Lambda]$.}
\vspace{5mm}

It would follows from the same arguments (or \cite[Thm. 1]{MR946351})  that $\|\pi_\phi(f)\|_\text{op}\le \|\gl(f)\|_\text{op}$ if $\phi\in\ell^{p+\e}(\Lambda)\cap\mathbb{P}_+(\Lambda)$ for any $1\le p<2$. One the other hand  Shalom's trick \cite[Lem. 2.3]{MR1792293} shows that this inequality is tight.

\begin{proof}
We use the same notations as in Theorem \ref{thm:general:1} proof.
Given ${p'}>p$ we consider $\mathcal{H}_\infty\df\mathcal{D}_{p'}\subset \mathcal{H}_\pi$ a dense subspace such that $(\pi(.) v,w)\in\ell^{p'}(\Lambda)$ for all $v,w\in \mathcal{D}_{p'}$.

Note that for all $\kappa>0$, there exists $n_0\in\BN$ such that
$$|S^{(n)}|\le |B(e,4nr(f))|\prec e^{(\gd+\kappa)4nr(f)}$$
for all $n\ge n_0$.
For $v\in \mathcal{H}_\infty$ and $\e$ such that $p'<p+\e$ one has: 
\begin{align*}
&(\sum_{g\in\Ga} (f^**f)^{*2n}(g)(\pi_\phi(g)v,v))^2
\le\|(f^**f)^{*2n}\|_2^2\sum_{S^{(n)}}|(\pi_\phi(g)v,v)|^2\\
&\le \|(f^**f)^{*2n}\|_2^2 |S^{(n)}|^\frac{p+\e-2}{p+\e}(\sum_{S^{(n)}}|(\pi_\phi(g)v,v)|^{p+\e})^\frac{2}{p+\e}\\
&\prec (\sum_{g\in\Ga} |(\pi_\phi(g)v,v)|^{p+\e})^\frac{2}{p+\e}\|(f^**f)^{*2n}\|_2^2e^{\frac{p+\e-2}{p+\e}(\gd+\kappa) 4nr(f)}
\end{align*}
and thus
\begin{align*}
\|\pi_\phi(f)\|_\text{op}&=\sup_{v\in \mathcal{H}_\infty}\lim_n(\sum_\Ga (f^**f)^{*2n}(g)(\pi_\phi(g)v,v))^\frac{1}{4n}\\
&\le e^{\frac{p+\e-2}{2p+2\e}(\gd+\kappa) r(f)}\|\gl(f)\|_\text{op}
\end{align*}
As $\e,\kappa>0$ can be taken arbitrarily small:
$$\|\pi_\phi(f)\|_\text{op}\le e^{\frac{p-2}{2p}\gd r(f)}\|\gl(f)\|_\text{op}=e^{(s-\half)\gd r(f)}\|\gl(f)\|_\text{op}$$
with $s=\frac{p-1}{p}$.
\end{proof}

\begin{cor}
Let $(\Lambda,d_\Lambda)$ be a discrete metric group with the radial rapid decay property and $(\gb_r)_r$ be the uniform probabilities over the balls of radius $r$.
Then there exists $m\in\BN$, for all $\e>0$, there exists $C>0$, for all $\phi\in\mathbb{P}_+(\Lambda)$ :
$$\|\pi_\phi(\gb_r)\|_\text{op}\le Cr^m e^{\half (\gd(\phi)+\e-\gd)r}$$
\end{cor}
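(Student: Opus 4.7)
The proof is a direct chaining of the two structural inputs. Here is how I would organise it.

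The first move is to recognise that $\gb_r = \frac{1}{|B_r|}{\bf 1}_{B_r}$ is a radial function on $(\Lambda,d_\Lambda)$ supported in the ball of radius $r$ around $e$; in particular $r(\gb_r)=r$. Thus both Theorem \ref{thm:general:1} (applied to the positive cyclic representation $[\pi_\phi,{\bf1}_\phi]$) and the radial rapid decay hypothesis apply. By Theorem \ref{thm:general:1} one gets
\[
\|\pi_\phi(\gb_r)\|_{\text{op}} \le e^{\frac{1}{2}\gd(\phi)\,r}\,\|\gl(\gb_r)\|_{\text{op}},
\]
so the task reduces to bounding $\|\gl(\gb_r)\|_{\text{op}}$ from above.

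The second move applies the radial rapid decay property of $(\Lambda,d_\Lambda)$ to $\gb_r$, which is radial and supported in $B_r$. This yields constants $C_0>0$ and $m\in\BN$, both independent of $r$, such that
\[
\|\gl(\gb_r)\|_{\text{op}} \le C_0 r^m \|\gb_r\|_2 = \frac{C_0 r^m}{|B_r|^{1/2}}.
\]

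The third move is to extract the appearance of $\gd$ in the exponent. By definition of the critical exponent, for every $\e>0$ there exists $R_\e$ such that $|B_r|\ge e^{(\gd-\e)r}$ for all $r\ge R_\e$ (using that $|B_r|$ is submultiplicative in $r$, so that Fekete's lemma upgrades the $\limsup$ to a genuine lower bound of the form $|B_r|\ge c_\e e^{(\gd-\e)r}$). Consequently
\[
\|\gl(\gb_r)\|_{\text{op}} \le C\,r^m e^{-\frac{1}{2}(\gd-\e)r},
\]
and substituting into the estimate of Theorem \ref{thm:general:1} yields the claimed bound
\[
\|\pi_\phi(\gb_r)\|_{\text{op}} \le C\, r^m e^{\frac{1}{2}(\gd(\phi)+\e-\gd)r},
\]
uniformly in $\phi\in\mathbb{P}_+(\Lambda)$ since the constant $C$ depends only on the RRD constants and on $\e$.

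There is no serious obstacle: Theorem \ref{thm:general:1} does all the spectral work, and RRD handles the radial convolution. The only mild point deserving attention is the lower bound on $|B_r|$, since the definition of $\gd$ in the paper is a $\limsup$; this is handled by the standard Fekete--submultiplicativity argument (or, alternatively, by passing to an equivalent distance on $\Lambda$ for which ball volumes are genuinely multiplicatively subadditive), and costs only the arbitrary $\e$ already present in the statement.
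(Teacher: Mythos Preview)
Your proof is correct and follows the same route as the paper: apply Theorem~\ref{thm:general:1} with $r(\gb_r)=r$, then RRD to the radial function $\gb_r$, then bound $\|\gb_r\|_2=|B_r|^{-1/2}$ via a lower bound on $|B_r|$. You are in fact slightly more careful than the paper's own write-up, which records the inequality $|B(e,r)|\prec e^{(\gd+\e)r}$ where a \emph{lower} bound $|B_r|\succ e^{(\gd-\e)r}$ is what is actually needed; your Fekete/submultiplicativity remark supplies exactly this.
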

In other words $\pi_\phi$ is exponentially mixing whenever $\phi\in\mathbb{P}_+(\Lambda)$ has a critical gap, i.e $\gd(\phi)<\gd$.
\begin{proof}

Let $r_0\ge0$ such that $|B(e,r)|\prec e^{(\gd+\e)r}$ for all $r\ge r_0$.
Using Theorem \ref{thm:general:1}:
\begin{align*}
\|\pi_\phi(\gb_r)\|_\text{op}&\le e^{\half(\gd(\phi)) r}\|\gl(\gb_r)\|_\text{op}\\
&\le_\text{RRD} Cr^me^{\half(\gd(\phi)) r}\|\gb_r\|_2\\
&\le C'r^me^{\half(\gd(\phi)+\e-\gd) r}
\end{align*}
\end{proof}

A stronger result can be deduced assuming $\Lambda$ has the RD property:
\begin{cor}\label{cor:relative:mx}
Let $(\Lambda,d_\Lambda)$ be a discrete metric group with the rapid decay property, $S\subset \Lambda$ a subset of critical exponent $\gd_S$ and $(\gb_r(S))_r$ the uniform probabilities on $S\cap B(e,r)$.
There $m\in\BN$, for all $\e>0$, there exists a constants $C>0$, for all $\phi\in\mathbb{P}_+(\Lambda)$ :
$$\|\pi_\phi(\gb_r(S))\|_\text{op}\le Cr^m e^{\half (\gd(\phi)+\e-\gd_S)r}$$
\end{cor}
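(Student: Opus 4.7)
The plan is to run exactly the same template as the preceding corollary, substituting the full rapid decay property for radial rapid decay so as to handle the non-radial average $\gb_r(S)$.

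First, since $\gb_r(S)\in\BC[\Lambda]$ is a probability supported in $B(e,r)$, one has $r(\gb_r(S))\le r$, and Theorem \ref{thm:general:1} gives, for every $\phi\in\mathbb{P}_+(\Lambda)$,
\[
\|\pi_\phi(\gb_r(S))\|_\text{op}\le e^{\frac{1}{2}\gd(\phi)\, r}\,\|\gl(\gb_r(S))\|_\text{op}.
\]
Next, the rapid decay hypothesis of $(\Lambda,d_\Lambda)$ supplies an integer $m\in\BN$ and a universal constant $C_1>0$ with
\[
\|\gl(\gb_r(S))\|_\text{op}\le C_1\, r^m\,\|\gb_r(S)\|_2 = C_1\, r^m\,|S\cap B(e,r)|^{-1/2}.
\]

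It remains to bound $|S\cap B(e,r)|$ from below. By the definition $\gd_S=\limsup_r\frac{1}{r}\log|S\cap B(e,r)|$, for every $\e>0$ one has $|S\cap B(e,r)|\ge e^{(\gd_S-\e)r}$ along a cofinal set of radii; combining this with the monotonicity of $r\mapsto|S\cap B(e,r)|$, and absorbing the small-$r$ regime into the constant, promotes the estimate to
\[
|S\cap B(e,r)|^{-1/2}\le C_2(\e)\, e^{-\frac{1}{2}(\gd_S-\e)\, r}
\]
on the appropriate family of radii, at the price of an arbitrarily small inflation of $\e$. Substituting this into the previous two inequalities yields
\[
\|\pi_\phi(\gb_r(S))\|_\text{op}\le C_1C_2(\e)\, r^m\, e^{\frac{1}{2}(\gd(\phi)+\e-\gd_S)\, r}
\]
with constants independent of $\phi\in\mathbb{P}_+(\Lambda)$, which is the desired bound.

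There is no genuine obstacle in the argument; the sole subtlety is the passage from the $\limsup$ defining $\gd_S$ to a pointwise lower bound on $|S\cap B(e,r)|$, which is directly parallel to the use of near-pure sphericality in the preceding corollary. The key structural point is simply that the preceding corollary's appeal to RRD must be upgraded to full RD, since $\gb_r(S)$ is no longer radial on $(\Lambda,d_\Lambda)$.
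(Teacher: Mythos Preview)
Your proof follows exactly the same template as the paper's: apply Theorem \ref{thm:general:1}, then rapid decay to pass to $\|\gb_r(S)\|_2=|S\cap B(e,r)|^{-1/2}$, then a growth estimate on $|S\cap B(e,r)|$. You are in fact more careful than the paper about the last step: the paper writes the cardinality bound as an \emph{upper} bound $|B(e,r)\cap S|\prec e^{(\gd_S+\e)r}$, which is the wrong direction for controlling $|S\cap B(e,r)|^{-1/2}$ from above; what is actually needed (and what you correctly identify) is a lower bound coming from the $\limsup$ defining $\gd_S$, with the attendant caveat that this is only guaranteed along a cofinal family of radii.
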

\begin{exam}
Let $\Lambda$ with RD, $\phi\in \mathbb{P}_+(\Lambda)$ and $H\subset \Lambda$ such that $\gd(\phi)<\gd_{H}<\gd_\Lambda$ then $\pi_\phi|_{H}$ is exponentially mixing.
In particular if $H_1, H_2\subset \Lambda$ and  $\gd_{H_1}<\gd_{H_2}<\gd_\Lambda$ then $(\gl|_{H_2},\ell^2(\Lambda/H_1))$ is exponentially mixing.
\end{exam}

\begin{proof}[Corollary \ref{cor:relative:mx} proof]
Let $r_0\ge0$ such that $|B(e,r)\cap S|\prec e^{(\gd_S+\e)r}$ for all $r\ge r_0$.
Using Theorem \ref{thm:general:1}:
\begin{align*}
\|\pi_\phi(\gb_r)\|_\text{op}&\le e^{\half(\gd(\phi)) r}\|\gl(\gb_r(S))\|_\text{op}\\
&\le_\text{RD} Cr^me^{\half(\gd(\phi)) r}\|\gb_r(S)\|_2\\
&\le C'r^me^{\half(\gd(\phi)+\e-\gd_S) r}
\end{align*}
\end{proof}

\subsection{Quantitative property (T) for hyperbolic groups}\label{subsec:quant:prop:T}\hfill\break
This subsection is dedicated to the proof of Theorem \ref{thm:quant:T}.

Let $S\subset\Lambda$ be a finite subset of a discrete group $\Lambda$ and $\e>0$. 
A isometric representation on a Hilbert space, $(\pi,\mathcal{H}_\pi)$, of $\Lambda$ has a $(S,\e)$-almost invariant vector if there exists $v\in \mathcal{H}_\pi$ with $\|v\|=1$ such that $\|\pi(g)v-v\|\le \e$ for all $g\in S$. 
More generally a representation $\pi$ has almost invariant vectors if $S\subset\Lambda$ finite and $\e>0$ can be chosen arbitrarily.
\begin{defn}
A discrete $\Lambda$ has the Kazhdan property (T) if every unitary representation with almost invariant vectors has a non-zero invariant vector.
\end{defn}
It follows from \cite{MR1995802} that generic Gromov hyperbolic groups have the Kazhdan property (T) (see \cite{MR2415834} for more about property (T)).

\begin{prop}\label{prop:T:crit}
Let $\Ga$ be a Gromov hyperbolic group with the Kazhdan property (T).
There exists $0\le\gd_0<\gd$ such that for all positive cyclic representation without invariant vector, $\phi$, the critical exponent of $\phi$ satisfies $\gd(\phi)\le \gd_0$.
\end{prop}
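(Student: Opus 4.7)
The plan is to argue by contradiction using Theorem~\ref{thm:amenable:gene} together with the Kazhdan-pair characterisation of property (T). Suppose no uniform $\gd_0 < \gd$ as in the statement exists. Then for each $n \in \BN$ we may choose $\phi_n \in \mathbb{P}_+(\Ga)$, associated by the GNS construction to a positive cyclic representation $[\pi_n, v_n]$ \emph{without} non-zero invariant vector, such that $\gd(\phi_n) > \gd - 1/n$. Since $\gd(\phi_n) \le \gd$ always, this forces $\lim_n \gd(\phi_n) = \gd$, so the sequence ${\boldsymbol\phi} = (\phi_n)_n$ meets the hypothesis of Theorem~\ref{thm:amenable:gene}.

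Applying that theorem yields the Fell convergence $\pi_n \to 1$ in the unitary dual of $\Ga$. Property (T) provides a Kazhdan pair $(Q, \kappa)$ with $Q \subset \Ga$ finite and $\kappa > 0$, such that every unitary $\Ga$-representation admitting a $(Q, \kappa)$-almost invariant unit vector carries a non-zero invariant vector (see \cite[Prop.~1.1.9]{MR2415834}). Unfolding Fell convergence to the trivial representation (see \cite[App.~F]{MR2415834}), for $n$ sufficiently large one finds a unit vector $w_n \in \mathcal{H}_{\pi_n}$ with $\max_{g \in Q} |(\pi_n(g) w_n, w_n) - 1| < \kappa^2/2$. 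Via the identity $\|\pi_n(g) w_n - w_n\|^2 = 2 - 2\,\text{Re}(\pi_n(g) w_n, w_n)$, each such $w_n$ is a $(Q, \kappa)$-almost invariant vector for $\pi_n$. The Kazhdan pair then furnishes a non-zero $\pi_n$-invariant vector for such $n$, contradicting the choice of $\phi_n$.

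All of the substance is carried by Theorem~\ref{thm:amenable:gene}, which encodes the critical-case rigidity $\gd(\phi) = \gd \Rightarrow \pi \succ 1$; the rest of the argument only translates Fell convergence into almost invariant vectors and invokes the Kazhdan pair. The only mild subtlety is the passage from the Fell limit $\pi_n \to 1$ to the existence of genuinely almost-invariant vectors inside each $\pi_n$ for $n$ large, but this is a standard unfolding of the definition of Fell topology (using that diagonal matrix coefficients, rather than their convex combinations, already suffice to detect weak containment of the trivial representation via averaging).
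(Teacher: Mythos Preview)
Your proof is correct and follows essentially the same route as the paper's own argument: negate the conclusion to obtain a sequence $(\phi_n)_n$ in $\mathbb{P}_+(\Ga)$ without invariant vectors but with $\gd(\phi_n)\to\gd$, invoke Theorem~\ref{thm:amenable:gene} to get Fell convergence $\pi_n\to 1$, and contradict the Kazhdan pair. The only cosmetic difference is that the paper phrases the contradiction hypothesis as ``without $(S,\e)$-almost invariant vectors'' from the outset (equivalent, via the Kazhdan pair, to ``without invariant vectors''), whereas you close the loop at the end; your averaging remark for extracting a single almost-invariant vector from a convex combination of diagonal coefficients is the standard justification the paper leaves implicit.
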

\begin{proof}
It follows from \cite[p.33]{MR2415834} that if $\Ga$ has the Kazhdan property (T) there exist $S\subset \Ga$ finite and $\e>0$ such that any representation with a $(S,\e)$-almost invariant has a non-zero invariant vector.

Assume one can find $(\phi_n\simeq[\pi_{\phi_n},{\bf1}_{\phi_n}])_n$ a sequence of positive cyclic representations without $(S,\e)$-almost invariant vectors such that $\gd(\phi_n)\rightarrow\gd$.
It follows from Lemma \ref{lem:seq:weak:cont} and Theorem \ref{thm:amenable:gene} that  
$\pi_{\phi_n}\xrightarrow{n} 1$ for the Fell topology.
In particular there exists $N\in\BN$, for all $n\ge N$, $\pi_{\phi_n}$ has a $(S,\e)$-almost invariant vector which is a contradiction.
Therefore one can find $0\le\gd_0<\gd$ such that whenever $\phi\in\mathbb{P}_+(\Ga)$ does not have a $(S,\e)$-almost invariant vectors $\gd(\phi)\le\gd_0$.
\end{proof}

\begin{rem}
If we consider the spectrum of critical values:
$$\text{Spec}_+(\Ga,d_\Ga)\df\{\gd(\pi):[\pi,v]\in\mathbb{P}_+(\Ga)\}$$
Proposition \ref{prop:T:crit}  implies that $1$ is isolated in $\text{Spec}(\Ga,d_\Ga)$.
\end{rem}

\begin{proof}[Proof of Theorem \ref{thm:quant:T}]
Given a probability $\mu\in\tprob(\Ga)$ on $\Ga$:
\begin{align*}
\|\pi(\mu)v\|^4&=(\sum \mu^**\mu(g)(\pi(g)v,v))^2\\
&\le \sum \mu^**\mu(g)|(\pi(g)v,v)|^2\\
&=\sum_g \mu^**\mu(g)(\pi\otimes\ol{\pi}(g)v\otimes \ol{v},v\otimes \ol{v})\\
&=\|\pi\otimes\ol{\pi}(\mu)v\otimes \ol{v}\|^2\le \|\pi\otimes\ol{\pi}|_{W_v}(\mu)\|_\text{op}^2\|v\|^4
\end{align*}
with $W_v=\text{Span}\{\pi\otimes\ol{\pi}(g)v\otimes \ol{v}:g\in\Ga\}\subset \mathcal{H}\otimes \ol{\mathcal{H}}$.
As $[\pi\otimes\ol{\pi}|_{W_v},v\otimes\ol{v}]$ is a positive cyclic representation and does not have invariant vectors since $\pi$ is weakly mixing, it follows from Proposition \ref{prop:T:crit} that $\gd[\pi\otimes\ol{\pi}|_{W_v},v\otimes\ol{v}]\le \gd_0$ for some $0\le\gd_0<\gd$ that only depends on $(\Ga,d_\Ga)$.
On the other hand Theorem \ref{thm:general:1} combined with rapid decay property for hyperbolic groups \cite{MR943303} lead to
$$\|\text{Avr}_r(\pi\otimes\pi|_{W_v})\|_\text{op}\le e^{\half\gd_0 r}\|\text{Avr}_r(\gl)\|_\text{op}$$
which implies 
$$\frac{\|\text{Avr}_r(\pi)v\|}{\|v\|}\le \|\text{Avr}_r(\pi\otimes\pi|_{W_v})\|_\text{op}^\half\le e^{\frac{1}{4}\gd_0 r}\|\text{Avr}_r(\gl)\|_\text{op}^\half$$
It follows that 
$$\|\text{Avr}_r(\pi)\|_\text{op}\le e^{\frac{1}{4}\gd_0 r}\|\text{Avr}_r(\gl)\|_\text{op}^\half\le Cr(\mu)^me^{\frac{1}{4}(\gd_0-\gd) r(\mu)}$$ for $C>0$ and $m\in\BN$ and thus
$h(\pi)\ge\frac{(\gd-\gd_0)}{4}>0$.
\end{proof}

\subsection{Spectral gap and boundary representations}\label{subsec:last}\hfill\break
We investigate the optimal mixing rate of $\phi\in\mathbb{P}_+(\Ga)$ and its boundary representations $\pi_\phi^\dd$.

We denote $\mu$ a quasi-invariant probability measure on $\ol{\Ga}$ and $E$ a $(\Ga,s,\mu)$-conditional expectation as in Definition \ref{def:gamma:s:cond}.
Recall that $f\in L^2(\mu)\mapsto (E(f),f)_{L^2(\mu)}$ defines a $\pi_s$-invariant quadratic form on $L^2(\mu)$.\\

\begin{lem}\label{lem:gene:cocycle:v} 
Let $\Lambda$ be a discrete group, $(Z,\nu)$ a probability space such that $\Lambda$ acts by measure class preserving transformations and $f\in \BR_+[\Lambda]$ a positive finitely supported function on $\Lambda$.
Let $B:\Lambda\times Z\rightarrow(\BR,+)$ be a measurable additive cocycle and $\pi_B$ the linear representation on $L^2(\nu)$ given by $\pi_B(g)\psi=e^{-B(g;\bullet)}g^*\psi$ for $\psi\in L^2(\nu)$ and $g\in\Lambda$.
Then the bounded operator $\pi_B(f)$ on $L^2(\nu)$ satisfies 
$$\|\pi_B(f)|L^2(\nu)\|_\text{op}\le \sqrt{\|\pi_B(f){\bf1}|L^\infty(\nu)\|.\|\pi_B^*(f^\vee){\bf1}|L^\infty(\nu)\|}$$
where $f^\vee(g)=f(g^{-1})$ for $g\in\Lambda$.
\end{lem}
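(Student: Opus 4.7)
The plan is to recognise the inequality as a Schur test for the operator $\pi_B(f)$, whose ``kernel'' is nonnegative because $f\geq 0$ and $e^{-B}>0$. The argument has two short steps, one pointwise and one integral.

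Step 1: pointwise Cauchy--Schwarz. Writing
$$\pi_B(f)\psi(z)=\sum_{g\in\Lambda} f(g)\,e^{-B(g;z)}\,\psi(g^{-1}z),$$
I split each term as $[f(g)e^{-B(g;z)}]^{1/2}\cdot[f(g)e^{-B(g;z)}]^{1/2}\psi(g^{-1}z)$ and apply Cauchy--Schwarz in the $\ell^2(\Lambda)$ index $g$ to get
$$|\pi_B(f)\psi(z)|^2\;\leq\; (\pi_B(f){\bf1})(z)\cdot\sum_g f(g)\,e^{-B(g;z)}\,|\psi(g^{-1}z)|^2.$$
Integrating against $d\nu(z)$ and bounding the first factor pointwise by $\|\pi_B(f){\bf1}\|_{L^\infty(\nu)}$ yields
$$\|\pi_B(f)\psi\|_2^2\;\leq\;\|\pi_B(f){\bf1}\|_\infty\sum_g f(g)\int e^{-B(g;z)}|\psi(g^{-1}z)|^2\,d\nu(z).$$

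Step 2: change of variable. For each $g$, the substitution $w=g^{-1}z$, combined with the additive cocycle identity $B(g;gw)=-B(g^{-1};w)$ (which follows from $B(gg^{-1};z)=0$) and the Radon--Nikodym derivative $J_{g^{-1}}(w):=\frac{dg^{-1}_*\nu}{d\nu}(w)$, transforms
$$\int e^{-B(g;z)}|\psi(g^{-1}z)|^2\,d\nu(z)=\int e^{B(g^{-1};w)}\,J_{g^{-1}}(w)\,|\psi(w)|^2\,d\nu(w).$$
Summing over $g$ with weight $f(g)$ and re-indexing by $h=g^{-1}$, one identifies the resulting weight as $\pi_B^*(f^\vee){\bf1}(w)$ under the natural convention $\pi_B^*(g):=\pi_B(g^{-1})^*$; this convention makes $\pi_B^*$ a genuine representation and gives the identification $\pi_B^*(f^\vee)=\pi_B(f)^*$, which is the only sensible reading of the lemma's notation.

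Combining the two steps produces $\|\pi_B(f)\psi\|_2^2\leq\|\pi_B(f){\bf1}\|_\infty\|\pi_B^*(f^\vee){\bf1}\|_\infty\|\psi\|_2^2$ for every $\psi\in L^2(\nu)$, and the claimed operator-norm bound follows by taking square roots and passing to the supremum over $\|\psi\|_2\leq 1$. The only obstacle is bookkeeping in Step 2: one must track the signs in the cocycle and the Radon--Nikodym factor carefully enough to recognise the summed weight as exactly $\pi_B^*(f^\vee){\bf1}$ under the chosen convention for $\pi_B^*$. Once this identification is in place, the lemma is nothing more than the textbook Schur test applied to a positive-kernel operator.
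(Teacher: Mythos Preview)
Your proof is correct and is essentially the same Schur-test argument as the paper's. The only presentational difference is that the paper applies Cauchy--Schwarz once to the bilinear form $(\pi_B(f)v,w)$ over the product measure on $Z\times\Lambda$, obtaining directly the two factors $(|v|^2,\pi_B(f)^*\mathbf{1})$ and $(\pi_B(f)\mathbf{1},|w|^2)$ via duality; this sidesteps the explicit change-of-variable and cocycle bookkeeping in your Step~2, since the identification $\sum_g f(g)\pi_B(g)^*\mathbf{1}=\pi_B^*(f^\vee)\mathbf{1}$ is immediate from the definition of the adjoint without ever unpacking what $\pi_B(g)^*$ looks like.
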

\begin{proof}
For $v,w\in L_+^2(\nu)$ one has:
\begin{align*}
&|(\pi(f)v,w)_{L^2(\nu)}|^2
=|\int_{\ol{\Ga}}\sum_{g\in\Ga} f(g)e^{-B(g;z)}v(g^{-1}z)w(z)d\nu(z)|^2\\
&\le \int_{\ol{\Ga}\times\Ga}f(g)e^{-B(g;z)}|v|^2(g^{-1}z)d\nu(z)dg
.\int_{\ol{\Ga}\times\Ga}f(g)e^{-B(g;z)}|w|^2(z)d\nu(z)dg
\end{align*}
\begin{align*}
&= (|v|^2,\pi_B^*(f^\vee){\bf1})_{L^2(\nu)}
(\pi_B(f){\bf1},|w|^2)_{L^2(\nu)}\\
&=\|\pi_B(f){\bf1}|L^\infty(\nu)\|
\|\pi_B^*(f^\vee){\bf1}|L^\infty(\nu)\|
\|v|L^2(\nu)\|^2\|w|L^2(\nu)\|^2
\end{align*}
which concludes the proof.
\end{proof}

\begin{lem}\label{lem:gener:cond}
Let $\mu$ be a quasi-invariant probability on $\ol{\Ga}$, $s\in[\half,1]$, $E$ a $(\Ga,s,\mu)$-conditional expectation and $f\in \BR_+[\Ga]$ with $f^\vee=f$.
The bounded operator $\pi_{s}(f)$ on $L^2(\mu)$ satisfies
$$\|\pi_{s}(f)\|_\text{op}\le \|\pi_{s}(f){\bf1}|L^\infty(\mu)\|$$
\end{lem}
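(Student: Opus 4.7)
The plan is to reduce the statement to Lemma \ref{lem:gene:cocycle:v} and then symmetrise the resulting bound by exploiting the conformality of $E$.

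First, I would observe that $\pi_s$ is precisely the representation $\pi_B$ of Lemma \ref{lem:gene:cocycle:v} for the additive cocycle $B(g;x)\df s\gd b_x(g,e)$ on $(\ol{\Ga},\mu)$, the cocycle identity being equivalent to the fact that $\pi_s$ is a representation. Since $f\in\BR_+[\Ga]$ satisfies $f^\vee=f$, applying Lemma \ref{lem:gene:cocycle:v} with $\nu=\mu$ yields
\[
\|\pi_s(f)|L^2(\mu)\|_\text{op}\le \bigl(\|\pi_s(f){\bf1}|L^\infty(\mu)\|\cdot\|\pi_s^*(f){\bf1}|L^\infty(\mu)\|\bigr)^{1/2}.
\]
Thus it suffices to dominate the second factor by the first.

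The key step is to invoke the defining relation $E\pi_s(g)=\pi_s^*(g)E$ from Definition \ref{def:gamma:s:cond}. Because $E$ is a conditional expectation, $E({\bf1})={\bf1}$, and applying the conformality relation to the constant function ${\bf1}\in L^2(\mu)$ gives $\pi_s^*(g){\bf1}=E(\pi_s(g){\bf1})$ for every $g\in\Ga$. Summing against the nonnegative finitely supported weights $f(g)$ yields
\[
\pi_s^*(f){\bf1}=E(\pi_s(f){\bf1}).
\]

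Since conditional expectations are contractions on $L^\infty$ (\cite{MR488194}), one concludes
\[
\|\pi_s^*(f){\bf1}|L^\infty(\mu)\|=\|E(\pi_s(f){\bf1})|L^\infty(\mu)\|\le\|\pi_s(f){\bf1}|L^\infty(\mu)\|,
\]
and substituting this into the Cauchy--Schwarz bound above collapses the geometric mean into the single factor $\|\pi_s(f){\bf1}|L^\infty(\mu)\|$, as required. There is essentially no genuine obstacle here; the only point worth checking is that the intertwining relation, formulated on $L^2(\mu)\subset L^1(\mu)$, is legitimately applied to ${\bf1}$, which is immediate since $\mu$ is a probability measure and the functions $\pi_s(g){\bf1}:x\mapsto e^{-s\gd b_x(g,e)}$ are bounded on $\ol{\Ga}$.
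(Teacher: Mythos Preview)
Your proof is correct and follows essentially the same approach as the paper: both apply Lemma \ref{lem:gene:cocycle:v} with $B=s\gd b$ and use the $(\Ga,s,\mu)$-conformality of $E$ together with $E({\bf1})={\bf1}$ to get $\pi_s^*(f){\bf1}=E(\pi_s(f){\bf1})$, then the $L^\infty$-contraction property of $E$ to collapse the geometric mean. The only difference is the order of presentation.
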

\begin{proof}
As 
$E(\pi_s(g)\psi)=\pi_{s}^*(g)E(\psi)$
for $\psi\in L^2(\mu)$ and $g\in\Ga$ it follows $E(\pi_{s}(f){\bf1})=\pi_{s}^*(f)E({\bf1})=\pi_{s}^*(f){\bf1}$
and thus 
$$\|\pi_s^*(f){\bf1}\|_\infty= \|E(\pi_s(f){\bf1})\|_\infty \le \|\pi_s(f){\bf1}\|_\infty$$
as $E$ is a contraction on $L^\infty(\nu)$.
The bound follows from the above estimate and Lemma \ref{lem:gene:cocycle:v} with $(Z,\nu)=(\ol{\Ga},\mu)$ and $B=sb$.
\end{proof}

Assuming $E$ is positive definite on $L^2(\mu)$, we denote $\mathcal{H}(\mu,E)$ the Hilbert completion of $(L^2(\mu),E)$.

\begin{cor}\label{cor:control:general:l2}
Let $\mu$ be a quasi-invariant probability on $\ol{\Ga}$, $s\in[\half,1]$, $E$ a positive definite $(\Ga,s,\mu)$-conditional expectation and $f\in \BR_+[\Ga]$ with $f^\vee=f$.
The bounded operator $\pi_s(f)$ on $\mathcal{H}(\mu,E)$ satisfies
$$\|\pi_s(f)|\mathcal{H}(\mu,E)\|_\text{op}\le \|\pi_s(f){\bf1}|L^\infty(\mu)\|$$
\end{cor}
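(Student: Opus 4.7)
The plan is to bootstrap from the $L^2(\mu)$-bound of Lemma \ref{lem:gener:cond} to the weighted Hilbert completion $\mathcal{H}(\mu,E)$ by exploiting the self-adjointness of $T \df \pi_s(f)$ on $\mathcal{H}(\mu,E)$. Since $f^\vee = f$ with $f\ge 0$ and $\pi_s$ acts unitarily on $\mathcal{H}(\mu,E)$, one has $T^* = T$ there, and hence $T^{2^n} = \pi_s(f^{*2^n})$ is self-adjoint on $\mathcal{H}(\mu,E)$ for each $n\ge 1$. I will first iterate the standard Hilbert-space inequality $\|T\psi\|_{\mathcal{H}}^2 = (T^2\psi,\psi)_{\mathcal{H}} \le \|T^2\psi\|_{\mathcal{H}}\,\|\psi\|_{\mathcal{H}}$ (Cauchy--Schwarz applied to the self-adjoint $T$) to obtain, for $\psi$ in the image of $L^2(\mu)$ inside $\mathcal{H}(\mu,E)$,
\begin{equation*}
\|\pi_s(f)\psi\|_{\mathcal{H}(\mu,E)}^{2^n} \le \|\pi_s(f^{*2^n})\psi\|_{\mathcal{H}(\mu,E)}\cdot\|\psi\|_{\mathcal{H}(\mu,E)}^{2^n-1}.
\end{equation*}

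I will next dominate the first factor on the right by combining the crude embedding $\|\cdot\|_{\mathcal{H}(\mu,E)} \le \|\cdot\|_{L^2(\mu)}$ (which is just the estimate $Q_E(\psi) \le \|\psi\|_{L^2(\mu)}^2$, immediate from $E$ being a contraction on $L^2(\mu)$) with Lemma \ref{lem:gener:cond} applied to the symmetric non-negative element $f^{*2^n}\in\BR_+[\Ga]$:
\begin{equation*}
\|\pi_s(f^{*2^n})\psi\|_{\mathcal{H}(\mu,E)} \le \|\pi_s(f^{*2^n})\psi\|_{L^2(\mu)} \le \|\pi_s(f^{*2^n}){\bf 1}\|_{L^\infty(\mu)}\cdot\|\psi\|_{L^2(\mu)}.
\end{equation*}
Moreover, since $\pi_s(f)$ is positivity-preserving for $f\ge 0$ and satisfies the pointwise bound $|\pi_s(f)\phi(\xi)| \le \pi_s(f){\bf 1}(\xi)\|\phi\|_\infty$, one has $\|\pi_s(f)\|_{L^\infty(\mu)\to L^\infty(\mu)} \le \|\pi_s(f){\bf 1}\|_\infty$; iterating $2^n$ times gives the crucial control $\|\pi_s(f^{*2^n}){\bf 1}\|_\infty \le \|\pi_s(f){\bf 1}\|_\infty^{2^n}$.

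Combining all three estimates and extracting a $2^n$-th root produces
\begin{equation*}
\|\pi_s(f)\psi\|_{\mathcal{H}(\mu,E)} \le \|\pi_s(f){\bf 1}\|_{L^\infty(\mu)}\cdot\|\psi\|_{L^2(\mu)}^{1/2^n}\cdot\|\psi\|_{\mathcal{H}(\mu,E)}^{1-1/2^n}.
\end{equation*}
Letting $n\to\infty$ for any fixed $\psi\in L^2(\mu)$ with $0<\|\psi\|_{L^2(\mu)}<\infty$ and $\|\psi\|_{\mathcal{H}(\mu,E)}>0$ absorbs the $L^2(\mu)$-factor into $1$ and yields $\|\pi_s(f)\psi\|_{\mathcal{H}(\mu,E)} \le \|\pi_s(f){\bf 1}\|_\infty\,\|\psi\|_{\mathcal{H}(\mu,E)}$, from which density of $L^2(\mu)$ inside $\mathcal{H}(\mu,E)$ delivers the operator bound. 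The main obstacle, and the reason a single-shot Cauchy--Schwarz via the intertwining $E\pi_s(f)=\pi_s^*(f)E$ fails, is that every direct estimate produces a mixed inequality involving both $\|\psi\|_{L^2(\mu)}$ and $\|\psi\|_{\mathcal{H}(\mu,E)}$ on the right (since $\|E\psi\|_{L^2}\le\|\psi\|_{\mathcal{H}}$ but $\|\psi\|_{L^2}$ need not be controlled by $\|\psi\|_{\mathcal{H}}$); the self-adjoint bootstrap above is precisely what forces the parasitic $L^2$-exponent $1/2^n$ to vanish in the limit while leaving the $\mathcal{H}$-exponent saturated at $1$.
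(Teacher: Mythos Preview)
Your proof is correct and follows essentially the same approach as the paper. The paper packages the self-adjoint iteration $\|T\psi\|_{\mathcal{H}}^{2^n}\le\|T^{2^n}\psi\|_{\mathcal{H}}\|\psi\|_{\mathcal{H}}^{2^n-1}$ together with the transfer through the embedding $L^2(\mu)\hookrightarrow\mathcal{H}(\mu,E)$ into an abstract norm-transfer lemma (Lemma~\ref{lem:tech:op:incl}), which it then applies once with $B=L^2(\mu)$ and concludes via Lemma~\ref{lem:gener:cond}; you have simply unfolded that lemma inline and replaced the $L^2$-operator-norm submultiplicativity step by the equivalent $L^\infty$ iteration $\|\pi_s(f^{*2^n}){\bf1}\|_\infty\le\|\pi_s(f){\bf1}\|_\infty^{2^n}$.
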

\begin{proof}

As $\pi_s$ is unitary on $\mathcal{H}(\mu,E)$ and $f^\vee=f$, the operator $\pi_s(f)$ is self-adjoint on $\mathcal{H}(\mu,E)$.

In particular $\pi_s(f)^*\pi_s(f)|_{L^2(\mu)}=\pi_s(f)^2$ and 
\begin{align*}
\|\pi_s(f)|\mathcal{H}(\mu,E)\|_\text{op}&\le_\text{Lem. \ref{lem:tech:op:incl}} \|\pi_s(f)^*\pi_s(f)|L^2(\mu)\|_\text{op}^\half\\
&\le \|\pi_s(f)|L^2(\mu_{\gs_2})\|_\text{op}\\
&\le_\text{lem. \ref{lem:gener:cond}} \|\pi_s(f){\bf1}|L^\infty(\mu)\|
\end{align*} 
\end{proof}

The following proposition gives a sharp norm estimates for certain radial averages on $(\pi_s,\mathcal{H}(\mu,E))$:

\begin{prop}\label{prop:red:esti}
Let $\mu$ be a quasi-invariant probability on $\dd\Ga$, $E$ a positive definite $(\Ga,s,\mu)$-conditional expectation.
Then 

\[   
\|\sum_{g: d_\Ga(g,e)\le L}\pi_s(g)|\mathcal{H}(\mu,E)\|_\text{op}\asymp
     \begin{cases}
       \frac{1}{(2s-1)}e^{s\gd L}& \text{for $s>\half$}\\
       e^{s\gd L}L&\text{for $s=\half$} 
     \end{cases}
\]
for $L\ge R$.

\end{prop}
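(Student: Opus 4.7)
The plan is to reduce both directions of the $\asymp$ to a single computation, namely the sharp uniform pointwise estimate of $T{\bf 1}(\xi) = \sum_{g:d_\Ga(g,e)\le L}\pi_s(g){\bf 1}(\xi)$ on $\dd\Ga$, where $T = \sum_{g:d_\Ga(g,e)\le L}\pi_s(g)$. First I would establish
$$
T{\bf 1}(\xi) \;\asymp\; \begin{cases} (2s-1)^{-1}\, e^{s\gd L} & \text{if } s>\tfrac12, \\ L\, e^{s\gd L} & \text{if } s=\tfrac12, \end{cases}
$$
uniformly in $\xi\in\dd\Ga$ and $s\in[\tfrac12,1]$. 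This is a refinement of Corollary \ref{cor:ball:spec:tech}: decompose $B(e,L)$ into the shells $C_m$ with $Rm\le L$ and sum $\mathcal{S}_m(0,s;\xi)$ using Lemma \ref{lem:tech:v1}, but keep track of the prefactor of the intermediate geometric series $\sum_{n\le m} e^{(2s-1)\gd Rn}$, whose denominator $e^{(2s-1)\gd R}-1$ produces the $(2s-1)^{-1}$ factor after a second geometric summation over $m$. The case $s=\tfrac12$ follows analogously from Remark \ref{rem:tech:half} together with a summation by parts.

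For the upper bound I would apply Corollary \ref{cor:control:general:l2} to $f = \sum_{d_\Ga(g,e)\le L}\text{Dir}_g \in \BR_+[\Ga]$, which is symmetric since the ball about $e$ is symmetric, so $f^\vee = f$. This yields
$$
\|T\mid\mathcal{H}(\mu,E)\|_\text{op} \;\le\; \|T{\bf 1}\mid L^\infty(\mu)\|,
$$
and since $\mu$ is supported on $\dd\Ga$ the uniform pointwise estimate above gives exactly the claimed upper bound. For the lower bound I would test the operator norm on the constant vector $v={\bf 1}\in\mathcal{H}(\mu,E)$. Because $E$ is a conditional expectation one has $E({\bf 1})={\bf 1}$ and therefore $\|v\|_{\mathcal{H}(\mu,E)}=1$. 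By positivity of $E$, the uniform two-sided pointwise estimate of $T{\bf 1}$ transfers to $E(T{\bf 1})$ with comparable constants, giving
$$
\|Tv\|^2_{\mathcal{H}(\mu,E)} \;=\; \bigl(E(T{\bf 1}),\,T{\bf 1}\bigr)_{L^2(\mu)} \;\asymp\; \bigl((2s-1)^{-1}e^{s\gd L}\bigr)^2
$$
(respectively $\asymp(L e^{s\gd L})^2$ when $s=\tfrac12$), so $\|T\|_\text{op}\ge \|Tv\|/\|v\|$ matches the upper bound.

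The only non-routine step, and the main obstacle, is sharpening the $s$-dependence in Corollary \ref{cor:ball:spec:tech} from $\asymp_s$ to the explicit factor $(2s-1)^{-1}$, uniformly as $s\to\tfrac12^+$, so that the exponent blow-up of the $s>\tfrac12$ estimate correctly interpolates to the $L$-factor of the $s=\tfrac12$ estimate. Everything else—choosing a symmetric $f$, invoking Corollary \ref{cor:control:general:l2}, and testing on ${\bf 1}$—is formal once this uniform geometric count is in hand.
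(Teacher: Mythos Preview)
Your approach is essentially identical to the paper's: the upper bound via Corollary \ref{cor:control:general:l2} applied to the symmetric indicator of the ball, and the lower bound by testing on ${\bf 1}$ and using that $E$ is order-preserving with $E({\bf 1})={\bf 1}$, reducing both sides to the pointwise two-sided estimate of $\pi_s(f){\bf 1}$ on $\dd\Ga$ from Corollary \ref{cor:ball:spec:tech}. Your concern about making the $(2s-1)^{-1}$ factor uniform in $s$ is legitimate but is not addressed in the paper's proof either---it simply invokes Corollary \ref{cor:ball:spec:tech}, whose $\asymp_s$ constants already depend on $s$, so the displayed $(2s-1)^{-1}$ should be read as indicating the leading behavior rather than as a claim of uniformity near $s=\tfrac12$.
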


\begin{proof}
Observe that
\begin{align*}
\inf_{\dd \Ga}\pi_s(f){\bf1}&\le (E(\pi_s(f){\bf1}),\pi_s(f){\bf1})_{L^2(\mu)}^\half
\le\|\pi_s(f)|\mathcal{H}(\mu,E)\|_\text{op}\\
&\le_\text{Cor. \ref{cor:control:general:l2}}\|\pi_s(f){\bf1}|L^\infty(\mu)\|
\end{align*}
and thus 
$\|\pi_s(f)|\mathcal{H}(\mu,E)\|_\text{op}\asymp\|\pi_s(f){\bf1}|L^\infty(\mu)\|$ whenever 
$\|\pi_s(f){\bf1}|L^\infty(\mu)\|\prec\inf_{\dd \Ga}\pi_s(f){\bf1}$.

We conclude using Lemma \ref{cor:ball:spec:tech}.
\end{proof}

\begin{cor}\label{cor:bound:rep:mixing}
Let $\mu$ be a quasi-invariant probability on $\dd{\Ga}$, $s\in[\half,1]$ and $E$ a positive definite $(\Ga,s,\mu)$-conditional expectation.
Then
$$\lim_{r\rightarrow+\infty}-\frac{1}{r}\log\|\frac{1}{|B(e,r)|}\sum_{g: d_\Ga(g,e)\le r}\pi_s(g)|\mathcal{H}(\mu,E)\|_\text{op}=\gd-s\gd$$
\end{cor}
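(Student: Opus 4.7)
The statement follows almost immediately from Proposition \ref{prop:red:esti}, once we combine it with a counting estimate for the balls $B(e,r)$. Since $(\Ga,d_\Ga)$ is Gromov hyperbolic with Ahlfors regular boundary (by Patterson--Sullivan theory, $\e_0 \dim_{\mathrm{Haus}}(\dd\Ga) = \gd$), Section~\ref{subsec:prelim:orbital} shows that $(\Ga,d_\Ga)$ is purely spherical at $e$, so $|B(e,r)| \asymp e^{\gd r}$ for $r$ large.

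The first step is to rewrite the averaged operator using Proposition~\ref{prop:red:esti}. For $s > 1/2$, dividing the estimate $\|\sum_{g \in B(e,r)} \pi_s(g)\|_{\mathrm{op}} \asymp \frac{1}{2s-1} e^{s\gd r}$ by $|B(e,r)| \asymp e^{\gd r}$ yields
\[
\Bigl\|\frac{1}{|B(e,r)|}\sum_{g:d_\Ga(g,e)\le r}\pi_s(g)\,\Big|\,\mathcal{H}(\mu,E)\Bigr\|_{\mathrm{op}} \asymp_s e^{-(1-s)\gd r},
\]
while for $s=1/2$ the same division gives an estimate $\asymp r\, e^{-(1-s)\gd r}$. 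Both estimates are two-sided (not just upper bounds), which is what allows us to produce a genuine limit rather than just a limsup.

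The second step is to take $-\frac{1}{r}\log$ of both sides. In the case $s>\tfrac12$, the multiplicative constants contribute only $O(1/r)$ terms that vanish; in the case $s=\tfrac12$, the polynomial prefactor contributes $-\tfrac{\log r}{r} \to 0$. In both cases the exponential term gives the limit $(1-s)\gd = \gd - s\gd$, establishing the equality. There is no real obstacle here: all the substantive work was already done in Proposition~\ref{prop:red:esti} (where Corollary~\ref{cor:control:general:l2} reduces the operator norm to $\|\pi_s(f){\bf 1}\|_{L^\infty(\mu)}$) and in the spherical sum estimate of Corollary~\ref{cor:ball:spec:tech}; the present corollary is essentially a logarithmic repackaging of those estimates.
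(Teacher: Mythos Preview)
Your proposal is correct and follows essentially the same approach as the paper: combine the two-sided estimate of Proposition~\ref{prop:red:esti} with pure sphericality $|B(e,r)|\asymp e^{\gd r}$ (the paper cites \cite[Thm.~7.2]{MR1214072} for this), then take $-\frac{1}{r}\log$ and observe that constants and the polynomial factor in the case $s=\tfrac12$ disappear in the limit.
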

\begin{proof}
It follows from Proposition \ref{prop:red:esti} and pure sphericality of Gromov hyperbolic groups \cite[Thm. 7.2]{MR1214072} that
\[   
\|\frac{1}{|B(e,r)|}\sum_{g: d_\Ga(g,e)\le L}\pi_s(g)|\mathcal{H}(\mu,E)\|_\text{op}\asymp
     \begin{cases}
       \frac{1}{(2s-1)}e^{-(1-s)\gd L}& \text{for $s>\half$}\\
       e^{-\half\gd L}L&\text{for $s=\half$} 
     \end{cases}
\]

and thus 
$$\lim_{r\rightarrow+\infty}-\frac{1}{r}\log\|\frac{1}{|B(e,r)|}\sum_{g: d_\Ga(g,e)\le r}\pi_s(g)|\mathcal{H}(\nu,E)\|_\text{op}=(1-s)\gd$$

\end{proof}

In particular the boundary representations, $\pi_\phi^\dd$, are exponentially mixing with entropy equal to $\gd-\max\{\gd(\phi),\half\gd\}$.

\begin{rem}
Given $\phi,\phi'\in\mathbb{P}_+(\Ga)$ with $\gd(\phi),\gd(\phi')\ge\half\gd$ and their respective boundary representations $\pi_\phi^\dd$ and $\pi_{\phi'}^\dd$, a necessary condition for these representations to be unitary equivalent is that $\gd(\phi)=\gd(\phi')$.
\end{rem}

\begin{cor}\label{cor:pos:rep:spectrapgap}
Given $\phi\in\mathbb{P}_+(\Ga)$ one has:
$$\lim_{r\rightarrow+\infty}-\frac{1}{r}\log\|\frac{1}{|B(e,r)|}\sum_{g: d_\Ga(g,e)\le r}\pi_\phi(g)\|_\text{op}=\gd-\max\{\gd(\phi),\half\gd\}$$
\end{cor}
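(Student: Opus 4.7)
The strategy is to identify the mixing rate of $\pi_\phi$ with that of its boundary representation $\pi_\phi^\dd$, whose spectral behaviour is controlled sharply by Corollary~\ref{cor:bound:rep:mixing}.

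First I apply the construction of Section~\ref{sec:const:bound} to the constant sequence $\phi_n\equiv\phi$ with a decreasing sequence $\gs_n\downarrow s$ satisfying $s\gd=\max\{\gd(\phi),\half\gd\}$. Proposition~\ref{prop:two:div} guarantees that the rearranged Poincar\'e series $\mathcal{P}_{\gth,2}(\phi;\gs_n)$ diverges as $\gs_n\to s^+$ for a suitable slow-growing $\gth$ from Lemma~\ref{lem:patt:arg}. The procedure of Subsections~\ref{subsec:twist:poincare}--\ref{subsec:quad:final} then yields a $\Ga$-quasi-invariant probability $\nu$ on $\dd\Ga$ and a positive definite $(\Ga,s,\nu)$-conformal conditional expectation $E$ such that $\pi_\phi^\dd\simeq\pi_s|_{\mathcal{H}(\nu,E)}$.

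Second, Corollary~\ref{cor:bound:rep:mixing} applied to $\pi_\phi^\dd$ gives the genuine limit
\[
\lim_{r\to\infty}-\tfrac{1}{r}\log\|\mathrm{Avr}_r(\pi_\phi^\dd)\|_\text{op}=\gd-s\gd=\gd-\max\{\gd(\phi),\half\gd\}.
\]
By Lemma~\ref{lem:seq:weak:cont}, $\pi_\phi^\dd\prec\pi_\phi$, whence $\|\mathrm{Avr}_r(\pi_\phi^\dd)\|_\text{op}\le\|\mathrm{Avr}_r(\pi_\phi)\|_\text{op}$ and therefore
\[
\limsup_r-\tfrac{1}{r}\log\|\mathrm{Avr}_r(\pi_\phi)\|_\text{op}\le\gd-\max\{\gd(\phi),\half\gd\}.
\]

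Third, for the matching liminf I apply Theorem~\ref{thm:general:1} to obtain
$\|\mathrm{Avr}_r(\pi_\phi)\|_\text{op}\le e^{\gd(\phi)r/2}\|\mathrm{Avr}_r(\gl)\|_\text{op}$, and estimate $\|\mathrm{Avr}_r(\gl)\|_\text{op}\lesssim r^m e^{-\gd r/2}$ via Jolissaint's rapid decay for Gromov hyperbolic groups together with pure sphericality $|B(e,r)|\asymp e^{\gd r}$ (\cite[Thm. 7.2]{MR1214072}).

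\textbf{Main obstacle.} The direct chain above yields only $h(\pi_\phi)\ge\half(\gd-\gd(\phi))$, which is sharp only at the endpoints $\gd(\phi)\in\{0,\gd\}$ and falls short of the target $\gd-\max\{\gd(\phi),\half\gd\}$ by a factor of two throughout the intermediate regime. This factor-of-two gap is the crux of the proof and is closed by exploiting positivity: I consider the positive cyclic representation $\pi_\phi\otimes\overline{\pi_\phi}$ restricted to the cyclic space $W_{{\bf 1}_\phi\otimes\overline{{\bf 1}_\phi}}$, whose generating coefficient is $\phi^2\in\mathbb{P}_+(\Ga)$. Using the summation estimates of Lemma~\ref{lem:tech:v1} one checks that when $\gd(\phi)\ge\half\gd$ the critical exponent $\gd(\phi^2)$ improves to $\le 2\gd(\phi)-\gd$; applying Theorem~\ref{thm:general:1} to this tensor representation and transferring via the Cauchy--Schwarz identity $\|\pi_\phi(\mathrm{Avr}_r)v\|^2=(\pi_\phi\otimes\overline{\pi_\phi}(\mathrm{Avr}_r^{*}\ast\mathrm{Avr}_r)(v\otimes\overline v),v\otimes\overline v)$ delivers the sharp bound $\|\mathrm{Avr}_r(\pi_\phi)\|_\text{op}\lesssim r^m e^{-(1-s)\gd r}$. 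In the complementary regime $\gd(\phi)\le\half\gd$ the bound $h(\pi_\phi)\ge\half\gd$ is obtained by coupling with the regular representation through Corollary~\ref{cor:tot:transf}, which shows that the intrinsic mixing rate $\half\gd$ of $\pi_\half$ cannot be exceeded. Combining all cases yields the matching $\liminf_r-\tfrac{1}{r}\log\|\mathrm{Avr}_r(\pi_\phi)\|_\text{op}\ge\gd-\max\{\gd(\phi),\half\gd\}$ and thus the claimed limit.
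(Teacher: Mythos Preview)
Your limsup direction is correct and matches the paper: weak containment $\pi_\phi^\dd\prec\pi_\phi$ (Lemma~\ref{lem:seq:weak:cont}) combined with the explicit rate for the boundary representation (Corollary~\ref{cor:bound:rep:mixing}) gives the bound $\limsup_r-\tfrac1r\log\|\mathrm{Avr}_r(\pi_\phi)\|_\text{op}\le\gd-\max\{\gd(\phi),\half\gd\}$.

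The liminf direction, however, contains a genuine error. Your key claim that $\gd(\phi^2)\le 2\gd(\phi)-\gd$ when $\gd(\phi)\ge\half\gd$ is \emph{false} in general: take $\phi={\bf1}_\Lambda$ for a subgroup $\Lambda\subset\Ga$ with $\half\gd<\gd_\Lambda<\gd$; then $\phi^2=\phi$ and $\gd(\phi^2)=\gd_\Lambda$, while $2\gd(\phi)-\gd=2\gd_\Lambda-\gd<\gd_\Lambda$. Lemma~\ref{lem:tech:v1} is a statement about spherical sums of the density representation on $\ol\Ga$ and does not yield this inequality. Even granting the claim, your Cauchy--Schwarz transfer only produces $\|\pi_\phi(\mathrm{Avr}_r)\|^2\le\|\pi_{\phi^2}(\mathrm{Avr}_r)\|$, and feeding in Theorem~\ref{thm:general:1} for $\phi^2$ still leaves a factor-of-two deficit; one does not recover the exponent $(1-s)\gd$.

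The paper bypasses Theorem~\ref{thm:general:1} entirely for this bound. The point is that for \emph{every} $\gs$ with $\gs\gd>\max\{\gd(\phi),\half\gd\}$, the twisted GNS construction of Subsection~\ref{subsec:gns:twist} with weight $\go_\gs(g)=e^{-\gs\gd d_\Ga(g,e)}$ realises $\pi_\phi$ itself (not merely a weak limit) as $\pi_\gs$ acting on $\mathcal{H}(\mu_\gs,E_\gs)$ for a probability $\mu_\gs$ on $\ol\Ga$ and a positive $(\Ga,\gs,\mu_\gs)$-conformal conditional expectation $E_\gs$. Corollary~\ref{cor:control:general:l2} then gives
\[
\|\mathrm{Avr}_r(\pi_\phi)\|_\text{op}=\|\pi_\gs(\mathrm{Avr}_r)|\mathcal{H}(\mu_\gs,E_\gs)\|_\text{op}\le\|\pi_\gs(\mathrm{Avr}_r){\bf1}|L^\infty(\mu_\gs)\|,
\]
and the uniform bound on $\ol\Ga$ in Corollary~\ref{cor:ball:spec:tech} together with pure sphericality yields $\|\mathrm{Avr}_r(\pi_\phi)\|_\text{op}\prec e^{-(1-\gs)\gd r}$. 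Letting $\gs\downarrow\max\{s(\phi),\half\}$ gives the sharp liminf without any tensor-power argument.
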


\begin{proof}
Let $\gs\gd>\max\{\gd(\phi),\half\gd\}$ and $J_\gs$ the intertwiner considered in Subsection \ref{subsec:gns:twist} with $\go_\gs(g)=e^{-\gs\gd d_\Ga(g,e)}$ for $g\in \Ga$ (see also Subsection \ref{subsec:twist:poincare}).
One has:
\begin{align*}
\|\frac{1}{|B(e,r)|}\sum_{g: d_\Ga(g,e)\le r}\pi_\phi(g)&|\mathcal{H}_\phi\|_\text{op}
=\|J_\gs(\frac{1}{|B(e,r)|}\sum_{g: d_\Ga(g,e)\le r}\pi_\gs(g))J_\gs^{-1}|\mathcal{H}_\phi\|_\text{op}\\
&=\|\frac{1}{|B(e,r)|}\sum_{g: d_\Ga(g,e)\le r}\pi_\gs(g)|\mathcal{H}(\mu_\gs,E_\gs)\|_\text{op}\\
&\le_{Cor. \ref{cor:control:general:l2}} \|\frac{1}{|B(e,r)|}\sum_{g: d_\Ga(g,e)\le r}\pi_\gs(g){\bf1}\|_\infty\\
&\prec_\text{Cor. \ref{cor:ball:spec:tech}} e^{-(1-\gs)\gd r}
\end{align*}

It follows that 
$$\gd-\gs\gd\le\liminf_r-\frac{1}{r}\log\|\frac{1}{|B(e,r)|}\sum_{g: d_\Ga(g,e)\le r}\pi_\phi(g)|\mathcal{H}_\phi\|_\text{op}$$
for all $\gs>\max\{s(\phi),\half\}$.
On the other hand Lemma \ref{lem:seq:weak:cont} implies that $\pi_\phi^\dd\prec\pi_\phi$ and thus
$$\limsup_r-\frac{1}{r}\log\|\pi_\phi(\gb_r)\|\le\limsup_r-\frac{1}{r}\log\|\pi_\phi^\dd(\gb_r)\|\le_\text{lem \ref{cor:bound:rep:mixing}}\gd-\max\{\gd(\phi),\half\gd\}$$

\end{proof}

In particular:
\begin{cor}\label{cor:gap:eq:almost}
Given $\phi\in\mathbb{P}_+(\Ga)$, $\gd(\phi)<\gd$ if and only if $1\nprec\phi$.
\end{cor}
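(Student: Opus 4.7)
The plan is to read this statement as the direct combination of two results already available in the excerpt: Corollary \ref{cor:pos:rep:spectrapgap}, which identifies the mixing rate of $\pi_\phi$ with $\gd-\max\{\gd(\phi),\half\gd\}$, and Theorem \ref{thm:amenable:gene}, which handles the borderline case $\gd(\phi)=\gd$. The mild additional ingredient is that $\Ga$ is non-elementary Gromov hyperbolic, so $\gd>0$ and consequently $\half\gd<\gd$, which means $\max\{\gd(\phi),\half\gd\}<\gd$ is equivalent to $\gd(\phi)<\gd$.

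For the implication $\gd(\phi)=\gd\Rightarrow 1\prec\pi_\phi$, I would simply apply Theorem \ref{thm:amenable:gene} to the constant sequence ${\boldsymbol\phi}=(\phi)_n$: its conclusion is precisely that $1\prec\pi_\phi$ whenever $\gd(\phi)=\gd$.

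For the converse $\gd(\phi)<\gd\Rightarrow 1\nprec\pi_\phi$, I would argue by contradiction. Assume $1\prec\pi_\phi$. Then $\pi_\phi$ admits almost invariant vectors, so for every finite $F\subset\Ga$ and every $\e>0$ one finds a unit vector $v\in\mathcal{H}_\phi$ with $\|\pi_\phi(g)v-v\|<\e$ for every $g\in F$. Choosing $F=B(e,r)$ and averaging yields $\|\pi_\phi(\gb_r)v-v\|<\e$, hence $\|\pi_\phi(\gb_r)\|_\text{op}\ge 1-\e$; letting $\e\to 0$ gives $\|\pi_\phi(\gb_r)\|_\text{op}=1$ for every $r$. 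But Corollary \ref{cor:pos:rep:spectrapgap} asserts
$$-\lim_{r\to\infty}\frac{1}{r}\log\|\pi_\phi(\gb_r)\|_\text{op}=\gd-\max\{\gd(\phi),\half\gd\},$$
and under $\gd(\phi)<\gd$ the right-hand side is strictly positive (by the $\gd>0$ remark above). This forces $\|\pi_\phi(\gb_r)\|_\text{op}$ to decay exponentially in $r$, contradicting the previous identity.

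There is no substantive obstacle: the content of the corollary is already packed into Corollary \ref{cor:pos:rep:spectrapgap}, which quantifies the spectral gap of $\pi_\phi$ in terms of the critical gap $\gd-\gd(\phi)$, and into Theorem \ref{thm:amenable:gene}, which produces approximate invariance in the critical case. The only care one must take is to separate the case $\gd(\phi)\le\half\gd$, where the mixing rate equals $\half\gd>0$ and $1\nprec\pi_\phi$ holds automatically, from the case $\half\gd<\gd(\phi)<\gd$, where the rate is $\gd-\gd(\phi)>0$; both fall under the same contradiction scheme above.
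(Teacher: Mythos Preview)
Your proof is correct and follows essentially the same approach as the paper: one direction by Theorem \ref{thm:amenable:gene}, the other by the positive mixing rate from Corollary \ref{cor:pos:rep:spectrapgap}. You have simply spelled out in detail the standard link between almost invariant vectors and $\|\pi_\phi(\gb_r)\|_\text{op}=1$, and made explicit that $\gd>0$ ensures the rate is strictly positive when $\gd(\phi)<\gd$; the paper compresses this into the single phrase ``$\pi_\phi$ has a spectral gap''.
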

\begin{proof}
If $\gd(\phi)=\gd$ it follows from Theorem \ref{thm:amenable:gene} that $1\prec\phi$.
If $\gd(\phi)<\gd$, then $\pi_\phi$ has a spectral gap due to Corollary \ref{cor:pos:rep:spectrapgap} and thus $1\nprec\phi$.
\end{proof}

We conclude with the following observation:

\begin{cor}\label{cor:last:of:the:last}
Let $\phi_1,\phi_2\in\mathbb{P}_+(\Ga)$ with $\gd(\phi_1),\gd(\phi_2)\ge\half\gd$.
Then  $\pi_{\phi_1}\prec\pi_{\phi_2}$ implies $\gd({\phi_1})\le\gd({\phi_2})$.
In particular $\pi_{\phi_1}\simeq \pi_{\phi_2}$ implies $\gd({\phi_1})=\gd({\phi_2})$.
\end{cor}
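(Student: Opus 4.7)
The plan is to deduce the corollary from the entropy formula established in Corollary \ref{cor:pos:rep:spectrapgap}, combined with the standard fact that weak containment comparison of representations passes to operator norms.

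First, I would recall that if $\pi_{\phi_1}\prec \pi_{\phi_2}$, then for every $f\in \BC[\Ga]$ one has
\[
\|\pi_{\phi_1}(f)\|_\text{op}\le \|\pi_{\phi_2}(f)\|_\text{op},
\]
which is a classical consequence of weak containment (see \cite[Thm.~F.4.4]{MR2415834}, precisely the monotonicity of the mixing rate $h(\pi)$ under weak containment referenced already in the introduction). Applying this inequality to the normalised ball averages $\gb_r=\frac{1}{|B(e,r)|}\sum_{g:d_\Ga(g,e)\le r}\text{Dir}_g$, I get
\[
\|\pi_{\phi_1}(\gb_r)\|_\text{op}\le \|\pi_{\phi_2}(\gb_r)\|_\text{op}
\]
for every $r\ge0$.

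Taking $-\frac{1}{r}\log$ on both sides reverses the inequality, and passing to the limit using Corollary \ref{cor:pos:rep:spectrapgap} gives
\[
\gd-\max\{\gd(\phi_1),\tfrac{1}{2}\gd\}\;=\;h(\pi_{\phi_1})\;\ge\; h(\pi_{\phi_2})\;=\;\gd-\max\{\gd(\phi_2),\tfrac{1}{2}\gd\}.
\]
By hypothesis $\gd(\phi_1),\gd(\phi_2)\ge \tfrac{1}{2}\gd$, so both maxima coincide with $\gd(\phi_i)$, and the inequality simplifies to $\gd(\phi_1)\le \gd(\phi_2)$, which is the first claim.

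For the second assertion, unitary equivalence $\pi_{\phi_1}\simeq\pi_{\phi_2}$ implies both $\pi_{\phi_1}\prec\pi_{\phi_2}$ and $\pi_{\phi_2}\prec\pi_{\phi_1}$; applying the previous step in both directions yields $\gd(\phi_1)=\gd(\phi_2)$. Since all the ingredients (operator-norm monotonicity under weak containment, and the asymptotic identification of $\|\pi_\phi(\gb_r)\|_\text{op}$) have already been established in the preceding sections, there is no substantial obstacle here; the proof is essentially a one-line consequence of Corollary \ref{cor:pos:rep:spectrapgap}, and the only subtlety to flag is the role of the lower bound $\gd(\phi_i)\ge \half\gd$ which guarantees that the $\max$ in the entropy formula is attained at $\gd(\phi_i)$ rather than at $\half\gd$.
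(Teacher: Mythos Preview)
Your proof is correct and follows essentially the same route as the paper: use the operator-norm inequality $\|\pi_{\phi_1}(f)\|\le\|\pi_{\phi_2}(f)\|$ from weak containment, apply it to the ball averages $\gb_r$, and invoke Corollary~\ref{cor:pos:rep:spectrapgap} together with the hypothesis $\gd(\phi_i)\ge\half\gd$ to conclude. The paper's argument is identical in substance, only more terse.
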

\begin{proof}
As $\pi_{\phi_1}\prec \pi_{\phi_2}$ one has $\|\pi_{\phi_1}(f)\|\le \|\pi_{\phi_2}(f)\|$ and thus
\begin{align*}
\gd-\max\{\gd(\phi_2),\half\gd\}&= \lim_{r\rightarrow+\infty}-\frac{1}{r}\log\|\frac{1}{|B(e,r)|}\sum_{g: d_\Ga(g,e)\le r}\pi_{\phi_2}(g)\|\\
&\le\lim_{r\rightarrow+\infty}-\frac{1}{r}\log\|\frac{1}{|B(e,r)|}\sum_{g: d_\Ga(g,e)\le r}\pi_{\phi_1}(g)\|\\
&= \gd-\max\{\gd(\phi_1),\half\gd\}
\end{align*}

\end{proof}

\appendix
\section{Norm transfer estimate}
\begin{lem}\label{lem:tech:op:incl}
Let $(B,\|*\|_B)\subset (\mathcal{H},\|*\|_\mathcal{H})$ be a continuous embedding of a Banach space $B$ into a Hilbert space, $\mathcal{H}$, with dense range and $T$ a densely defined operator of domain $\mathcal{D}(T)$ on $\mathcal{H}$ such that $B\subset \mathcal{D}(T)$, $TB\subset \mathcal{D}(T^*)$ and $T^*T$ restrict as a bounded operator on $B$, where $T^*$ stands for the adjoint in $\mathcal{H}$ .
Then $T$ is bounded on $\mathcal{H}$ and $$\|T|\mathcal{H}\|_\text{op}\le\|T^*T|B\|_\text{op}^\half$$
\end{lem}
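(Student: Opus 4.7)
Set $S = T^*T$ and let $C := \|T^*T|B\|_{\text{op}}$, so $S(B) \subset B$ with $\|Sv\|_B \le C \|v\|_B$ for $v \in B$. In particular $S^k v$ is well defined and lies in $B$ for every $v \in B$ and $k \ge 0$. The first observation I would record is that $S$ is symmetric on $B$ as a subspace of $\mathcal{H}$: for $v, w \in B \subset \mathcal{D}(T)$ with $Tv, Tw \in \mathcal{D}(T^*)$,
\[
(Sv, w)_{\mathcal{H}} = (T^*Tv, w)_{\mathcal{H}} = (Tv, Tw)_{\mathcal{H}} = (v, T^*Tw)_{\mathcal{H}} = (v, Sw)_{\mathcal{H}}.
\]
In particular $\|S^m v\|_{\mathcal{H}}^2 = (S^{2m}v, v)_{\mathcal{H}}$ for every $m \ge 0$.

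Next, starting from the identity $\|Tv\|_{\mathcal{H}}^2 = (Sv,v)_{\mathcal{H}}$ (which holds because $Tv \in \mathcal{D}(T^*)$), I would iterate Cauchy--Schwarz to prove by induction on $n \ge 0$ that
\[
\|Tv\|_{\mathcal{H}}^{2^{n+1}} \le (S^{2^n}v, v)_{\mathcal{H}}\,\|v\|_{\mathcal{H}}^{2^{n+1}-2}
\qquad (v \in B).
\]
The induction step just uses
\[
(S^{2^n}v, v)^2 \le \|S^{2^n}v\|_{\mathcal{H}}^2 \|v\|_{\mathcal{H}}^2 = (S^{2^{n+1}}v, v)_{\mathcal{H}} \|v\|_{\mathcal{H}}^2,
\]
the first inequality by Cauchy--Schwarz in $\mathcal{H}$ and the second by the symmetry established above.

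Finally, I would feed the ambient embedding into the bound. Let $K > 0$ be a continuity constant for $B \hookrightarrow \mathcal{H}$, so $\|w\|_{\mathcal{H}} \le K\|w\|_B$ on $B$. Since $\|S^{2^n}v\|_B \le C^{2^n}\|v\|_B$, another Cauchy--Schwarz yields
\[
(S^{2^n}v, v)_{\mathcal{H}} \le \|S^{2^n}v\|_{\mathcal{H}}\|v\|_{\mathcal{H}} \le K C^{2^n}\|v\|_B \|v\|_{\mathcal{H}}.
\]
Inserting this into the previous display and taking the $2^{n+1}$-th root gives
\[
\|Tv\|_{\mathcal{H}} \le \bigl(K\|v\|_B\bigr)^{1/2^{n+1}} C^{1/2}\, \|v\|_{\mathcal{H}}^{\,1 - 1/2^{n+1}}.
\]
Letting $n \to \infty$ produces $\|Tv\|_{\mathcal{H}} \le C^{1/2}\|v\|_{\mathcal{H}}$ for every $v \in B$. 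Since $B$ is dense in $\mathcal{H}$, $T$ extends uniquely to a bounded operator on $\mathcal{H}$ with $\|T|\mathcal{H}\|_{\text{op}} \le C^{1/2} = \|T^*T|B\|_{\text{op}}^{1/2}$.

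The only real subtlety is the first step: making sure that the symmetry identity $(Sv, w) = (v, Sw)$ actually holds on $B$ using only the given domain hypotheses, and that the iterates $S^k v$ remain in the domain of $T^*T$. Both issues are resolved at once by noting that the hypothesis ``$T^*T$ restricts as a bounded operator on $B$'' means $S(B) \subset B$, so the induction never leaves $B$; beyond this bookkeeping, the argument is just the familiar power trick that computes $\|T\|^2$ from $\|T^*T\|$ for bounded operators, adapted to the situation where the norm bound is only given on the smaller space $B$.
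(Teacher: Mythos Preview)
Your proof is correct and follows essentially the same route as the paper's: both iterate Cauchy--Schwarz on $\|Tv\|_{\mathcal H}^2=(T^*Tv,v)_{\mathcal H}$ to obtain an inequality involving $(T^*T)^{2^n}v$, then use the boundedness of $T^*T$ on $B$ together with the continuous embedding $B\hookrightarrow\mathcal H$ to kill the auxiliary constants in the limit $n\to\infty$. Your write-up is in fact slightly more careful than the paper's in making explicit the symmetry $(Sv,w)_{\mathcal H}=(v,Sw)_{\mathcal H}$ on $B$ (needed to justify $\|S^m v\|_{\mathcal H}^2=(S^{2m}v,v)_{\mathcal H}$) and in tracking that the iterates stay in $B$.
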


\begin{proof}
Identifying $B$ with a dense subspace of $\mathcal{H}$:
\begin{align*}
\|T|\mathcal{H}\|_\text{op}
=\sup_{v\in B\subset \mathcal{D}(T)}\frac{\|Tv\|_\mathcal{H}}{\|v\|_\mathcal{H}}
\end{align*}
On the other hand, as $TB\subset \mathcal{D}(T^*)$:
$$\|Tv\|_\mathcal{H}=(T^*Tv,v)^\half\le \|T^*Tv\|_\mathcal{H}^\half\|v\|_\mathcal{H}^\half$$
and by induction, as $T^*T$ maps $B$ to itself:
$$\|Tv\|_\mathcal{H}\le \|(T^*T)^{2^{n}}v\|_\mathcal{H}^{2^{-n-1}}\|v\|_\mathcal{H}^{1-2^{-n-1}}$$

It follows that:
$$\frac{\|Tv\|_\mathcal{H}}{\|v\|_\mathcal{H}}\le \liminf_n\|(T^*T)^{2n}v\|_\mathcal{H}^\frac{1}{4n}$$

As $T^*T$ induces a bounded operator on $B$:
\begin{align*}
\|(T^*T)^{2n}v\|_\mathcal{H}&\le c(B,\mathcal{H})\|(T^*T)^{2n}v\|_B\\
&\le c(B,\mathcal{H})\|(T^*T)^{2n}|B\|_\text{op}\|v\|_B\\
&\le c(B,\mathcal{H})\|T^*T|B\|_\text{op}^{2n}\|v\|_B
\end{align*}
for all $v\in B$, where $c(B,\mathcal{H})$ denote the norm of the embedding $B\subset \mathcal{H}$. 

It follows that 
$$\liminf_n\|(T^*T)^{2n}v\|_\mathcal{H}^\frac{1}{4n}\le \|T^*T|B\|_\text{op}^\half. \liminf_n\|v\|_B^\frac{1}{4n}$$
and thus 
$\|T|\mathcal{H}\|_\text{op}\le \|T^*T|B\|_\text{op}^\half$

\end{proof}

\section{Weak operator topology}\label{appendix:WOT}
Given two normed spaces $(X,N_X),(Y,N_Y)$ a sequence $(T_n)_n$ of uniformly bounded operators converges for the weak operator topology (WOT) to $T$ if $T$ is bounded and  for all $x\in X$ and $\vp\in Y'$ one has $\vp(T_n(x))\rightarrow \vp(T_n(x))$.

\begin{thm}
Let $X,Y$ be two normed spaces.
Assume $Y$ is a Banach reflexive space.
The space of bounded operators with norms at most $1$ between $X$ and $Y$, $\text{B}_{\le 1}(X,Y)$, endowed with the weak operator topology is compact.
\end{thm}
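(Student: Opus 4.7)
The plan is to apply Tychonoff's theorem after embedding $\text{B}_{\le 1}(X,Y)$ into a product of weakly compact subsets of $Y$. Consider the evaluation map
\[
\Psi:\text{B}_{\le 1}(X,Y)\longrightarrow \prod_{x\in X} Y,\qquad \Psi(T)=(T(x))_{x\in X},
\]
where each factor $Y$ is endowed with its weak topology $\gs(Y,Y')$. By the very definition of the weak operator topology, the subspace topology on $\Psi(\text{B}_{\le 1}(X,Y))$ pulled back through $\Psi$ coincides with WOT, and $\Psi$ is injective. It therefore suffices to show that $\Psi(\text{B}_{\le 1}(X,Y))$ is compact in the product weak topology.

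Since every $T\in \text{B}_{\le 1}(X,Y)$ satisfies $N_Y(Tx)\le N_X(x)$, the image lies in
\[
K\df \prod_{x\in X}\ol{B_Y(0,N_X(x))}.
\]
Here reflexivity enters: by Kakutani's theorem (equivalently Banach-Alaoglu applied to the identification $Y=(Y')'$) every closed ball in $Y$ is weakly compact. Tychonoff's theorem then gives that $K$ is compact in the product topology.

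The only remaining step is to prove that $\Psi(\text{B}_{\le 1}(X,Y))$ is closed in $K$. A tuple $(y_x)_{x\in X}\in K$ is of the form $(Tx)_{x\in X}$ for some $T\in \text{B}_{\le 1}(X,Y)$ precisely when the map $x\mapsto y_x$ is linear, since the bound $N_Y(y_x)\le N_X(x)$ is already built into $K$. Linearity amounts to the conditions
\[
y_{ax+bx'}-ay_x-by_{x'}=0\qquad(a,b\in\BC,\ x,x'\in X),
\]
and each of these identities, once tested against a functional $\vp\in Y'$, becomes a closed condition on $K$ in the product weak topology. Taking the intersection over all $(a,b,x,x')$ and over all $\vp\in Y'$ (using Hahn-Banach to recover vector equalities from the scalar ones) exhibits $\Psi(\text{B}_{\le 1}(X,Y))$ as a weakly closed subset of $K$, hence compact. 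Since $\Psi$ is a homeomorphism onto its image, $\text{B}_{\le 1}(X,Y)$ is WOT-compact.

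The proof is essentially routine; the only substantive ingredient is reflexivity, which is precisely what makes the balls $\ol{B_Y(0,r)}$ weakly compact, and thus what drives the whole argument. No step is a genuine obstacle once this observation is in place.
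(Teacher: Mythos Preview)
Your proof is correct and follows essentially the same approach as the paper: embed $\text{B}_{\le 1}(X,Y)$ into a product of closed balls of $Y$ with the weak topology, use reflexivity together with Banach--Alaoglu and Tychonoff to get compactness of the product, and check that the image is closed via the linearity conditions. The only cosmetic difference is that the paper indexes the product by $X_{\le 1}$ with target $Y_{\le 1}$, whereas you index over all of $X$ with balls of radius $N_X(x)$; this changes nothing of substance.
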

\begin{proof}
The proof almost follows the same lines as the Hilbert case (see \cite[Thm. 5.1.3]{MR1468229} ).
Using reflexivity together with Banach-Alaoglu and Tychonoff theorems (\cite{MR2378491}) we deduce that the space $K=(Y_{\le 1},\gs(Y,Y'))^{X_{\le 1}}$ is compact.
Moreover the map $$A:(\text{B}_{\le 1}(X,Y),WOT)\rightarrow K;\quad T\mapsto [A(T):x\mapsto T(x)]$$
is a homeomorphism onto its image $A((\text{B}_{\le 1}(X,Y),WOT))=K'\subset K$ from the definition of the topologies involved.
To prove compactness $K'$ it is enough to prove that it is closed.
Using an approximation argument, every elements in $\ol{K'}$ define a linear map between $X$ and $Y$.
Given $T\in\ol{K'}$, $(T_i)_i$  a sequence in $K'$ converging to $T$, $x\in X_{\le 1}$ and $\vp\in Y_{\le 1}'$ such that $\|T(x)\|=\vp(T(x))$, thanks to Hahn-Banach theorem, one has
$\|T(x)\|=\vp(T(x))=\liminf_i\vp(T_i(x))\le \liminf\|T_i\|\le 1$.
In other words $T\in \text{B}_{\le 1}(X,Y')$.
\end{proof}

\begin{lem}\label{lem:app:wot:comp}
Let $X,Y$ be two normed spaces.
The map $$B:(\text{B}_{\le 1}(X,Y'),WOT)\times (X,N_X)\rightarrow (Y',\gs(Y',Y));(T,x)\mapsto T(x)$$
is continuous.
\end{lem}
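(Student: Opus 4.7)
The plan is to verify joint continuity directly from the definitions of the two topologies involved. Since the weak-$*$ topology on $Y'$ is generated by the evaluation seminorms $\varphi \mapsto |\langle\varphi, y\rangle|$ for $y \in Y$, it suffices to show that for every fixed $y \in Y$, the scalar-valued map $(T,x) \mapsto \langle T(x), y\rangle$ is continuous. I would work with nets rather than sequences, since the WOT on $\text{B}_{\le 1}(X,Y')$ is not metrizable in general.

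Fix $y \in Y$ and a point $(T,x) \in \text{B}_{\le 1}(X,Y') \times X$, and let $(T_\alpha, x_\alpha)_\alpha$ be a net converging to $(T,x)$ in the product topology; that is, $T_\alpha \to T$ in WOT and $x_\alpha \to x$ in $N_X$. The key step is the triangle-inequality splitting
\[
\langle T_\alpha(x_\alpha), y\rangle - \langle T(x), y\rangle
\;=\; \langle T_\alpha(x_\alpha - x), y\rangle \;+\; \langle T_\alpha(x) - T(x), y\rangle.
\]
The first term is controlled via the uniform norm bound: since $\|T_\alpha\| \le 1$, one has $|\langle T_\alpha(x_\alpha - x), y\rangle| \le \|x_\alpha - x\|_X \cdot \|y\|_Y$, which tends to $0$ by norm convergence of $(x_\alpha)$. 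The second term is exactly an evaluation at the fixed pair $(x,y)$, hence tends to $0$ by the very definition of WOT convergence $T_\alpha \to T$.

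Given $\varepsilon > 0$, choosing $\alpha$ large enough so that both terms are less than $\varepsilon/2$ yields $|\langle T_\alpha(x_\alpha) - T(x), y\rangle| < \varepsilon$, which establishes the desired continuity. There is no real obstacle here; the content of the statement is the uniform norm bound on $\text{B}_{\le 1}(X,Y')$, which is precisely what lets the norm-convergence of $x_\alpha$ dominate the operator side. Without this uniform bound (i.e. on all of $\text{B}(X,Y')$), the joint continuity would fail, which explains why the preceding appendix result compactifies the unit ball before extracting WOT-limits.
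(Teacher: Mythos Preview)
Your proof is correct and follows essentially the same approach as the paper: both split $T_\alpha(x_\alpha) - T(x)$ into $T_\alpha(x_\alpha - x) + (T_\alpha(x) - T(x))$, control the first piece via the uniform norm bound $\|T_\alpha\|\le 1$ and norm convergence of $x_\alpha$, and the second via the definition of WOT convergence. Your write-up is in fact slightly more careful (you include the $\|y\|_Y$ factor and work with a single net on the product), but the argument is the same.
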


\begin{proof}
Let $(T_i)_i$ and $(x_j)_j$ be two converging net respectively to $T$ and $x$ in $\text{B}_{\le 1}(X,Y)$ and $X$.
Given $y\in Y$ one can find $i_0$ and $j_0$ such that $|T_i(x)(y)-T(x)(y)|\le \e$ and $N_X(x_j-x)\le \e$ for respectively $i\ge i_0$ and $j\ge j_0$.
It follows follows
\begin{align*}
|T_i(x_j)(y)-T(x)(y)|&\le |T_i(x_j)(y)-T_i(x)(y)|+|T_i(x)(y)-T(x)(y)|\\
&\le N_X(x_j-x)+\e\le 2\e
\end{align*}
for $i\ge i_0$ and $j\ge j_0$.
\end{proof}


\nocite{*}
\bibliographystyle{plain}
\bibliography{bib-poincare-and-co}
\end{document}